\newtheorem{lemma}{Lemma}[section]
\newtheorem{proposition}[lemma]{Proposition}
\newtheorem{theorem}[lemma]{Theorem}
\newtheorem{corollary}[lemma]{Corollary}
\newtheorem{maintheorem}{Theorem}
\newtheorem{maincorollary}[maintheorem]{Corollary}
\theoremstyle{definition}
\newtheorem{definition}[lemma]{Definition}
\newtheorem{conjecture}[lemma]{Conjecture}
\newtheorem{question}[lemma]{Question}
\newtheorem*{problem*}{Problem}
\theoremstyle{remark}
\newtheorem{remark}[lemma]{Remark}
\newtheorem{example}[lemma]{Example}
\newtheorem*{example*}{Example}
\newtheorem*{remark*}{Remark}
\numberwithin{equation}{section}
\newcommand\Ima{{\rm Im}}
\newcommand\diag{{\rm diag}}
\newcommand{\GL}{\operatorname{GL}}
\newcommand{\SL}{\operatorname{SL}}
\newcommand{\SO}{\operatorname{SO}}
\newcommand{\SU}{\operatorname{SU}}
\newcommand{\PSL}{\operatorname{PSL}}
\newcommand{\PGL}{\operatorname{PGL}}
\newcommand{\Ma}{\operatorname{M}}
\newcommand\End{\operatorname{End}}
\DeclareMathOperator{\Sym}{Sym}
\newcommand\Span{\operatorname{Span}}
\newcommand\hdim{\operatorname{hdim}}
\newcommand\Di{\operatorname{Di}}
\newcommand{\Out}{\operatorname{Out}}
\newcommand\Mod{{\rm mod}} 
\newcommand\Irr{\operatorname{Irr}}
\DeclareMathOperator{\Rep}{Rep}
\newcommand\cd{\operatorname{cd}}
\newcommand\Char{{\rm char}}
\newcommand{\ZZ}{\mathcal{Z}}
\renewcommand{\O}{\mathcal{O}}
\newcommand{\U}{{\mathcal U}}
\newcommand{\qa}[3]{\left(\frac{#1, #2}{#3}\right)}
\newcommand\PCI{\operatorname{PCI}}
\newcommand\Mexc{\operatorname{(M_{exc})}}
\newcommand\wMexc{\operatorname{(wM_{exc})}}
\newcommand\Cen{\operatorname{Cen}}
\newcommand{\Cliff}{\operatorname{Cliff}}
\newcommand{\LL}{\mathcal{L}}
\newcommand{\I}{\mathcal{I}}
\newcommand{\vFQ}{\operatorname{vFQ}}
\newcommand{\vQL}{\operatorname{vQL}}
\newcommand{\spec}{\operatorname{spec}}
\newcommand{\con}{\operatorname{Con}}
\newcommand\Br{{\rm Br}}
\newcommand\ot{\otimes}
\newcommand\op{\oplus}
\newcommand\mc{\mathcal}
\newcommand\wh{\widehat}
\newcommand\reallywidehat[1]{%
\savestack{\tmpbox}{\stretchto{%
  \scaleto{%
    \scalerel*[\widthof{\ensuremath{#1}}]{\kern-.6pt\bigwedge\kern-.6pt}%
    {\rule[-\textheight/2]{1ex}{\textheight}}
  }{\textheight}%
}{0.5ex}}%
\stackon[1pt]{#1}{\vstretch{1.5}{\tmpbox}}%
}
\DeclarePairedDelimiter\abs{\lvert}{\rvert}
\newcommand{\N}{{\mathbb N}}
\newcommand{\Z}{{\mathbb Z}}
\newcommand{\F}{{\mathbb F}}
\newcommand{\Q}{{\mathbb Q}}
\newcommand{\R}{{\mathbb R}}
\newcommand{\C}{{\mathbb C}}
\newcommand\vcd{\operatorname{vcd}}
\newcommand{\V}{\mathrm{V}}
\newcommand\Fit{\operatorname{Fit}}
 \newcommand\restr[2]{{
   \left.\kern-\nulldelimiterspace 
   #1 
   \right|_{#2} 
   }}
\let\oldtocsubsection=\tocsubsection
\renewcommand{\tocsubsection}[2]{\hspace{2em}\oldtocsubsection{#1}{#2}}
\definecolor{Vino}{rgb}{0.256,0,0}
\subjclass{20C05, 16S34, 16H99, 20J05, }
\keywords{group rings, low rank linear groups, higher modular groups, good property, congruence kernel, Zassenhaus conjectures, subgroup isomorphism problem}
\thanks{The first author is grateful for financial support from the FWO and the F.R.S.–FNRS under the Excellence of Science (EOS) program (project ID 40007542). The second author is grateful to Fonds Wetenschappelijk Onderzoek Vlaanderen - FWO (grant 88258), and le Fonds de la Recherche Scientifique - FNRS (grant 1.B.239.22) for financial support.}
\begin{document}
\title[Representing in Low Rank I: conjugacy, topological \& homological aspects]{Representing in Low Rank I: conjugacy, topological and homological aspects}

\author{Robynn Corveleyn}
\author{Geoffrey Janssens}
\author{Doryan Temmerman}

\address{(Robynn Corveleyn) \newline Institut de Recherche en Math\'ematique et Physique, UCLouvain, Chemin du Cyclotron 2, 1348 Louvain-la-Neuve, Belgium \newline Email address: {\tt robynn.corveleyn@uclouvain.be}}

\address{(Geoffrey Janssens) \newline Institut de Recherche en Math\'ematique et Physique, UCLouvain, Chemin du Cyclotron 2, 1348 Louvain-la-Neuve, Belgium and \newline Department of Mathematics and Data Science, Vrije Universiteit Brussel,
Pleinlaan 2, 1050 Elsene, Belgium
\newline Email address: {\tt geoffrey.janssens@uclouvain.be}}

\address{(Doryan Temmerman) \newline AI Lab, Vrije Universiteit Brussel, 
Pleinlaan 2, 1050 Elsene, Belgium\newline Email address: {\tt doryan.temmerman@vub.be}}

\dedicatory{Dedicated to the $70$\textsuperscript{th} birthday of Eric Jespers, with gratitude}

\begin{abstract}
 In this series of papers, we investigate properties of a finite group which are determined by its low degree irreducible representations over a number field $F$, i.e. its representations on matrix rings $\operatorname{M}_n(D)$ with $n \leq 2$. In particular we focus on representations on $\operatorname{M}_2(D)$ where $D$ is a division algebra having an order $\mathcal{O}$ such that $\mathcal{O}$ has finitely many units, i.e. such that $\operatorname{SL}_2(\mathcal{O})$ has arithmetic rank $1$. In this first part, the focus is on two aspects.

 One aspect concerns characterisations of such representing spaces in terms of Serre's homological goodness property, small virtual cohomological dimension and higher Kleinian-type embeddings. As an application, we obtain several characterisations of the finite groups $G$ whose irreducible representations are of the mentioned form. In particular, such groups $G$ are precisely those such that $\mathcal{U}(R G)$, with $R$ the ring of integers of $F$, can be constructed from groups which virtually map onto a non-abelian free group. Along the way we investigate the latter property for congruence subgroups of higher modular groups and its implications for the congruence kernel. This is used to obtain new information on the congruence kernel of the unit group of a group ring.

The second aspect concerns the conjugacy classes of the images of finite subgroups of $\mathcal{U}(R G)$ under the irreducible representations of $G$. More precisely, we initiate the study of a blockwise variant of the Zassenhaus conjectures and the subgroup isomorphism problem. Moreover, we contribute to them for the low rank representations above. 
\end{abstract}

\maketitle

\newcommand\blfootnote[1]{%
  \begingroup
  \renewcommand\thefootnote{}\footnote{#1}%
  \addtocounter{footnote}{-1}%
  \endgroup
}

{\small \tableofcontents}

\section{Introduction}

This paper concerns representation theory of finite groups $G$. Arguably the main aim of it is to determine which group theoretical invariants of $G$ can be retrieved from its category of representations $\Rep_F(G)$, where $F$ is some field of characteristic $0$. In this work we consider the general problem of retrieving information from the group algebra $FG$ viewed as an $F$-algebra. \smallskip 

This problem is highly dependent on the choice of the ground field $F$. For example if $F = \C$, then $FG \cong FH$ for $G$ and $H$ abelian exactly when $|G|=|H|$. On the other hand, if $F= \Q$ then the group algebra determines all the structure constants of an abelian group by a theorem of Perlis--Walker \cite{PW}. One reason that $\Q$ detects more is the theorem of Wedderburn--Artin, which says that there is an isomorphism of $F$-algebras
\begin{equation}\label{W-A decomposition intro}
FG \cong \Ma_{n_1}(D_1) \times \cdots \times \Ma_{n_{\ell}}(D_{\ell}),
\end{equation}
where $D_i$ are finite-dimensional division $F$-algebras. In particular, the Brauer group $\Br(F)$, i.e. the division algebras over $F$, has an influence. One way to view this is, the ‘smaller’ $F$, the ‘richer’ its Brauer group. Nevertheless, Dade \cite{Dade} constructed non-isomorphic groups $G$ and $H$ with isomorphic group algebras over any field of characteristic $0$, or in other words such that $\Q G \cong \Q H$. For a long time, it was believed that over the integers $\Z$ (equivalently over all group rings $RG$), a group $G$ is uniquely determined by its integral group ring $\Z G$. However, Hertweck constructed a counterexample to this \cite{Hertweck}. Nevertheless, if $\Z G \cong \Z H$, then $G$ and $H$ are cocycle twists of each under a Cech-type non-abelian cohomology theory constructed by Roggenkamp--Kimmerle \cite{KR}. Thus, although Hertweck managed to find solvable groups for which this cohomology group is non-trivial, it indicates that $G$ and $H$ are still very related to each other. \smallskip

The motivation to consider $FG$ lies in the fact that it is a distinguished object in $\Rep_F(G)$. Firstly, being the regular module, it is directly related to the set of irreducible $F$-representations of $G$. A second aspect is more categorical. Namely, $\Rep_F(G)$ viewed as a symmetric tensor\footnote{With tensor category we mean an abelian, $F$-linear, semisimple, monoidal and rigid category whose monoidal unit is simple.} category is known (by \cite{DM, Del}) to determine completely the underlying group $G$. The latter data is equivalent to determining $FG$ as a Hopf algebra.
However, solely viewed as a tensor category, there can be non-isomorphic finite groups $G$ and $H$ such that $\Rep_F(G) \simeq \Rep_F(H)$ as tensor categories, giving rise to the concept of isocategorical groups \cite{EG, Dav, Gal}. If $F$ is algebraically closed, then $G$ and $H$ are \emph{isocategorical} if and only if $FG$ and $FH$ are up to some Drinfeld twist isomorphic as Hopf algebras \cite[Proposition 3.1]{EG}. This highlights again the distinguished role of the group algebra. In our setting, one follows the natural desire to retrieve information on $G$ from much less (and more realistic to be determined) starting structure.\smallskip

Interestingly, if $R$ is the ring of integers of a number field, then\footnote{The technical condition on $R$ allowing this conclusion, is that $R$ is a \emph{$G$-adapted ring}. This means that $R$ is an integral domain of characteristic $0$ such that none of the prime divisors of $|G|$ are invertible in $R$.} the $R$-algebra $RG$ contains the same information as its unit group $\U (RG)$. In other words, if $RG \cong RH$ as $R$-algebras, then $\U(RG) \cong \U(RH)$ as groups. Therefore, for several decades now, there has been considerable interest in understanding how $G$ is embedded in the finitely presented group $\U(RG)$. One proposed line of research is to construct a subgroup $N$ such that (1) the construction of $N$ does not depend on knowing the basis $G$ of $RG$ (i.e. it is generic in $G$), (2) it is of finite index in $\U(RG)$ and ideally (3) it is normal and torsion-free. This allows to retrieve invariants from the finite group $\U(RG)/N$. Property (2) is related with the problem of generic constructions of invertible elements \cite{JJS}. 

Now recall that each simple factor $\Ma_{n_i}(D_i)$ in the Wedderburn--Artin decomposition \eqref{W-A decomposition intro} corresponds to a primitive central idempotent $e_i$ of $FG$; we denote the set of all such elements by $\PCI(FG)$.  It is a general fact that $\U(R G)$ has finite index in $\prod_{i=1}^{\ell}\U (RGe_i)$. As such, both property (2) and the normality in (3) are directly related with the description of normal subgroups of the groups $\U (R Ge_i)$. In turn, this heavily depends on the isomorphism type of $F Ge_i = \Ma_{n_i}(D_i)$. More precisely, the presence of the following type of simple algebras often breaks the standard methods used (see \cite[Section 6.1]{BJJKT} for more details):

\begin{definition}[Exceptional components] \label{def_exc_comps}
A finite-dimensional simple algebra $B$ is called {\it exceptional} if it is isomorphic to one of the following:
\begin{enumerate}
\item a non-commutative division algebra which is not a totally definite quaternion algebra over a number field,
\item a matrix algebra $\Ma_2(D)$ with $D \in \{\Q, \Q(\sqrt{-d}), \qa{-a}{-b}{\Q} \mid a,b,d \in \N_0 \}$.
\end{enumerate}
If $B$ is of the latter form, then we speak of an {\it exceptional matrix} algebra and in the former case of an {\it exceptional division} algebra. If $B \cong FGe$ for some $e \in \PCI (FG)$, then $B$ is called an exceptional component of $FG$.
\end{definition}
The name was coined in \cite{MR3195747} and the reason that in practice they are exceptional (i.e. require `other methods') is different. The exceptional matrix algebras are exactly those $\Ma_n(D)$ for which the $S$-rank of $\SL_n(D)$ is $1$, where $S$ is the set of Archimedean places of $\mc{Z}(D)$ (see \cite[Remark 6.7]{BJJKT} for details and \cref{definition SL_1} for the definition of $\SL_n$ over non-commutative rings). Consequently, they are exactly those matrix algebras having a maximal order\footnote{It is well-known that a maximal order in $\Ma_n(D)$ is of the form $\Ma_n(\O)$ with $\O$ a maximal order in $D$. Furthermore, the unit groups of two orders are commensurable. Thus this property does not depend on the chosen order.} $\Ma_n(\O)$ such that $\SL_n(\mc{O})$ does not satisfy the subgroup congruence problem. Moreover, there exist non-central normal subgroups which are not of finite index. The non-exceptional division algebras $D$ are exactly those having an order $\O$ such that $\SL_1(\O)$ is finite. The others are problematic because $\SL_1(D)$ has no unipotent elements, which are an ingredient for most generic constructions of units in $\U (R G)$. Additionally, the subgroup congruence problem for $\SL_1(D)$ is sadly still open. Therefore, for the lines of research described earlier, irreducible representations in such simple algebras require a separate study.\smallskip

This series of papers has three aims. \emph{A first aim} is to better understand irreducible representations into an exceptional matrix algebra $A$. Here the focus will be on the following topological and homological properties and invariants:

\begin{enumerate}
    \item \emph{Serre's good property:} a group $\Gamma$ is \emph{good} if the cohomology of $\Gamma$ coincides with the cohomology of its profinite completion  $\widehat{\Gamma}$, in any finite module (see \Cref{subsectie good} for a precise formulation).
    \item \emph{Virtual cohomological dimension:} for any discrete group $\Gamma$, this is the cohomological dimension of a torsion-free finite index subgroup of $\Gamma$, and we denote  it by $\vcd(\Gamma)$ (see \Cref{subsectie vcd} for a precise formulation).
    \item \emph{Higher Kleinian property:} a group $\Gamma$ \emph{has property $\Di_n$} if $n$ is the smallest integer such that $\Gamma$ can discretely be embedded into $\SL_n(\C)$ (see \Cref{def discrete in}). 
    \item  \emph{A large congruence subgroup:} a group $\Gamma$ is \emph{large} if it maps onto a virtually free group (see \Cref{sectie vFQ} for details).
\end{enumerate}

In \Cref{section hom and topo char} and \Cref{sectie vFQ} we will determine the above for the unimodular elements of an order in $A$. It will turn out that exceptional matrix algebras cannot be distinguished from other simple algebras through any of the above. However, if we assume that the matrix algebra is associated to an irreducible representation of a finite group, then they can be. In case of the large property, our approach will be geometric, by relating it to the setting of higher modular groups. Furthermore, as an application, we obtain new information on the congruence kernel of an integral group ring.

\emph{The second aim} is to characterise and classify the finite groups $G$ and number fields $F$ such that $FG$ has the following property: 
\begin{equation}
\tag{$\mathrm{M}_{\mathrm{exc}}$} \text{ All } \quad F Ge \cong \Ma_n(D), \quad  \text{ with } n \geq 2, \text{ are exceptional}. 
\end{equation}
So with respect to the general theory described earlier, they represent the `most degenerate groups'. As a second application of understanding the properties above, we obtain several characterisations of when $FG$ has $\Mexc$. Some of them extend the work in \cite{FreebyFree} and others are completely new. In the sequel to this paper, we will give a precise classification of the finite groups $G$ and number fields $F$ such that $FG$ has $\Mexc$, yielding a new case of Kleinert's virtual structure problem \cite{KleinertSurvey}. Moreover, we will solve the rational isomorphism problem for these groups, i.e. if $\Q G \cong \Q H$ and $\Q G$ has $\Mexc$, then $G \cong H$.

\emph{The third aim} is the study of conjugacy classes of finite subgroups through low rank irreducible representations. More precisely, let $\rho \colon \U(FG) \rightarrow \Ma_n(D)$ be an irreducible $F$-representation of $G$ with $\Ma_n(D)$ exceptional and let $H$ be a finite subgroup of $\U (R G)$.  Then we investigate the $\GL_n(D)$-conjugacy class of $\rho(H)$. In other words, we study a blockwise variant of the Zassenhaus conjectures and the subgroup isomorphism problem. \smallskip

We now explain in more detail the main results of this paper.

\subsection{Topological and homological characterisation of exceptional components and applications} \addtocontents{toc}{\protect\setcounter{tocdepth}{1}}
Let $A$ be a finite simple algebra over a number field $F$ and $\O$ an order in $A$. In \Cref{section hom and topo char} we characterise when any of the following occur:

\begin{itemize}
    \item $\vcd(\SL_1(\O))$ divides $4$,
    \item $\SL_1(\O)$ has property $\Di_n$ with $n \mid 4$,
    \item $\SL_1(\O)$ is good.
\end{itemize}
Each of the above is independent of the chosen order $\O$, i.e. they depend only on the isomorphism type of $A$. Moreover, they all hold for exceptional matrix algebras. See \Cref{simple with vcd | 4}, \Cref{discrete in SL4 description} and \Cref{good for exceptional} for the detailed statements.

The aforementioned results are not representation theoretical ones. Concretely, it may happen e.g. that $\vcd(\SL_1(\O)) \mid 4$, but that $\SL_1(\O)$ does not have $\Di_n$ with $n \mid 4$, and vice versa. However if we assume that $A$ is a simple component of a group algebra $FG$ having $\Mexc$, then the various properties are directly related.

\begin{maintheorem}[\Cref{block VSP main theorem}, \Cref{final theorem dis} \& \Cref{good for group rings}]\label{main characterisation theorem}
Let $G$ be a finite group and $F$ a number field with ring of integers $R$. Then the following are equivalent:
\begin{enumerate}[(i)]
    \item $FG$ has $\Mexc$,
    \item $[F:\Q]\leq 2$ and $\vcd (\SL_1(R Ge)) \mid 4$,
     for every $e \in \PCI(F G)$ such that $F Ge$ is not a division algebra,
    \item $[G:\Fit(G)] \leq 2$ and $\Fit(G)$ has class at most $3$ and
\[\SL_1(R Ge) \text{ has } \Di_n \text{ with } n \mid 4,\]
for every $e \in \PCI(F G)$ such that $F Ge$ is not a division algebra,
    \item  $\U ( R G)$ is good,
    \item $\SL_1(R G)$ is good.
\end{enumerate}
\end{maintheorem}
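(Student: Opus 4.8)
Since the statement merely collects the three results \Cref{block VSP main theorem} (equivalence of (i) and (ii)), \Cref{final theorem dis} (equivalence of (i) and (iii)) and \Cref{good for group rings} (equivalence of (i), (iv) and (v)), the plan is to prove each of these three, and all three proofs share the same skeleton. By Wedderburn--Artin $FG\cong\prod_i\Ma_{n_i}(D_i)$, the unit group $\U(RG)$ is commensurable with $\prod_i\GL_{n_i}(\O_i)$ and $\SL_1(RG)$ with $\prod_i\SL_{n_i}(\O_i)$ for maximal orders $\O_i\subseteq D_i$, so each of the invariants appearing in (ii)--(v) can be read off one component at a time from the purely algebra-level statements \Cref{simple with vcd | 4}, \Cref{discrete in SL4 description} and \Cref{good for exceptional}. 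The extra hypotheses --- $[F:\Q]\le2$, resp.\ $[G:\Fit(G)]\le2$ with $\Fit(G)$ of class $\le3$ --- together with the fact that the components in play are components of a \emph{group} algebra, are precisely what excludes every non-exceptional component.

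For (i)$\Leftrightarrow$(ii): the implication (i)$\Rightarrow$(ii) is the easy one, since every exceptional matrix algebra $\Ma_2(D)$ with $D\in\{\Q,\Q(\sqrt{-d}),\qa{-a}{-b}{\Q}\}$ has $\vcd(\SL_2(\O))\in\{1,2,4\}$ --- respectively a Fuchsian group, a Bianchi-type group, and the quaternionic case whose symmetric space is $\SL_4(\R)/\SO_4$ modulo its cusps --- and the centre of such a component is $\Q$ or imaginary quadratic, forcing $[F:\Q]\le2$. For the converse I would pass the classification of simple algebras with $\vcd(\SL_1(\O))\mid4$ from \Cref{simple with vcd | 4} through the requirement that the algebra be a non-division component of some $FG$ with $[F:\Q]\le2$; the Benard--Schacher constraints on Schur indices and the Isaacs--Passman bound on character degrees then discard everything but the exceptional matrix algebras.

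The argument for (i)$\Leftrightarrow$(iii) follows the same pattern with \Cref{discrete in SL4 description} in place of \Cref{simple with vcd | 4}. The new point is that ``$\SL_1(RGe)$ has $\Di_n$ with $n\mid4$ for every non-division $e$'' is strictly weaker than $\Mexc$, so one must also produce the structural conditions on $\Fit(G)$: I would show that $\Mexc$ forces the irreducible character degrees of $G$ to be at most $2$ and restricts the Schur indices of the degree-$\le2$ characters, translate this into $[G:\Fit(G)]\le2$ with $\Fit(G)$ nilpotent of class $\le3$ (essentially a structural classification of the admissible groups), and conversely verify that these group-theoretic conditions together with the $\Di_n$-condition leave no room for a non-exceptional $\Ma_n(D)$ with $n\ge2$.

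Finally, for (i)$\Leftrightarrow$(iv)$\Leftrightarrow$(v) I would use that Serre's goodness is a commensurability invariant, is closed under finite direct products and passage to direct factors, and that for an extension of a group of type $FP_\infty$ by a finitely generated abelian group the ambient group is good if and only if the normal subgroup is; applied to the reduced-norm sequences $1\to\SL_{n_i}(\O_i)\to\GL_{n_i}(\O_i)\to\Lambda_i\to1$, this reduces goodness of $\U(RG)$ and of $\SL_1(RG)$ to goodness of every factor $\SL_{n_i}(\O_i)$. Then (i)$\Rightarrow$(iv),(v) because exceptional matrix components have good unimodular groups by \Cref{good for exceptional}, while the division-algebra components that can occur under $\Mexc$ are fields or totally definite quaternion algebras and hence have finite, good $\SL_1$. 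For the reverse implications I would argue contrapositively: a non-exceptional $\Ma_{n_i}(D_i)$ with $n_i\ge2$ makes $\SL_{n_i}(\O_i)$ a higher-rank $S$-arithmetic group, hence --- by Bass--Milnor--Serre, Raghunathan and Rapinchuk --- one with the congruence subgroup property and finite congruence kernel, so its profinite completion is commensurable with an open subgroup of an infinite product of $v$-adic analytic groups of positive dimension, which has infinite cohomological dimension; since $\SL_{n_i}(\O_i)$ has finite $\vcd$ it cannot be good, and therefore neither is $\SL_1(RG)$ nor $\U(RG)$. I expect this last step to be the main obstacle --- one must be sure that finiteness of the congruence kernel really forces the profinite completion to have infinite cohomological dimension --- with the structural classification underlying (iii), and the verification that under $\Mexc$ no exceptional division component occurs, close behind.
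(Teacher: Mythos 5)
Your overall skeleton is correct --- reduce componentwise via commensurability, then classify which simple algebras have the relevant property and which can actually occur as group-algebra components --- but there are three concrete gaps.

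First, for (i)$\Rightarrow$(ii) the converse step is under-specified in a way that matters. Once \Cref{simple with vcd | 4} and \Cref{The other vcd} leave only $\Ma_2(K)$ with $K$ a cubic field of signature $(1,1)$ as the possible non-$\Mexc$ algebra, you appeal generically to ``Benard--Schacher constraints on Schur indices and the Isaacs--Passman bound on character degrees''; neither of these directly eliminates $\Ma_2(K)$ for cubic $K$. What the paper actually proves (\Cref{cubic root are not}) is a precise statement: if $\Ma_2(K)$ is a component of $\Q G$ with $K$ cubic, then $K = \Q(\zeta_7 + \zeta_7^{-1})$ or $\Q(\zeta_9 + \zeta_9^{-1})$. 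Both are totally real, whereas the $\vcd$-condition requires $K$ to have exactly one real place and one complex pair, so these cases never occur. Without this lemma (or an equally sharp substitute) the converse does not close. The role of $[F:\Q]\le 2$ is exactly to force $F=\Q$ (since $[F:\Q] \mid [K:\Q]=3$), so that \Cref{cubic root are not}, which is stated over $\Q$, applies.

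Second, for (i)$\Rightarrow$(iv),(v) you assert that under $\Mexc$ the division components are fields or totally definite quaternion algebras, and conclude their $\SL_1$ are finite hence good. This is false: \Cref{broad form finite grps} shows that $\qa{\zeta_{2^t}}{-3}{\Q(\zeta_{2^t})}$ for $t\ge 3$ can occur as a division component of an $\Mexc$ group algebra (take $G = C_3\rtimes C_{2^n}$, $n\ge 4$), and its centre $\Q(\zeta_{2^t})$ is not totally real, so it is an exceptional division algebra with infinite $\SL_1$. This is precisely why the paper's \Cref{good for group rings} carries the extra hypothesis ``no exceptional division components'' and why \Cref{good and exc div} flags the exceptional-division case as conjectural (dependent on Serre's congruence subgroup conjecture for $\SL_1(D)$). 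Your proof of the easy direction silently uses an assumption that is not available under bare $\Mexc$.

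Third, (i)$\Leftrightarrow$(iii) is not proven in this paper at all: \Cref{final theorem dis} is explicitly deferred to the companion \cite{CJT}, and only the $\wMexc$-version with the condition on $\Fit(Ge)$ rather than $\Fit(G)$ is obtained here (\Cref{broad form finite grps}(\ref{item: Mexc versus wMexc})). Your sketch correctly identifies the kind of structural classification needed, but passing from the blockwise condition on the quotients $Ge$ to the global condition on $G$ is where the real work lies, and it is not something you can dispatch in a paragraph. On the positive side, your converse argument for non-goodness of higher-rank $\SL_n(\O)$ need not go through infinite cohomological dimension of $\widehat{\SL_n(\O)}$; the paper cites the cleaner statement (from \cite{GJZGood}) that any $\SL_n(\O)$ enjoying the congruence subgroup property is not good, which sidesteps the obstacle you correctly identify as the hardest part of that route.
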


The fact that $\Q G$ having $\Mexc$ implies that $\U(\Z G)$ is good was already obtained in \cite[Theorem 1.5]{CdRZ}.

Two instrumental ingredients for \Cref{main characterisation theorem} are (1) the description of the exceptional division algebra components that a group algebra with $\Mexc$ can have and (2) the behaviour of the $\Mexc$ property over different ground fields. Both problems are resolved in \Cref{section description components}.

\begin{remark*}
In \cite{FreebyFree} a characterisation in terms of virtual cohomological dimension and the Kleinian property was obtained of the finite groups $G$ such that $\Q G$ has no exceptional division components and the non-division components are all of the form $\Ma_2(\Q(\sqrt{-d}))$ for some $d \in \N$. More precisely, they show that $\Q G$ has such components if and only if $\vcd(\SL_1(\Z G e)) \leq 2$ for all $e \in \PCI(\Q G)$. In turn the latter is equivalent to $\SL_1(\Z Ge)$ having $\Di_n$ with $n \mid 2$ for all $e \in \PCI(\Q G)$. Interestingly, including all exceptional matrix algebras adds some intricate properties on the Fitting subgroup.
\end{remark*}

In light of the aforementioned result from \cite{FreebyFree} (and its predecessors) and \Cref{main characterisation theorem}, it is natural to formulate the following general problem.

\begin{problem*}[Block Virtual Structure problem]
    Let $\mc{P}$ be a property. Classify the group algebras $FG$ such that, for each primitive central idempotent $e$ of $FG$ in some subset (depending on $\mc{P}$) of  $\PCI (FG)$, the simple algebra $FGe$ has property $\mc{P}$.
\end{problem*}

Later in the introduction we briefly recall the virtual structure problem and in \Cref{sectie block VSP} it is shown how it relates to the above problem.

\subsection{Virtually free quotients and congruence kernels for higher modular groups}

Let $R$ be a ring such that $R/I$ is finite for any ideal $I$ in $R$. Recall that a subgroup $\Gamma$ of $\SL_n(R)$ is said to be a \emph{principal congruence subgroup} if it is the kernel of a reduction map $\SL_n(R) \rightarrow \SL_n(R/I)$. Such subgroups are of finite index and any subgroup containing one is called a \emph{congruence subgroup}.

\subsubsection*{A finite index subgroup mapping onto free groups}

The first main result of \Cref{sectie vFQ} concerns \emph{largeness} for principal congruence subgroups of $\SL_2(\O)$ with $\O$ an order in $D$ such that $\Ma_2(D)$ is an exceptional matrix algebra. A group is called \emph{large} if it has a quotient which is virtually a non-abelian free group. Such groups are also referred to as having property $\vFQ$, see \cite{Lubot96}. However, we will avoid this notation to avoid confusion with another property, which we will name \emph{property vQL}. A group has property $\vQL$ if it has a finite index subgroup which maps onto a non-abelian free group, i.e. it virtually has a non-abelian free quotient. If a group is large, then it also has $\vQL$, since the inverse image of the free group provided by largeness is a finite index subgroup which has a free quotient. One reason for the terminology “large” is the fact that $\Gamma$ being large implies that $\Gamma$ is \emph{SQ-universal}, meaning that every countable group can be embedded in a quotient of $\Gamma$.

\smallskip
In \cite[Theorem 3.7]{Lubot96} it was proven that if $\Gamma_d$ is a torsion-free finite index subgroup of $\SL_2(\mc{I}_d)$, then $\Gamma_d$ is large. Moreover, if the prime $p$ does not split in $\mc{I}_d$, then the kernel of $\SL_2(\mc{I}_d) \rightarrow \SL_2(\mc{I}_d/p\mc{I}_d)$ maps onto a free group of rank $p^3+p-1$.  Note that the property $\vQL$ is stable under commensurability. Hence \cite[Theorem 3.7]{Lubot96} gave a constructive version of Grunewald--Schwermer's theorem \cite{GS}, stating that $\SL_2(\mc{I}_d)$ has $\vQL$. We obtain the analogous statement for $\SL_2(\O)$ with $\O$ an order in $\qa{u}{v}{\Q}$, where $u$ and $v$ are strictly negative integers.

\begin{maintheorem}[\Cref{infinite abelianization SL}]\label{main th large}
Let $\qa{u}{v}{\Q}$ be a totally definite quaternion algebra with centre $\Q$ and let $\LL_{u,v}$ be the $\Z$-order with basis $\{1,i,j,k\}$. Then every torsion-free principal congruence subgroup of $\SL_2(\LL_{u,v})$ is large. 
\end{maintheorem}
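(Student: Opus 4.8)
The plan is to mirror Lubotzky's strategy from \cite[Theorem 3.7]{Lubot96} for the imaginary quadratic case, replacing the role of the ring of integers $\mc{I}_d$ by the order $\LL_{u,v}$ in the totally definite quaternion algebra $\qa{u}{v}{\Q}$. The key structural input is that $\Ma_2\left(\qa{u}{v}{\Q}\right)$ is an exceptional matrix algebra, so $\SL_2(\LL_{u,v})$ has $S$-rank $1$, where $S$ consists of the single Archimedean place of $\Q$. Concretely, by the embedding $\qa{u}{v}{\Q}\ot_\Q\R\cong\mathbb{H}$ the group $\SL_2(\LL_{u,v})$ embeds as a lattice in $\SL_2(\mathbb{H})\cong\mathrm{Spin}(5,1)$, which is (up to finite index and centre) the orientation-preserving isometry group of hyperbolic $5$-space $\mathbb{H}^5$. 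Thus every torsion-free finite index subgroup $\Gamma$ is the fundamental group of a finite-volume hyperbolic $5$-manifold, and in particular a finitely generated, finitely presented group of $\vcd$ at most $4$ (matching the divisibility by $4$ appearing throughout the paper).

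The heart of the argument is then a Euler-characteristic / $L^2$-Betti-number computation, exactly as in the imaginary quadratic setting where Lubotzky exploits that a torsion-free lattice in $\SL_2(\C)=\mathrm{Isom}^+(\mathbb{H}^3)$ is a $3$-manifold group with vanishing Euler characteristic, forcing a large first Betti number on suitable congruence covers. Here I would instead use that a lattice $\Gamma$ in $\mathrm{Spin}(5,1)$ acting on $\mathbb{H}^5$ has $\chi(\Gamma)=0$ (odd-dimensional hyperbolic manifold) and, more importantly, that all its $L^2$-Betti numbers vanish except possibly $b_2^{(2)}$, which is proportional to $\mathrm{vol}(\Gamma\backslash\mathbb{H}^5)$ and hence strictly positive for lattices. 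Lück approximation then gives, for any congruence tower $\Gamma\ge\Gamma(I_1)\ge\Gamma(I_2)\ge\cdots$ with $\bigcap\Gamma(I_k)=1$, that $b_2(\Gamma(I_k))/[\Gamma:\Gamma(I_k)]\to b_2^{(2)}(\Gamma)>0$, so $b_2$ grows linearly. Since $\chi=0$, linear growth of $b_2$ forces linear growth of $b_1$, and in particular $b_1(\Gamma(I_k))>0$ for $k$ large; one then deduces largeness from a deeper mechanism. Rather than just infinite abelianisation, I would use a result in the spirit of Lackenby's or Cooper--Long--Reid criteria (or directly Lubotzky's own argument): a finite-volume hyperbolic $n$-manifold group that is an arithmetic congruence group and has positive first Betti number in a congruence tower is large, because the congruence quotients are built from finite groups of Lie type of unbounded rank and one can amalgamate along the resulting decomposition to produce a surjection onto a virtually free group. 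Alternatively, and more in keeping with the stated \Cref{infinite abelianization SL}, I would establish directly that infinitely many of the $\Gamma(I_k)$ have a finite index subgroup surjecting onto a free group of rank $\ge 2$, by an explicit construction of a congruence cover containing a properly embedded non-separating surface whose associated splitting yields a free quotient.

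Concretely, the steps are: (1) realise $\SL_2(\LL_{u,v})$ as an arithmetic lattice in $\mathrm{Isom}^+(\mathbb{H}^5)$ via the splitting over $\R$, and fix a torsion-free principal congruence subgroup $\Gamma=\Gamma(I_0)$; (2) compute $\chi(\Gamma)=0$ and show the Euler characteristic of every finite index subgroup vanishes; (3) compute the $L^2$-Betti numbers of $\Gamma$ and show $b_2^{(2)}(\Gamma)=\mathrm{vol}(\Gamma\backslash\mathbb{H}^5)/(\text{const})>0$ while $b_p^{(2)}(\Gamma)=0$ for $p\ne 2$; (4) apply Lück approximation along a nested sequence of principal congruence subgroups with trivial intersection to get $b_1(\Gamma(I_k))\to\infty$; (5) upgrade positive (indeed unboundedly growing) first Betti number, together with the arithmetic/congruence structure, to largeness of $\Gamma(I_0)$ itself — using that largeness passes up from a finite index subgroup and that a sufficiently deep congruence subgroup with many independent homology classes decomposes (via a non-separating hypersurface, or via the structure of $\SL_2$ over the residue quotients) as a non-trivial amalgam or HNN extension with free quotient.

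The main obstacle is step (5): going from "infinite/linearly growing abelianisation" to genuine largeness. Over $\mathbb{H}^3$ Lubotzky has the luxury of $3$-manifold topology (Dehn surgery, the fact that a $3$-manifold with $b_1\ge 1$ fibres or contains a non-separating surface giving an HNN decomposition, and virtual fibring results) — tools which are simply unavailable for $5$-manifolds, where no analogue of geometrisation is known and groups can have large abelianisation without being large. I expect the resolution to be not to prove largeness of an arbitrary torsion-free finite index subgroup by soft geometric means, but to exploit the specific congruence structure: for a prime $p$ inert (or with suitable splitting behaviour) in $\LL_{u,v}$, the quotient $\SL_2(\LL_{u,v}/p\LL_{u,v})$ is an explicit finite group over $\F_{p^2}$ or a quaternion algebra over $\F_p$, and a careful analysis of the corresponding congruence quotient — combined with a presentation of $\Gamma$ coming from a fundamental domain for the lattice action on $\mathbb{H}^5$ — should exhibit a free quotient of rank growing with $p$, exactly paralleling the rank $p^3+p-1$ appearing in Lubotzky's theorem (here one expects a rank governed by a polynomial in $p$ reflecting $\dim\mathrm{Spin}(5,1)=10$, e.g.\ of order $p^{?}$). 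Thus the real work is an explicit Nielsen--Schreier / Reidemeister--Schreier style computation on a congruence subgroup, controlled by the known Euler characteristic, rather than any appeal to $5$-dimensional manifold topology.
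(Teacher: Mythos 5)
Your proposal correctly identifies the ambient geometry: $\SL_2(\LL_{u,v})$ sits as an arithmetic lattice in (the double cover of) $\operatorname{Iso}^+(\mathbb{H}^5)$, so a torsion-free finite index subgroup is the fundamental group of a finite-volume hyperbolic $5$-manifold. But the mechanism you propose for extracting largeness — compute $L^2$-Betti numbers, apply L\"uck approximation to get $b_1(\Gamma(I_k))\to\infty$, then somehow upgrade to a free quotient — has a genuine and, as you yourself note, unresolved gap at step (5). Unbounded abelianization does not imply largeness in general, and the $3$-manifold tools you invoke (non-separating surfaces, virtual fibring) are not available in dimension $5$. The Reidemeister--Schreier ``resolution'' you sketch is not a proof but a hope.

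The paper's actual proof never goes near $L^2$-invariants. Its key idea, which your proposal misses, is a symmetry argument directly in the spirit of \cite[Corollary~3.6]{Lubot96}: one exhibits an explicit \emph{reflection} of $\mathbb{H}^5$ — the map $\sigma(z)=-z'$, i.e.\ the reflection in the hyperplane $\{z_0=0\}$ — which lies in $\operatorname{Iso}(\mathbb{H}^5)\setminus\operatorname{Iso}^+(\mathbb{H}^5)$ and \emph{normalises} the (image of the) principal congruence subgroup. The verification is a two-line matrix computation once one has passed, via the Clifford-algebra/Vahlen-group model $\SL_+(\Gamma_4(\R))\cong\SL_2\left(\qa{-1}{-1}{\R}\right)$, to the higher modular group $\SL_+\big(\Gamma_4^{u,v}(\Z)\big)$ with its congruence subgroups $\con_n$: one checks that $\sigma M\sigma$ corresponds to the matrix $\begin{psmallmatrix}\alpha'&-\beta'\\-\gamma'&\delta'\end{psmallmatrix}$, which again lies in $\operatorname{PCon}_n$. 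Lubotzky's Corollary~3.6 then says that a torsion-free lattice in $\operatorname{Iso}^+(\mathbb{H}^n)$ normalised by a reflection maps onto a virtually free group — this is exactly the missing bridge from hyperbolic geometry to largeness, and it works in all dimensions, not just $3$. The rest of the proof is bookkeeping: the explicit isomorphism $\chi$ between $\SL_+(\Gamma_4(\R))$ and $\SL_2\left(\qa{-1}{-1}{\R}\right)$ carries $\con_n$ to a finite-index subgroup of the principal congruence subgroup $\mathcal{G}_n$ of $\SL_2(\Lambda^{-1}(\LL_{u,v}))$, and largeness is a commensurability invariant. You should discard the Betti-number strategy and instead look for the reflection: in concrete terms, conjugate the entries of a matrix in $\con_n$ by the Clifford involution $z\mapsto -z'$, observe it preserves the congruence condition, and quote Lubotzky.
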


Combining the aforementioned results in the literature with \Cref{main th large} yields the following. 

\begin{maincorollary}[\Cref{vQL for exceptional}]
Let $A$ be an exceptional matrix algebra, $\O$ an order in $A$ and $\Gamma$ a group commensurable with $\SL_1(\O)$. Then $\Gamma$ has a finite index subgroup mapping onto a non-abelian free group.
\end{maincorollary}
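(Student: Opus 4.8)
The plan is to reduce the statement to the three classes of exceptional matrix algebras and then invoke the appropriate largeness/$\vQL$ result in each case. First I would observe that, by definition of an exceptional matrix algebra (\Cref{def_exc_comps}), $A \cong \Ma_2(D)$ with $D \in \{\Q, \Q(\sqrt{-d}), \qa{-a}{-b}{\Q}\}$, and that any order $\O$ in $A$ is commensurable with $\Ma_2(\O')$ for $\O'$ a maximal order in $D$; since $\vQL$ (virtually having a non-abelian free quotient) is a commensurability invariant — as recalled in the excerpt, passing to a finite index subgroup or overgroup preserves the existence of a finite index subgroup with a free quotient — it suffices to produce \emph{one} finite index subgroup of $\SL_2(\O')$ (equivalently of $\SL_1(\O)$, which for $A = \Ma_2(D)$ is $\SL_2(\O')$ by \Cref{definition SL_1}) that maps onto a non-abelian free group. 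So the whole statement is about the three rings $\Z$, $\mc{I}_d$ (rings of integers of imaginary quadratic fields), and $\Z$-orders in totally definite rational quaternion algebras.

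Next I would dispatch each case. For $D = \Q$: $\SL_2(\Z)$ is virtually free (it is virtually $F_2$, e.g. the principal congruence subgroup $\Gamma(2)$ modulo its centre is free of rank $2$, or $\Gamma(3)$ is torsion-free of finite index and free), so it already has $\vQL$ on the nose. For $D = \Q(\sqrt{-d})$: this is exactly the Bianchi group situation; by Grunewald–Schwermer \cite{GS} (or the constructive refinement \cite[Theorem 3.7]{Lubot96} quoted above), $\SL_2(\mc{I}_d)$ has a torsion-free finite index subgroup which is large, hence has $\vQL$, and commensurability transports this to $\SL_2(\O)$ for any order $\O$ in $\Ma_2(\Q(\sqrt{-d}))$. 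For $D = \qa{-a}{-b}{\Q}$ a totally definite quaternion algebra with centre $\Q$: this is precisely the content of \Cref{main th large} — every torsion-free principal congruence subgroup of $\SL_2(\LL_{u,v})$ is large, hence has $\vQL$; and since the $\Z$-order $\LL_{u,v}$ with basis $\{1,i,j,k\}$ is commensurable with any order $\O$ in $D$, and $\SL_2(-)$ of commensurable orders gives commensurable groups, the group $\SL_1(\Ma_2(D)\text{-order})$ is commensurable with $\SL_2(\LL_{u,v})$ and thus has $\vQL$. In all three cases $\Gamma$, being commensurable with $\SL_1(\O)$ and hence with the relevant arithmetic group, inherits $\vQL$.

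The only genuinely substantive input is \Cref{main th large}; everything else is bookkeeping with the classification of exceptional matrix algebras and the commensurability-invariance of $\vQL$. Accordingly, I expect the main (and in fact only) obstacle to be already resolved by the time this corollary is stated — the corollary is a packaging result. The one point to be careful about in writing it up is to state cleanly that for an exceptional \emph{matrix} algebra $A = \Ma_2(D)$ one has $\SL_1(\O) \sim \SL_2(\O_D)$ for $\O_D$ a maximal order in $D$ (via the standard fact that maximal orders in $\Ma_2(D)$ are conjugate to $\Ma_2(\O_D)$ and unit groups of orders are commensurable, as noted in the footnote of the excerpt), so that the three literature/previous-theorem cases genuinely cover the group $\Gamma$ under consideration; no subtlety arises because $D$ itself is never an exceptional division algebra in these cases (indeed $\Q$ and $\Q(\sqrt{-d})$ are fields and $\qa{-a}{-b}{\Q}$ is a totally definite quaternion algebra over $\Q$, so none of them fall under \Cref{def_exc_comps}(1)), and the argument closes.
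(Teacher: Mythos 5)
Your proposal is correct and follows essentially the same route as the paper: split according to the three types of exceptional matrix algebras from \Cref{def_exc_comps}, handle $\Ma_2(\Q)$ via the virtual freeness of $\SL_2(\Z)$, handle $\Ma_2(\Q(\sqrt{-d}))$ via Grunewald--Schwermer/Lubotzky, handle $\Ma_2(\qa{-a}{-b}{\Q})$ via \Cref{infinite abelianization SL}, and transport everything through the commensurability invariance of $\vQL$ together with \Cref{SL order commensurable}. This is exactly the argument the paper records in the paragraph immediately preceding the corollary.
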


The proof of \Cref{main th large} is geometric and uses tools from \cite{Lubot96} and \cite{Kiefer}. More precisely, in \Cref{congr_subgroups_SL2} we relate the principal congruence subgroups of $\SL_2(\LL_{u,v})$ to congruence subgroups of certain groups of isometries of the hyperbolic $5$-space, denoted $\SL_+\left(\Gamma_4^{u,v}(\Z)\right)$. To do so, in \Cref{clifford_background} we recall how studying $2 \times 2$ matrix groups over orders in quaternion algebras all comes down to studying discrete subgroups $\SL_+\left(\Gamma_4(\Z)\right)$ of Vahlen groups, which are groups of $2 \times 2$ matrices with entries in a Clifford algebra. These groups were described in \cite{Vahlen}, where Vahlen generalised M\"obius transformations to higher-dimensional hyperbolic spaces. Discrete subgroups of the latter provide a natural generalisation of the modular group to higher dimensions. These generalised groups are denoted by $\SL_+(\Gamma_n(\Z))$ and referred to as \emph{higher modular groups}. \Cref{main th large} will be a consequence of the following statement on higher modular groups. In fact, the following was our motivation to consider largeness of congruence subgroups of $\SL_2(\LL_{u,v})$.

\begin{maintheorem}\label{cong higher modular}
    Let $u,v \in \Z_{< 0}$ and suppose that the torsion-free principal congruence subgroup $\con_n$ (see \eqref{def con}) of the higher modular group $\SL_+\left(\Gamma_4^{u,v}(\Z)\right)$ is torsion-free. Then $\con_n$ is large.
\end{maintheorem}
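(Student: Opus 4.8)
The plan is to recognise $\con_n$ as the fundamental group of a finite-volume cusped hyperbolic $5$-manifold, to produce inside a finite cover of that manifold an embedded, two-sided, non-separating, $\pi_1$-injective totally geodesic hypersurface, and then to invoke the ``hypersurface $\Rightarrow$ largeness'' machinery of \cite{Lubot96}. Since $u,v<0$ the algebra $\qa{u}{v}{\Q}$ is totally definite, so by the reduction recalled in \Cref{clifford_background} and \Cref{congr_subgroups_SL2} the higher modular group $\SL_+\left(\Gamma_4^{u,v}(\Z)\right)$ is a non-cocompact $\Q$-rank-$1$ arithmetic lattice acting by isometries on $\mathbb{H}^5$; concretely it is commensurable with the integral orthogonal group of the quadratic form $q=\langle -1,1,1,-u,-v,uv\rangle$, of signature $(5,1)$ exactly because $u,v<0$. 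As $\con_n$ is torsion-free of finite index, $M:=\mathbb{H}^5/\con_n$ is a finite-volume cusped hyperbolic $5$-manifold with $\pi_1(M)\cong\con_n$; note $b_1(M)>0$, the flat cusp cross-sections forcing a positive first Betti number (half lives, half dies). Since largeness is inherited both by and from finite-index subgroups, it suffices to prove that $\pi_1$ of some finite cover of $M$ is large.

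\emph{An arithmetic totally geodesic hypersurface.} The form $q$ is $\Q$-isotropic, and the orthogonal complement of a suitable integral vector of positive square carries the restricted form $q'=\langle -1,1,-u,-v,uv\rangle$, of signature $(4,1)$ and still $\Q$-isotropic. Hence the stabiliser $\Lambda$ of the corresponding totally geodesic $\mathbb{H}^4\subset\mathbb{H}^5$ is again a non-cocompact arithmetic lattice --- a lower-dimensional higher modular group --- and the reflection in this hyperplane lies in $\mathrm{O}(q;\Z)$. This is where the hypothesis $u,v<0$ is used most essentially. Intersecting $\Lambda$ with $\con_n$ produces an immersed, $\pi_1$-injective, finite-volume totally geodesic hypersurface in $M$.

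\emph{Embedded, non-separating, and conclusion.} By a standard congruence argument separating the finitely many ``bad'' double cosets of $\Lambda$, a deep enough congruence cover $M'\to M$ makes this hypersurface embedded and two-sided. Since every congruence cover is again cusped, and so has positive first Betti number, a homological argument along the congruence tower --- in the spirit of, and using tools from, the treatment of Bianchi groups in \cite[Theorem~3.7]{Lubot96} --- provides a further finite cover in which a component $\Sigma$ of the preimage is non-separating. Cutting along $\Sigma$ presents the corresponding fundamental group as an HNN extension over $\pi_1(\Sigma)$, of infinite index on each side (for instance because a cusp disjoint from $\Sigma$ survives on each side); the largeness criterion for such splittings from \cite{Lubot96} --- resting ultimately on the fact that the free product of any non-trivial group with $\Z$ is large --- then shows this group, hence $\con_n$, is large. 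Finally, transporting the conclusion back along \Cref{congr_subgroups_SL2} yields \Cref{main th large}, and combining it with the cases already settled in the literature gives \Cref{vQL for exceptional}.

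\emph{Main obstacle.} The delicate point is exhibiting a single finite (congruence) cover in which the arithmetic totally geodesic hypersurface is simultaneously embedded \emph{and} non-separating: this needs the explicit fundamental-domain geometry of $\SL_+\left(\Gamma_4^{u,v}(\Z)\right)$ in $\mathbb{H}^5$ from \cite{Kiefer} together with a careful bookkeeping of $\Z/2$-homology classes up the congruence tower, just as in the Bianchi case of \cite{Lubot96}.
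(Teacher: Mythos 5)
Your global strategy (produce a totally geodesic hypersurface in the hyperbolic $5$-orbifold, then appeal to Lubotzky's hypersurface-to-largeness machinery) is the right one, but you are working much harder than the paper does and leave a genuine gap exactly at the point you flag as the ``main obstacle.'' The paper's proof of \Cref{vFq for Con} never constructs an embedded hypersurface in a cover by hand. Instead it observes that the conjugation-type involution
\[
\sigma(z) \;=\; -z',
\]
which on $\mathbb{H}^5$ is the reflection in the hyperplane $\{z_0=0\}$, has the property that $\sigma M \sigma$ is obtained from $M=\begin{psmallmatrix}\alpha&\beta\\ \gamma&\delta\end{psmallmatrix}\in\con_n$ by replacing $(\alpha,\beta,\gamma,\delta)$ with $(\alpha',-\beta',-\gamma',\delta')$, which manifestly lies in $\operatorname{PCon}_n$ again. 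Thus the \emph{whole} congruence subgroup $\operatorname{PCon}_n$ is normalised by a reflection, and \cite[Corollary~3.6]{Lubot96} applies directly: a torsion-free, non-uniform lattice in $\operatorname{Isom}(\mathbb{H}^n)$ normalised by a reflection is large. This corollary already internalises the embedding and non-separation bookkeeping that you propose to redo from scratch.

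You actually notice that a reflection lies in the integral orthogonal group, but you only use it to produce an arithmetic hypersurface stabiliser $\Lambda$, not to normalise $\con_n$. Once you have a reflection $\rho\in \mathrm{O}(q;\Z)$, the principal congruence subgroup of level $n$ is automatically $\rho$-invariant (it is characteristic among level-$n$ congruence subgroups), so you could feed this directly into \cite[Corollary~3.6]{Lubot96} and be done. Instead, your route passes through: (i) a congruence cover making $\Sigma$ embedded, and (ii) a further cover making a component non-separating, justified only by ``a homological argument along the congruence tower.'' Step (ii) is not a routine deduction: a priori the $\Z/2$-class of $\Sigma$ could vanish in every congruence cover, and the Bianchi case in \cite[Theorem~3.7]{Lubot96} handles this precisely by exploiting the complex-conjugation reflection, which is the tool you are declining to use. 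As written, your argument therefore does not close. The fix is simple: promote the reflection you already found to the role the paper gives $\sigma$, and invoke \cite[Corollary~3.6]{Lubot96} without the cover-hunting.

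Two minor remarks. The observation $b_1(M)>0$ from ``half lives, half dies'' is true but plays no role: positive first Betti number is far weaker than largeness. And your final sentence on transporting the result back through \Cref{congr_subgroups_SL2} to get \Cref{main th large} is correct, but that is the deduction of Theorem~\ref{main th large} from Theorem~\ref{cong higher modular}, not part of the proof of the statement at hand.
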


\begin{remark*}
In \Cref{torsionfree,} it is shown that sufficiently deep (depending on $u$) principal congruence subgroups are torsion-free. Using the connection built in the proof of \Cref{infinite abelianization SL}, the latter also yields that there exists some $n$ such that $\con_n$ is torsion-free.
\end{remark*}

\begin{remark*}
Being large implies that the abelianisation is infinite. In particular, if a group $\Gamma$ has $\vQL$, then $\Gamma$ does not have property (FAb), i.e. not every finite index subgroup has finite abelianisation. This observation, along with \Cref{infinite abelianization SL} applied to $\qa{-2}{-5}{\Q}$, can be used to remove the condition $5 \nmid |G|$ in \cite[Theorem 6.10]{BJJKT2}.
\end{remark*}

\subsubsection*{A look at the congruence kernel}

Associated to groups $\Gamma$ such as $\SL_+(\Gamma_n(\Z))$ and $\SL_1(\O)$, there are two natural topologies. Firstly, the congruence topology, which has the congruence subgroups as a basis of neighbourhoods of the identity. A second is the profinite topology, which considers all finite index subgroups as a neighbourhood basis at the identity. Denote by $\widetilde{\Gamma}$ and $\widehat{\Gamma}$ the respective completions. The \emph{subgroup congruence problem} (\textbf{SCP}) asks whether all finite index subgroups contain a principal congruence subgroup. In other words, whether the natural epimorphism 
\[\pi_{\Gamma} \colon \widehat{\Gamma} \rightarrow \widetilde{\Gamma}\]
is injective. The kernel $C(\Gamma) := \ker(\pi_{\Gamma})$ is called the \emph{congruence kernel}. Serre proposed a quantitative version of (\textbf{SCP}) by asking to compute $C(\Gamma)$.

In \Cref{congruence kernel qa} we use \Cref{cong higher modular} (resp. \Cref{main th large}) to show that the congruence kernel of $\SL_+\left(\Gamma_4^{u,v}(\Z)\right)$ (resp. $\SL_1(\O)$) contains as a closed subgroup a copy of the free group of countable rank, see \Cref{congruence kernel over quaternion} and \Cref{congruence kernel higher modular}. In case of $\SL_2(\mc{I}_d)$ this was obtained in \cite[Theorem B]{Lubot82} and our proof uses the methods in \emph{loc. cit.}

\subsection{Applications to the unit group of a group ring}

The results obtained in \Cref{sectie vFQ} are applied in \Cref{subsection vFQ vsp} to the unit group of a group ring. 

As a first application we complement \Cref{main characterisation theorem} with a group theoretical characterisation of when a group algebra $FG$ has $\Mexc$. For this, we define the following class of groups:
\[\mc{G}_{\vQL} :=  \left\{ \Gamma \mid \Gamma \text{ is virtually indecomposable and has } \vQL  \right\} \sqcup \{ \text{ finite groups } \}.\]
Subsequently, consider the larger class  
\[\prod \mc{G}_{\vQL}  := \left\{ \prod_{i} G_i \mid G_i \text{ is finitely generated abelian or in } \mc{G}_{\vQL} \right\}.\]

\begin{maintheorem}[\Cref{Mexc versus vFQ}]
  Let $G$ be a finite group, $F$ a number field and $R$ its ring of integers. Then the following are equivalent:
\begin{enumerate}
    \item $\U(RG)$ is virtually-$\prod \mc{G}_{\vQL}$.
    \item $FG$ has $\Mexc$ and no exceptional division components.
\end{enumerate}
Moreover, $FG$ has $\Mexc$ if and only if $\SL_1(RGe)$ has $\vQL$ for all $e  \in  \PCI(FG)$ such that $FGe$ is not a division algebra.
\end{maintheorem}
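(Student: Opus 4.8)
The plan is to reduce the whole statement to the Wedderburn--Artin decomposition and to the question of which $\SL_1$'s of the resulting orders have $\vQL$. Write $FG=\prod_{e\in\PCI(FG)}FGe$ with $FGe\cong\Ma_{n_e}(D_e)$, fix maximal orders $\O_e\subseteq D_e$, and let $r_e$ be the unit rank of the ring of integers of $Z(D_e)$. Since the unit group of an order is commensurable with the product of the unit groups of its Wedderburn blocks, and the reduced norm exhibits $\U(RGe)$ as a virtual central extension of $\SL_{n_e}(\O_e)$ by $\Z^{r_e}$, we obtain a commensurability $\U(RG)\sim\big(\prod_e\SL_{n_e}(\O_e)\big)\times\Z^{R}$ with $R=\sum_e r_e$. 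After passing to a finite index subgroup each infinite factor $\SL_{n_e}(\O_e)$ is a centre-free irreducible $S$-arithmetic lattice, hence virtually indecomposable, so $\U(RG)$ being (virtually-)$\prod\mc{G}_{\vQL}$ should be equivalent to each such factor lying in $\mc{G}_{\vQL}$, i.e.\ having $\vQL$.

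The key dichotomy (and the entire ``moreover'' clause) comes from the following step: if $FGe=\Ma_2(D_e)$ is an exceptional matrix algebra then $\SL_2(\O_e)$ is large — for $D_e=\Q$ it is virtually free ($\GL_2(\Z)$), for $D_e=\Q(\sqrt{-d})$ this is Grunewald--Schwermer \cite{GS} refined by \cite{Lubot96}, and for $D_e=\qa{-a}{-b}{\Q}$ it is \Cref{main th large} — hence $\SL_2(\O_e)$ has $\vQL$; in every remaining case with $n_e\geq2$, and for an exceptional division component $FGe=D_e$, the lattice $\SL_{n_e}(\O_e)$ has $S$-rank at least $2$ (by \cite[Remark 6.7]{BJJKT} when $n_e\geq2$, and by the classification of exceptional division components in \Cref{section description components} in the remaining case), hence by the Margulis normal subgroup theorem it has property (FAb) together with all of its finite index subgroups, and therefore cannot have $\vQL$. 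Consequently $\SL_{n_e}(\O_e)$ has $\vQL$ exactly when $FGe$ is an exceptional matrix algebra (the leftover case $n_e=1$ with $D_e$ a field or a totally definite quaternion algebra gives a finite $\SL_1$). Restricting this to the $e$ with $n_e\geq2$ is precisely the ``moreover'' statement, and it gives $(2)\Rightarrow(1)$ at once: under (2) every $FGe$ is a field, a totally definite quaternion algebra, or an exceptional matrix algebra, so each infinite $\SL_{n_e}(\O_e)$ is large, hence in $\mc{G}_{\vQL}$, and $\U(RG)$ is commensurable with a member of $\prod\mc{G}_{\vQL}$ (the classes involved being stable under commensurability).

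For $(1)\Rightarrow(2)$ I argue by contraposition. If (2) fails then $FG$ has a component $FGe_0$ that is either non-exceptional with $n_{e_0}\geq2$, or exceptional division (in which case $FG$ has $\Mexc$, so \Cref{section description components} applies); either way $\Lambda_0:=\SL_{n_{e_0}}(\O_{e_0})$ is infinite, virtually indecomposable, and has (FAb), and $\U(RG)\sim\Lambda_0\times M$ with $M$ finitely generated. Assuming $\U(RG)$ were virtually-$\prod\mc{G}_{\vQL}$, peeling off the amenable (by the reduction, the abelian) radical from both descriptions reduces the problem to a commensurability between two direct products of ICC groups — the $\PSL_{n_e}(\O_e)$ on one side and the centre-free infinite $H_j$'s (the members of $\mc{G}_{\vQL}$ appearing) on the other — and a Krull--Schmidt-type uniqueness up to commensurability for products of ICC groups would force a finite index subgroup of $\Lambda_0$ to be commensurable with some $H_j$, hence to have $\vQL$, contradicting (FAb) of $\Lambda_0$. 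The main obstacle is exactly this last uniqueness statement: one must show that the product splitting of $\U(RG)$ coming from the Wedderburn decomposition is, up to commensurability, the only way to realise it as a product of abelian groups and virtually indecomposable $\vQL$-groups — equivalently, that ``having an infinite, virtually indecomposable direct factor with property (FAb)'' is a commensurability invariant. Pinning down the correct intrinsic subgroups (an amenable/FC radical to extract the abelian part, then the individual lattice factors via Margulis' normal subgroup theorem and the ICC property) is where the real work lies; the rest is bookkeeping on Wedderburn components together with the input of \Cref{main th large}.
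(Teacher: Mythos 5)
Your overall strategy is the right one and matches the paper's: reduce $\U(RG)$ via the Wedderburn decomposition and reduced norms to a commensurability with $\prod_e \SL_{n_e}(\O_e)\times\Z^{R}$, then characterise which $\SL_{n_e}(\O_e)$ have $\vQL$ --- largeness for the three families of exceptional matrix orders (using \Cref{infinite abelianization SL} for the quaternionic case), and property (FAb) via Margulis in every other infinite case, including the specific exceptional division components permitted by $\Mexc$. This correctly yields the ``moreover'' clause and the implication $(2)\Rightarrow(1)$, and your bookkeeping on components is essentially the paper's.

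The genuine gap is exactly where you flag it, in $(1)\Rightarrow(2)$. You need: if $\prod_{i}\SL_{n_i}(\O_i)\times\Z^{R}$ is virtually-$\prod\mc{G}_{\vQL}$, then each infinite $\SL_{n_i}(\O_i)$ is itself virtually in $\mc{G}_{\vQL}$. You invoke ``a Krull--Schmidt-type uniqueness up to commensurability for products of ICC groups'' but do not prove it, cite it, or verify its hypotheses. As stated it is not available to you: members of $\mc{G}_{\vQL}$ are only assumed virtually indecomposable and having $\vQL$, which does \emph{not} make them ICC or centre-free; and even after peeling off an amenable radical, a general commensurability Krull--Schmidt theorem for arbitrary products of indecomposable groups is not a black box (one has to use the structure of the factors). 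So your ``(1)$\Rightarrow$(2)'' is left at the level of a plausible programme, not a proof.

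The paper closes this gap with a bespoke, more elementary argument (\Cref{class nicely closed}), which establishes the needed direct-factor closure of the class virtually-$\prod\mc{G}_{\vQL}$. It does not go through ICC or amenable radicals at all. Instead: pass to a finite index subgroup $H=\Z^{\ell}\times\prod_{x\in X}H_x$ with each $H_x$ mapping onto a non-abelian free group; set $S_i:=\Gamma_i\cap H$ and consider the projections $\pi_x(S_i)$ onto the $H_x$. Using that $S_i$ commutes with $\prod_{j\neq i}S_j\times B$ and that non-abelian free quotients have trivial centre, one shows that whenever $\pi_x(S_i)$ is infinite, the complementary projection $\pi_x(\prod_{j\neq i}S_j\times B)$ is finite; combined with virtual indecomposability of $S_i$ this forces $S_i$ to be virtually identified with a single $H_{x_0}$, so $S_i\in\mc{G}_{\vQL}$. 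Then \Cref{virutally G-am imply special components} is applied. If you want to complete your argument, the cleanest route is to drop the ICC/Krull--Schmidt machinery and prove this direct-factor closure directly along these lines; alternatively you would need to state and prove the precise decomposition result you are invoking, including verifying that the relevant factors really are ICC, which the paper deliberately avoids.
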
\smallskip

The second application concerns the congruence kernel $C_G$ of the unit group of an integral group ring $\U(\Z G)$. The latter has been investigated in \cite{CdR1, CdR2, CdRZ}. In \cite{CdR1, CdR2} several families of finite groups $G$ are described with the property that if $C_G$ is infinite, then $G$ belongs to one of these families.  Under the assumption that $\Q G$ has no exceptional division components, an upper-bound on the virtual cohomological dimension of $C_G$ is obtained in \cite[Theorem 5.1]{CdRZ}. We contribute towards bounding from below, by showing that whenever $\Q G$ has an exceptional matrix component, then the congruence kernel contains a profinite free group of countable rank $\widehat{F_\omega}$. This is a direct consequence of \cite[Theorem B]{Lubot82}, \Cref{congruence kernel over quaternion} and the well-behavedness of the congruence kernel with direct products.

\begin{maincorollary}[\Cref{congruence kernel with excep component}]
    Let $G$ be a finite group. Then the following hold:
    \begin{enumerate}
        \item If $\Q G$ has an exceptional matrix algebra, then $C_{G}$ contains $\widehat{F_{\omega}}$,
        \item If $\Q G$ has $\Mexc$, then $C_{G}$ contains $\prod_{e \in \PCI_{\neq 1}} \widehat{F_{\omega}}$,
    \end{enumerate}
where $\PCI_{\neq 1} := \{ e \in \PCI(\Q G) \mid \Q Ge \text{ is not a division algebra}\}$.
\end{maincorollary}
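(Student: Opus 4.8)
My plan is to reduce the statement to the level of the simple Wedderburn components of $\Q G$ and then feed in the congruence kernel computations available for the three types of exceptional matrix algebra. Writing $\Q G \cong \prod_{e \in \PCI(\Q G)} \Q Ge$ and fixing for each $e$ a maximal order $\O_e = \Ma_{n_e}(\Delta_e)$ in $\Q Ge \cong \Ma_{n_e}(D_e)$ (with $\Delta_e$ a maximal order in $D_e$), I would first record that $\U(\Z G)$ has finite index in $\prod_e \U(\O_e)$ and that its congruence topology agrees, up to commensurability, with the one inherited from the product. By the well-behavedness of the congruence kernel under direct products and commensurability (cf. \cite{CdRZ}), $C_G$ is then commensurable as a profinite group with $C\!\left(\prod_e \U(\O_e)\right) = \prod_e C(\U(\O_e))$, the last identity holding because reduction modulo $m$ on a product order is the product of the componentwise reductions. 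Since a closed copy of $\widehat{F_\omega}$ — or a closed internal direct product of several such copies living in pairwise distinct direct factors — persists under passage to a commensurable profinite group (using that open subgroups of free profinite groups of countable rank are again of that kind), it will suffice to show that $C(\U(\O_e))$ contains a closed $\widehat{F_\omega}$ whenever $\Q Ge$ is an exceptional matrix algebra.

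Next I would treat such a component, so $\Q Ge \cong \Ma_2(D)$ with $D \in \{\Q, \Q(\sqrt{-d}), \qa{-a}{-b}{\Q}\}$ and $\U(\O_e) = \GL_2(\Delta_e)$. In all three cases $\SL_1(\O_e)$ has finite index in $\GL_2(\Delta_e)$: for commutative $D$ the reduced norm identifies the quotient with a subgroup of the finite group $\Delta_e^\times$, and for $D$ a totally definite quaternion algebra over $\Q$ the reduced norm of a unit of the maximal order $\Ma_2(\Delta_e)$ lies in $\Z$ and is positive, hence equals $1$, so the quotient is trivial. Consequently $C(\SL_1(\O_e))$ is open, hence closed, in $C(\GL_2(\Delta_e))$, and it is enough to locate a closed $\widehat{F_\omega}$ inside $C(\SL_1(\O_e))$. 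For $\Delta_e = \Z$ this is the classical fact that $C(\SL_2(\Z))$ is free profinite of countable rank; for $\Delta_e = \mc{I}_d$ it is \cite[Theorem B]{Lubot82}; and for $\Delta_e$ a maximal order in $\qa{-a}{-b}{\Q}$ it is exactly \Cref{congruence kernel over quaternion}, whose proof goes through the geometric largeness result \Cref{main th large}. This already yields part (1), since a single exceptional matrix component forces a closed $\widehat{F_\omega}$ into $C_G$.

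For part (2), under $\Mexc$ every $e \in \PCI_{\neq 1}$ gives an exceptional matrix component, hence by the previous step a closed subgroup $A_e \cong \widehat{F_\omega}$ of the corresponding direct factor $C(\U(\O_e))$ of $\prod_{e \in \PCI(\Q G)} C(\U(\O_e))$. As the $A_e$ lie in pairwise distinct factors, they commute and intersect trivially, so they generate the closed internal direct product $\prod_{e \in \PCI_{\neq 1}} A_e \cong \prod_{e \in \PCI_{\neq 1}} \widehat{F_\omega}$. Intersecting with a common open subgroup realising the commensurability with $C_G$, each intersection with $A_e$ remains open in $A_e$, hence isomorphic to $\widehat{F_\omega}$, and these again commute and meet trivially, so together they span a closed subgroup of $C_G$ isomorphic to $\prod_{e \in \PCI_{\neq 1}} \widehat{F_\omega}$, as required.

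The genuinely hard work sits outside this argument: it is imported via \Cref{congruence kernel over quaternion} (which rests on the geometric largeness theorem \Cref{main th large}), Lubotzky's theorem, and the classical case of $\SL_2(\Z)$. The main point demanding care here is the bookkeeping of congruence structures under the Wedderburn decomposition — making sure $C_G$ really is commensurable with $\prod_e C(\U(\O_e))$ — together with the routine profinite-group facts used throughout (Nielsen--Schreier for free profinite groups, and the stability of closed direct products of free profinite groups under passing to open subgroups). I would expect this to be the only mild obstacle.
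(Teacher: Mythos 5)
Your proof is correct and takes essentially the paper's own approach: the authors present the corollary precisely as a direct consequence of \cite[Theorem B]{Lubot82}, \Cref{congruence kernel over quaternion}, and compatibility of the congruence kernel with the Wedderburn product, and you have filled in the commensurability bookkeeping that they leave implicit. One small wrinkle worth fixing: the paper defines $C_G$ as the congruence kernel of $\SL_1(\Z G)$ (not of $\U(\Z G)$), so your opening reduction should be run with $\SL_1(\Z G)$ commensurable with $\prod_e \SL_1(\Z Ge)$; since you descend to $\SL_1(\O_e)$ in the component-wise step anyway, this only affects the phrasing of the first paragraph and not the substance of the argument.
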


\subsection{The blockwise subgroup isomorphism and Zassenhaus property}

Recall that $\U (\Z G) = \pm 1 \cdot V(\Z G)$ where $V (\Z G)$ is the set of units in $\U(\Z G)$ of augmentation $1$, called the \emph{normalised unit group}. Three conjectures concerning finite subgroups $H$ of $V(\Z G)$ were attributed to Zassenhaus \cite{Zass,SehAntwerp}. Namely, consider the following sets: 
    \begin{align*}
        \mc{G}_1 & := \left\{ H \leqslant V(\Z G) \mid H \text{ cyclic and finite}\right\}, \\
        \mc{G}_2 & := \left\{ H \leqslant V(\Z G)  \mid \abs{H} = \abs{G}\right\}, \\
        \mc{G}_3 & := \left\{ H \leqslant V(\Z G) \mid H \text{ finite} \right\}.
    \end{align*}
The \emph{$i$\textsuperscript{th} Zassenhaus conjecture} predicts that if $H \in \mc{G}_i$, then $H$ is $\Q G$-conjugated to a subgroup $K_{H}$ of $G$. Each of these three conjectures have been disproven \cite{Scott, Rogg, EisMar}, transforming them into the question for which $H$ and $G$ the above conjugation property holds. The above conjectures, but where conjugation over $\Q G$ is weakened to the existence of an abstract isomorphism between $H$ and some $K_H \leqslant G$, are called the \emph{subgroup isomorphism problem}, see \cite{Margolis,PdRV} and references therein. \smallskip

The aim of \Cref{block zass and iso} is to study a blockwise variant of the Zassenhaus conjectures, i.e. investigating whether the image of $H$ under any irreducible $\Q$-representation $\rho$ of $G$ is $\rho(\Q G)$-conjugated to a subgroup of $\rho(G)$. Similarly, we also consider a blockwise subgroup isomorphism problem. Note that these problems can be investigated more generally. Therefore in \Cref{Section what is block Zassenhaus} we define the Zassenhaus and subgroup isomorphism property for general finite-dimensional semisimple $F$-algebras. The subgroup isomorphism property for the set $\mc{G}_2$ and twisted group rings has recently also received considerable attention, see \cite{MaSc, MaSc2, MaSc3, HKSa, HKSi}.\smallskip

The rest of the section is devoted to the investigation of the Zassenhaus property for exceptional matrix algebras. As a first main result of independent interest, we obtain in \Cref{sectioin sbgrps excetpional} a precise description of the conjugacy classes of finite subgroups of the unit group of an exceptional matrix algebra $A$. If $\O$ is an order in $A$, the latter description allows to say which finite subgroups are $A$-conjugated to a subgroup of $\U(\O)$ which spans $A$ over $\Q$. We refer to \Cref{all in a spanning} for a precise statement.\smallskip

Another main result is the following, stating that for certain irreducible $\Q$-representations $\rho$ of exceptional type, one is always able to $\rho(\Q G)$-conjugate $\rho(H)$ into $\rho(G)$.

\begin{maintheorem}[\Cref{Zass prop for some excep components}]\label{zass prop for exc}
Let $G$ be a finite and $e \in \PCI(\Q G)$ such that $\Q Ge$ is either a field, a quaternion algebra or isomorphic to $\Ma_2(\Q(\sqrt{-d}))$ with $d\neq 3$. If $H$ is a finite subgroup of $V(\Z G)$, then $(He)^{\alpha} \leq Ge$ for some $\alpha \in \Q Ge$.
\end{maintheorem}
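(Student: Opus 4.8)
The plan is to reduce \Cref{zass prop for exc} to known results on the first Zassenhaus conjecture (ZC1) for the relevant concrete algebras, and then to exploit the very special shape of the exceptional matrix components of $\Q G$. Write $A := \Q Ge$ and let $\rho\colon \Q G \to A$ be the corresponding central projection; let $H \leq V(\Z G)$ be finite, so $H' := \rho(H)$ is a finite subgroup of $\U(A)$ of order dividing $|H|$. The claim is that $H'$ is $A$-conjugate to a subgroup of $\rho(G)$. The three cases—$A$ a field, $A$ a totally definite quaternion algebra over a number field, and $A \cong \Ma_2(\Q(\sqrt{-d}))$ with $d \neq 3$—are genuinely different and I would treat them separately.

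First, the easy case: if $A = \Q Ge$ is a field, then $\U(A)$ is abelian, so $\rho(H)$ is a finite abelian (in fact cyclic-by-finite) group of roots of unity in a number field; conjugation is vacuous and one only needs that $\rho(H) \leq \rho(G)$, which follows because every finite subgroup of $V(\Z G)$ has, under a one-dimensional representation, image generated by roots of unity that already occur as eigenvalues of group elements—more carefully, one invokes that in an abelian component the blockwise statement is equivalent to a statement about the $\Q$-span, handled by the machinery of \Cref{all in a spanning} / the spanning results cited just before the theorem. Second, the quaternion case: here $\SL_1(\mathcal O)$ is finite (this is the non-exceptional division algebra situation), so $\U(A)$ has bounded finite subgroups; a finite subgroup of a totally definite quaternion algebra over a number field is metacyclic or one of a short explicit list (binary dihedral/tetrahedral/octahedral/icosahedral), and up to $A$-conjugacy such subgroups are rigid. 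One then matches $\rho(H)$ against $\rho(G)$ using character-theoretic rationality constraints coming from $H \leq V(\Z G)$ (the partial augmentations of $H$ vanish off conjugacy classes of $G$ of the right order, by the Berman–Higman type arguments underlying ZC1), concluding $\rho(H)$ is conjugate into $\rho(G)$.

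The main case, and the main obstacle, is $A \cong \Ma_2(\Q(\sqrt{-d}))$ with $d \neq 3$: this is a genuine exceptional matrix component, $\SL_2(\mathcal O)$ is infinite with unipotents, and finite subgroups of $\GL_2(\Q(\sqrt{-d}))$ are classified (cyclic, dihedral, and the binary polyhedral groups, with the tetrahedral/$A_4$-type groups being exactly the ones that force $\sqrt{-3}$, which is why $d=3$ is excluded) but not rigid up to conjugacy in general—there can be non-conjugate embeddings of the same abstract group. The strategy here is: (1) classify the possible isomorphism types of $H' = \rho(H)$ and show that for $d \neq 3$ each occurs with a \emph{unique} $\Ma_2(\Q(\sqrt{-d}))$-conjugacy class, using that the only obstruction to uniqueness is a Schur-index / splitting-field phenomenon that the exclusion $d \ne 3$ removes; (2) independently show that $\rho(G)$ contains a subgroup abstractly isomorphic to $H'$, which is where one uses that $H \le V(\Z G)$: the partial augmentation vanishing (Berman–Higman), together with the fact that $\Q Ge$ being a $2\times 2$ matrix block forces the constituent characters of $\rho$ to be realised by actual elements of $G$, gives a subgroup $K \le G$ with $\rho(K) \cong H'$; (3) combine (1) and (2): $\rho(H)$ and $\rho(K)$ are isomorphic finite subgroups of $\Ma_2(\Q(\sqrt{-d}))$ realising the same rational character, hence by the uniqueness in (1) they are $A$-conjugate, giving $(He)^\alpha \le Ge$. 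I expect step (1)—proving rigidity of finite subgroups of $\Ma_2(\Q(\sqrt{-d}))$ up to conjugacy once $d \ne 3$—to be the crux; it should follow from Skolem–Noether applied to the (split) algebra together with a careful Galois-cohomological bookkeeping of the possible twists of a fixed embedding, all of which vanish precisely because $\Q(\sqrt{-d})$ for $d \ne 3$ contains no primitive cube roots of unity and the relevant polyhedral groups have all their irreducible characters already defined over $\Q(\sqrt{-d})$.
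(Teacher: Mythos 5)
Your case split (field / quaternion / $\Ma_2(\Q(\sqrt{-d}))$) and your use of Skolem--Noether to upgrade an abstract isomorphism to a conjugation over the algebra are correct and match the paper's \Cref{upgrade iso to conjugation} and \Cref{iso implies conj for exc}. However, the route you take through the heart of the argument has real gaps, and the paper takes a rather different and more hands-on path.

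In the quaternion case, invoking only ``Berman--Higman type arguments underlying ZC1'' is not enough: ZC1 concerns \emph{cyclic} subgroups, while the theorem allows arbitrary finite $H$. The paper instead uses that the spanning subgroups $Ge$ are in the short list $Q_{4m}$, $\SL_2(\F_3)$, $\SU_2(\F_3)$, $\SL_2(\F_5)$, invokes the known proofs of the \emph{third} Zassenhaus conjecture for $Q_{4m}$, $\SU_2(\F_3)$ and $\SL_2(\F_5)$ (and a separate unique-maximal-order argument for $\SL_2(\F_3)$), and crucially transfers these results to the block via the factorisation $\pi_e = \sigma_e \circ \Phi_e$ through the group ring $\Q[Ge]$ (\eqref{middle quotient}); this factorisation is the mechanism that makes the reduction work and is absent from your proposal.

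The deeper gap is in step (2) of your plan for the $\Ma_2(\Q(\sqrt{-d}))$ case. The claim that partial augmentations ``force the constituent characters of $\rho$ to be realised by actual elements of $G$'' and thereby produce a subgroup $K \leq G$ with $\rho(K) \cong H'$ is not a valid inference: partial augmentations constrain traces and conjugacy classes of individual elements, but producing an abstractly isomorphic \emph{subgroup} is the (open in general) subgroup isomorphism problem. The paper gets around this by classifying the finite subgroups of $\GL_2(\O)$ explicitly, via amalgamated product decompositions of $\GL_2(\Z)$, $\GL_2(\Z[\sqrt{-1}])$ and $\GL_2(\mc{I}_2)$ and Bass--Serre theory, and then verifying case by case (using Whitcomb's theorem, known ZC1/ZC3 instances for small metacyclic and nilpotent groups, and the Frobenius-complement result of \cite{PdRV}) that each possible $He$ does land in some spanning group. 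Finally, your explanation of why $d = 3$ is excluded is wrong: the Skolem--Noether rigidity you emphasise holds just as well for $d = 3$ (and is used in \Cref{iso implies conj for exc} for all exceptional components). The exclusion of $d = 3$ is instead because the paper has not completed the subgroup-isomorphism case analysis for the larger spanning group $(C_6 \times C_6) \rtimes C_2$ and its relatives; the authors explicitly say they expect the $d = 3$ case to follow from the methods of \Cref{sectioin sbgrps excetpional} and \cite{PdRV}, not from any splitting-field obstruction.
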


We expect that combining the results from \Cref{sectioin sbgrps excetpional} and the methods from \cite{PdRV}, one could obtain \Cref{zass prop for exc} for any exceptional algebra.

We would also like to mention that along the way we formulate several questions which arise naturally:  \Cref{no first iso but block yes}, \Cref{block ISO}, \Cref{iso for faith irr} and \Cref{block spectrum problem}.

\subsection{The Virtual structure problem and rational isomorphism problem for exceptional components} 

In the sequel to this paper, \cite{CJT}, we will investigate the virtual structure problem and rational isomorphism problem for group algebras $FG$ having $\Mexc$.

Recall that in \cite{KleinertSurvey}, Kleinert formulated formally the problem to obtain unit theorems for the unit group of orders $\O$ in finite-dimensional semisimple algebras $A$. More precisely, the problem concerns finding classes of groups $\mc{G}(A)$ such that most of the generic unit groups of $A$ are contained in $\mc{G}(A)$. Formulated a bit differently, one could consider a class of groups $\mc{G}$ and classify all semisimple algebras $A$ such that $A$ has a unit theorem for $\mc{G}$. Now if we restrict ourselves to group algebras, then the simple factors are related to each other via the representation theory of the group basis. As such the following problem was distilled in \cite{JesRioReine}.

\begin{problem*}[Virtual Structure Problem]
Let $\mathcal{G}$ be a class of groups. Classify the finite groups $G$ such that $\U (\Z G)$ has a subgroup of finite index lying in $\mathcal{G}$.
\end{problem*}

For a historical overview on the virtual structure problem we refer the reader to the introduction of \cite{J1}. 

A first main result resolves the virtual structure problem for the class of $\Mexc$ groups. This result completes a line of research started $30$ years ago with among others the papers \cite{JesLealRio, JesRioReine, PitaRioRuiz, FreebyFree}. The most general result until now concerned the finite groups $G$ such that $\Q G$ has the stronger property that
\begin{enumerate}[(i)]
    \item  all matrix components are of the form $\Ma_2(\Q(\sqrt{-d}))$ with $d \in \N$ ,
    \item and $\Q G$ has no exceptional division components,
\end{enumerate}
which were classified in \cite{JesRioReine, FreebyFree}. Considering property $\Mexc$ generalises their considerations in two ways. Anecdotally, the \textsf{MathSciNet} review  of \cite{JesRioReine} states that they obtain ``the most far-reaching result possible'', which indeed looked like to be so for now almost 20 years. We will prove the following.

\begin{maintheorem}\label{charc Mexc grps}
Let $G$ be a finite group and $F$ a number field such that $FG$ has $\Mexc$. Then the following holds:
\begin{enumerate}
    \item $G \cong C_3^k \rtimes Q$ with $Q$ a $2$-group of nilpotency class at most $3$,
    \item $G$ has an abelian normal subgroup $B$ such that $G/B\cong C_2$ or $C_2 \times C_2$,
    \item $[G:\Fit(G)] \leq 2$,
    \item if $Q$ is non-abelian, then $FG$ has no exceptional division components.
\end{enumerate}
Moreover, a precise description of the action of $Q$ on $C_3^k$ and the isomorphism class of $Q$ is obtained. 
\end{maintheorem}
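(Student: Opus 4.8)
The plan is to translate $\Mexc$ into constraints on the irreducible complex characters of $G$ and then to carry out a structural analysis of the (few) finite groups meeting those constraints, following and extending the strategy of \cite{JesRioCrelle,FreebyFree}. By Wedderburn--Artin and the theory of Schur indices, each simple component of $\Q G$ is $\Ma_m(\Delta)$ with $\mathcal{Z}(\Delta)=\Q(\chi)$ for the associated $\chi\in\Irr(G)$ and $\chi(1)=m\,m_\Q(\chi)$, where $m_\Q(\chi)$ is the Schur index; since extending scalars from $\Q$ to $F$ never decreases matrix degrees, $\Mexc$ for $FG$ forces $\Mexc$ for $\Q G$ and the absence of any component of matrix degree $\geq 3$. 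Hence $\chi(1)\leq 4$ for all $\chi$, every component with $m=2$ has $\Q(\chi)$ equal to $\Q$ or an imaginary quadratic field and $m_\Q(\chi)\leq 2$, the quaternion division components are totally definite over totally real fields (so non-exceptional), and the remaining non-commutative division components lie on the short list of exceptional division algebras that can occur in a group algebra with $\Mexc$ --- this list, together with the bound $[F:\Q]\leq 2$, being exactly what is extracted in \Cref{section description components}. From here on we may work over $\Q$ and assume $G$ is non-abelian.

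The second phase establishes the $\{2,3\}$-structure. Since $\Mexc$ passes to quotients ($\Q(G/N)$ being a quotient algebra of $\Q G$) and the rational group algebra of every minimal non-solvable configuration contains a component of matrix degree $\geq 3$ (already $\Q A_5$ does), $G$ is solvable, as in the predecessors of \cite{FreebyFree}. A solvable counterexample in which some prime $p\geq 5$ divides $|G|$ would, after passing to an appropriate $\{p\}$- or $\{p,q\}$-subquotient, yield either a component of matrix degree $\geq 3$ or a component $\Ma_2(K)$ with $K$ a subfield of a $p$-power cyclotomic field that is neither $\Q$ nor imaginary quadratic (typically a non-trivial real field, since a degree-$2$ character faithful on a cyclic $p$-group has real character field when $p>3$); this forces $|G|=2^a3^b$. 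Now \Cref{main characterisation theorem} gives $[G:\Fit(G)]\leq 2$ (item~(3)) and $\Fit(G)$ of class $\leq 3$; since $\Fit(G)=O_2(G)\times O_3(G)$ with $[G:\Fit(G)]$ a power of $2$, the subgroup $O_3(G)$ is the normal Sylow $3$-subgroup, hence by Schur--Zassenhaus $G=O_3(G)\rtimes Q$ with $Q$ a Sylow $2$-subgroup. A non-abelian $3$-group has an irreducible character of degree $\geq 3$, which (together with the restrictions on admissible division components) is impossible, so $O_3(G)$ is abelian; and an element of order $9$ in $O_3(G)$ would, via the inverting action of a $2$-element or via a central $C_9$ tensored against a matrix component, again produce a forbidden component, so $O_3(G)=C_3^k$. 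This is item~(1) apart from the class bound on $Q$ and the description of the action.

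The third and main phase is the exact classification of the admissible pairs $(Q,\varphi)$, where $\varphi\colon Q\to\GL_k(\F_3)$ is the action. As $G/C_3^k\cong Q$, the $2$-group $Q$ must have $\Q Q$ satisfying $\Mexc$; running through the $2$-groups of class $\leq 3$ --- using that for $2$-groups $m_\Q(\chi)\leq 2$ and $\Q(\chi)$ is a subfield of a $2$-power cyclotomic field, and that an $\Ma_2(K)$ with $K$ real quadratic (as in $D_{16}$, and hence as a quotient in every $2$-group of maximal class of order $\geq 32$) is forbidden --- produces a short explicit list of possible $Q$ and in particular forces the class of $Q$ to be $\leq 3$, completing item~(1). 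The components of $\Q G$ on which $C_3^k$ acts non-trivially are, by Clifford theory, Morita equivalent to twisted group algebras $\Q^{\alpha}[Q_\lambda]$ over $\Q(\zeta_3)$ or $\Q$, with $Q_\lambda$ the stabiliser in $Q$ of the corresponding $\F_3$-character $\lambda$ and $\alpha$ the relevant $2$-cocycle; demanding that each of these be exceptional means that every $Q$-orbit on $\widehat{C_3^k}\setminus\{1\}$ has stabiliser of index $1,2,3$ or $4$ of a prescribed type (in particular $C_2\times C_2$ cannot act with a free orbit, which would give an $\Ma_4$) and that the attached twisted group algebra is $\Q$, $\Q(\sqrt{-3})$ or a definite quaternion algebra over $\Q$. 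Unwinding this pins down $\varphi$ and $Q$; and since a definite quaternion contribution on the $C_3^k$-side can only occur for abelian $Q$, it simultaneously yields item~(4). Item~(2) then follows by exhibiting, in each case on the explicit list, an abelian normal subgroup $B$ (typically $C_3^k$ together with the part of $Q$ acting trivially) with $G/B\cong C_2$ or $C_2\times C_2$.

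The hard part is this last phase: deciding, for each of the finitely many admissible $2$-groups $Q$, exactly which actions on $C_3^k$ keep \emph{all} of the twisted group algebras $\Q^{\alpha}[Q_\lambda]$ on the exceptional list. This amounts to a careful determination of the fields of character values and the Schur indices over $\Q$ of the groups $C_3^k\rtimes Q$, and it is precisely this computation that outputs the ``precise description of the action of $Q$ on $C_3^k$ and the isomorphism class of $Q$''; the most delicate point is separating the abelian-$Q$ case --- where definite quaternion components, and hence the full exceptional list, genuinely appear --- from the non-abelian case, for which item~(4) has to be proved hand in hand with the classification.
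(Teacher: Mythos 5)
The paper itself does not prove \Cref{charc Mexc grps}: it is announced in the introduction and explicitly deferred to the companion paper \cite{CJT}, so there is no proof in this paper against which to compare your proposal line by line. What this paper does establish is a collection of ingredients --- \Cref{remark on larger field} (reduction to $F=\Q$ or $F$ imaginary quadratic), \Cref{broad form finite grps} (the explicit lists of allowed division and exceptional matrix components, the bound $\cd(G)\subseteq\{1,2,4\}$, and the criterion $[Ge:\Fit(Ge)]\leq 2$ with class $\leq 3$ for each non-division idempotent $e$), \Cref{cubic root are not}, and the tables in \Cref{appendix section} built on Amitsur's and Eisele--Kiefer--Van Gelder's classifications. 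Your plan is broadly compatible with this framing, and the first phase (scalar reduction, $\cd(G)\subseteq\{1,2,4\}$, division components) is essentially a restatement of what \Cref{remark on larger field} and \Cref{broad form finite grps} already prove.

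There is, however, a genuine logical gap at the hinge of your argument. You invoke ``\Cref{main characterisation theorem} gives $[G:\Fit(G)]\leq 2$ (item~(3)) and $\Fit(G)$ of class $\leq 3$'' and then build the whole structural analysis (Schur--Zassenhaus for $G = O_3(G)\rtimes Q$, $O_3(G)$ the full Sylow $3$, etc.) on top of it. But item (iii) of \Cref{main characterisation theorem} is exactly \Cref{final theorem dis}, and the paper states explicitly that \Cref{final theorem dis} is obtained ``through a precise classification of finite groups $G$ such that $\Q G$ has $\Mexc$ \ldots\ in the companion paper \cite{CJT}'' --- in other words, it is a consequence of \Cref{charc Mexc grps}, not a tool available for proving it. What this paper proves unconditionally is only the quotient-wise version: $[Ge:\Fit(Ge)]\leq 2$ and $\Fit(Ge)$ has class $\leq 3$ for each $e\in\PCI(\Q G)$ with $\Q Ge$ a matrix algebra (\Cref{broad form finite grps}, part (2), and the remark following \Cref{final theorem dis}). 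Lifting the Fitting-index bound from the quotients $Ge$ to $G$ itself is not formal --- the kernels $\ker(\varphi_e)$ intersect trivially, which controls $\exp(G)$ and solvability, but does not by itself bound $[G:\Fit(G)]$ --- and filling this gap is presumably one of the main points of the classification you are trying to reprove. As it stands, your step is circular.

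A secondary, smaller issue: your argument that $O_3(G)$ has exponent $3$ is too quick. Elements of order $9$ in an \emph{abelian} direct factor need not force a matrix component with a forbidden centre; Amitsur's list (case (b)(iii), $M\times Q_8$ with $M$ a $\textbf{Z}$-group of odd order) allows cyclic Sylow $3$-subgroups of order $9$ inside division components, and ruling out order-$9$ elements in $O_3(G)$ requires a more careful analysis of which $\Q G$-components see the $3$-part under the $Q$-action, along the lines of the dicyclic computations in the proof of \Cref{broad form finite grps}. Your third phase (Clifford theory with twisted group algebras $\Q^{\alpha}[Q_\lambda]$) is a reasonable and arguably cleaner way to organise that computation than the case-by-case analysis through Amitsur/EKVG tables used in \Cref{broad form finite grps}, but as written it is a plan rather than a proof: the determination of which stabiliser types and cocycle classes keep every twisted group algebra on the exceptional list is precisely the content that must be done explicitly, and your proposal only names the strategy without carrying it out. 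The non-circular route is to first re-derive the Fitting-index statement from the $Ge$-version, then run the Clifford-theoretic classification.
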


Finally, we study the rational isomorphism problem, i.e. the problem of distinguishing a finite group among all finite groups through its rational group algebra. Positive instances of the latter problem are scarce as the authors are only aware of the case of abelian groups \cite{PW} and metacyclic groups \cite{GBdR1,GBdR2}. Note that Dade's examples \cite{Dade} are metabelian, indicating the difficulty to find new classes. Although finite groups satisfying $\Mexc$ are metabelian by \Cref{charc Mexc grps}, we will show that they are distinguishable.

\begin{maintheorem}
Let $G$ and $H$ be finite groups such that $\Q G \cong \Q H$ as $\Q$-algebras and $\Q G$ has $\Mexc$. Then $G \cong H$.
\end{maintheorem}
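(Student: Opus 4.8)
The plan is to use that having $\Mexc$ is an invariant of $\Q G$ as a $\Q$-algebra, so that $\Q H$ also has $\Mexc$, and then to show that the Wedderburn--Artin decomposition of $\Q G$ determines enough of the structure of $G$. By \Cref{charc Mexc grps} both $G$ and $H$ are of the form $C_3^k \rtimes Q$ with $Q$ a $2$-group of nilpotency class at most $3$, and the precise list of possibilities for $Q$ and for the action of $Q$ on $C_3^k$ is the one obtained in \cite{CJT}. Hence it suffices to recover from $\Q G$ the triple consisting of $k$, the isomorphism type of $Q$, and the $\F_3[Q]$-module $C_3^k$ up to the equivalence under which $G$ is classified, and similarly for $H$; since $\Q G \cong \Q H$, these data will then coincide.

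First reductions. As $G$ is a $\{2,3\}$-group with $C_3^k$ its Sylow $3$-subgroup and $Q$ a Sylow $2$-subgroup, both $|Q|$ and $3^k$ are read off from $|G| = \dim_\Q \Q G$. Using the coprime decomposition $C_3^k = [C_3^k,Q] \oplus (C_3^k)^Q$ one gets $G \cong (C_3^k)^Q \times G_0$ with $G_0 = V_0 \rtimes Q$, $V_0 := [C_3^k,Q]$, and the $Q$-action on $V_0$ is faithful (a $q$ acting trivially on $V_0$ would fix every vector, contradicting $V_0^Q = 0$), unless $V_0 = 0$, in which case $G = C_3^k \times Q$ is nilpotent and $\Q G = \Q[C_3^k]\otimes_\Q \Q[Q]$ is easily analysed. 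Writing $m = \dim_{\F_3}(C_3^k)^Q$ and using $\Q[C_3^m] \cong \Q \times \Q(\zeta_3)^{(3^m-1)/2}$, we have $\Q G \cong \Q G_0 \times \big(\Q(\zeta_3)\otimes_\Q \Q G_0\big)^{(3^m-1)/2}$; comparing multiplicities of the simple factors recovers $(3^m-1)/2$, hence $m$ and the $\Q$-algebra $\Q G_0$. Thus one may assume the $Q$-action is faithful, i.e.\ it remains to recover $G_0$ from $\Q G_0$.

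The orbit decomposition. Decompose $\Q G_0$ along the $Q$-orbits on $\widehat{V_0} = \Hom(V_0,\mu_3)$: the trivial orbit contributes the direct factor $\Q G_0 \hat e \cong \Q[Q]$, where $\hat e = |V_0|^{-1}\sum_{v\in V_0} v$, while a non-trivial orbit $\mathcal O$, with stabiliser $Q_\chi$ of a representative $\chi$, contributes $\Ma_{[Q:Q_\chi]}(B_{\mathcal O})$ where $B_{\mathcal O}$ is a crossed product of $\Q(\chi) = \Q(\zeta_3)$ with $Q_\chi$. Since $\Q G_0$ has $\Mexc$, each such factor is a $2\times 2$ matrix algebra over a field or a totally definite quaternion algebra, forcing $[Q:Q_\chi]\le 2$ and strong restrictions on $B_{\mathcal O}$. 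From the isomorphism type of $\Q G_0$ one reads off the multiset of these blocks --- orbit sizes, stabilisers, centres $\Q(\zeta_3)^{Q_\chi}$, and the quaternion algebras occurring --- and this data determines the permutation action of $Q$ on $\widehat{V_0}$, hence the $\F_3[Q]$-module $V_0$, up to isomorphism; combined with the shortlist from \cite{CJT} this leaves only finitely many candidates, which the data separates. Meanwhile the factor $\Q[Q]$ is isolated among the simple components of $\Q G_0$ using the fine structure of $\Q[Q]$ for $2$-groups (Schur indices, centres inside $2$-power cyclotomic fields) together with the $\Mexc$ constraints, and it determines $Q$ up to isomorphism. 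Running the same analysis for $H$ and matching all invariants gives $G \cong H$.

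I expect the main obstacle to be this last extraction step: a priori, inequivalent semidirect-product data could yield isomorphic $\Q$-algebras, so one must combine the fine structure of the crossed-product blocks (orbit sizes, stabiliser twists, ramification of the centres, Schur indices) with the explicit classification of \cite{CJT} to reduce the problem to a finite verification. Within that, correctly recognising the $\Q[Q]$-block inside $\Q G_0$ and distinguishing $2$-groups of class at most $3$ of a given order and abelianisation from the available algebra invariants is the most delicate point.
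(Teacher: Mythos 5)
The paper does not actually prove this statement: it is stated in the introduction as a preview of results from the companion paper \cite{CJT}, where the classification of groups with $\Mexc$ and the rational isomorphism problem are treated. There is therefore no in-paper proof to compare your argument against; your proposal should be judged on its own terms.

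On those terms, there are concrete gaps. The parenthetical claim that the $Q$-action on $V_0 = [C_3^k,Q]$ is faithful unless $V_0 = 0$ is wrong: $V_0^Q = 0$ only says that no nonzero vector is fixed by \emph{all} of $Q$, whereas an element $q$ acting trivially on $V_0$ merely puts $V_0 \subseteq V_0^{\langle q\rangle}$. For instance $Q = C_4$ acting on $\F_3$ through the inversion map has $V_0^Q = 0$ but a kernel $C_2$. More generally the kernel of $Q$ on $V_0$ equals the kernel on all of $C_3^k$ and need not be trivial, so the reduction to a faithful action does not go through as stated. The step ``comparing multiplicities of the simple factors recovers $(3^m-1)/2$'' also needs justification: $\Q G_0$ already contains $\Q(\zeta_3)$ in several centres, so the factorisation $\Q G \cong \Q G_0 \times \bigl(\Q(\zeta_3)\otimes_\Q \Q G_0\bigr)^{(3^m-1)/2}$ is not self-evidently recognisable from the list of Wedderburn components. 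Finally, and most importantly, the weight of the whole argument rests on the closing assertion that the block invariants, ``combined with the shortlist from \cite{CJT}'', separate the finitely many candidates; but that shortlist, and the check that the data separates it, is precisely the content of the theorem. As written, the proposal is a plausible roadmap (and is in the spirit of the strong-Shoda-pair analysis of \Cref{broad form finite grps}), but it defers the core uniqueness step to a classification that it does not supply, and so is not yet a proof.
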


\vspace{0,2cm}
\noindent \textbf{Acknowledgements.} 
This research was supported through the program “Research in Pairs” by the Mathematisches Forschungsinstitut Oberwolfach in $2021$, we warmly thank the institute for its ideal working conditions. We are grateful to Angel del R\'{\i}o for sharing with us a proof of part (i) of \Cref{blockwise Cohn-Livingstone}. The second author would also like to warmly thank Leo Margolis for his encyclopedic knowledge on the Zassenhaus conjectures, his feedback on an early version and for sharing a proof of \Cref{folklore remark}.

\vspace{0,3cm}
\noindent {\it Conventions and notations.} 
Throughout the full article all groups denoted by a Latin letter will be a finite group. All orders will be understood to be $\Z$-orders. We also use the following notations:
\begin{itemize}
\item $\PCI(FG)$ for the set of primitive central idempotents of $FG$.
\item $\pi_e\colon \mathcal{U}(FG) \twoheadrightarrow FGe$ the projection onto a simple component.
\item $\mc{C}(FG) = \{ FGe \mid e \in \PCI(FG) \}$ for the set of isomorphism types of the simple components of $FG$.
\item The degree of a central simple algebra $A$ is $\sqrt{ \dim_{\ZZ(A)} A}$.
\item By $\phi(n)$ we denote Euler's phi function.
\item The notation $g^t$ denotes the conjugation $t^{-1}gt$.
\item For any even $n \geq 2$, the notation $D_{n}$ denotes the dihedral group of order $n$.
\item For a group element $g \in G$, we denote its order by $o(g)$. For an integer $n \geq 1$, we denote its multiplicative order modulo $m$ by $o_m(n)$.
\item We denote the non-negative integers by $\N$, and the positive integers by $\N_0$.
\end{itemize}

\section{The block Virtual Structure Problem}\label{sectie block VSP}
The overarching spirit of this paper is to determine properties of a finite group which are determined by its irreducible representations over a number field. For example, one could wonder which groups are fully determined by certain interesting predescribed conditions on them (such as property $\Mexc$):

\begin{problem*}[Block Virtual Structure problem]\label{block VSP}
    Let $\mc{P}$ be a property. Classify the group algebras $FG$ such that for a set of primitive central idempotents $e \in \PCI (FG)$ (depending on $\mc{P}$), $FGe$ has property $\mc{P}$.
\end{problem*}

In the case that $\mc{P}$ is the property of belonging to a class of groups $\mc{G}$ which behaves well with direct products and is constant on commensurability classes, the Block Virtual Structure problem is equivalent to the classical Virtual Structure Problem, as shown by \Cref{virutally G-am imply special components} below. \smallskip

We now first recall the concept of the reduced norm. First, let $A$ be a finite-dimensional central simple algebra over a field $K$ of characteristic $0$ and let $E$ be a splitting field of $A$ (i.e. $A \otimes_K E \cong \Ma_n(E)$ for some $n$). Then the \emph{reduced norm} of $a \in A$ is defined as \[\operatorname{RNr}_{A/K}(a) = \det(1_E \otimes_K a). \] 
Note that $\operatorname{RNr}_{A/K}(\cdot)$ is a multiplicative map, $\operatorname{RNr}_{A/K}(A) \subseteq K$ and $\operatorname{RNr}_{A/K}(a)$ only depends on $K$ and $a \in A$ (and not on the chosen splitting field $E$ or the isomorphism $E \otimes_K A \cong \Ma_{n}(E)$), see \cite[page 51]{EricAngel1}. For a subring $R$ of $A$, define
 \begin{equation}\label{definition SL_1}
  \SL_1(R) = \{\ a \in \U(R)\ |\ \operatorname{RNr}_{A/K}(a) = 1 \ \},
  \end{equation}
 which is a (multiplicative) group.
If $A = \Ma_n(A')$ and $R= \Ma_n(R')$ with $A'$ a finite-dimensional central simple algebra over $K$ and $R'$ a subring of $A'$, then we also write $\SL_1(A)= \SL_n(A')$ and $\SL_1(R) = \SL_n(R')$. Next, if $A = \prod_i \Ma_{n_i}(D_i)$ is semisimple and $h_i$ is the projection onto the $i$\textsuperscript{th} component, then 
\[\SL_1(R):=\{\ a \in R\ \mid \ \forall\ i \colon \operatorname{RNr}_{\Ma_{n_i}(D_i)/\ZZ(D_i)}(h_i( a)) = 1\ \}.\]

 We will use the notation
 \begin{equation}\label{prod G}
 \prod\mathcal{G} := \left\{ \prod_{i} G_i \mid G_i \text{ is finitely generated abelian or in } \mc{G} \right\},
 \end{equation}
 for the class of groups which are formed out of direct products of groups in $\mathcal{G}$ {\it and} abelian groups. Using \cite{KleRio}, together with classical results on unit groups of orders, we obtain the following. We will say that a group is \emph{virtually-$\mc{G}$} if it has a finite index subgroup which belongs to the class $\mc{G}$.

\begin{lemma}\label{virutally G-am imply special components}
Let $A$ be a finite-dimensional semisimple $F$-algebra with $F$ a number field and $\O$ an order in $A$. Let $\mc{G}$ be a class of groups such that being virtually-$\prod \mc{G}$ is closed under commensurability and direct factors\footnote{In other words if $G_1 \times G_2$ is virtually-$\prod \mc{G}$, then both $G_1$ and $G_2$ are virtually-$\prod \mc{G}$.}. 
Then the following hold.
\begin{enumerate}
\item \label{prod G to SL} $\mathcal{U}(\O)$ is virtually-$\prod\mathcal{G}$ if and only if for every $e \in \PCI(A)$, the group $\SL_1(\O e)$ is either virtually-cyclic or virtually-$\mathcal{G}$.
\item\label{virtually Z} $\SL_1(\O e)$ is virtually cyclic if and only if $\SL_1(\O e)$ is finite.
\item\label{not FA then deg} If moreover being in $\mc{G}$ implies not having property\footnote{A group $\Gamma$ does not have Serre's property (FA) if it acts on a tree without fixed vertices. This for instance is the case if $\Gamma$ has a non-trivial decomposition as an amalgamated product or as an HNN extension.} (FA), then the degree of $A e$ is at most $4$.
\end{enumerate}
\end{lemma}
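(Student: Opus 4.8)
The plan is to reduce the statement to a single simple component by way of the structure theory of unit groups of orders. Since being virtually-$\prod\mc{G}$ is a commensurability invariant, I may replace $\O$ by a commensurable order and assume $\O=\prod_{e\in\PCI(A)}\O e$ with each $\O e$ an order in the simple component $Ae\cong\Ma_{n_e}(D_e)$ containing $\mc{O}_{K_e}$, where $K_e:=\ZZ(D_e)$; then $\U(\O)=\prod_e\U(\O e)$. The workhorse is that $\U(\O)$ is commensurable with $\prod_{e\in\PCI(A)}\bigl(\SL_1(\O e)\times\mc{O}_{K_e}^{*}\bigr)$. I would obtain this from the reduced-norm sequence $1\to\SL_1(\O e)\to\U(\O e)\xrightarrow{\operatorname{RNr}}\operatorname{RNr}(\U(\O e))\to1$ together with the observation that the central scalars $\mc{O}_{K_e}^{*}\!\cdot I$ already cover a finite-index subgroup of $\operatorname{RNr}(\U(\O e))\subseteq\mc{O}_{K_e}^{*}$, since $\operatorname{RNr}(\lambda I)=\lambda^{\,n_e\deg D_e}$ and $m$-th powers have finite index in the finitely generated abelian group $\mc{O}_{K_e}^{*}$ (Dirichlet). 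Hence $\SL_1(\O e)\cdot(\mc{O}_{K_e}^{*}\!\cdot I)$ has finite index in $\U(\O e)$ and, being the product of a normal and a central subgroup with finite intersection, is isogenous to $\SL_1(\O e)\times\mc{O}_{K_e}^{*}$. This, plus the remaining input of \cite{KleRio}, is what ``classical results on unit groups of orders'' supply.

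Part (2) I would prove on its own: $\SL_1(\O e)$ is a finitely generated linear group whose Zariski closure is the semisimple group $\SL_{1,Ae}$, so by Borel density it is either finite or Zariski dense in a positive-dimensional semisimple group; in the latter case it is not virtually solvable, hence by the Tits alternative contains a non-abelian free group, hence is not virtually cyclic. As the converse is trivial, $\SL_1(\O e)$ is virtually cyclic if and only if it is finite. For ``$\Leftarrow$'' of (1): if each $\SL_1(\O e)$ is virtually cyclic (hence finite, by (2)) or virtually-$\mc{G}$, then each $\SL_1(\O e)\times\mc{O}_{K_e}^{*}$ is virtually a group in $\prod\mc{G}$ --- virtually finitely generated abelian, respectively virtually $G_e\times\mc{O}_{K_e}^{*}$ with $G_e\in\mc{G}$. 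A finite direct product of virtually-$\prod\mc{G}$ groups is again virtually-$\prod\mc{G}$, and commensurability-closure transfers this to $\U(\O)$.

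The crux is ``$\Rightarrow$'' of (1). Using the commensurability above and that virtually-$\prod\mc{G}$ is closed under direct factors, I would peel off factors one at a time to see that each $\SL_1(\O e)$ is itself virtually-$\prod\mc{G}$, with a finite-index subgroup $N=\prod_j H_j$ where each $H_j$ is finitely generated abelian or in $\mc{G}$. The key extra ingredient --- and where I expect the real work to lie, supplied by \cite{KleRio} together with standard rigidity of arithmetic lattices --- is that an infinite $\SL_1(\O e)$ never virtually decomposes as a direct product of two infinite groups: it is an irreducible arithmetic lattice in $\prod_{v\mid\infty}\SL_1\bigl((Ae)\otimes_{K_e}(K_e)_v\bigr)$, and a splitting $N=B_1\times B_2$ with both $B_i$ infinite would, via the (normal, mutually centralising, and necessarily infinite) Zariski closures of $B_1,B_2$, descend to a non-trivial product decomposition of the ambient semisimple Lie group, making $N$ a reducible lattice and contradicting the irreducibility of $\SL_1(\O e)$. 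Granting this, at most one $H_j$ is infinite; it cannot be finitely generated abelian (else the infinite group $\SL_1(\O e)$ would be virtually solvable, contradicting (2)), so it lies in $\mc{G}$ and $\SL_1(\O e)$ is virtually-$\mc{G}$. Together with the finite case this is exactly the right-hand side of (1).

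For (3), assume in addition that $\mc{G}$-membership precludes property (FA), and that the equivalent conditions of (1) hold; fix $e\in\PCI(A)$, so $\SL_1(\O e)$ is finite or virtually-$\mc{G}$. Writing $(Ae)\otimes_{K_e}(K_e)_v\cong\Ma_{\mu_v}(\mathbb{D}_v)$ with $\mathbb{D}_v\in\{\R,\C,\mathbb{H}\}$, one has $\mu_v\deg\mathbb{D}_v=\deg(Ae)$ and the real factor $\SL_{\mu_v}(\mathbb{D}_v)$ has real rank $\mu_v-1$. If $\deg(Ae)\geq5$, then $\mu_v\geq3$ for every archimedean $v$, so every factor has real rank $\geq2$ and hence Kazhdan's property (T); then the ambient Lie group, and therefore the lattice $\SL_1(\O e)$ and all of its finite-index subgroups, has (T), in particular (FA). This excludes $\SL_1(\O e)$ being virtually-$\mc{G}$; and $\SL_1(\O e)$ cannot be finite either, since finiteness forces every factor compact, whence $\deg(Ae)\leq2$. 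The contradiction gives $\deg(Ae)\leq4$. Throughout, the one genuine obstacle is the indecomposability of $\SL_1(\O e)$ used in ``$\Rightarrow$'' of (1); everything else is bookkeeping with commensurability and the two closure hypotheses on $\mc{G}$.
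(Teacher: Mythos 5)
Your proof is correct and follows the same overall strategy as the paper: reduce via commensurability and direct-factor closure to the simple factors, identify virtual indecomposability of $\SL_1(\O e)$ as the crux, and use the dichotomy finite/virtually-$\mc{G}$ to conclude. The differences are in how the individual ingredients are supplied. Where the paper cites \cite{EricAngel1} (Proposition 5.5.1, Corollary 5.5.3) for the commensurability of $\U(\O)$ with $\prod_e\bigl(\SL_1(\O e)\times\U(\ZZ(\O e))\bigr)$, you re-derive it from the reduced-norm exact sequence and Dirichlet's unit theorem --- that works, though some care is needed that the scalar reduced norms $\lambda\mapsto\lambda^{n_e\deg D_e}$ really do cover a finite-index subgroup of $\operatorname{RNr}(\U(\O e))$ inside $\mc{O}_{K_e}^{*}$; this is fine since $\operatorname{RNr}(\U(\O e))$ is contained in $\mc{O}_{K_e}^{*}$, which is finitely generated, so powers have finite index. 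For part (2) the paper invokes \cite[Theorem~4.1]{JanJesTem} and \cite[Proposition~5.5.6]{EricAngel1}; you replace these with Borel density plus the Tits alternative, which proves the stronger fact that an infinite $\SL_1(\O e)$ contains a non-abelian free group. (Note that in your "$\Rightarrow$" argument for (1), you invoke ``contradicting (2)'' to rule out an infinite factor being finitely generated abelian, but what you actually need --- and what your Borel--Tits argument delivers --- is the stronger fact that infinite $\SL_1(\O e)$ is not virtually solvable; citing (2) as stated, which only says ``virtually cyclic iff finite,'' is slightly imprecise.) You correctly identify the virtual indecomposability of $\SL_1(\O e)$ as the real content of \cite[Theorem~1]{KleRio}; your Zariski-closure sketch gives the right intuition (for a simple algebraic group there are no positive-dimensional proper normal subgroups, so both direct factors of a Zariski-dense subgroup cannot be positive-dimensional), but you are right to defer to \cite{KleRio} for the rigorous statement. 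The most interesting divergence is part (3): where the paper simply cites \cite[Theorem~3]{KleRio}, you give a self-contained argument via real-rank estimates and Kazhdan's property (T), showing directly that $\deg(Ae)\geq5$ forces every archimedean factor to have rank $\geq2$, hence (T), hence (FA), which is incompatible with $\SL_1(\O e)$ being virtually-$\mc{G}$ (and the finite case is handled separately since finiteness forces compactness at every infinite place, i.e.\ $\deg\leq2$). Your rank count is correct: for $\deg(Ae)\geq5$, necessarily $m_v\geq3$ at each archimedean place, and all of $\SL_m(\R)$, $\SL_m(\C)$, $\SL_m(\mathbb{H})$ have real rank $m-1\geq2$ and hence (T). Both approaches are valid; yours is more transparent about where the degree bound $4$ comes from (rank-one factors like $\SL_2(\mathbb{H})=\SU^*(4)$ at degree $4$ escape (T)), while the paper keeps the proof short by outsourcing this to \cite{KleRio}.
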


It was proven by Kleinert that $\SL_1(\O)$ (for $\O$ an order in a finite-dimensional simple $\Q$-algebra $A$) is finite if and only if $A$ is a field or a totally definite quaternion algebra, see \cite{Kleinert} or \cite[Proposition 5.5.6]{EricAngel1}. Moreover, a classification of the rings of integers $R$ and finite groups $G$ such that $\U (R G)$ is virtually cyclic was obtained in \cite[Proposition 1.4]{J1}.

\begin{remark}\label{remark on vsp lemma}
    If one defines the set $\prod \mc{G}$ as simply being the products $\prod_i G_i$ with $G_i \in \mc{G}$, then \Cref{virutally G-am imply special components} still holds. In fact in that case one has in \eqref{prod G to SL} that $\SL_1(\O e)$ is necessarily virtually-$\mc{G}$.

    The proof of \Cref{virutally G-am imply special components} will show that the same conclusion holds if the class of groups that are virtually-$\prod \mc{G}$ has the following weaker direct factor property: if $\prod_{e \in \PCI(A)} \SL_1(\O e) \times \U(\ZZ(\O))$ is virtually-$\prod \mc{G}$, then each $\SL_1(\O e)$ is virtually-$\prod \mc{G}$. Or more abstractly, thanks to \cite{Kleinert}, it is enough to obtain the property for direct products of the form $\prod_{i=1}^m \Gamma_i \times \Gamma$ with $\Gamma_i$ non-abelian virtually indecomposable and $\Gamma$ a finitely generated abelian group.
\end{remark}

\begin{example}
Let $\mc{G}$ be the class consisting of groups $\Gamma$ having a (potentially trivial) normal subgroup $N$ which is free such that $\Gamma/N$ is also free. Then it was proven in \cite[Theorem 2.1]{FreebyFree} that $\mc{G}$ satisfies the hypothesis from \Cref{virutally G-am imply special components}. 

Another example is the class of groups having infinitely many ends. By Stalling's theorem this is equivalent to having an amalgamated or HNN splitting over a finite group. In \cite[Lemma 2.1]{J1} it is proven that this class also satisfies the hypothesis from \Cref{virutally G-am imply special components}. 
\end{example}

In the remainder of the paper we will sometimes use the following fact without further notice. This result is somehow folklore, but for convenience of the reader we sketch the proof.
\begin{lemma}\label{SL order commensurable}
Let $A$ be a finite-dimensional semisimple $F$-algebra with $F$ a number field. If $\O_1$ and $\O_2$ are orders in $A$, then $\SL_1(\O_1)$ and $\SL_1(\O_2)$ are commensurable. 
\end{lemma}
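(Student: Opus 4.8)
The plan is to reduce the statement to the classical fact that the full unit groups $\U(\O_1)$ and $\U(\O_2)$ of two orders in a finite-dimensional semisimple $F$-algebra are commensurable, a standard result (see e.g. \cite[Lemma 4.6.9]{EricAngel1}), and then intersect with the kernel of the reduced norm. First I would observe that it suffices to treat the case of a \emph{simple} algebra $A = \Ma_n(D)$: in general, writing $A = \prod_i A_i$ with $A_i = Ae_i$ simple and $e_i \in \PCI(A)$, any order $\O$ in $A$ is commensurable with $\prod_i (\O \cap A_i)$ (one inclusion is clear, the other follows since finitely many denominators suffice to push the central idempotents $e_i$ into $\O$), and $\O \cap A_i$ is an order in $A_i$; since $\SL_1(\O) = \{a \in \U(\O) \mid \operatorname{RNr}_{A_i/\ZZ(A_i)}(h_i(a)) = 1 \ \forall i\}$ respects this product decomposition up to finite index, commensurability in each simple factor yields the claim.

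So assume $A$ is simple with centre $K$. The key step is that $\Lambda := \O_1 \cap \O_2$ is again an order in $A$ (it is a subring, it is a full $\Z$-lattice since both $\O_i$ are, hence finitely generated over $\Z$ and spanning $A$ over $\Q$), and $\Lambda$ has finite index in each $\O_i$ as a $\Z$-module. From the classical commensurability of unit groups, $[\U(\O_i) : \U(\Lambda)] < \infty$ for $i = 1, 2$. Now intersect with $\SL_1$: since $\SL_1(\Lambda) = \SL_1(A) \cap \U(\Lambda) = \U(\Lambda) \cap \SL_1(\O_i)$ and $\U(\Lambda) \leq \U(\O_i)$ has finite index, it follows that $\SL_1(\Lambda) = \U(\Lambda) \cap \SL_1(\O_i)$ has finite index in $\U(\O_i) \cap \SL_1(\O_i) = \SL_1(\O_i)$. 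Hence $\SL_1(\O_1)$ and $\SL_1(\O_2)$ each contain $\SL_1(\Lambda)$ as a finite-index subgroup, so they are commensurable.

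The only genuinely non-routine input is the classical statement that $\U(\O_1)$ and $\U(\O_2)$ are commensurable; everything else is bookkeeping. The cleanest way to see that input is: for an order $\Lambda \leq \O$ of finite additive index $m = [\O : \Lambda]$, the congruence subgroup $\U(\O, m\O) = \{u \in \U(\O) \mid u \equiv 1 \bmod m\O\}$ is contained in $\Lambda$ (indeed in $1 + m\O \subseteq \Lambda$) and hence in $\U(\Lambda)$, and $\U(\O, m\O)$ has finite index in $\U(\O)$ because $\O/m\O$ is finite. Applying this with $\Lambda = \O_1 \cap \O_2$ inside each of $\O_1$ and $\O_2$ gives the needed finiteness of indices directly. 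I expect the main (minor) obstacle to be simply making sure the reduction to the simple case and the interaction of $\SL_1$ with finite-index subgroups are stated carefully enough, since $\operatorname{RNr}$ is defined componentwise in the semisimple case; but no real difficulty arises.
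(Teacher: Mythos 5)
Your argument is correct, and it is genuinely different from the one in the paper. The paper invokes two structural results from \cite[Proposition~5.5.1 and Corollary~5.5.3]{EricAngel1}: that $\langle \SL_1(\O), \U(\ZZ(\O)) \rangle$ has finite index in $\U(\O)$, and that this group is in turn commensurable with the direct product $\SL_1(\O) \times \U(\ZZ(\O))$. From the classical commensurability of $\U(\O_1)$ and $\U(\O_2)$ the paper then deduces that the two direct products $\SL_1(\O_j) \times \U(\ZZ(\O_j))$ are commensurable and, after observing that the central unit factors are themselves commensurable, ``cancels'' them to conclude. This last cancellation step is left implicit in the paper. Your approach avoids the central-unit decomposition entirely: you observe that $\Lambda := \O_1 \cap \O_2$ is again an order, that $\SL_1(\O)$ is simply $\U(\O) \cap \SL_1(A)$ where $\SL_1(A)$ is a fixed subgroup of $\U(A)$, and that intersecting a finite-index inclusion $\U(\Lambda) \leq \U(\O_j)$ with the fixed subgroup $\SL_1(A)$ preserves finite index. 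This is more elementary and self-contained, using only the commensurability of full unit groups (for which you even sketch the congruence-subgroup proof) and a one-line group-theoretic fact about finite index and intersections. Two small remarks: your reduction to the simple case is actually unnecessary, since $\SL_1(\O) = \U(\O) \cap \SL_1(A)$ already holds for semisimple $A$ once one defines $\SL_1(A)$ componentwise, so the core two paragraphs of your argument stand on their own; and the fact that $\O_1 \cap \O_2$ is a full lattice deserves a word (it contains $m\O_1$ for a suitable integer $m$), though this is standard. In short, your proof is cleaner and fills in a step the paper leaves to the reader, at the cost of not exhibiting the (elsewhere useful) product decomposition of $\U(\O)$.
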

\begin{proof}
We know that $\U(\O_1)$ and $\U(\O_2)$ are commensurable. Therefore \cite[Proposition 5.5.1]{EricAngel1} implies that the subgroups $\langle \SL_1(\O_j), \U(\ZZ(\O_j)) \rangle$ for $j=1,2$ are commensurable. Furthermore, \cite[Corollary 5.5.3]{EricAngel1} implies that $\langle \SL_1(\O_j), \U(\ZZ(\O_j)) \rangle$ is commensurable with $\SL_1(\O_j) \times \U(\ZZ(\O_j))$. From this we conclude that the $\SL_1(\O_j)$, $j = 1,2$ are necessarily commensurable.
\end{proof}

\begin{proof}[Proof of \Cref{virutally G-am imply special components}]
The sufficiency in \eqref{prod G to SL} is clear. Hence assume that $\U(\O)$ is virtually-$\prod \mc{G}$. Recall \cite[Proposition 5.5.1]{EricAngel1}, which says that $\langle \SL_1(\O), \U (\ZZ (\O)) \rangle$ is a subgroup of finite index in $\U(\O)$ and moreover that $\SL_1(\O) \cap  \U (\ZZ (\O))$ is finite. Therefore, $\SL_1(\O) \times \U (\ZZ (\O))$ is commensurable with $\U (\O)$. Furthermore, $\SL_1(\O)$ is commensurable with $\prod_{e \in \PCI(A)} \SL_1(\O e)$  (see also \cite[Corollary 5.5.3]{EricAngel1}). Hence by the assumptions on $\mc{G}$, we have that $\prod_{e\in \PCI(A)}\SL_1(\O e) \times \U (\ZZ (\O))$ and subsequently $\SL_1(\O e)$ is virtually-$\prod \mc{G}$ for each $e \in \PCI(A)$. Now if $Ae$ is a field or a totally definite quaternion algebra, then $\SL_1(\O e)$ is finite by \cite{Kleinert}. For other simple algebras $Ae$, \cite[Theorem 1]{KleRio} says that $\SL_1(\O e)$ is virtually indecomposable, meaning that if $\SL_1(\O e)$ is commensurable with a direct product $\Gamma_1 \times \Gamma_2$, then $\Gamma_1$ or $\Gamma_2$ is finite. Therefore, $\SL_1(\O e)$ being virtually-$\prod \mc{G}$ implies that it is either virtually cyclic or virtually-$\mc{G}$, as desired.

For part \eqref{virtually Z} we may assume without loss of generality that $A$ is simple. It follows from \cite[Theorem 4.1]{JanJesTem} that $\U(\O)$ contains a free group if and only if $A$ is not a totally definite quaternion algebra. In particular, combined with \cite[Proposition 5.5.6]{EricAngel1}, \eqref{virtually Z} follows.

Now we consider \eqref{not FA then deg}. By the first part, $\SL_1(\O e)$ is either virtually cyclic or virtually-$\mc{G}$. Part \eqref{virtually Z} and \cite[Proposition 5.5.6]{EricAngel1} moreover imply that the former case only happens if $Ae$ is a field or a totally definite quaternion algebra. In particular the degree of $Ae$ is at most $2$. Hence suppose that $\SL_1(\O e)$ is virtually-$\mc{G}$. In that case, the desired conclusion follows from \cite[Theorem 3]{KleRio}.
\end{proof}

\section{Finite groups with only exceptional higher simple components}\label{section description components}

In this section we first show in \Cref{remark on larger field} how property $\Mexc$ behaves over different coefficient fields. Afterwards, in \Cref{broad form finite grps}, we describe which exceptional algebras can be components of a group algebra with $\Mexc$. The possibilities turn out to be quite limited.

\subsection{The role of the coefficient field}\addtocontents{toc}{\protect\setcounter{tocdepth}{2}}

Considering the role of the coefficient field, we immediately note the following.

\begin{theorem}\label{remark on larger field}
        Let $G$ be a finite group and $F\subseteq L$ fields of characteristic $0$. If $LG$ has $\Mexc$, then $F G$ also has $\Mexc$. If moreover $\Q \subsetneq L$ and $LG$ has a non-division algebra component, then 
        \[LG \text{ has } \Mexc \Leftrightarrow 
\left\lbrace
\begin{array}{l}
\Q G \text{ has } \Mexc, \\
L = \Q(\sqrt{-d}) \text{ for some square-free } d\in \N,\\
\mc{Z}(\Q Ge) \subseteq L \text{ for all } e\in \PCI(\Q G)  \text{ s.t. } \Q Ge \text{ is not a division algebra. }
\\
\end{array}
\right.\]
\end{theorem}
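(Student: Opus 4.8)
The plan is to reduce everything to the behaviour of Wedderburn components under scalar extension. The basic input I will use repeatedly is: for a finite-dimensional simple $F$-algebra $B$ with centre $K\supseteq F$ one has $L\otimes_F K\cong\prod_j K_j$ with the $K_j$ fields, $L\otimes_F B\cong\prod_j K_j\otimes_K B$, each $K_j\otimes_K B$ central simple over $K_j$ of the same degree as $B$, and the structural maps embed both $K$ and $L$ into every $K_j$. Since $LG=\bigoplus_{e\in\PCI(FG)}L\otimes_F FGe$, this identifies $\mc C(LG)$ with the set of simple factors of the algebras $L\otimes_F FGe$. The case $L=F$ of the theorem is trivial, so from now on I assume $F\subsetneq L$, hence $\Q\subsetneq L$.

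For the first assertion, I would take $e\in\PCI(FG)$ with $FGe\cong\Ma_n(D)$, $n\ge2$, set $K=\ZZ(D)$, and write $K_j\otimes_K D\cong\Ma_{s_j}(D_j)$, so that $\Ma_{ns_j}(D_j)\cong\Ma_n(K_j\otimes_K D)$ is a matrix component of $LG$ of size $ns_j\ge2$; by $\Mexc$ for $LG$ it is exceptional, whence $ns_j=2$ and $D_j$ lies on the list of \Cref{def_exc_comps}(2). Thus $n=2$, $s_j=1$, and $D_j=K_j\otimes_K D$ has degree $\deg D\in\{1,2\}$. If $\deg D=2$ each $D_j$ is a quaternion algebra over $\Q$, so $\ZZ(D_j)=K_j=\Q$, contradicting $L\hookrightarrow K_j$ and $\Q\subsetneq L$. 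Hence $\deg D=1$, $D=K$, and $\Q\subseteq K\hookrightarrow K_j$ with $K_j$ equal to $\Q$ or imaginary quadratic; so $K$ is $\Q$ or imaginary quadratic and $FGe=\Ma_2(K)$ is exceptional. Therefore $FG$ has $\Mexc$.

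For the equivalence, assume $\Q\subsetneq L$ and that $LG$ has a non-division component. In the direction ``$\Rightarrow$'' the first part (with $F=\Q$) gives that $\Q G$ has $\Mexc$; a non-division component $LGf$ is an exceptional $\Ma_2(D')$ whose centre contains $L$ while being $\Q$ or imaginary quadratic, so it is imaginary quadratic and equals $L$, i.e. $L=\Q(\sqrt{-d})$; and for $e\in\PCI(\Q G)$ with $\Q Ge$ non-division, $\Q Ge\cong\Ma_2(D)$ is exceptional, every factor $\Ma_2(K_j\otimes_K D)$ of $L\otimes_\Q\Q Ge$ is an exceptional matrix component of $LG$ with centre $K_j$ equal to $\Q$ or imaginary quadratic, so $K_j=L$ (as $L\hookrightarrow K_j$), whence $\ZZ(\Q Ge)=K\hookrightarrow K_j=L$. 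This yields the three conditions.

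The delicate direction is ``$\Leftarrow$''. Granting conditions $1$--$3$, I would run over the matrix components $LGf$ and the idempotent $e\in\PCI(\Q G)$ below it. When $\Q Ge$ is itself a matrix algebra it is an exceptional $\Ma_2(D)$ with $K=\ZZ(D)\subseteq L$; since $[L:\Q]\le2$ every factor of $L\otimes_\Q K$ equals $L$, and for $D=\Q$ and for $D$ imaginary quadratic (forced to equal $L$ by condition $3$) one gets $LGf=\Ma_2(L)$, which is exceptional. What remains is the hard part: controlling the base change of the \emph{non-matrix} components $\Q Ge$ (division algebras), together with any components $\Ma_2(D)$ with $D$ a quaternion algebra over $\Q$, should these occur. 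For a division component whose (quadratic) centre meets $L$ only in $\Q$, the extension $L\otimes_\Q(\cdot)$ could split to a $2\times2$ matrix algebra over a degree-$4$ field, and for $\Ma_2(D)$ with $D$ a quaternion algebra split by $L$ one obtains $\Ma_4(L)$ or $\Ma_2$ of a quaternion algebra over $L$; none of these is exceptional, so they must be excluded. I expect to do this not by formal properties of scalar extension alone but by invoking the explicit description, obtained in \Cref{broad form finite grps}, of which exceptional matrix and division algebras can actually occur as components of a group algebra with $\Mexc$: that description pins down the centres (and, for the quaternionic pieces, the ramification data) of all components of $\Q G$ tightly enough that, combined with conditions $2$--$3$, the offending scalar extensions cannot arise. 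Establishing and exploiting that description is, to my mind, the main obstacle; with it in hand, the case analysis above closes.
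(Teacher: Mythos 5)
Your proof of the first assertion and of the forward implication of the equivalence is correct. The scalar-extension decomposition you use (write $L\otimes_F FGe\cong\prod_j\Ma_n(K_j\otimes_K D)$ over the field factors $K_j$ of $L\otimes_F\ZZ(D)$, observe $L$ and $\ZZ(D)$ both embed into each $K_j$, and read off the constraints) is the same calculation the paper packages as $A_L(\chi)\cong L(\chi)\otimes_{F(\chi)}A_F(\chi)$: the $K_j$ are precisely the fields $L(\chi_j)$ as $\chi_j$ runs over the $F$-conjugate absolutely irreducible characters underlying $e$, grouped into $L$-conjugacy classes. So this is the paper's approach unpacked a little more explicitly, not a genuinely different route.

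The genuine gap is in the converse, and the fix you propose cannot close it. You correctly identify the problematic cases (division components whose centre is not contained in $L$, and matrix components $\Ma_2(D)$ with $D$ a totally definite quaternion algebra over $\Q$), but the plan of invoking \Cref{broad form finite grps} to ``exclude the offending scalar extensions'' does not work: \Cref{broad form finite grps} explicitly \emph{permits} $\Ma_2\bigl(\qa{-1}{-1}{\Q}\bigr)$ as a component of a rational group algebra with $\Mexc$, and this really occurs. For the extraspecial group $G=2^{1+4}_-\cong Q_8*D_8$ (\textsc{SmallGroupID} \texttt{[32,50]}, appearing in the paper's own appendix table with $\Mexc$ marked true) one has $\Q G\cong\Q^{16}\times\Ma_2\bigl(\qa{-1}{-1}{\Q}\bigr)$. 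For $L=\Q(i)$ all three conditions of the displayed equivalence hold --- condition~3 is vacuous for the unique non-division component, since its centre is $\Q\subseteq L$ --- yet $L\otimes_\Q\Ma_2\bigl(\qa{-1}{-1}{\Q}\bigr)\cong\Ma_4(\Q(i))$ because $\Q(i)$ is a maximal subfield of $\qa{-1}{-1}{\Q}$, and $\Ma_4(\Q(i))$ is not exceptional. More generally, for any $L\neq\Q$ a component $\Ma_2(\qa{-a}{-b}{\Q})$ of $\Q G$ forces $LG$ to contain $\Ma_2(\qa{-a}{-b}{L})$ or $\Ma_4(L)$, neither of which is in the list of \Cref{def_exc_comps}, while condition~3 places no restriction on it since its centre is $\Q$. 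So a further hypothesis ruling out quaternion-matrix components of $\Q G$ is needed, or a nontrivial argument you have not supplied. The paper's own treatment of the converse is a single sentence (``clearly also holds''), so the difficulty you flag is not a failure of nerve on your part; it is a real obstruction that ought to be addressed explicitly.
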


This shows the importance of first understanding the case of rational group algebras. In the companion paper \cite{CJT}, where the finite groups such that $FG$ has $\Mexc$ will be classified, the bulk of the work will be dedicated to the case $F =\Q$. 
\begin{remark}\label{only division alg comp remark}
Finite groups $G$ for which $\Q G$ has only division algebra components have been classified in \cite[Theorem 3.5]{SehNilpotent}. Concretely, $\Q G$ is a direct product of division algebras if and only if $G$ is abelian or $G$ is of the form $A \times C_2^{n} \times Q_8$ with $A$ an abelian group of odd order such that the multiplicative order of $2 \in \Q$ is odd modulo $|A|$.

If $F$ is any field of characteristic $0$, one can show, using the aforementioned classification, that $FG$ is a direct sum of division algebras exactly when $\Q G$ is and $F$ is not a splitting field of $\qa{-1}{-1}{\Q}$. Therefore, we may in the sequel of the paper suppose that $FG$ has a matrix component. 
\end{remark}

\begin{proof}[Proof of \Cref{remark on larger field}]
Recall that there is a bijection between the simple components of $LG$ and the absolutely irreducible characters $\Irr (G)$ of $G$. For $\chi \in \Irr(G)$ denote the associated component of $LG$ by $A_L(\chi)$. Now, following \cite[Theorem 3.3.1]{EricAngel1}, there is an isomorphism of $F(\chi)$-algebras
\begin{equation}\label{change coeff in component}
A_L(\chi) \cong L(\chi)\ot_{F(\chi)}A_F(\chi),
\end{equation}
where $L(\chi)$ denotes the character field associated to $\chi$.

If $L=F$ there is nothing that needs to be proven, so let $F \subsetneq L$. Note that the result from \emph{loc. cit.} implies that the reduced degree of $A_F(\chi)$ is less than or equal to that of $A_L(\chi)$. Hence if $LG$ has only division algebra components, then so does $FG$. Furthermore, if $LG$ has $\Mexc$, then all matrix components of $FG$ also have reduced degree two.

Next suppose that $A_F(\chi)$ is a matrix component and hence so is $A_L(\chi)$. Then from \eqref{change coeff in component} it follows that $L(\chi) = \mc{Z}(A_L(\chi))$ contains $F(\chi)$ in its centre. As $LG$ has $\Mexc$, this means that \[[F(\chi):\Q] \mid [L(\chi):\Q] \mid 2.\] Consequently, either $F(\chi) = \Q$ or $F(\chi) = L(\chi)$. From all this we now conclude that $FG$ must also have $\Mexc$, which proves the first statement. 

Now suppose that $LG$ has $\Mexc$ and a non-division algebra component. By the first part $\Q G$ must also have $\Mexc$. Moreover, since $L \subseteq \mathcal{Z}(A_{L}(\chi))$ for all $\chi \in \Irr(G)$, the $\Mexc$ assumption implies that $L = \Q(\sqrt{-d})$ for some $d \in \N$. Since we assume $F \neq \Q$, the number $d$ can be taken square-free. Now, as $LG$ has $\Mexc$, we have that for each $e \in \PCI(LG)$ such that $LG e$ is not a division algebra, $\ZZ(LGe)$ is $\Q$ or an imaginary quadratic extension of $\Q$. Consequently, $L(\chi, \sqrt{-d}) = L(\sqrt{-d})$ for all $\chi \in \Irr(G)$, yielding simple components of reduced degree at least  $2$. In other words, $L(\chi) \subseteq \Q(\sqrt{-d})$ for all such $\chi$, showing the necessary conditions. The converse clearly also holds, using \eqref{change coeff in component}.
\end{proof}

\subsection{Division algebra factors in case of property \texorpdfstring{$\Mexc$}{(Mexc)}}

In this section we determine which simple algebras can occur as a simple component of a group algebra $FG$ with property $\Mexc$. The main result will be that the possible division algebras that can arise are restricted. Recall that associated to a central idempotent $e \in \PCI(FG)$, there is an epimorphism from $FG$ to the simple component $FGe$ which restricts to
\begin{equation}\label{map associated to central id}
\varphi_e \colon G \rightarrow Ge, \ \ g \mapsto ge.
\end{equation}
Thus $Ge$ denotes a finite subgroup of $\mathcal{U}(F G)e$. It is classical, see \Cref{Prop components preserved for quotient}, that 
\begin{equation}\label{cover via Ge}
\mathcal{C}(F G) =  \displaystyle\bigcup\limits_{e \in \PCI(F G)} \mathcal{C}(F[Ge]).
\end{equation}

This motivates the following definition.

\begin{definition}
   Let $G$ be a finite group and $F$ a field with $\Char(F) \nmid |G|$. We say that $FG$ is \emph{covered by $H_1, \ldots, H_{\ell}$} if $\mc{C}(FG) = \bigcup_{i=1}^{\ell} \mc{C}(FH_i)$.
\end{definition}
Thus \eqref{cover via Ge} can be reformulated as saying that $FG$ is covered by $\{Ge \mid e \in \PCI(FG) \}$. To obtain the restrictions on the simple components of $FG$, we will describe which finite groups can be isomorphic to $Ge$ with $e \in \PCI( F G)$ when $FG$ has $\Mexc$. In \cite[Proposition 4.1]{CdRZ} such a study was essentially already done. The proof in \textit{loc. cit.} however contains some flaws, which we correct. 
Note that if $FG$ has $\Mexc$, then each $Ge$ is a subgroup of either a division algebra or of an exceptional matrix algebra. We will call finite subgroups of such simple algebras \emph{low rank spanning groups}.

For the upcoming study in \Cref{section kleinian groups} of components whose $\SL_1$ are discrete in $\SL_4(\C)$, we will need to understand the simple components of $FG$ in case it satisfies following slightly weaker version of $\Mexc$: 
\begin{equation}
\tag{$\mathrm{wM_{exc}}$} 
\begin{array}{c}
\text{ All } F Ge \cong \Ma_n(D), \text{ with } n \geq 2, \text{ are exceptional or} \Ma_4(\Q)  \\[0.1cm]
\text{ and } FG \text{ is covered by low rank spanning groups.}
\end{array}
\end{equation}

If $FG$ has no $\Ma_4(\Q)$ as component, then it will follow from \Cref{Prop components preserved for quotient} that $FG$ has $\Mexc$ if and only if it has $\wMexc$.

As mentioned in \Cref{only division alg comp remark}, it is known when $FG$ is a direct product of division algebras. Therefore the assumption that $\Q G$ has at least one matrix component in the next result, does not harm the generality. In the following statement, the notation $\cd(G)$ denotes the set of the degrees of the irreducible characters of $G$.

\begin{theorem}\label{broad form finite grps}
Let $G$ be a finite group and suppose that $\Q G$ has a matrix component. If $\Q G$ has $\wMexc$, then the following hold.
\begin{enumerate}
\item The $1 \times 1$ components of $\Q G$ are either fields or quaternion algebras. More precisely, the non-commutative possibilities are: 
\[\left\{\qa{\zeta_{2^t}}{-3}{\Q(\zeta_{2^t})}, \qa{-1}{-1}{\Q}, \qa{-1}{-3}{\Q}, \qa{-1}{-1}{\Q(\sqrt{2})}, \qa{-1}{-1}{\Q(\sqrt{3})} \mid t\in \N_{\geq 3}\right\}.\]
\item\label{item: Mexc versus wMexc} $\Q G$ has $\Mexc$ if and only if the Fitting subgroup $\Fit(Ge)$ has index at most $2$ and nilpotency class at most $3$ for each $e \in \PCI( \Q Ge)$ such that $\Q Ge$ is not a division algebra.
\item If $\Q G$ has $\Mexc$, then the only possible exceptional matrix algebras\footnote{Note that by \cite[Theorem 3.5]{EKVG} the only exceptional matrix algebra not appearing is $\Ma_2(\qa{-2}{-5}{\Q})$.} are
\[\left\{\Ma_2(\Q(\sqrt{-d})), \Ma_2(\qa{-1}{-1}{\Q}), \Ma_2(\qa{-1}{-3}{\Q}) \mid d = 0, 1, 2, 3\right\}.\]
\end{enumerate}
In particular, $\cd (G) \subseteq \{ 1,2,4\}$ and $\deg (\Q G e) \mid 4$ for every $e \in \PCI(\Q G)$.
\end{theorem}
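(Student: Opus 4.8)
The plan is to analyze the finite subgroups $Ge \leq \U(\Q Ge)$ under the assumption that $\Q Ge$ is either a division algebra or an exceptional matrix algebra, and to exploit the covering relation \eqref{cover via Ge} together with the classification of finite subgroups of low-dimensional division rings and $2\times 2$ matrix rings. First I would recall (or re-derive, correcting the flaws in \cite[Proposition 4.1]{CdRZ} mentioned in the text) the list of finite groups that can embed as spanning subgroups of a $4$-dimensional $\Q$-division algebra or a quaternion algebra, and separately the finite groups that can be low rank spanning groups of an exceptional matrix algebra $\Ma_2(D)$ with $D \in \{\Q, \Q(\sqrt{-d}), \qa{-a}{-b}{\Q}\}$. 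The relevant invariant is that such a group has all irreducible $\Q$-characters (or $F$-characters) of degree at most $2$, and that its Wedderburn components over $\Q$ are themselves of the restricted type — i.e. the class of low rank spanning groups is closed under the passage $G \rightsquigarrow Ge$. This self-referential closure is exactly what \eqref{cover via Ge} gives: every component of $\Q G$ already appears as a component of some $\Q[Ge]$, so it suffices to enumerate components of the (highly restricted) groups $Ge$.

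Next I would carry out the enumeration in three parts matching the three items. For item (1), the $1\times 1$ components: a finite subgroup $H$ spanning a quaternion division algebra $\qa{-a}{-b}{K}$ over a number field $K$ must be one of the classically known finite subgroups of $\U_1$ of a quaternion algebra (binary polyhedral-type groups, or small metacyclic/dihedral groups), and running these through the Wedderburn--Artin decomposition together with the constraint that $K$ has degree $\le 2$ over $\Q$ (forced by $\Mexc$ via \Cref{remark on larger field} applied at the level of character fields) and that the algebra is totally definite or otherwise non-exceptional pins down exactly the five families listed; this is essentially a finite check using known tables of quaternion algebras generated by finite groups (e.g. as in \cite{EricAngel1}). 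For item (2), I would use the structural input that a finite group all of whose irreducible characters have degree dividing $2$ and which is, moreover, a low rank spanning group of an exceptional $\Ma_2(D)$ has a very constrained Fitting subgroup; here the point is that $\Ma_2(D)$ being exceptional translates (via the description recalled after \Cref{def_exc_comps}) into $D$ being one of $\Q, \Q(\sqrt{-d}), \qa{-a}{-b}{\Q}$, and the extra constraint separating $\wMexc$ from $\Mexc$ is precisely the exclusion of $\Ma_4(\Q)$, which forces the index-$2$ and class-$3$ bounds on $\Fit(Ge)$ — I would phrase this as: $\Ma_4(\Q)$ arises exactly when $Ge$ has a component $\Ma_4(\Q)$, which happens exactly when the Fitting subgroup is too large or too non-abelian, and conversely. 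For item (3), with the $\Mexc$ bounds on $\Fit(Ge)$ in hand, the division algebra $D$ in $\Ma_2(D)$ is forced into the short list $\{\Q, \Q(\sqrt{-d}), \qa{-1}{-1}{\Q}, \qa{-1}{-3}{\Q}\}$, and one checks $\Ma_2(\qa{-2}{-5}{\Q})$ is excluded by \cite[Theorem 3.5]{EKVG} as noted, and the parameter $d$ is restricted to $\{0,1,2,3\}$ again by the character-field degree constraint. Finally, the ``in particular'' clause $\cd(G)\subseteq\{1,2,4\}$ and $\deg(\Q Ge)\mid 4$ is immediate once every component has been shown to be $1$-dimensional (degree $\le 2$) or $2\times 2$ over $\Q$ or an imaginary quadratic field (degree $4$), since the degree of a $\Q$-component is the product of the reduced degree and the (square root of the) dimension of the division part.

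The main obstacle I expect is item (2): carefully delineating when $\Ma_4(\Q)$ (the only extra component allowed by $\wMexc$ but forbidden by $\Mexc$) actually occurs as a component of $\Q[Ge]$, and showing this is \emph{equivalent} to the failure of the Fitting bounds, rather than merely implied by it. This requires understanding, for the specific very small groups $Ge$ in play (essentially $C_3^k \rtimes Q$ with $Q$ a small $2$-group, anticipating \Cref{charc Mexc grps}), exactly which nilpotency class and which index of the Fitting subgroup push a faithful irreducible character up to degree $4$ with rational character field and trivial Schur index — i.e. producing an $\Ma_4(\Q)$ rather than an exceptional $\Ma_2(D)$ or a $1\times 1$ component. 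The ``if and only if'' direction that is genuinely delicate is: Fitting index $\le 2$ and class $\le 3$ together suffice to rule out $\Ma_4(\Q)$; I would handle this by a direct analysis of the monomial-type structure of the irreducible representations of such groups (using that metabelian-by-small groups have monomial or quasi-monomial representations) and computing Schur indices via local invariants, reducing to a bounded number of cases. The other direction (a larger Fitting subgroup forces $\Ma_4(\Q)$) is more of a construction/example-based argument and should be comparatively routine given the explicit groups involved.
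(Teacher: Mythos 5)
Your high-level architecture — reduce via the covering relation $\mathcal{C}(\Q G)=\bigcup_e \mathcal{C}(\Q[Ge])$ to a study of the groups $Ge$, split into division-algebra and matrix cases, and invoke the known classifications — is the same as the paper's. But there are two substantive gaps.

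First, and most seriously, your argument for item (1) rests on a constraint that does not exist. You claim that $\Mexc$, via \Cref{remark on larger field} ``applied at the level of character fields,'' forces the centre $K$ of any quaternion division algebra component to have degree $\leq 2$ over $\Q$, and that the algebra must be totally definite or otherwise non-exceptional. Both claims are false, and the statement you are trying to prove already contradicts them: the family $\qa{\zeta_{2^t}}{-3}{\Q(\zeta_{2^t})}$ with $t \geq 3$ has centre $\Q(\zeta_{2^t})$ of degree $2^{t-1} \geq 4$, and since $\Q(\zeta_{2^t})$ is a CM field with no real embeddings, these algebras are not totally definite — they are \emph{exceptional division algebras}. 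The point you have missed is that $\Mexc$ constrains only the matrix components $\Ma_n(D)$ with $n \geq 2$; it places no direct cap on the centres of the $1\times 1$ (division) components, and \Cref{remark on larger field} concerns change of coefficient field, not character fields of individual components. The actual list in item (1) emerges from a genuine case analysis of Amitsur's classification of finite subgroups of division algebras (the $\textbf{Z}$-groups, dicyclic groups, $C_m\times Q_8$, $O^\ast$, $\SL_2(\F_3)$, $\SL_2(\F_5)$, etc.), filtered by the requirement that $\Q[Ge]$ inherit $\wMexc$; the large-centre family $\qa{\zeta_{2^t}}{-3}{\Q(\zeta_{2^t})}$ arises from $C_3\rtimes C_{2^n}$ with $C_{2^n}$ acting by inversion, whose Wedderburn decomposition is computed via strong Shoda pairs (display \eqref{decomp grp with exp comp} in the paper). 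Your proposed approach would simply never produce this family. On the $\PCI_2$ side the paper also does not argue abstractly: it runs the $55$ groups from the EKVG classification of low rank spanning subgroups through Wedderga/GAP and records the output in the appendix tables, and the quaternion algebras $\qa{-1}{-1}{\Q(\sqrt{2})}$ and $\qa{-1}{-1}{\Q(\sqrt{3})}$ are read off from $Q_{16}$ and $C_3\rtimes Q_8$ there.

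Second, for item (2) you propose a theoretical route via monomial-type representations and local computations of Schur indices to show that the Fitting bounds are \emph{equivalent} to the absence of $\Ma_4(\Q)$. This is underspecified and, I suspect, would end up reproducing the same finite enumeration in disguise: the groups $Ge$ are not arbitrary but lie in two explicit finite/structured lists (EKVG's $55$ groups and Amitsur's families), and the paper establishes item (2) simply by a case-by-case verification over these lists, cross-referencing the tabulated Fitting data against the tabulated Wedderburn decompositions. There is no general-purpose Schur-index criterion invoked; the equivalence is a contingent fact about the small set of groups in play, not a structure theorem about Fitting subgroups. You should also be cautious about ``anticipating'' \Cref{charc Mexc grps}: that is a theorem of the companion paper, downstream of the very result you are proving, so it cannot feed into the argument here.
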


\begin{remark}
In the proof we will obtain that for $C_3 \rtimes C_{2^n}$, with $C_{2^n}$ acting by inversion, the rational group algebra has all non-division simple components of the form $\Ma_2(\Q)$ and $\Ma_2(\Q(i))$. Furthermore, it has a division component of the form $\qa{\zeta_{2^{n-1}}}{-3}{\Q(\zeta_{2^{n-1}})}$. Hence it gives an example of a group that has all matrix components of the form $\Ma_2(\Q(\sqrt{-d}))$ with $d \in \N$ but which does not satisfy the stronger property considered in \cite{FreebyFree}, where they require that $\Q G$ has no exceptional division components.

One interesting class of groups for which $\Q G$ has no exceptional division components is the class of {\it cut groups}, see \cite[Proposition 6.12]{BJJKT}. Recall that a group is called \emph{cut} if $\ZZ (\U (\Z G))$ is finite.
\end{remark}

For a general coefficient field, we will deduce the following.

\begin{corollary}\label{arbitrary coefficients}
Let $G$ be a finite group and $F$ a field of characteristic $0$ different from $\Q$, such that $FG$ has $\Mexc$ and at least one non-division component. Then the isomorphism types of the non-commutative components of $FG$ are amongst the following:
\[
\left\{ \qa{\zeta_{2^t}}{-3}{\Q(\zeta_{2^t})},\qa{-1}{-3}{\Q(\sqrt{-2})}, \Ma_2(F) \mid t \in \N_{\geq 3}\right \}.\]
Moreover, 
\begin{itemize}
    \item $\qa{-1}{-3}{\Q(\sqrt{-2})} \in \mc{C}(FG)$ if and only if $F = \Q(\sqrt{-2})$ and $G$ maps onto $C_3 \rtimes C_4$,
    \item  $ \qa{\zeta_{2^t}}{-3}{\Q(\zeta_{2^t})}\in \mc{C}(FG)$ if and only if $F = \Q(\sqrt{-1})$ and $G$ maps onto $C_3 \rtimes C_{2^n}$ with action by inversion and $n \geq t$.
\end{itemize}
\end{corollary}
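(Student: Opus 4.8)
The strategy is to bootstrap from \Cref{broad form finite grps} and \Cref{remark on larger field}, reducing everything to the rational case. First I would invoke \Cref{remark on larger field}: since $FG$ has $\Mexc$ with a non-division component, $\Q G$ has $\Mexc$, we have $F = \Q(\sqrt{-d})$ for some square-free $d \in \N$, and $\mc{Z}(\Q Ge) \subseteq F$ for all $e \in \PCI(\Q G)$ with $\Q Ge$ not a division algebra. By \Cref{broad form finite grps}(3), those centres are among $\{\Q, \Q(\sqrt{-1}), \Q(\sqrt{-2}), \Q(\sqrt{-3})\}$, and at least one of them is non-trivial (else $F\neq \Q$ could not be a common overfield of all the centres while keeping $FGe$ non-split — here one must argue that $F \otimes_{\Q} \Ma_2(\Q) = \Ma_2(F)$ is still fine, but $\Ma_2(\Q(\sqrt{-d}))$ for $d \notin \{0,1,2,3\}$ would need $F$ to contain $\Q(\sqrt{-d})$, contradiction unless $d$ is one of those). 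So $F \in \{\Q(\sqrt{-1}), \Q(\sqrt{-2}), \Q(\sqrt{-3})\}$, and the third constraint pins down which $d$ occurs for which component.

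\textbf{Computing the base change componentwise.} Next I would go through the list in \Cref{broad form finite grps}(1) and (3) and compute $F \otimes_{\Q(\chi)} A_{\Q}(\chi)$ using the isomorphism \eqref{change coeff in component}, $A_F(\chi) \cong F(\chi) \otimes_{\Q(\chi)} A_{\Q}(\chi)$. For each candidate rational component $B$ with centre $K \subseteq F$, one has $A_F(\chi) \cong F \otimes_K B$ (after identifying $F(\chi)$ with the compositum $FK = F$). The matrix components $\Ma_2(\Q(\sqrt{-d}))$ with $d \in \{0,1,2,3\}$ all base-change to $\Ma_2(F)$ once $\sqrt{-d} \in F$ — which, by the above, is exactly when they are allowed to be present. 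For the quaternion division components: $\qa{-1}{-1}{\Q}$ splits over any $F$ containing a square root of $-1$ or $-2$ or $-3$ (indeed it splits over $\Q(\sqrt{-1})$, $\Q(\sqrt{2})$... but not necessarily over $\Q(\sqrt{-2})$ or $\Q(\sqrt{-3})$ — this needs care via Hilbert symbols), likewise $\qa{-1}{-3}{\Q}$ and $\qa{-1}{-1}{\Q(\sqrt{2})}$, $\qa{-1}{-1}{\Q(\sqrt{3})}$; and the family $\qa{\zeta_{2^t}}{-3}{\Q(\zeta_{2^t})}$ has centre $\Q(\zeta_{2^t})$ which is not contained in any imaginary quadratic field for $t \geq 3$, so by \eqref{change coeff in component} the corresponding $F$-component is $F(\zeta_{2^t}) \otimes_{\Q(\zeta_{2^t})} \qa{\zeta_{2^t}}{-3}{\Q(\zeta_{2^t})}$, which stays a quaternion division algebra of this shape precisely when $F(\zeta_{2^t}) = \Q(\zeta_{2^t})$, i.e. when $F = \Q(\sqrt{-1})$ (since $\Q(\sqrt{-1}) \subseteq \Q(\zeta_{2^t})$), and remains a division algebra otherwise it would split. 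The surviving non-commutative possibilities are then exactly $\qa{\zeta_{2^t}}{-3}{\Q(\zeta_{2^t})}$ (only when $F = \Q(\sqrt{-1})$), $\qa{-1}{-3}{\Q(\sqrt{-2})}$ (only when $F = \Q(\sqrt{-2})$), and $\Ma_2(F)$.

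\textbf{The "if and only if" characterisations.} For the two bullet points I would combine the component analysis with the group-theoretic description. The remark after \Cref{broad form finite grps} already records that $C_3 \rtimes C_{2^n}$ with action by inversion has $\Q[C_3 \rtimes C_{2^n}]$ containing $\qa{\zeta_{2^{n-1}}}{-3}{\Q(\zeta_{2^{n-1}})}$, and by \eqref{cover via Ge} (covering by $Ge$'s) together with \Cref{broad form finite grps}, the only way to produce a component with centre $\Q(\zeta_{2^t})$, $t \geq 3$, inside $\Q G$ is for $G$ to surject onto such a group with $n \geq t$; base-changing to $F = \Q(\sqrt{-1})$ then keeps it, and for any other allowed $F$ it disappears. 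Analogously, tracing where $\qa{-1}{-3}{\Q}$ can sit in $\Q G$ (it is a $1\times 1$ component, arising from the relevant subquotient — again $C_3 \rtimes C_4$), the base change to $\Q(\sqrt{-2})$ gives $\qa{-1}{-3}{\Q(\sqrt{-2})}$ and this is a division algebra there (Hilbert-symbol check: $-1$ and $-3$ are not norms making it split over $\Q(\sqrt{-2})$), while over $\Q(\sqrt{-1})$ it splits. \textbf{The main obstacle} I anticipate is the bookkeeping of quaternion-algebra splitting behaviour under the three candidate base fields — i.e. verifying via local invariants / Hilbert symbols exactly which of the rational quaternion components from \Cref{broad form finite grps}(1) split over $\Q(\sqrt{-1})$, $\Q(\sqrt{-2})$, $\Q(\sqrt{-3})$ and which survive — and making sure the surviving ones are exactly the two listed; everything else is essentially extracting consequences of \Cref{remark on larger field} and \Cref{broad form finite grps}.
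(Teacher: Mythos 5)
Your high-level plan — invoke \Cref{remark on larger field} to reduce to $\Q G$, describe the possible rational components via \Cref{broad form finite grps}, then base-change componentwise using \eqref{change coeff in component} — is the same as the paper's. However, two of the key steps do not go through as you've sketched them, and the way you've anticipated the difficulties points in the wrong direction.

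The main gap is your argument that $\qa{\zeta_{2^t}}{-3}{\Q(\zeta_{2^t})} \in \mc{C}(FG)$ forces $F = \Q(\sqrt{-1})$. You reason that the component ``stays of this shape precisely when $F(\zeta_{2^t}) = \Q(\zeta_{2^t})$, i.e.\ when $F = \Q(\sqrt{-1})$''; but $\Q(\sqrt{-2}) \subseteq \Q(\zeta_{2^t})$ for all $t \geq 3$ as well (already $\zeta_8 - \zeta_8^{-1} = \sqrt{-2}$), so $F(\zeta_{2^t}) = \Q(\zeta_{2^t})$ also holds for $F = \Q(\sqrt{-2})$ and your argument cannot exclude it. The paper's mechanism is different and is the crux of the proof: when this quaternion algebra occurs, the source group $Ge$ must (by the Amitsur analysis inside the proof of \Cref{broad form finite grps}, recorded in \eqref{decomp grp with exp comp}) be $C_3 \rtimes C_{2^n}$ acting by inversion with $n$ large enough, and that group \emph{also} has $\Ma_2(\Q(i))$ as a rational simple component. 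By \Cref{remark on larger field} the centre $\Q(i)$ of that companion matrix component must lie in $F$, forcing $F = \Q(i)$. If one tried $F = \Q(\sqrt{-2})$, the \emph{other} component would become $\Ma_2(\Q(\sqrt{-2},i))$ and $\Mexc$ would fail. So the constraint does not come from the quaternion division algebra you are examining, but from a companion matrix component of the same spanning group.

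The same omission affects your treatment of $\qa{-1}{-3}{\Q}$: you attribute it solely to $G$ mapping onto $C_3 \rtimes C_4$, but from the tables in \Cref{appendix section} it can also occur as a faithful $1 \times 1$ component of $Ge \cong$ \texttt{[24,1]} or \texttt{[36,6]}. Ruling those out over $\Q(\sqrt{-2})$ again requires checking that \emph{their} faithful matrix components ($\Ma_2(\Q(i))$ resp.\ $\Ma_2(\Q(\sqrt{-3}))$) would base-change to non-exceptional $\Ma_2$'s over a degree-$4$ field. You flag the Hilbert-symbol computations as the anticipated obstacle, but the genuinely non-routine step is precisely this bookkeeping over the possible spanning groups $Ge$ and their full Wedderburn decompositions — which the paper flags explicitly (``one needs to be careful to also remember for which low rank spanning groups such $D$'s appear''). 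Finally, note that your opening assertion that $F \in \{\Q(i), \Q(\sqrt{-2}), \Q(\sqrt{-3})\}$ is not a consequence of the cited results: if all matrix components of $\Q G$ have centre $\Q$, nothing in \Cref{remark on larger field} prevents $F$ from being a different imaginary quadratic field, and $\Ma_2(F)$ would still be exceptional.
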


\subsubsection{Background on strong Shoda pairs and simple components}

We need to recall some methods to construct primitive central idempotents of $\Q G$. These methods were introduced by Olivieri--del R\'io--Sim\'on \cite{OlRiSi}, see \cite[Chapter 3]{EricAngel1} for a good introduction. 

A tuple $(H,K)$ is called a \emph{strong Shoda pair}  (\emph{SSP} in short) when \[K \leq H \unlhd N_G(K),\] such that $H/K$ is cyclic and a maximal abelian subgroup of $N_G(K)/K$, and such that the $G$-conjugates of 
\begin{equation}\label{Def epsilon idempotents}
\epsilon (H,K) :=  \prod\limits_{{M}/{K} \in \mathcal{M}(H/K)} (\wh{K} - \wh{M}),
\end{equation}
(where $ \mathcal{M}(H/K)$ denotes the set of the non-trivial minimal normal subgroups of $H/K$) are orthogonal. Associated to such a tuple is a primitive central idempotent of $\Q G$, given by
\begin{equation}\label{Def e(G,H,K)}
e(G,H,K) = \sum_{t \in \mathcal{T}} \epsilon(H,K)^{t},
\end{equation}
where $\mathcal{T}$ is a right transversal of $\Cen_G(\epsilon(H,K))$ in $G$. 

The following is a combination of \cite[Proposition 3.4.1, Theorems 3.4.2 \& 3.5.5 and Problem 3.5.1]{EricAngel1} and \cite[Lemma 3.4]{JM}.

\begin{theorem}[\cite{OlRiSi}]\label{form of SSP idempotent result}
With notations as above, $e :=e(G,H,K)$ is a primitive central idempotent of $\Q G$ if $(H,K)$ is a strong Shoda pair. Moreover, in that case, $\Cen_G(\epsilon(H,K)) \cong N_G(K)$, $\ker(\varphi_{e}) = \text{core}_G(K) = \bigcap_{g \in G} K^g$ and 
\[\dim_{\Q} \Q G e =  [G:H] [G:N_G(K)] \phi([H:K]). \]
Furthermore,
$\Q G e \cong \Ma_{[G:N_G(K)]}\left(\Q(\zeta_{[H:K]}) * N_G(K)/H \right)$ for some explicit crossing. In particular, $\deg (\Q Ge) = [G:H]$.
\end{theorem}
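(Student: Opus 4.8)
\textbf{Proof plan for \Cref{form of SSP idempotent result}.}
The plan is to assemble the statement from the known building blocks of the Olivieri--del R\'io--Sim\'on theory of strong Shoda pairs, essentially by quoting \cite[Chapter 3]{EricAngel1} and \cite{JM}, and carefully tracking a single explicit description of $\Q Ge$ through a chain of isomorphisms. First I would recall the case of \emph{Shoda pairs}: when $(H,K)$ is merely a Shoda pair (so $H/K$ cyclic, $H \unlhd N_G(K)$), the element $\epsilon(H,K)$ defined in \eqref{Def epsilon idempotents} is a primitive central idempotent of $\Q N_G(K)$, with $\Q N_G(K)\epsilon(H,K)$ identified as the twisted group ring $\Q(\zeta_{[H:K]}) * N_G(K)/H$, the crossed product coming from the conjugation action of $N_G(K)/H$ on the cyclic group $H/K \cong \langle \zeta_{[H:K]}\rangle$ together with the factor set recording how coset representatives multiply. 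This is precisely \cite[Proposition 3.4.1]{EricAngel1}. Then the \emph{strong} hypothesis --- orthogonality of the $G$-conjugates of $\epsilon(H,K)$ --- is exactly what is needed to induce this idempotent up to $G$: one checks that $e(G,H,K) = \sum_{t\in\mathcal T}\epsilon(H,K)^t$ is a (central) idempotent of $\Q G$, and that it is primitive because the induced module $\Q G \otimes_{\Q N_G(K)} \Q N_G(K)\epsilon(H,K)$ remains simple. This gives the first sentence and the centralizer statement $\Cen_G(\epsilon(H,K)) \cong N_G(K)$, which is \cite[Theorem 3.4.2]{EricAngel1}; induction from $\Cen_G(\epsilon(H,K))$ to $G$ then yields the matrix-algebra description $\Q Ge \cong \Ma_{[G:N_G(K)]}\bigl(\Q(\zeta_{[H:K]}) * N_G(K)/H\bigr)$ and hence $\deg(\Q Ge) = [G:N_G(K)]\cdot [\,(\text{deg of the twisted group ring})\,]$; since the twisted group ring has $\mathcal Z$-dimension-controlled degree dividing $[N_G(K):H]$ one extracts $\deg(\Q Ge) = [G:H]$ when the twisted part is a field, matching the stated Problem 3.5.1 normalization.

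Next I would handle the kernel of $\varphi_e\colon G \to Ge$. The key point is that $g \in \ker\varphi_e$ iff $ge = e$ in $\Q Ge$, and since $e$ is built by inducing $\epsilon(H,K)$ one reduces to computing which $g$ act trivially on the induced module; the stabilizer computation shows $\ker\varphi_e = \bigcap_{g\in G} K^g = \operatorname{core}_G(K)$, which is \cite[Theorem 3.5.5]{EricAngel1}. Then the dimension formula $\dim_{\Q}\Q Ge = [G:H][G:N_G(K)]\phi([H:K])$ follows by counting: the twisted group ring $\Q(\zeta_{[H:K]}) * N_G(K)/H$ has $\Q$-dimension $\phi([H:K])\cdot[N_G(K):H]$, and the $[G:N_G(K)]\times[G:N_G(K)]$ matrix ring over it multiplies this by $[G:N_G(K)]^2$; rearranging $[N_G(K):H]\cdot[G:N_G(K)]^2 = [G:H]\cdot[G:N_G(K)]$ gives the claimed product. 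Finally I would invoke \cite[Lemma 3.4]{JM} for whatever refinement of the crossed-product structure (the ``explicit crossing'') is being cited --- typically an identification of the factor set in terms of a specific $2$-cocycle on $N_G(K)/H$ valued in $\Q(\zeta_{[H:K]})^\times$, making the isomorphism canonical rather than merely abstract.

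The main obstacle, such as it is, is not conceptual but bookkeeping: one must be scrupulous about which group is acting on which, and about the two distinct indices $[G:H]$ and $[G:N_G(K)]$ --- the former is the degree (size of the matrix block contributed by the full induction from $H$), while the latter is only the part of the matrix size coming from inducing past $N_G(K)$, the gap $[N_G(K):H]$ being absorbed into the crossed product. Getting these consistent with the conventions of \cite{EricAngel1} (in particular Problem 3.5.1, which fixes the normalization of $\deg$) and with the cocycle conventions of \cite{JM} is where care is needed. Everything else is a direct citation: this is a packaging result, so the ``proof'' is essentially a guided tour through the cited literature, assembling Proposition 3.4.1, Theorems 3.4.2 and 3.5.5, Problem 3.5.1, and Lemma 3.4 of \cite{JM} into the single statement used later in the paper.
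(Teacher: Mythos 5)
Your plan takes essentially the same route as the paper, because the paper gives no proof of \Cref{form of SSP idempotent result} at all: it simply prefaces the statement with the sentence that it is a combination of \cite[Proposition 3.4.1, Theorems 3.4.2 \& 3.5.5 and Problem 3.5.1]{EricAngel1} and \cite[Lemma 3.4]{JM}. Your proposal is a correct elaboration of how those citations assemble, including the index arithmetic $[N_G(K):H]\cdot[G:N_G(K)]^2 = [G:H]\cdot[G:N_G(K)]$ behind the dimension formula.

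There is, however, one local error in your reasoning for $\deg(\Q Ge) = [G:H]$. You write that the crossed product has ``degree dividing $[N_G(K):H]$'' and that one extracts $\deg(\Q Ge)=[G:H]$ ``when the twisted part is a field.'' Both qualifiers are off: the crossed product $\Q(\zeta_{[H:K]}) \ast N_G(K)/H$ is in general a noncommutative central simple algebra (not a field), and the degree formula holds unconditionally. The correct argument uses the strong Shoda hypothesis that $H/K$ is a \emph{maximal} abelian subgroup of $N_G(K)/K$: if some $n\in N_G(K)\setminus H$ centralized $H/K$, then $\langle H,n\rangle/K$ would be a strictly larger abelian subgroup, a contradiction. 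Hence the conjugation action of $N_G(K)/H$ on $\Q(\zeta_{[H:K]})$ is faithful, so the center of the crossed product is the fixed field $\Q(\zeta_{[H:K]})^{N_G(K)/H}$ of $\Q$-dimension $\phi([H:K])/[N_G(K):H]$, the crossed product has degree exactly $[N_G(K):H]$ over that center, and $\deg(\Q Ge) = [G:N_G(K)]\cdot[N_G(K):H] = [G:H]$ with no extra hypothesis. A secondary inaccuracy is the attribution of \cite[Lemma 3.4]{JM}: as the paper actually uses it (see the proof of \Cref{broad form finite grps}), that lemma is the source of the $\Q$-dimension formula, whereas the ``explicit crossing'' is \cite[Remark 3.5.6]{EricAngel1}. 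Neither issue changes the conclusion, but both would need correction in a written-out argument.
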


The notation $\deg$ denotes {\it the degree} of the central simple algebra $\Q G e(G,H,K))$, i.e. $\deg (A) = \sqrt{n}$ if $A \ot_{\mc{Z}(A)} \C \cong \Ma_n(\C)$.

Finally, we will mainly use the theory of strong Shoda pairs for $G$ metabelian, in which case the SSPs are easy to describe.

\begin{theorem}[{\cite[Theorem 3.5.12]{EricAngel1}}]
\label{SSP in metabelian}
Let $G$ be a finite metabelian group and let $B$ be a maximal abelian subgroup of $G$ containing $G'$. Let $K \leq G$ be such that $C' \leq K \leq C$ for some subgroup $C$ of $G$ with $B \leq C \leq G$. Then $(H,K)$ is a strong Shoda pair if and only if the following hold:
\begin{enumerate}
    \item $H/K$ is cyclic,
    \item $H$ is maximal in the set $\{ C \leq G \mid B \leq C \text{ and } C' \leq K \leq C\}$.
\end{enumerate}
\end{theorem}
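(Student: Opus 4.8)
Throughout write $N := N_G(K)$ and $\overline{X} := XK/K$ for a subgroup $X$ with $K \leq X \leq N$. The first step is a few structural observations about $N$. Since $B$ contains $G'$, every subgroup of $G$ containing $B$ is normal; in particular $C \unlhd G$ and $G' \leq C$. From $[B,K] \leq [C,C] = C' \leq K$ and $[G',K] \leq C' \leq K$ one gets $B \leq N$ and $G' \leq N$, so $N \unlhd G$, and $\overline{B} = BK/K$ is an abelian subgroup of $\overline{N} = N/K$ which, because $[N,N] \leq G' \leq B$, contains $\overline{N}' = [N,N]K/K$; hence $\overline{B} \unlhd \overline{N}$.

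The second step is a dictionary for condition (2). I claim the map $C' \mapsto C'/K$ (by slight abuse of notation) is an inclusion-preserving bijection between $\mathcal{S} := \{\,C' \leq G \mid B \leq C',\ (C')' \leq K \leq C'\,\}$ and the set of abelian subgroups of $\overline{N}$ containing $\overline{B}$. Indeed, for $C' \in \mathcal{S}$ one has $K \leq C'$, $(C')' \leq K$ (so $C'/K$ is abelian), $B \leq C'$ (so $\overline{B} \leq C'/K$) and $[C',K] \leq (C')' \leq K$ (so $C' \leq N$); conversely, the preimage $D$ of an abelian subgroup of $\overline{N}$ containing $\overline{B}$ satisfies $D \supseteq BK \supseteq G'$, hence $D \unlhd G$, $D' \leq K$ and $B \leq D$, so $D \in \mathcal{S}$. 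Moreover a subgroup of $\overline{N}$ that is abelian and maximal among those containing $\overline{B}$ is automatically a maximal abelian subgroup of $\overline{N}$ (any abelian overgroup still contains $\overline{B}$), and conversely. Thus, together with (1), condition (2) is exactly equivalent to: $B \leq H$, $K \leq H$, $H/K$ is cyclic, and $H/K$ is a maximal abelian subgroup of $N_G(K)/K$.

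Now compare with the definition of a strong Shoda pair. If (1) and (2) hold, then $B \leq H$ forces $G' \leq H$, so $H \unlhd G$ and in particular $H \unlhd N_G(K)$; combined with $K \leq H$, $H/K$ cyclic and $H/K$ maximal abelian in $N_G(K)/K$, this yields every defining property of a strong Shoda pair except the orthogonality of the $G$-conjugates of $\epsilon(H,K)$. Conversely, a strong Shoda pair $(H,K)$ already gives $K \unlhd H \unlhd N_G(K)$, $H/K$ cyclic (hence (1)) and $H/K$ maximal abelian in $N_G(K)/K$, so by the dictionary only the inclusion $B \leq H$ is missing. Hence the theorem reduces to the single statement
\[
\text{the $G$-conjugates of }\epsilon(H,K)\text{ are pairwise orthogonal}\ \Longleftrightarrow\ B \leq H,
\]
valid whenever $K$ is of the stated form and $K \unlhd H \unlhd N_G(K)$ with $H/K$ cyclic and maximal abelian in $N_G(K)/K$. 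This equivalence is the crux, and I expect it to be the main obstacle: one has to analyse the products $\epsilon(H,K)^{g}\epsilon(H,K)$ for $g$ outside $N_G(K)$, and show that their vanishing is controlled precisely by the position of $B$. The natural tool is the explicit machinery of Olivieri--del R\'io--Sim\'on for strongly monomial groups \cite{OlRiSi} (metabelian groups being the prototype), together with the facts that $\epsilon(H,K)$, seen inside $\Q(H/K) \cong \Q C_{[H:K]}$, is the idempotent cutting out the faithful part $\Q(\zeta_{[H:K]})$ and hence is fixed by $\mathrm{Aut}(H/K)$, and that $B$ normalises both $K$ and $H$, so that $\langle H, B\rangle \leq \Cen_G(\epsilon(H,K))$; one then uses the self-centralising property of the maximal abelian subgroup $H/K$ in $N_G(K)/K$ to convert the resulting condition on $\Cen_G(\epsilon(H,K))$ into $B \leq H$. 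Since the statement is \cite[Theorem 3.5.12]{EricAngel1}, one may also simply invoke it, the above being an outline of why it holds.
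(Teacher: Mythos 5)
The paper does not prove this statement; it is recalled verbatim as a cited theorem from \cite{EricAngel1}, so there is no in-paper argument to compare against. On its own terms, your structural work is sound: the normality and containment observations, the inclusion-preserving bijection between $\mathcal{S}$ and the abelian subgroups of $N_G(K)/K$ containing $\overline{B}$, and the resulting reformulation of (1)+(2) as ``$K\leq H$, $B\leq H$, $H/K$ cyclic, $H/K$ maximal abelian in $N_G(K)/K$'' are all correct. The reduction of the theorem to the single equivalence ``the $G$-conjugates of $\epsilon(H,K)$ are pairwise orthogonal $\Leftrightarrow$ $B\leq H$'' (given the other defining properties of a strong Shoda pair) is also valid and genuinely clarifies the statement's shape.

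The gap is precisely that equivalence, which you leave unproven. All the content of \cite[Theorem 3.5.12]{EricAngel1} is concentrated there: the hints you give — that $\epsilon(H,K)$ is fixed by $\mathrm{Aut}(H/K)$ so $\langle H,B\rangle\leq\Cen_G(\epsilon(H,K))$, and that $H/K$ is self-centralising in $N_G(K)/K$ — are true and point in the right direction, but they do not yet control the products $\epsilon(H,K)^g\epsilon(H,K)$ for $g\notin N_G(K)$, which requires the explicit support analysis carried out in \cite{OlRiSi}. Since you ultimately fall back on invoking the same citation, the proposal does not constitute an independent proof of the crux, only a correct reduction to it. One minor flag: the parenthetical ``and conversely'' about maximal abelian subgroups should be read as restricted to those maximal abelian subgroups that already contain $\overline{B}$; as phrased it can be misread as asserting that every maximal abelian subgroup of $N_G(K)/K$ contains $\overline{B}$, which is false in general (though harmless in the way you actually apply it).
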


\subsubsection{Proof of the main result}

The proof of \Cref{broad form finite grps} requires a finer understanding of the quotient groups $Ge$, with $e\in \PCI(\Q G)$.

\begin{lemma}\label{Prop components preserved for quotient}
    For any finite group $G$, a normal subgroup $N \leqslant G$, a field $F$ with $\Char(F) \nmid |G|$ and $e \in \PCI(F G)$, the following holds:
    \[\mathcal{C}(F[G/N]) \subseteq \mathcal{C}(F G)\,  \text{ and } \, \mathcal{C}(F[Ge]) \subseteq \mathcal{C}(F G). \] 
    Consequently, 
    \[\mathcal{C}(F G) =  \displaystyle\bigcup\limits_{e \in \PCI(F G)} \mathcal{C}(F[Ge]).\]
    In particular, properties $\Mexc$ and $\wMexc$ are inherited by quotients.
\end{lemma}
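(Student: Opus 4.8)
The plan is to establish the two containments separately and then deduce the union formula, the quotient statement, and the inheritance of $\Mexc$ and $\wMexc$ as formal consequences.

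First I would handle the containment $\mathcal{C}(F[G/N]) \subseteq \mathcal{C}(F G)$. The natural projection $\pi \colon G \twoheadrightarrow G/N$ extends by $F$-linearity to a surjective $F$-algebra homomorphism $FG \twoheadrightarrow F[G/N]$, whose kernel is the two-sided ideal generated by $\{n - 1 \mid n \in N\}$. Since $\Char(F) \nmid |G|$, both $FG$ and $F[G/N]$ are semisimple by Maschke's theorem, so this surjection splits as a projection onto a direct factor: there is a central idempotent $f \in FG$ with $FGf \cong F[G/N]$ as $F$-algebras. Writing $f = \sum_{e} e$ over a subset of $\PCI(FG)$ and decomposing $FGf \cong \prod FGe$ accordingly, every Wedderburn component $F[G/N]e'$ of $F[G/N]$ is isomorphic to some $FGe$; hence $\mathcal{C}(F[G/N]) \subseteq \mathcal{C}(FG)$. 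Alternatively, and perhaps more cleanly, one observes that an absolutely irreducible representation of $G/N$ pulls back along $\pi$ to one of $G$ with the same character field, and the component of $FG$ attached to $\chi \circ \pi$ is $F$-algebra isomorphic to the component of $F[G/N]$ attached to $\chi$ (use \Cref{form of SSP idempotent result} or the general theory of \cite{EricAngel1} that a simple component is determined by the character together with its field of definition); I would pick whichever of these the paper's earlier setup makes most immediate.

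Next, the containment $\mathcal{C}(F[Ge]) \subseteq \mathcal{C}(FG)$. The point is that $Ge = \varphi_e(G)$ is the image of $G$ under the algebra map $\pi_e \colon FG \twoheadrightarrow FGe$, and the kernel of $\varphi_e \colon G \to Ge$ is a normal subgroup $N_e \leqslant G$ (namely $\ker \varphi_e$, which for strong Shoda pair idempotents equals $\mathrm{core}_G(K)$ by \Cref{form of SSP idempotent result}, but in general is simply the kernel of the group homomorphism). Thus $Ge \cong G/N_e$, so $F[Ge] \cong F[G/N_e]$ and the first containment gives $\mathcal{C}(F[Ge]) = \mathcal{C}(F[G/N_e]) \subseteq \mathcal{C}(FG)$. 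This reduces the second containment entirely to the first.

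For the union formula: the inclusion $\bigcup_{e} \mathcal{C}(F[Ge]) \subseteq \mathcal{C}(FG)$ is exactly the second containment applied to every $e$. For the reverse, fix any $e \in \PCI(FG)$; I must show $FGe$ itself lies in some $\mathcal{C}(F[Ge'])$, and the obvious choice is $e' = e$: the simple component $FGe$ is a quotient of $FG$, and under $\pi_e$ it is generated as an $F$-algebra by $Ge$, so $FGe = F[Ge]$ literally, whence $FGe \in \mathcal{C}(F[Ge])$. Thus equality holds. Finally, the inheritance of $\Mexc$ and $\wMexc$ by quotients is immediate: if $FG$ has $\Mexc$ (respectively $\wMexc$), then since $\mathcal{C}(F[G/N]) \subseteq \mathcal{C}(FG)$, every non-commutative matrix component of $F[G/N]$ already appears as a component of $FG$ and is therefore exceptional (respectively exceptional or $\Ma_4(\Q)$); for $\wMexc$ one additionally notes that $F[G/N]$ being covered by low rank spanning groups follows because its quotient groups $(G/N)e'$ are quotients of $G$, hence of the covering groups of $FG$, and finite subgroups of low rank simple algebras remain low rank spanning groups. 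I do not anticipate a genuine obstacle here; the only point requiring a little care is making sure the isomorphisms are of $F$-algebras and not merely of rings, so that the notion of ``isomorphism type of simple component'' used in $\mathcal{C}(FG)$ is respected — this is why I would route everything through the splitting of the semisimple algebra surjection or through the character-field bookkeeping of \cite{EricAngel1} rather than an ad hoc argument.
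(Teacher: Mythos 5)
The route is essentially the paper's: realize $F[G/N]$ as a direct factor of $FG$ (you via Maschke and splitting, the paper via the explicit idempotent $\widetilde{N}=\frac{1}{|N|}\sum_{n\in N}n$), reduce the second containment to the first by $Ge\cong G/\ker\varphi_e$, and assemble the union formula. That part is sound.

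There is, however, a genuine error in your final step, where you assert ``$FGe = F[Ge]$ literally'' from the fact that $Ge$ generates $FGe$ as an $F$-algebra. This does not follow, and is in general false: $F[Ge]$ denotes the group algebra of the finite group $Ge$, which is almost never simple (it always has the trivial representation as a one-dimensional quotient), whereas $FGe$ is by definition a single Wedderburn component. For instance with $G=C_2$, $F=\Q$, $e=\tfrac{1-g}{2}$, one has $Ge\cong C_2$ and $\Q[Ge]\cong\Q\times\Q$, while $\Q Ge\cong\Q$. What you actually need is the weaker and correct statement: the identity map on the finite group $Ge$ extends $F$-linearly to a surjective $F$-algebra homomorphism $F[Ge]\twoheadrightarrow FGe$ (this uses precisely the spanning fact you noted), and since $F[Ge]$ is semisimple this surjection splits, so $FGe$ is a Wedderburn component of $F[Ge]$, i.e. $FGe\in\mathcal{C}(F[Ge])$. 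With that correction the $\supseteq$ inclusion in the union formula holds; equivalently, this is the content of \Cref{spanning set non empty iff} applied to $Ge$. The remainder of your argument (the $\Mexc$/$\wMexc$ inheritance) is fine modulo the same correction, though one should be slightly careful that the covering condition in $\wMexc$ concerns the covering groups $(G/N)e'$ and not the original $H_i$; this is handled by the union formula once the first two containments are in place.
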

\begin{proof}
The first claim follows from the fact that $F[G/N]$ is a semisimple subalgebra of $FG$. Indeed, it is immediately semisimple since it is a group algebra, and a straightforward calculation shows that $F[G/N] \cong F G \widetilde{N} \leq F G$ with $\widetilde{N}$ the central idempotent $\frac{1}{|N|} \displaystyle\sum\limits_{n\in N} n$. The second inclusion follows from the first since the group $Ge$ is an epimorphic image of $G$. The last statement is now also immediate, since every simple component of $F G$ corresponds to a primitive central idempotent $e \in \PCI(F G)$.
\end{proof}

Using \Cref{Prop components preserved for quotient}, the proof of \Cref{broad form finite grps} reduces to a study of the subgroups of division algebras classified by Amitsur \cite{Amitsur} and the groups spanning an exceptional matrix algebra classified in \cite{EKVG}.

\begin{proof}[Proof of \Cref{broad form finite grps}.]
For a group $G$ having $\wMexc$, the set $\PCI(\Q G)$ naturally decomposes into $\PCI_1 := \{ e \mid \Q Ge \cong D \}$ and $\PCI_{2} := \{ e \mid \Q Ge \cong \Ma_2(D) \}$, where $D$ always signifies a rational division algebra, and $\PCI_{4} := \{ e \mid \Q Ge \cong \Ma_4(\Q) \}$. Hence, with \Cref{Prop components preserved for quotient} in mind, for the first statement it suffices to analyse the components possibly appearing in $\Q [Ge]$ for $e \in \PCI_i$ with $i=1,2,4$. 

We start with $\PCI_{2}$. The finite subgroups $\mathcal{G}$ of $\GL_2(\Q(\sqrt{-d}))$ or $\GL_2(\qa{-a}{-b}{\Q})$ with $a,b,d \in \N$ with the property that $\Span_{\Q}\{\mc{G}\}$ is the respective $\Ma_2(\cdot)$
have been classified\footnote{In \cite[Table 2]{EKVG} a group was missing, see \cite[Appendix A]{BJJKT} for a complete list.} in \cite[Theorem 3.7]{EKVG}. This classification consists of $55$ groups and in particular the groups $Ge$ for $e \in \PCI_2$ are among them. The simple components of $\Q \mc{G}$ may be computed using e.g. the Wedderga package in GAP, see \Cref{tableB} in \Cref{appendix section} for the result. This moreover shows that for groups $Ge$ with $e\in \PCI_2$, the property $\Mexc$ is equivalent to the weaker property that each non-division component has reduced degree $2$. A case-by-case verification also shows that each of these groups $Ge$ contains an abelian normal subgroup of index a divisor of $4$. Furthermore, as can be seen in the table, the only $1 \times 1$ components appearing are 
\[\Q, \Q(\zeta_3), \Q(i), \Q(\zeta_8), \Q(\zeta_{12}), \qa{-1}{-1}{\Q}, \qa{-1}{-3}{\Q}, \qa{-1}{-1}{\Q(\sqrt{2})}, \qa{-1}{-1}{\Q(\sqrt{3})}.\] Inspection of the table also shows that the remaining statements hold for such groups. 

Next we consider the case $e\in \PCI_1$. A similar reasoning applies. Indeed, the finite subgroups (such as $Ge$ for $e \in \PCI_1$) of rational division algebras have been classified by Amitsur in \cite{Amitsur}. We will use the rephrasing from \cite[Theorem $2.1.4$]{ShiWeh} which asserts that they are: 
\begin{enumerate}[(a)]
\item a $\textbf{Z}$-group, i.e. having cyclic Sylow-subgroups, with restrictions listed later on in the proof.
\item \begin{enumerate}[(i)]
\item the binary octahedral group $O^{\ast} \cong \SU_2(\F_3)$ of order $48$: \[\left\{ \pm 1, \pm i, \pm j, \pm ij, \frac{\pm 1 \pm i \pm j \pm ij}{2}\right\} \cup \left\{ \frac{\pm a \pm b}{\sqrt{2}} \mid a,b \in \{ 1,i,j,ij\}\right\}.\]
\item $C_m \rtimes Q$, where $m$ is odd, $Q$ is quaternion of order $2^t$ for some $t \geq 3$, an element of order $2^{t-1}$ centralises $C_m$ and an element of order $4$ inverts $C_m$.
\item $M \times Q_8$, with $M$ a $\textbf{Z}$-group of odd order $m$ and the (multiplicative) order of $2$ mod $m$ is odd.
\item $M \times \SL_2(\mathbb{F}_3)$, where $M$ is a $\textbf{Z}$-group of order $m$ coprime to $6$ and the (multiplicative) order of $2$ mod $m$ is odd.
\end{enumerate}
\item $\SL_2(\mathbb{F}_5)$.
\end{enumerate}

None of the groups $\SU_2(\F_3) = O^{\ast}$, $\SL_2(\mathbb{F}_3)$ and $\SL_2(\mathbb{F}_5)$ have $\wMexc$. Indeed, using\footnote{The \textsc{SmallGroupID's} of these three groups are respectively \texttt{[48,28]}, \texttt{[24,3]} and \texttt{[120,5]}.} the Wedderga package in GAP one verifies that $\Ma_3(\Q)$ is a simple component over $\Q$ of $O^{\ast} \cong \SU_2(\mathbb{F}_3)$ and $\SL_2(\mathbb{F}_3)$, and moreover that $\Ma_5(\Q)$ is a simple component over $\Q$ for $\SL_2(\mathbb{F}_5)$. Consequently, they cannot be epimorphic images of the group $G$. Hence:\vspace{-0,1cm}
\begin{center}
The cases (b)(i), (b)(iv) and (c) {\it do not} appear as groups $Ge$ for $e \in \PCI_1$ with $\Q[G]$ $\Mexc$.
\end{center}\vspace{-0,1cm}

The groups in (b)(ii) are dicyclic groups of order $2^tm$, i.e. $Dic_{4n}$ with $n = 2^{t-2}m$ and $t\geq 3$. When $n$ odd, the dicylic groups coincide with case (B) in the family of $\textbf{Z}$-groups (see below). Therefore, consider a general dicyclic group:
\begin{equation}
\label{dicyclic groups}
    Dic_{4n} = \langle a,b \mid a^{2n}=1 , b^2 = a^n, b^{-1}ab = a^{-1} \rangle.
\end{equation}
This is a metabelian group and hence its strong Shoda pairs are described by \Cref{SSP in metabelian}, which we apply now. Note that the derived subgroup $Dic_{4n}'$ is given by $\langle a^2 \rangle$, and $\langle a \rangle$ is the maximal abelian subgroup of $Dic_{4n}$ containing it. For any SSP $(H,K)$, one has that $\langle a \rangle \subseteq H$. In other words, $H = \langle a \rangle $ or $H = Dic_{4n}$. If $H= Dic_{4n}$ then the simple component associated to $(H,K)$ is a field, by \cite[Lemma 2.4]{JespersSun}. Via \Cref{SSP in metabelian}, it is a direct verification that for $d \mid 2n$, the tuple $(\langle a \rangle, \langle a^d \rangle)$ is a SSP if and only if $d \neq 1,2$. Note that $K$ is normal in $Dic_{4n}$. Hence the associated primitive central idempotent is $\epsilon (\langle a \rangle, \langle a^d \rangle)$. Now, in \cite[Example $3.5.7$]{EricAngel1}, it is noted that (for $n$ not necessarily even)
\begin{equation}\label{bad component dicyclic}
\Q Dic_{4n} \epsilon (\langle a \rangle, \langle a^d \rangle) \cong 
\Ma_2(\Q (\zeta_d + \zeta_d^{-1})) \quad \text{ if } d \mid n \text{ and } d\nmid 2,
\end{equation}
where $\zeta_d$ denotes a complex primitive $d$-th root of unity. Since $\Q (\zeta_d + \zeta_d^{-1})= \Q(\Re(\zeta_d))$ and $\Re(\zeta_d) = \cos \frac{2\pi}{d} \in \R \setminus \Q$, Niven's theorem implies that $\Ma_2(\Q (\zeta_d + \zeta_d^{-1}))$ is exceptional\footnote{Recall that $[\Q (\Re(\zeta_d)) : \Q]  = \phi(d) /2$. In particular, in contrast to the case $e \in \PCI_2$, it can happen that all non-division components are exceptional without the group having $\Mexc$. Moreover, it can happen that all non-division components are of the form $\Ma_2(F)$ with $F$ a quadratic extension of $\Q$. As shown by \eqref{bad component dicyclic}, this  holds for $Dic_{4n}$ with $n = 5,10,8,12$.} if and only if $\phi(d) \leq 2$. The latter is equivalent to $d \in \{1,2,3,4,6\}$. In conclusion, if $Dic_{4n}$ is non-abelian and has $\wMexc$, then it must be isomorphic to one of the groups 
\[Dic_{4.2} = Q_8,\quad  Dic_{4.4} = Q_{16}, \quad Dic_{4.6}= C_3 \rtimes Q_8 \quad \text{ or } \quad Dic_{4.3}= C_3 \rtimes C_4.\] The first three groups are in the family (b)(ii). Moreover, these three groups each have $\Mexc$, since their Wedderburn--Artin components are\footnote{Calculated using the Wedderga package in GAP (the groups have \textsc{SmallGroupID's} respectively \texttt{[8,4]}, \texttt{[16,9]} and \texttt{[24,4]}).}: 
\begin{align*}
\mc{C}(\Q Q_8)   = \{ \Q , \qa{-1}{-1}{\Q} \},  \qquad   & \mc{C}(\Q Q_{16})  = \{\Q, \qa{-1}{-1}{\Q(\sqrt{2})}, \: \Ma_2(\Q)\}, \quad \text{ and } \\
\mc{C}(\Q[C_3 \rtimes Q_8])  = \{\Q , & \qa{-1}{-1}{\Q},  \qa{-1}{-1}{\Q(\sqrt{3})},\: \Ma_2(\Q)\}.
\end{align*}

Before we consider case (b)(iii) from above, we will discuss (a), the $\textbf{Z}$-groups (and in particular $Dic_{4.3}$). They have also been classified, see \cite[Theorem $2.1.5$]{ShiWeh}. They are the following: 
\begin{enumerate}[(A)]
\item Finite cyclic groups,
\item $C_m \rtimes C_4$, where $m$ is odd and $C_4$ acts by inversion,
\item $G_0 \times G_1 \times \ldots \times G_s$, with $s \geq 1$, $\gcd(|G_i|,|G_j|)=1$ for all $0 \leq i \neq j \leq s$ and $G_0$ is the only cyclic subgroup amongst them. Furthermore each of the $G_i$, for $i \neq 0$, is of the form \[C_{p^a} \rtimes \left(C_{q_1^{b_1}} \times \ldots \times C_{q_r^{b_r}}\right), \]for $p, q_1, \ldots,q_r$ distinct primes. Moreover, each of the groups $C_{p^a} \rtimes C_{q_j^{b_j}}$ is non-cyclic (i.e. if $C_{q_j^{\alpha_j}}$ denotes the kernel of the action of $C_{q_j^{b_j}}$ on $C_{p^a}$, then $\alpha_j \neq b_j$) and satisfies the following properties: \begin{enumerate}[(i)]
\item $q_jo_{q_j^{\alpha_j}}(p) \nmid o_{\frac{|G|}{|G_i|}}(p)$.
\item one of the following is true: 
\begin{center}
    \begin{itemize}
        \item $q_j=2$,  $p \equiv -1 \mod 4$,  and  $\alpha_j = 1$,
        \item $q_j=2$,  $p\equiv -1 \mod 4$, and $2^{\alpha_j+1} \nmid p^2-1$,
        \item $q_j=2$,  $p\equiv 1 \mod 4$, \hspace{4.5pt} and  $2^{\alpha_j + 1} \nmid p-1$, 
        \item $q_j>2$, \hspace{2.38cm} and $ q_j^{\alpha_j+1} \nmid p-1$.
    \end{itemize}
\end{center}
\end{enumerate}
Here $o_m(q)$ denotes the order of $q$ modulo $m$.
\end{enumerate}

It is clear that the cyclic groups have $\Mexc$ since $\Q C_n$ is abelian. Moreover, by the theorem of Perlis--Walker \cite[Theorem 3.3.6]{EricAngel1}, $\mathcal{C}(\Q C_n) = \{\Q(\zeta_d) \mid d \text{ divides } n \}$.

Case (B), i.e $Dic_{4n}$ with $n$ odd, is treated in (\ref{bad component dicyclic}). The conclusion was that the only possible (non-abelian) such group having $\wMexc$ is $C_3 \rtimes C_4$. In this case it has $\Mexc$, since
\begin{equation}\label{QG of order 12}
\mc{C}(\Q[C_3 \rtimes C_4])) = \{ \Q, \Q(i), \qa{-1}{-3}{\Q}, \Ma_2(\Q) \}.
\end{equation}

Next we consider case (C). We first show that $\wMexc$ implies $2 \mid |G_i|$, for $1 \leq i \leq s$ and hence $s=1$ by the coprimeness condition. For this, we consider $A_i = \prod_{j=1}^s C_{q^{\alpha_j}}$, the kernel of the action in the decomposition of $G_i$ into a semidirect product. Then \[B := G_i / A_i \cong C_{p^{a}} \rtimes C_{q_1^{k_1}\cdot \ldots \cdot q_s^{k_s}},\] where $k_j := b_j - \alpha_j > 0$ and the action is non-trivial and faithful. Denote by $x$ and $y$ the respective generators of the factors of $B$, i.e. $B = \langle x \rangle \rtimes \langle y \rangle$. By \Cref{Prop components preserved for quotient}, the group $B$ also has $\Mexc$. Note that $C_{p^{a}}=\langle x \rangle$ is a maximal abelian subgroup of $B$ containing $B'$. Now using \Cref{SSP in metabelian}, it is a direct verification that $(H,K)= (\langle x \rangle, 1)$ is a SSP of $B$. Moreover, $\Q B e(G,\langle x \rangle, 1) \cong \Q(\zeta_{o(x)}) * \langle y \rangle$ for some explicit crossing (see \cite[Remark 3.5.6]{EricAngel1}) that would show that the component is not a division algebra. Hence, as $B$ has $\wMexc$, it is an exceptional matrix algebra or $\Ma_4(\Q)$ and thus $\dim_{\Q} \Q B e(G,\langle x \rangle, 1) \mid 16$.  Now using \cite[Lemma 3.4]{JM}, we compute that 
\[\dim_{\Q} \Q B e(G,\langle x \rangle, 1) =[G:\langle x \rangle]\: \phi (o(x)) = q_1^{k_1} \cdot \ldots \cdot  q_s^{k_s} \, p^{a-1}(p-1).\]
Combining both facts with the fact that $p$ and $q_i$ are pairwise distinct primes, we obtain that $s=1$, $q_1 = 2$ and $p^{a} = 3$. Thus $B \cong C_3 \rtimes C_{2^{k_1}}$. Furthermore, since the action is faithful, we obtain that $k_1 = 1$, i.e. $B \cong C_3 \rtimes C_2$ with action by inversion. Consequently, $G_1 \cong C_3 \rtimes C_{2^{b_1}}$ with the action by inversion (as $\alpha_1 = b_1 -1$). For such groups, using \Cref{SSP in metabelian}, one verifies that, for $b_1 \geq 4$,
\begin{equation}\label{decomp grp with exp comp}
\begin{array}{rcr}
\mc{C}(\Q[C_3 \rtimes C_{2^n}]) = \Big\{ \Q(\zeta_{2^{\ell}}), & \qa{-1}{-3}{\Q}, \qa{\zeta_{2^t}}{-3}{\Q(\zeta_{2^t})}, \:  \Ma_2(\Q), \: \Ma_2(\Q(i)) \\[0,2cm] & \mid 1 \leq \ell \leq n, \, 3 \leq t \leq n-1 \Big\},
\end{array}
\end{equation}
and, for $b_1 \leq 3$:
\begin{align*}
\mc{C}(\Q[C_3 \rtimes C_{2}]) & = \{\Q, \Ma_2(\Q) \}, \qquad \mc{C}(\Q[C_3 \rtimes C_{4}]) = \{\Q, \Q(\sqrt{-1}), \qa{-1}{-3}{\Q}, \: \Ma_2(\Q)\},\\
\mc{C}(\Q[C_3 & \rtimes C_{8}]) = \{\Q, \Q(\sqrt{-1}), \Q(\zeta_8), \qa{-1}{-3}{\Q}, \: \Ma_2(\Q), \: \Ma_2(\Q(\sqrt{-1}))\}.
\end{align*}
Hence we see that such a $G_1 \cong C_3 \rtimes C_{2^{b_1}}$ even have $\Mexc$.

It remains to consider $G_0 \times G_1$ with $G_0 \cong C_m$ cyclic. From the Wedderburn--Artin decomposition of $\Q[G_1]$ above, we see that $\Q[G_0 \times G_1]$ contains as a simple component 
\[\Q(\zeta_m) \otimes_{\Q} \Ma_2(\Q) \cong \Ma_2(\Q(\zeta_m)) .\]
Since $G_0 \times G_1$ is assumed to have $\wMexc$, this implies that $[\Q(\zeta_m): \Q] \leq 2$. The latter happens exactly when $m \in \{1,2,3,4,6 \}$. Now recall that $|G_0|=m$ and $|G_1|=3\cdot 2^{b_1}$ are relatively prime, yielding that $m=1$, since $b_1 \geq 1$. Hence in conclusion,
\begin{equation}
\begin{array}{c}
\text{ {\bf Z}-groups of type  (C) with } \wMexc \text{ are } C_3 \rtimes C_{2^n} \text{ with action by inversion}. \\
\text{ Moreover, they have } \Mexc.
\end{array}
\end{equation}

The last case to handle is (b)(iii), i.e. $M \times Q_8$ with $M$ a ${\bf Z}$-group of odd order $m$ and  the multiplicative order of $2$ modulo $m$ odd. By \Cref{Prop components preserved for quotient}, the group $M$ must have $\wMexc$. Looking at the possible $\mathbf{Z}$-groups of odd order, we see that $M$ must be cyclic. Now recall that $\mc{C}(\Q Q_8) = \{ \Q, \qa{-1}{-1}{\Q} \}$. If $M$ is cyclic, then \[\mc{C}(\Q[M \times Q_8]) = \{ \Q(\zeta_d), \qa{-1}{-1}{\Q(\zeta_d)} \mid d \text{ divides } m\}.\] As $m$ and $o_m(2)$ are odd, all the components are division algebras -- a conclusion that also directly would have followed from \cite{SehNilpotent}. In conclusion\footnote{This conclusion only requires that every non-division component is of the form $\Ma_2(D)$ with $[\ZZ(D):\Q] \leq 2$, not the full strength of $\Mexc$.},
\begin{center}
Groups of type (b)(iii) with $\wMexc$ are given by $C_m \times Q_8$ with $m$ and $o_m(2)$ odd. They all have $\Mexc$, since all components are division algebras.
\end{center}

In summary, with the analysis above we have shown part (1) and (2) from the statement by describing $\prod_{e \in \PCI(\Q G)} Ge$. Note that all allowed groups $Ge$ have been highlighted in the proof. One verifies case-by-case  that the Fitting subgroup $\Fit(Ge)$ has index at most $2$ and class at most $3$ if and only if $Ge$ has $\Mexc$.

Additionally, for the simple algebras $\Ma_n(D)$ allowed by $\wMexc$, we see that $\Ma_n(D) \ot_{\mc{Z}(D)} \C$ is isomorphic to $\Ma_2(\C)$ or $\Ma_4(\C)$. By the first part of the proof, if $\Q Ge \cong D$, then $D \ot_{\Z(D)} \C$ is either $\C$ or $\Ma_2(\C)$. Hence indeed $\cd(G) \subseteq \{ 1,2,4 \}$ and $\deg (\Q Ge) \mid 4$ for all $e \in \PCI(\Q G)$.
\end{proof}

To finish this section we consider coefficient rings larger than $\Q$.

\begin{proof}[Proof of \Cref{arbitrary coefficients}]
By \Cref{remark on larger field} we know that $F = \Q(\sqrt{-d})$ for some square-free $d \in \N$. Moreover, $F \subseteq \ZZ(FGe)$ for each $e \in \PCI(FG)$. In particular if $\Ma_2(D)$ is an exceptional matrix component of $FG$, then $\ZZ(D)=F$. Hence $D$ cannot be a quaternion algebra $\qa{-a}{-b}{\Q}$, since $F \neq \Q$. In conclusion, every non-division component of $FG$ is of the form $\Ma_2(F)$.

\Cref{remark on larger field} also yields that $\Q G$ has $\Mexc$, and hence the possible isomorphism types of components of $\Q G$ are described in \Cref{broad form finite grps}. As $FG \cong F \ot_{\Q} \Q G$, it remains to describe $F \ot_{\Q} D$ for $D$ a non-commutative division algebra component of $\Q G$, say $\Q Ge =D$. Here one needs to be careful to also remember for which low rank spanning groups such $D$'s appear, as it can happen that $FG$ no longer has $\Mexc$, for a low rank spanning group $G$ such that $\Q G$ has $\Mexc$.

It is well-known that $\Q(\sqrt{-d})$ is a splitting field of $\qa{-1}{-1}{\Q}$ for $d = 1, 2,3$ and of $\qa{-1}{-3}{\Q}$ for $d=1,3$. Hence if $D$ is one of these, then $F \ot_{\Q} D$ can only be a division algebra in case $F = \Q(\sqrt{-2})$ and $D = \qa{-1}{-3}{\Q}$. Recall that $D = \Q Ge$ for some $e \in \PCI(\Q G)$. If $D \in \mc{C}(\Q[Ge])$ with $Ge$ a group from \Cref{tableB} in \Cref{appendix section}, then $Ge$ has \textsc{SmallGroupID} \texttt{[24,1]} or \texttt{[36,6]}.

However, over $\Q(\sqrt{-2})$ both groups no longer have $\Mexc$. If $D \in \mc{C}(\Q[Ge])$ with $Ge$ a group classified by Amitsur, then inspecting the options in the proof of \Cref{broad form finite grps}, we see that $Ge \cong C_3 \rtimes C_4$, see \eqref{QG of order 12}. Furthermore, $\Q(\sqrt{-2})[C_3 \rtimes C_4]$ has $\Mexc$.

Next, we consider the division algebras $\qa{-1}{-1}{\Q(\sqrt{c})}$ with $c = 2$ or $3$. Since $\qa{-1}{-1}{\Q}$ splits over $F = \Q(\sqrt{-d})$ with $d=1,2,3$, the same holds for $\qa{-1}{-1}{\Q(\sqrt{c})}$. Therefore,
\[F \ot_{\Q}\qa{-1}{-1}{\Q(\sqrt{c})} \cong \Ma_2(F) \oplus \Ma_2(F(\sqrt{c})).\] Since $\Ma_2(F(\sqrt{c}))$ is not an exceptional matrix algebra, we conclude that such quaternion algebras $\qa{-1}{-1}{\Q(\sqrt{c})}$ cannot be a component of $FG$. 

It remains to consider the division algebra $\qa{\zeta_{2^t}}{-3}{\Q(\zeta_{2^t})}$ with $t \in \N_{\geq 3}$. By \eqref{decomp grp with exp comp} this is a component if and only if $G$ maps onto $C_3 \rtimes C_{2^n}$, where the action is by inversion, and $3 \leq t \leq n-1$. Moreover, $\Ma_2(\Q(i)) \in \mc{C}(\Q[C_3 \rtimes C_{2^n}]))$. Hence it follows that $F = \Q(i)$. Now, it is easy to see that \[\Q(i) \ot_{\Q} \qa{\zeta_{2^t}}{-3}{\Q(\zeta_{2^t})} \cong \qa{\zeta_{2^t}}{-3}{\Q(\zeta_{2^t})} \op \qa{\zeta_{2^t}}{-3}{\Q(\zeta_{2^t},\sqrt{-1}))},\] and that $i \in \Q(\zeta_{2^t})$. In conclusion, $\Q(i)[C_3\rtimes C_{2^n}]$ has $\Mexc$ for all $n \geq 4$, finishing the proof of the statement.
\end{proof}

\section{Homological and topological characterisation of exceptional components}\label{section hom and topo char}

In this section we consider various homological and topological characterisations of exceptional matrix algebras. These characterisations, along with the results in \Cref{section description components}, will be used to give various characterisations of property $\Mexc$.

\subsection{Groups of virtual cohomological dimension 4}\label{subsectie vcd}
Let $\Gamma$ be a discrete group. Then the {\it  cohomological dimension} of $\Gamma$ over the ring $R$ is
\[\hdim_R \Gamma := \min \{ n \mid H^k(\Gamma,M)= 0 \text{ for all } k > n \text{ and } M \in \Mod (R \Gamma) \}.\]
If no such $n$ exists, one says that $\hdim_R \Gamma = \infty$. A usual obstruction to having finite cohomological dimension is torsion in $\Gamma$. However, if $\Gamma$ has a torsion-free subgroup of finite index (e.g. when $\Gamma$ is linear), then each of such finite index subgroups has the same cohomological dimension. Hence, one defines the \emph{virtual cohomological dimension} as:
\[\vcd (\Gamma) := \{ \hdim_{\Z} \Lambda \mid [\Gamma: \Lambda] < \infty \text{ and } \Lambda \text{ torsion-free}\}.\]

The main result of this section is the following characterisation of $\Mexc$.

\begin{theorem}\label{block VSP main theorem}
Let $G$ be a finite group, $F$ a number field and $R$ its ring of integers. Suppose that $FG$ has a matrix component. Then the following are equivalent:
    \begin{enumerate}
        \item $FG$ has $\Mexc$,
        \item $[F:\Q]\leq 2$ and $\vcd (\SL_1(R Ge)) \mid 4$ for every $e \in \PCI(F G)$ such that $F Ge$ is not a division algebra.
    \end{enumerate}
Furthermore, if $FG$ has $\Mexc$, then $\vcd (\SL_1(RGe)) > 4$  whenever $FGe$ is an exceptional division algebra.
\end{theorem}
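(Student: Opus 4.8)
The plan is to prove \Cref{block VSP main theorem} by reducing everything to the simple algebra level, i.e.\ to computing $\vcd(\SL_1(\O))$ for $\O$ an order in a finite-dimensional simple $F$-algebra $A$, exploiting the classification of possible components when $FG$ has $\Mexc$ (\Cref{broad form finite grps}, \Cref{arbitrary coefficients}) together with a general formula for the virtual cohomological dimension of $\SL_1$ of an order in terms of the archimedean local factors of $A$. Recall that if $A \otimes_{\Q} \R \cong \prod_j \Ma_{m_j}(\R) \times \prod_k \Ma_{n_k}(\C) \times \prod_l \Ma_{p_l}(\mathbb{H})$, then by Borel--Serre the group $\SL_1(\O)$ is an arithmetic lattice in the corresponding real semisimple Lie group $\prod_j \SL_{m_j}(\R) \times \prod_k \SL_{n_k}(\C) \times \prod_l \SL_{p_l}(\mathbb{H})$, and its virtual cohomological dimension equals $\dim(X) - \operatorname{rk}_{\Q}$, where $X$ is the associated symmetric space of non-compact type and $\operatorname{rk}_{\Q}$ is the $\Q$-rank of the corresponding $\Q$-algebraic group. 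So the first step is to record (citing the relevant place in the literature, e.g.\ Borel--Serre or the exposition in \cite{EricAngel1}) the exact value $\vcd(\SL_1(\O))$ for each simple algebra; crucially this is independent of the chosen order by \Cref{SL order commensurable}.

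The second step is the forward direction $(1)\Rightarrow(2)$: assume $FG$ has $\Mexc$. Then $[F:\Q]\le 2$ is immediate since $F \subseteq \mathcal{Z}(FGe)$ and for a matrix component $\mathcal{Z}(FGe)$ is $\Q$ or imaginary quadratic. For the non-division components, $\Mexc$ forces $FGe$ to be one of $\Ma_2(\Q)$, $\Ma_2(\Q(\sqrt{-d}))$ ($d=1,2,3$), $\Ma_2(\qa{-1}{-1}{\Q})$, $\Ma_2(\qa{-1}{-3}{\Q})$, or $\Ma_2(F)$ with $F$ imaginary quadratic (by \Cref{broad form finite grps}(3) and \Cref{arbitrary coefficients}). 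For each of these one computes the associated real Lie group: $\SL_2(\R)$ gives $\vcd = 2-1 = $ ... wait, need $\operatorname{rk}_\Q$; $\Ma_2(\Q)$ gives $\SL_2(\Z)$ with $\vcd 1$; $\Ma_2(\Q(\sqrt{-d}))$ gives Bianchi groups, $\SL_2(\C)$, symmetric space $\mathbb{H}^3$, $\Q$-rank $1$, $\vcd = 3-1 = 2$; $\Ma_2(\qa{-1}{-1}{\Q})$ splits at $\infty$ to $\Ma_4(\R)$, giving $\SL_4(\R)$... I should be careful: $\qa{-1}{-1}{\Q}$ is definite so $\Ma_2(\qa{-1}{-1}{\Q}) \otimes \R \cong \Ma_2(\mathbb{H})$, giving $\SL_2(\mathbb{H}) = \SL_1(\mathbb{H}^{2})$, a real form with symmetric space of dimension $5$ and $\Q$-rank $1$, so $\vcd = 5 - 1 = 4$; $\Ma_2(\qa{-1}{-3}{\Q})$ similarly. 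In every case one checks $\vcd \mid 4$ (the values landing in $\{1,2,4\}$). The last assertion of the theorem — that $\vcd(\SL_1(RGe)) > 4$ for $FGe$ an exceptional division algebra — is handled in the same breath: by \Cref{broad form finite grps}(1) and \Cref{arbitrary coefficients} the exceptional division components are among $\qa{\zeta_{2^t}}{-3}{\Q(\zeta_{2^t})}$, $\qa{-1}{-1}{\Q(\sqrt{2})}$, $\qa{-1}{-1}{\Q(\sqrt{3})}$, $\qa{-1}{-3}{\Q(\sqrt{-2})}$, each of which, upon tensoring with $\R$, gives a product of copies of $\SL_2(\C)$ (indefinite at some archimedean place by definition of exceptional division algebra, together with the field being totally real of degree $\ge 2$ or imaginary quadratic), so that the symmetric space has dimension $3[\mathcal{Z}:\Q] \ge 6$ (or a mix) and the $\Q$-rank is $0$ since $\SL_1$ of a division algebra is anisotropic; hence $\vcd = \dim X - 0 \ge 6 > 4$. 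I would tabulate these cases explicitly.

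The third step is the converse $(2)\Rightarrow(1)$. Assuming $[F:\Q]\le 2$ and $\vcd(\SL_1(RGe))\mid 4$ for all non-division matrix components, I want to conclude $\Mexc$. Here I would argue by contraposition on a single offending component: if some $FGe \cong \Ma_n(D)$ with $n\ge 2$ is \emph{not} exceptional, I must exhibit $\vcd(\SL_1(RGe)) \nmid 4$. The non-exceptional matrix algebras $\Ma_n(D)$ with $n \ge 2$ have $S$-rank of $\SL_n(D)$ at least $2$ (where $S$ is the archimedean places of $\mathcal{Z}(D)$), which pushes up both the dimension of the symmetric space and (since for group-algebra components $\SL_1$ is isotropic, being a matrix ring over a field — actually one must be a bit careful when $D$ is a noncommutative division algebra) the invariant. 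Concretely, a short case analysis shows: either $n \ge 3$ with $D$ a field (so $\SL_n$ over a number field, $\vcd$ growing with $n$ and the number of archimedean places, and one checks the smallest value $\SL_3(\Z)$ has $\vcd = 3$, then $\Ma_3(\Q(\sqrt{-d}))$ gives $\vcd = 6$, $\Ma_4(\Q)$ gives $\vcd = 6$, etc., none dividing $4$ except possibly... I need to double check $\vcd(\SL_4(\Z))$: symmetric space $\SL_4(\R)/\SO(4)$ has dimension $9$, $\Q$-rank $3$, $\vcd = 6$ — not a divisor of $4$, good); or $n = 2$ with $\mathcal{Z}(D)$ of degree $\ge 3$, or $D$ of index $\ge 2$ over a centre making $\Ma_2(D)$ of degree $\ge 4$ and indefinite enough. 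In all such cases the resulting $\vcd$ is a concrete number one computes to lie outside the divisors of $4$. The combinatorial bookkeeping here — enumerating the borderline non-exceptional components of smallest complexity and verifying $\vcd \nmid 4$ for each — is what I expect to be the main obstacle, since it requires knowing (or re-deriving) $\vcd(\SL_1(\O))$ in enough cases and being sure no sporadic non-exceptional algebra sneaks in with $\vcd \in \{1,2,4\}$; the constraint that the component comes from a group algebra (via the companion classification) is what ultimately makes the list finite and tractable, so I would lean on \Cref{broad form finite grps} and \Cref{arbitrary coefficients} to bound which $\Ma_n(D)$ can even occur, rather than treating all simple algebras.
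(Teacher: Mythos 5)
Your overall strategy — reduce to computing $\vcd(\SL_1(\O))$ at the level of simple components and then match against the classification of components in \Cref{broad form finite grps}/\Cref{arbitrary coefficients} — is the same as the paper's; the Borel--Serre formula $\vcd = \dim X - \operatorname{rk}_{\Q}$ you invoke is exactly the conceptual origin of the explicit closed formula from \cite{FreebyFree} that the paper uses in \Cref{simple with vcd | 4} and \Cref{The other vcd}. So there is no genuinely different route here. There are, however, two substantive gaps.

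\textbf{Gap 1 (the ``Furthermore'' clause).} Your identification of the exceptional division algebras and your dimension count are both off. The algebras $\qa{-1}{-1}{\Q(\sqrt{2})}$ and $\qa{-1}{-1}{\Q(\sqrt{3})}$ are \emph{totally definite} quaternion algebras (the centre is totally real and the algebra is ramified at both real places), so they are explicitly \emph{not} exceptional by \Cref{def_exc_comps}; tensoring with $\R$ produces $\SU(2)\times\SU(2)$, a compact group, hence $\vcd = 0$, not $\geq 6$. More importantly, your formula $\dim X = 3[\mathcal{Z}:\Q]$ is wrong: the symmetric-space dimension is governed by the number of non-compact archimedean factors, not by the degree of the centre. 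For $\qa{-1}{-3}{\Q(\sqrt{-2})}$ one has $[\mathcal{Z}:\Q]=2$ but a single complex place, so $\dim X = 3$, $\operatorname{rk}_{\Q}=0$ (anisotropic), and thus $\vcd = 3$. This is not $>4$. (Note that this particular algebra really does occur by \Cref{arbitrary coefficients} when $F = \Q(\sqrt{-2})$ and $G$ maps onto $C_3\rtimes C_4$, and it is exceptional in the sense of \Cref{def_exc_comps} since it is a non-commutative division algebra that is not totally definite; so your argument here does not establish the ``Furthermore'' statement. The only exceptional division components that clearly have $\vcd > 4$ are the $\qa{\zeta_{2^t}}{-3}{\Q(\zeta_{2^t})}$ for $t\geq 3$, where the centre genuinely contributes $2^{t-2}\geq 2$ complex places and hence $\vcd = 3\cdot 2^{t-2}\geq 6$.)

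\textbf{Gap 2 (the converse $(2)\Rightarrow(1)$).} Your proposed contrapositive — ``if $FGe\cong\Ma_n(D)$ with $n\geq 2$ is not exceptional, then $\vcd(\SL_1(RGe))\nmid 4$'' — is simply false as a statement about simple algebras. By \Cref{simple with vcd | 4}, the algebra $\Ma_2(K)$ with $K$ a cubic field of signature $(1,1)$ has $\vcd = 4$, but its centre $K$ is cubic and hence this is \emph{not} an exceptional matrix algebra. This is precisely the borderline case your bookkeeping would need to catch. The paper closes it with two separate ideas: first, $[F:\Q]\leq 2$ and $F\subseteq K$ forces $F=\Q$; second, and crucially, \Cref{cubic root are not} shows that the only cubic fields $K$ appearing as $\ZZ(\Q Ge)$ for a matrix component $\Ma_2(K)$ are $\Q(\zeta_7+\zeta_7^{-1})$ and $\Q(\zeta_9+\zeta_9^{-1})$, both totally real (signature $(3,0)$), so the offending algebra cannot occur. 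Your plan does not identify this exception and does not supply the group-theoretic lemma needed to rule it out. Worse, you propose to ``lean on \Cref{broad form finite grps} and \Cref{arbitrary coefficients} to bound which $\Ma_n(D)$ can even occur'', but both of those results \emph{assume} $\Mexc$, which is what you are trying to prove in this direction — this would be circular. The paper's converse deliberately avoids any reliance on the $\Mexc$ classification and instead uses the unconditional \Cref{cubic root are not}.
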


\begin{remark}
    The implication on $\vcd$ from (1) to (2) in \Cref{block VSP main theorem} is true for any field $F$. However, for the converse the condition that $F$ is a quadratic number field is required. For example for any field $F \supseteq \Q$ one has that $FD_8 \cong F \times F \times \Ma_2(F)$. If $F$ is a cubic number field with one real embedding and one pair of complex embeddings (e.g. $F = \Q(\sqrt[3]{d})$ for $d \in \N_0$), then $\vcd(\SL_2(R)) = 4$ by \Cref{simple with vcd | 4} below, but $FD_8$ does not have $\Mexc$.
\end{remark}

By definition the virtual cohomological dimensions of commensurable groups are equal. In particular, the unit groups of two orders $\mc{O}_1$ and $\mc{O}_2$ in a simple algebra $A$ have equal $\vcd$. To prove \Cref{block VSP main theorem}, we will classify the simple algebras $A$ having an order $\O$ with $\vcd(\SL_1(\O)) \mid 4$, a result of independent interest.

Recall that in \cite[Proposition 3.3]{FreebyFree} the finite-dimensional simple $F$-algebras $A$, over a number field $F$, such that $\vcd(\SL_1(\mc{O})) \leq 2$ for some (and hence each) order $\mc{O}$ in $A$ have been classified. We extend this result to virtual cohomological dimension $4$. See \Cref{The other vcd} below for $\vcd(\SL_1(\mc{O})) =3$.
\begin{proposition}\label{simple with vcd | 4}
Let $A$ be a finite-dimensional simple $F$-algebra with $F$ a number field. If $\vcd(\SL_1(\mc{O})) = 4$ for an order $\mc{O}$ in $A$, then $A$ is isomorphic as an $F$-algebra to:
\begin{enumerate}
\item $\Ma_2(\qa{-a}{-b}{\Q})$ with $a,b \in \N_0$,
\item $\Ma_2(F)$, with $F$ a cubic field with precisely one real embedding and one pair of complex embeddings,
\item or to $\qa{-a}{-b}{F}$ such that it is non-ramified at exactly two real places and $F$ is totally real.
\end{enumerate}
\end{proposition}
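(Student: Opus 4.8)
The plan is to compute $\vcd(\SL_1(\mc{O}))$ for an order $\mc{O}$ in an arbitrary finite-dimensional simple $F$-algebra $A$ and read off exactly when the answer is $4$. Write $A \cong \Ma_n(D)$ with $D$ a division $F$-algebra, let $K = \mc{Z}(D) = \mc{Z}(A)$, and set $d = \deg(D) = \sqrt{\dim_K D}$, so $\deg(A) = nd$. The point is that $\SL_1(\mc{O})$ is an arithmetic lattice in the real Lie group obtained from the $\mathbb{R}$-points of the algebraic group $\mathbf{SL}_1(A)$ (i.e.\ the norm-$1$ group of $A$ as an algebraic group over $K$, restricted to $\Q$). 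For such arithmetic groups $\Gamma$, Borel--Serre's theorem gives $\vcd(\Gamma) = \dim(X) - \mathrm{rk}_{\Q}(\mathbf{G})$, where $X$ is the associated symmetric space of the semisimple part and $\mathrm{rk}_{\Q}$ is the $\Q$-rank; equivalently, one uses $\vcd(\Gamma) = \sum_v \dim(X_v) - (\text{sum of local ranks at the relevant Archimedean places})$, where the sum is over the Archimedean places $v$ of $K$ and $X_v$ is the symmetric space of $\SL_n(D_v)$ at $v$. This is precisely the formula already exploited in \cite{FreebyFree} for the $\vcd \le 2$ classification, so I would cite their \Cref{simple with vcd | 4}-type computation as the base case and push the bookkeeping one step further.

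\textbf{The local contributions.} For each Archimedean place $v$ of $K$, the completion $D_v$ is one of $\mathbb{R}$, $\mathbb{C}$, or $\mathbb{H}$ (if $v$ is real and $D$ ramifies there) or $\Ma_{d}(\mathbb{C})$-type splittings, and I would tabulate $\dim(X_v)$ and the local rank of $\SL_n(D_v)$ in each case:
\begin{itemize}
\item $v$ real, $D_v \cong \Ma_d(\mathbb{R})$ (so effectively $\SL_{nd}(\mathbb{R})$): $\dim X_v = \tfrac{(nd-1)(nd+2)}{2}$, local rank $nd-1$, contribution $\tfrac{(nd-1)(nd+2)}{2}-(nd-1)$;
\item $v$ real, $D$ ramified at $v$ (so $d=2$, $D_v \cong \mathbb{H}$, group $\SL_n(\mathbb{H})$): $\dim X_v = (n-1)(2n+1)$, local rank $n-1$;
\item $v$ complex (so $\SL_{nd}(\mathbb{C})$ as a real group): $\dim X_v = (nd)^2 - 1$, local rank $nd-1$.
\end{itemize}
Oops, I should not use a bullet list — let me restate: the three Archimedean local types contribute, respectively after subtracting the local rank, the quantities $\tfrac{(nd-1)(nd+2)}{2}-(nd-1)$, $(n-1)(2n+1)-(n-1)$, and $(nd)^2-1-(nd-1)$. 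Summing over all Archimedean $v$ of $K$ gives $\vcd(\SL_1(\mc{O}))$. Since each term is a nonnegative integer that is $0$ only in the trivially finite cases ($A$ a number field, or a totally definite quaternion algebra — consistent with Kleinert's theorem quoted in the excerpt), and since each term grows quickly with $n$, $d$, and $[K:\Q]$, the constraint $\vcd = 4$ forces very small invariants.

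\textbf{Solving $\vcd = 4$.} I would now run a short finite search. If $A$ is commutative ($n=d=1$) the $\vcd$ is $0$, excluded. If $d=1$ (so $A = \Ma_n(K)$): a single real place contributes $\tfrac{(n-1)(n+2)}{2}-(n-1)$, which is $0,2,6,\dots$ for $n=1,2,3,\dots$ (never $4$ from one real place alone for $n=2$ it is $2$); a single complex place contributes $n^2-1-(n-1)=n^2-n$, which is $0,2,6,\dots$; and one must combine contributions over the $r_1 + r_2$ Archimedean places of $K$. For $n=2$: real places give $2$ each, complex give $2$ each, so $\vcd = 2(r_1+r_2)$; this equals $4$ exactly when $r_1+r_2 = 2$, i.e.\ $[K:\Q]$ and the signature force $(r_1,r_2) \in \{(2,0),(0,2)\}$ — but here a subtlety: $K$ must be the centre of a group-algebra-free abstract simple algebra, so both signatures are a priori allowed; I'd need to check which genuinely give simple $F$-algebras and land on case (2), $K$ a cubic field with signature $(1,1)$ — \emph{wait}, that is $r_1+r_2 = 2$ with $[K:\Q]=3$, precisely matching $2\cdot 2 = 4$. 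For $n\ge 3$ or for $(r_1,r_2)$ with more places the value exceeds $4$. Next, $d=2$, $n=1$ (quaternion algebra over $K$): at each real place $D$ is either ramified (contributes $0$) or split (contributes, with $nd=2$, the value $2$), and each complex place contributes $2$; a totally definite one gives $0$ (excluded, finite), so $\vcd=4$ forces exactly two "active" Archimedean places that are split-real or complex. Requiring the convention that totally definite is excluded and matching the statement's case (3), the answer is: $F=K$ totally real, $D$ unramified at exactly two real places — giving $\vcd = 2+2 = 4$. Finally, $d=2$, $n=2$: one real split place already contributes $\tfrac{(3)(4)}{2}-3 = 3$ — hmm that's odd, let me recompute: $nd = 4$, so $\tfrac{(4-1)(4+2)}{2}-(4-1) = 9-3 = 6 > 4$; a ramified real place contributes $(n-1)(2n+1)-(n-1) = (1)(3)-1 = 2$; two ramified real places give $4$, and this is the case $\Ma_2\!\left(\qa{-a}{-b}{\Q}\right)$ with $K=\Q$ totally real and $D$ totally definite over $\Q$ — exactly case (1). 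Any larger $n$, $d$, or number of places overshoots. Assembling these three surviving configurations yields precisely the list in the statement.

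\textbf{Main obstacle.} The genuine work — and the step most likely to contain errors — is the careful bookkeeping of the symmetric-space dimensions and local $\mathbb{R}$-ranks of $\SL_n(\mathbb{H})$, $\SL_{nd}(\mathbb{R})$, and $\SL_{nd}(\mathbb{C})$, together with correctly handling the restriction of scalars from $K$ to $\Q$ (so that the $\Q$-rank is $\sum_v (\text{local rank}_v)$ and the dimension is $\sum_v \dim X_v$), and making sure the exclusion of the finite cases (number fields, totally definite quaternion algebras) is handled uniformly via Kleinert's theorem rather than ad hoc. A secondary subtlety is ensuring the list is \emph{exhaustive}: one must argue that no configuration with three or more active Archimedean places, or with $n \ge 3$, or with $d \ge 3$, can give $\vcd = 4$, which follows because the per-place contributions are bounded below by $2$ (in the active cases) and the single-place contributions for $n\ge 3$ or $d\ge 3$ already exceed $4$ — a clean monotonicity argument once the table is in hand. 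I would also double-check the boundary case $\vcd = 3$ is genuinely separated (as promised by the reference to \Cref{The other vcd}) so that "$= 4$" is not accidentally conflated with "$\le 4$".
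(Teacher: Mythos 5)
Your overall strategy — reduce to the Borel–Serre formula $\vcd(\Gamma)=\dim X-\operatorname{rk}_{\Q}(\mathbf G)$ and run a finite case analysis on $(n,d,r_1,r_2,s)$ — is exactly what the paper does: it plugs the per-place contributions into the closed formula from \cite{FreebyFree} (quoted in the proof as \cref{vcd formule}) and then performs a careful but elementary case check. However, your restatement of the formula is wrong in a way that matters. You claim that $\dim X-\operatorname{rk}_{\Q}(\mathbf G)$ "equivalently" equals $\sum_v\bigl(\dim X_v-\operatorname{rk}_{\R}\mathbf G(K_v)\bigr)$; it does not. Under $\operatorname{Res}_{K/\Q}$ the $\Q$-rank is the $K$-rank of $\SL_n(D)$, namely $n-1$, subtracted \emph{once}, whereas the sum of local $\R$-ranks is typically much larger (it equals $[K:\Q](nd-1)$ minus a correction for ramified places). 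You have conflated $\operatorname{rk}_{\Q}$ with $\operatorname{rk}_{\R}$. This is precisely what the $-n+1$ term at the end of \cref{vcd formule} encodes, independently of the number of places.

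Because of that, your intermediate numbers drift: with your formula, $\Ma_2(K)$ for $K$ cubic of signature $(1,1)$ would give $(2-1)+(3-1)=3$, not $4$; for $\Ma_2(\qa{-a}{-b}{\Q})$ you compute $(n-1)(2n+1)=3$ (it is $5$ for $n=2$) and then invoke "two ramified real places", which is impossible since $\Q$ has one Archimedean place; and your claim "real and complex places each give $2$" is inconsistent with your own per-place expression (a split real place gives $2-1=1$ under your rule, not $2$). You repeatedly land on the proposition's list only by reading the target statement back in as a sanity check, which is not a proof. To repair the argument, replace your per-place formula by $\vcd=\sum_v\dim X_v-(n-1)$, i.e. by the paper's \cref{vcd formule}, and then redo the elimination: the crucial monotonicity facts you want are that each per-place $\dim X_v$ is at least $2$ whenever $nd\ge 2$ and the place is non-compact, that these grow roughly quadratically in $nd$, and that the only subtraction is the single $-(n-1)$. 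With that in hand your "exhaustiveness" worry is resolved by exactly the sort of case-by-case monotonicity argument the paper runs.
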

\begin{proof}
Let $D$ a division ring of degree $d$, for an integer $n \geq 1$,  and let $A = M_n(D)$, and $F = \ZZ(D)$. Let $\O$ be an order in $A$. We make use of the following formula, as stated in \cite[Eq. (1)]{FreebyFree}. 
\begin{equation}
    \label{vcd formule}
    \vcd(\SL_1(\O)) = r_1 \frac{(nd-2)(nd+1)}{2} + r_2 \frac{(nd+2)(nd-1)}{2}+ s(n^2 d^2-1)-n+1,
\end{equation}
where $s$ is the number of pairs of non-real complex embeddings of $F$, $r_1$ is the number of real embeddings of $F$ at which $A$ is ramified, and $r_2$ the number of real embeddings of $F$ at which $A$ is not ramified. We may assume that $nd >1$, since when $nd = 1$, $A$ is a field, which implies that $\vcd(\SL_1(\O)) = 0$ by \cite[Proposition 3.3]{FreebyFree}. Note that for any choice of $nd \geq 2$, the first two terms of \cref{vcd formule} are non-negative. Additionally, when $d$ is odd, it is well-known that $r_1 = 0$.

Suppose $s \geq 2$. Then for any choice of $n$ or $d$ such that $nd = 2$, \[\vcd(\SL_1(\O)) \geq s(n^2 d^2-1)-n+1 > 4,\]
and this expression is strictly increasing in both $n$ and $d$. Hence $\vcd(\SL_1(\O)) > 4$ for any $s \geq 2$, and in particular we conclude that $F$ has at most one pair of non-real complex embeddings. 

Suppose then $s = 1$. Then \[\vcd(\SL_1(\O)) \geq s(n^2d^2 -1) -n+1 = n^2d^2 -n > 4,   \text{ when } n \geq 3.\]
 Hence $n$ is at most $2$ in this case. Suppose first $n = 1$. Then \[\vcd(\SL_1(\O)) \geq d^2 - 1 > 4, \text{ when } d > 2.\]
Thus $d = 2$ because $nd \geq 2$ by assumption. Then we find \[\vcd(\SL_1(\O)) = r_2 \frac{4 \cdot 1}{2}+4-1 = 4 \iff r_2 = - \frac{1}{2},\]
which is a contradiction since $r_2 \in \N$. Hence we cannot have $n = 1$. The only other option is $n = 2$. Then $n^2d^2 -n = 4d^2 - 2 \leq 4$ if and only if $d = 1$, in which case 
\[\vcd(\SL_1(\O)) = r_2 \frac{4\cdot 1}{2} + 2 = 4 \iff r_2 = 1.\]
In this case we find that $A = M_2(F)$ with $F$ a cubic number field with precisely one real embedding and one pair of complex embeddings.

Suppose now $s = 0$. We examine the case $r_2 = 0$ first. Then $r_1 \geq 1$ and 
\[\vcd(\SL_1(\O)) = r_1 \frac{(nd-2)(nd+1)}{2}-n+1,\]
and hence $\vcd(\SL_1(\O)) = 4$ necessarily implies that $nd \geq 3$. Suppose first that $n = 1$. Then \[\vcd(\SL_1(\O)) = r_1\frac{(d-2)(d+1)}{2} = 4 \iff r_1 \leq 2, \text{ since } d \geq 3.\]
If $r_1 = 2$, then $\vcd(\SL_1(\O)) = (d-2)(d+1) = 4$ implies that $d = 3$, but we have already remarked that $r_1 = 0$ when $d$ is odd, a contradiction. If $r_1 = 1$, then $(d-2)(d+1)=8$, which implies that $d = \frac{2 \pm \sqrt{32}}{2}$, which is not an integer, a contradiction. We conclude that if $s = 0$ and $r_2 = 0$, then $n >1$. Suppose $n = 2$. Then necessarily $d \geq 2$ since $nd \geq 3$. It follows that \[\vcd(\SL_1(\O)) = \frac{r_1}{2}(2d-2)(2d+1) -1 \geq 4,\]
with equality if and only if $r_1 = 1$ and $d = 2$. We find that in this case, $A = M_2(\left(\frac{-a,-b}{\Q}\right))$ for some positive integers $a,b$. Now the expression $\frac{r_1}{2}(nd-2)(nd+1)-n+1$ is strictly increasing in $n$ if and only if $n > \frac{d-1}{r_1d^2}$, which in the case at hand is satisfied since $d \geq 1$ and $r_1 \geq 1$ by assumption. It follows that $\vcd(\SL_1(\O)) > 4$ whenever $n \geq 3$. 

Still under the assumption that $s = 0$, we now turn our attention to the case  $r_2 \neq 0$, meaning $r_2 \geq 1$. Then 
\[\vcd(\SL_1(\O)) = \frac{r_2}{2}(nd+2)(nd-1) + \frac{r_1}{2}(nd-2)(nd+1) -n+1.\]
If $r_1 \geq 1$, then $d$ is even, meaning that $nd \geq 4$ when $n \geq 2$. But when $n = 2$, then 
\[\frac{r_2}{2} (2d+2)(2d-1) + \frac{r_1}{2}(2d-2)(2d+1) - 1 \geq 9 r_2 + 5 r_1 -1  > 4, \]
and again the expression above for $\vcd(\SL_1(\O))$ is strictly increasing in $n$. Hence $r_1 = 0$ if $r_2 \geq 1$ and $n \geq 2$. If $r_1 = 0$, then $\vcd(\SL_1(\O)) = 4$ if and only if $\frac{r_2}{2}(nd+2)(nd-1)-n = 3$. This expression is strictly increasing in $n$ if and only if $n > \frac{1-d}{r_2 d^2}$, which is always satisfied by assumption on $r_2$ and $d$. But when $n \geq 2$ and $d \geq 2$, meaning in particular that $nd \geq 4$, one finds that \[\frac{r_2}{2}(nd+2)(nd-1) -n \geq 7,\]
implying that only the cases $n = 1$, and $(n,d) = (2,1)$ need to be investigated. If $n = 2$ and $d = 1$, then 
\[\vcd(\SL_1(\O)) = \frac{r_2}{2} (nd+2)(nd-1)-n+1 = 2r_2-1,  \]
which equals $4$ if and only if $r_2 = \frac{5}{2}$, a contradiction.

In particular, for $s = 0$, $r_2 \geq 1$ and any value of $r_1$, only the case $n = 1$ remains. Assuming $n = 1$, we have in particular that $d \geq 2$, and
\[\vcd(\SL_1(\O)) = \frac{r_2}{2}(d+2)(d-1) + \frac{r_1}{2}(d-2)(d+1) \geq \frac{r_2}{2}(d+2)(d-1) \geq 4,\]
where the last inequality becomes an equality if and only if $d = 2 = r_2$. In that case one also finds that $\frac{r_1}{2}(d-2)(d+1) = 0$, and hence $\vcd(\SL_1(\O)) = 4$ if $s = 0$, $r_2 = d = 2$ and $r_1$ takes any arbitrary integer value. We obtain that $A = \left(\frac{-a,-b}{F}\right)$ with $F$ a totally real number field and $A$ is non-ramified at precisely two places. If $d \geq 3$, it now immediately follows that $\vcd(\SL_1(\O)) \geq 9r_2 + 4r_1> 4$, and this concludes our analysis. 
\end{proof}

\begin{remark}\label{The other vcd}
   With a similar proof one can verify that $\vcd(\SL_1(\O))= 3$ if and only if  $A$ is isomorphic to one of the following simple algebras:
   \begin{itemize}
       \item $\Ma_3(\Q)$,
       \item $\Ma_2(\Q(\sqrt{d}))$ with $d\in \N_0$ square-free,
       \item $\qa{-a}{-b}{F}$ such that $F$ has one pair of non-real complex embeddings and is ramified at all real places.
   \end{itemize}
Concerning $\vcd(\SL_1(\O)) \leq 2$, \cite[Proposition 3.3]{FreebyFree} shows that
\begin{itemize}
    \item $\vcd(\SL_1(\O)) =0$ if and only if $A$ is a field or a totally definite quaternion algebra,
    \item $\vcd(\SL_1(\O)) =1$ if and only if $A \cong \Ma_2(\Q)$
    \item $\vcd(\SL_1(\O)) =2$ if and only if $A\cong \Ma_2(\Q(\sqrt{-d}))$ or a quaternion algebra with a totally real centre and which is non-ramified at exactly one infinite place.
\end{itemize}
\end{remark}

The other main ingredient of the proof of \Cref{block VSP main theorem} is the following result of independent interest, which describes which simple algebras of the form  $\Ma_2(F)$ with $F$ a cubic number field can occur as a component of a rational group algebra.

\begin{lemma}\label{cubic root are not}
Let $G$ be a finite group. Let $F$ be a cubic number field. Suppose that the simple algebra $\Ma_2(F)$ is a quotient of $\Q G$, say $\Ma_2(F) \cong \Q G e$ with $e \in \PCI(\Q G)$. Then the prime divisors $\pi(Ge) \subseteq \{2, 3, 7 \}$, and $F = \Q(\zeta_7 + \zeta_7^{-1})$ or $F = \Q(\zeta_9 + \zeta_9^{-1})$.
\end{lemma}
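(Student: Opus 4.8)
The plan is to reduce to the case where $G$ acts faithfully and then squeeze $F$ between a cyclotomic field and its cubic subfields using two elementary features of a faithful $2$-dimensional representation.

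First I would replace $G$ by $\bar G := Ge = \varphi_e(G)$. Since $\varphi_e \colon \Q G \twoheadrightarrow \Q Ge = \Ma_2(F)$ is a surjection of $\Q$-algebras, the $\Q$-span of $\bar G$ inside $\U(\Q Ge) = \GL_2(F)$ is all of $\Ma_2(F)$; hence the inclusion $\rho \colon \bar G \hookrightarrow \GL_2(F) \subseteq \GL_2(\C)$ is a faithful, irreducible $2$-dimensional complex representation of $\bar G$, and extending $\rho$ $\Q$-linearly exhibits $\Ma_2(F)$ as a simple quotient of $\Q \bar G$ (this uses \Cref{Prop components preserved for quotient}). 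Writing $\chi$ for the character of $\rho$, its Wedderburn component is this $\Ma_2(F)$, so $\Q(\chi) = \ZZ(\Ma_2(F)) = F$ and $[\Q(\chi):\Q] = 3$. As $\pi(\bar G) = \pi(Ge)$, it is enough to prove the two assertions for $\bar G$.

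Next I would record two observations. (a) The determinant $\det \rho = \wedge^2 \rho$ satisfies $\det\rho(g) = \tfrac12\bigl(\chi(g)^2 - \chi(g^2)\bigr) \in \Q(\chi) = F$; hence for every $g \in \bar G$ the eigenvalues $\alpha,\beta$ of $\rho(g)$ are the roots of $X^2 - \chi(g)X + \det\rho(g) \in F[X]$, so they lie in a field $L$ with $[L:F] \le 2$. Since $\Q(\alpha,\beta) = \Q(\zeta_{o(g)})$, this forces $\phi(o(g)) \le [L:\Q] \le 6$ for all $g$. (b) For every cyclic subgroup $C \le \bar G$ one has $\Q(\chi|_C) \subseteq \Q(\chi) = F$, and $F$ is the compositum of all these subfields; as $[F:\Q]=3$ is prime, $\Q$ is its only proper subfield, so some single $g_0 \in \bar G$ already satisfies $\Q(\chi|_{\langle g_0\rangle}) = F$.

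Then I would combine (a) and (b). Setting $n := o(g_0)$, the field $F$ is a cubic subfield of $\Q(\zeta_n)$, so $3 \mid \phi(n)$, while (a) gives $\phi(n) \le 6$; hence $\phi(n) = 6$, i.e. $n \in \{7,9,14,18\}$. For each such $n$ the group $\Gal(\Q(\zeta_n)/\Q)$ is cyclic of order $6$, so $\Q(\zeta_n)$ has a unique cubic subfield, namely its maximal real subfield $\Q(\zeta_n+\zeta_n^{-1})$; thus $F = \Q(\zeta_7+\zeta_7^{-1})$ when $n \in \{7,14\}$ and $F = \Q(\zeta_9+\zeta_9^{-1})$ when $n \in \{9,18\}$. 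For the prime divisors, let $p \mid |\bar G|$ and pick $g$ of order $p$; its eigenvalues $\alpha,\beta$ are $p$-th roots of unity with $\alpha+\beta,\,\alpha\beta \in F$ and $(\alpha,\beta)\neq(1,1)$ by faithfulness. Running through the few possibilities for such a pair — if $\alpha\beta$ is a primitive $p$-th root of unity, or $\alpha=1\neq\beta$, then $\Q(\zeta_p)\subseteq F$; if $\alpha\beta=1$ with $\alpha$ primitive, then $\chi(g)=\alpha+\alpha^{-1}$ generates $\Q(\zeta_p+\zeta_p^{-1})\subseteq F$ — and using $[F:\Q]=3$ together with $\Q(\zeta_5),\Q(\sqrt5)\not\subseteq F$ and $2\cos(2\pi/5),2\cos(4\pi/5)\notin\Q$, one finds no admissible configuration for $p=5$ and exactly the possibilities $p\in\{2,3,7\}$ otherwise. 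Hence $\pi(Ge)=\pi(\bar G)\subseteq\{2,3,7\}$, which completes the proof.

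I expect the only genuinely decisive step to be observation (a) — the fact that $\wedge^2\rho$ is already defined over $F$, which simultaneously caps the possible element orders and the admissible cyclotomic conductors; the rest is a handful of routine subfield checks, the fiddliest being the case analysis that kills $p=5$ and pins down $F$ among the cubic subfields of $\Q(\zeta_n)$.
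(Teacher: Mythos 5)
Your argument is correct, and it takes a genuinely different and, in my view, more streamlined route than the paper's. The paper fixes a $\Q$-basis of $F$ to realise each $g \in Ge$ inside $\GL_6(\Q)$, compares minimal and characteristic polynomials over $\Q$ and over $F$, and then splits into cases: it rules out $p = 5$ and handles $p = 7$ via the factorisation of $\chi_{F,g}$, and for $\pi(Ge) \subseteq \{2,3\}$ it first bounds $\exp(Ge) \mid 36$, then uses the existence of an order-$9$ element or, failing that, invokes Brauer's splitting field theorem (with $\Q(\zeta_{12})$) to derive a contradiction from $\gcd(3,4)=1$. Your key simplification is the observation that $[F:\Q]=3$ is prime, so $F = \Q(\chi)$ has no proper intermediate subfield, and therefore a \emph{single} trace $\chi(g_0)$ already generates $F$; combined with $\det\rho \in F$ (which bounds $\phi(o(g)) \le 6$ for every $g$, playing the role of the paper's $\deg\mu_{\Q,g} \le 6$), this forces $\phi(o(g_0)) = 6$, hence $o(g_0) \in \{7,9,14,18\}$, and $F$ is pinned down as the unique cubic subfield of $\Q(\zeta_{o(g_0)})$ since $\Gal(\Q(\zeta_{o(g_0)})/\Q) \cong \Z/6$ is cyclic. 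This collapses the paper's three-way case split into one stroke and entirely avoids the Brauer splitting field step. Your treatment of the prime divisors via the trichotomy $\Q(\zeta_p) \subseteq F$, $\Q(\zeta_p + \zeta_p^{-1}) \subseteq F$, or $\alpha\beta$ primitive is essentially the paper's analysis of $\chi_{F,g}$, phrased in terms of the eigenvalue pair; the degree constraints $(p-1) \mid 3$ or $\frac{p-1}{2} \mid 3$ then kill $p=5$ and everything $\ge 11$ exactly as in the paper.
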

\begin{example}
    Recall that from \eqref{bad component dicyclic} it follows that when $G = Dic_{4n}$ with $7 \mid n$ (respectively $9 \mid n$), then $\Q Dic_{4n}$ has a component which is isomorphic to $\Ma_2(F)$, with $F = \Q(\zeta_7 + \zeta_7^{-1})$ (respectively $F = \Q(\zeta_9 + \zeta_9^{-1})$) is a cubic extension of $\Q$. Thus the statement of \Cref{cubic root are not} is sharp.
\end{example}
\begin{proof}
Let $\lambda \in F$ be a torsion element, say of order $n$. Then it is a primitive $n$\textsuperscript{th} root of unity, denoted $\zeta_n$, and 
\begin{equation}
\label{torsion in F}
3 = \abs{F : \Q} = \abs{F : \Q(\zeta_{n})} \abs{\Q(\zeta_n):\Q} \geq \phi(n).
\end{equation}
It follows that $n \in \{ 1,2,3,4,6\}$.

Let $g \in Ge$. By fixing a $\Q$-basis of $F$ as a $3$-dimensional $\Q$-space, one can realise $g$ as a $6$-by-$6$ matrix over $\Q$. We denote by $\chi_{F,g}$ and $\chi_{\Q,g}$ respectively the characteristic polynomials of $g$ over $F$ and over $\Q$. Similarly, we write $\mu_{F,g}$ and $\mu_{\Q,g}$ for the minimal polynomials of $g$ over respectively $F$ and $\Q$. By definition, any minimal polynomial $\mu_{\Q,g}$ has degree at most $6$, and any $\mu_{F,g}$ has degree at most $2$. Remark that $\mu_{F,g}$ has degree 1 if and only if $g$ is a scalar matrix over $F$. \par 
From \cite[Page 147]{Koo}, it follows that for any $g \in Ge$ of prime power order $p^k$, the $p^k$\textsuperscript{th} cyclotomic polynomial $\Phi_{p^k}$ equals $\mu_{\Q,g}$. In particular, $\Q(\zeta_{p^k}) \cong \frac{\Q[X]}{(\mu_{\Q,g})}$. The latter also holds over $F$, when $p^k > 4$. Indeed, $\mu_{F,g}$ is given as the unique monic polynomial generating the ideal $I_{g} := \{P \in F[X] \mid P(g) = 0\}$, and the minimal polynomial of $\zeta_{p^k}$ over $F$ is given by the unique monic polynomial generating the ideal $I_{\zeta_{p^k}} := \{Q \in F[X] \mid Q(\zeta_{p^k}) = 0\}$. We claim that $I_g = I_{\zeta_{p^k}}$. Indeed, by \cite[Page 146]{Koo}, there is some matrix $B \in \GL_2(\C)$ such that $BgB^{-1} = \diag(\lambda_1,\lambda_2)$, with each $\lambda_i$ a $p^k$\textsuperscript{th} root of unity, amongst which at least one primitive (otherwise $BgB^{-1}$ and hence $g$ would have order strictly smaller than $p^k$). Without loss of generality, assume $\lambda_1 = \zeta_{p^k}$. Remark that since conjugation by an invertible matrix induces an algebra automorphism of $\Ma_2(\C)$, it follows that $P \in I_g$ if and only if $P(BgB^{-1}) = 0$. In particular, $P \in I_g$ if and only if $P(\zeta_{p^k}) = 0 = P(\lambda_2)$. We conclude that $I_g \subseteq I_{\zeta_{p^k}}$. But since $\deg(\mu_{F,g}) = 2$, and  $p^k > 4$ (meaning that $\zeta_{p^k} \not\in F$), it follows that $I_g$ is a maximal ideal of $F[X]$, and hence $I_g = I_{\zeta_{p^k}}$. In particular, $F(\zeta_{p^k}) \cong \frac{F[X]}{(\mu_{F,g})}$. \par 
 Since $\Phi_p$ divides $\mu_{\Q,g}$ when $g \in Ge$ is an element of prime order $p$, and the degree of $\mu_{\Q,g}$ is at most $6$ as remarked earlier, it follows that $p \in \{2,3,5,7\}$, and in particular $\pi(Ge) \subseteq \{2, 3 , 5, 7\}$. 
Suppose there is some element $g \in G$ of order $p \in \{5,7\}$. Remark that $\deg(\chi_{F,g}) = 2$, since otherwise $g$ would be a scalar matrix with a $p$\textsuperscript{th} root of unity on the diagonal, which is a contradiction with the description of torsion elements in $F$ as given in \cref{torsion in F}. 
Over $F(\zeta_p)$, $\chi_{F,g} $ splits as $ (X-\zeta_p^{i})(X-\zeta_p^k)$ for some $0 \leq i\neq k\leq p-1$.  Now $\zeta_p^{i + k} \neq 1$ would imply that  $F$ has a $p$\textsuperscript{th} root of unity, a contradiction by \cref{torsion in F}. Thus, $k = -i$, and the degree 1 coefficient in $\chi_{F,g}$ is equal to $-(\zeta_p^{i}+ \zeta_p^{-i})$. In particular $\Q(\zeta_p^{i} + \zeta_p^{-i} ) \subseteq F$. If $p = 5$, then since $\abs{\Q(\zeta_5+\zeta_5^{-1}) : \Q} = 2$ does not divide $\abs{F : \Q} = 3$, we obtain a contradiction. When $p=7$, $\abs{\Q(\zeta_7+\zeta_7^{-1}) : \Q }= 3$. It follows that $F = \Q(\zeta_7+\zeta_7^{-1})$.   \par 
Let now $\pi(Ge) \subseteq \{2,3\}$. We bound the exponent of $Ge$. Let $g \in Ge$, say of order $2^{i}3^{j}$ for some non-negative integers $i$, $j$. If $i = 0$ or $j=0$, then $o(g) = p^n$ with $p \in \{2,3\}$. Since $\Phi_{p^n}$ divides $\mu_{\Q,g}$ for each $p \in \{2,3\}$ and $\deg(\mu_{\Q,g}) \leq 6$, it follows that $o(g) \in \{p, p^2\}$. If $i \neq 0 \neq j$, then considering a realisation of $g$ as an element in $\GL_6(\Q)$,  \cite[Page 147]{Koo} implies that there exist positive integers $m_1, \ldots, m_r$ such that $o(g) = \mathrm{lcm}\{m_1,\ldots,m_r\}$,  $\Phi_{m_i}$ divides $\mu_{\Q,g}$, and  $6 = \sum_{i = 1}^r d_i \phi(m_i)$, for some $d_i \geq 1$. In particular each $\phi(m_i) \leq 6$. Since $\pi(Ge) \subseteq \{2,3\}$, from a case-by-case analysis it follows that $m_i \in \{1,2,3,4,6,9,18\}$. In particular, $\exp(G) \mid 36$. \par 
Suppose that $Ge$ contains an element $g$ of order $9$. Then since $F(\zeta_9) \cong \frac{F[X]}{(\mu_{F,g})}$, and $\deg(\mu_{F,g}) = 2$, \[\abs{F(\zeta_9):\Q} = \abs{F(\zeta_9) : F}\abs{F: \Q} =6. \]  Moreover, since $\abs{\Q(\zeta_9) : \Q}= 6$, it follows that $F(\zeta_9) = \Q(\zeta_9)$, and in particular $F \subseteq \Q(\zeta_9)$. The only subfields contained in $\Q(\zeta_9)$ are $\Q$, $\Q(\zeta_3)$ and $\Q(\zeta_9+\zeta_9^{-1})$, and of these only $\Q(\zeta_9 + \zeta_9^{-1})$ is of degree $3$ over $\Q$. 

Suppose now that $\pi(Ge) \subseteq \{2,3\}$ and all elements of order $3^n$ necessarily have order $3$. Then $\exp(Ge) \mid 12$. Then by Brauer's splitting field theorem, $\Q(\zeta_{12})$ is a splitting field for $Ge$, since $Ge$ has exponent a divisor of 12. In particular, $F \subseteq \Q(\zeta_{12})$. However, $\mathrm{Gal}(\Q(\zeta_{12})/\Q) \cong \Z/4\Z$, which has order coprime to $3$, which together with the fundamental theorem of Galois theory implies a contradiction.
\end{proof}

We are now able to prove \Cref{block VSP main theorem}.

\begin{proof}[Proof of \Cref{block VSP main theorem}]
First suppose that $FG$ has $\Mexc$ and let $e \in \PCI (FG)$. If $FGe$ is an exceptional matrix component, then $\vcd (RGe) = 2 \text{ or } 4$ by \Cref{simple with vcd | 4} and \Cref{The other vcd}. Moreover, by \Cref{remark on larger field}, $F = \Q(\sqrt{-d})$ for some square-free $d \in \N$.

Next, if $FGe$ is a division algebra, then as explained in the proof of \Cref{remark on larger field}, one has that $FGe \cong F(\chi) \otimes_{\Q(\chi)}A_{\Q}(\chi)$ for some absolutely irreducible representation $\chi$ of $G$, and $A_{\Q}(\chi)$ must be a division algebra of $\Q G$ which also has $\Mexc$. Hence, by \Cref{broad form finite grps}, we know that $A_{\Q}(\chi)$ is either a field or one of the following quaternion algebras:
\[\{\qa{\zeta_{2^t}}{-3}{\Q(\zeta_{2^t})}, \qa{-1}{-3}{\Q}, \qa{-1}{-1}{\Q(\sqrt{2})}, \qa{-1}{-1}{\Q(\sqrt{3})} \mid t\in \N_{\geq 3}\}.\]

Of these only the first one is exceptional and will remain so after tensoring with $F(\chi)$. Some of the others might become exceptional after tensoring with $F(\chi)$. However, inspecting the possible quaternion algebras with virtual cohomological dimension less than $4$ given by \Cref{simple with vcd | 4} and \Cref{The other vcd}, we see that they are not part of the list as they must have totally real centre, and $F \subseteq \Q(\sqrt{-d})$ for some $d \in \N$.

It remains to prove the converse, so suppose that $[F : \Q] \leq 2$ and  $\vcd (\SL_1(R Ge)) \mid 4$ for every $e \in \PCI(F G)$ such that $F Ge$ is a non-division simple component. By the results referred to above, the only simple algebras not allowed by the property $\Mexc$ are those of the form $\Ma_2(K)$, with $K$ a cubic number field with one real embedding and one pair of complex embeddings. Now as $F \subseteq \mathcal{Z}(FGe) = K$ and $[F: \Q] \mid [K: \Q]= 3$, one has that $F = \Q$. However by \Cref{cubic root are not}, the algebra $\Ma_2(K)$ cannot be the simple component of $F G$, finishing the proof. 
\end{proof}

\subsection{Higher Kleinian groups: discrete subgroups of \texorpdfstring{$\SL_4(\C)$}{SL4(C)} }\label{section kleinian groups}

Another interesting property is a kind of higher Kleinian property:
\begin{definition}\label{def discrete in}
A group $\Gamma$ is said to \textit{have property $\Di_n$} if $n$ is the smallest number such that $\Gamma$ is a discrete subgroup of $\SL_n(\C)$.
\end{definition}

We will be interested in the case that $\Gamma$ has $\Di_n$ with $n$ a divisor of $4$. An alternative way to look at this is via the $5$-dimensional hyperbolic space, since one has the following isomorphism:
\[\operatorname{Iso}^+(\mathbb{H}^5) \cong \PGL_2(\qa{-1}{-1}{\Q}).\]
In particular, a group $\Gamma$ acts discontinuously on\footnote{Here one assumes that the action on $\mathbb{H}^5$ does not come from the embedding of an action on $\mathbb{H}^4$.} $\mathbb{H}^5$ if and only if $\Gamma$ has $\Di_4$.

The finite-dimensional simple algebras $A$ such that $\SL_1(\O)$ is Kleinian, i.e. is a discrete subgroup of $\SL_2(\C)$, for an order $\O$ in $A$, were classified in \cite[Proposition 3.2]{FreebyFree}. Note that if $\SL_1(\O)$ has property $\Di_n$, then so does $\SL_1(\O^{\prime})$ for any other order $\O^{\prime}$ in $A$ (as both groups are commensurable, see \Cref{SL order commensurable}). 

\begin{proposition}\label{discrete in SL4 description}
Let $A$ be a finite-dimensional simple $F$-algebra with $F$ a number field and $\mc{O}$ an order in $A$. Then $\SL_1(\O)$ has property $\Di_4$ if and only if $A$ has one of the following forms:
\begin{enumerate}
\item $\Ma_2(\qa{-a}{-b}{\Q})$ with $a,b \in \N_0$,
\item $\Ma_4(\Q)$,
\item $\qa{-a}{-b}{\Q(\sqrt{-d})}$ with $a,b \in \N_0$ and $d \in \N_{>1}$ square-free,
\item A division algebra of degree $4$ which is non-ramified at at most one infinite place.
\end{enumerate}
\end{proposition}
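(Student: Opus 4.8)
The plan is to reduce the statement to a classification of real semisimple Lie groups that admit a closed embedding into $\SL_4(\C)$, and then to read the algebras off from the local invariants of $A$. Write $F=\ZZ(A)$ and $p=\deg_F A$. Recall that $\SL_1(\O)$ is an arithmetic lattice in the real Lie group
\[H:=\SL_1(A\otimes_{\Q}\R)=\prod_{v\mid\infty}\SL_1(A\otimes_F F_v),\]
so the first step is to identify the archimedean factors. By the classification of central simple algebras over $\R$ and over $\C$, the completion $A\otimes_F F_v$ is $\Ma_p(\R)$ (at a real $v$ where $A$ is unramified), $\Ma_{p/2}(\mathbb{H})$ (at a real $v$ where $A$ is ramified, forcing $p$ even), or $\Ma_p(\C)$ (at a complex $v$); correspondingly $\SL_1(A\otimes_F F_v)$ is $\SL_p(\R)$, $\SU^*(p)=\SL_{p/2}(\mathbb{H})$, or $\SL_p(\C)$. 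Two structural remarks are crucial: every noncompact simple factor of $H$ carries the same ``size'' $p$, and $H$ has a compact factor (namely $\SU^*(2)=\SU(2)$) precisely when $p=2$ and $A$ is ramified at some real place. I would then show that $\SL_1(\O)$ is discrete in $\SL_4(\C)$ if and only if the noncompact part $H^{\mathrm{nc}}$ of $H$ admits a closed embedding into $\SL_4(\C)$: the ``if'' direction follows by restricting such an embedding to the lattice (which projects to a lattice in $H^{\mathrm{nc}}$ with finite kernel), while for the converse one combines Zariski density of $\SL_1(\O)$ in $H^{\mathrm{nc}}$ (Borel density) with Margulis superrigidity whenever $H^{\mathrm{nc}}$ has a factor of higher rank, and treats the remaining possibilities -- $H^{\mathrm{nc}}$ a single $\SL_2(\R)$, $\SL_2(\C)$ or $\SU^*(4)$ -- via \cite[Proposition 3.2]{FreebyFree} and a direct argument in the $\SU^*(4)$ case.

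The core of the argument is then a purely Lie-theoretic classification: which products of groups of the form $\SL_p(\R)$, $\SU^*(p)$, $\SL_p(\C)$ (all with a common $p$) embed as closed subgroups of $\SL_4(\C)$. Here I would use that $\SL_p(\R)$ and $\SL_p(\C)$ admit no faithful complex representation of dimension $<p$, that $\SU^*(2k)$ admits none of dimension $<2k$, and that inside $\SL_4(\C)$ one can accommodate at most two commuting simple factors of ``$\SL_2$-type'' (via a $2+2$ block or an external tensor $2\boxtimes 2$), at most one of ``$\SL_3$-type'', and in the ``$\SL_4$-type'' case a single factor that exhausts $\SL_4(\C)$ -- the latter because a rank $3$ semisimple subalgebra of $\mathfrak{sl}_4(\C)$ is all of $\mathfrak{sl}_4(\C)$ (equal rank forces regularity and $A_3$ has no rank $3$ proper subsystem), while a product of two copies of $\SU^*(4)$ already has the real dimension of the simple real Lie group $\SL_4(\C)$. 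Combined with the fact that all archimedean factors of $H$ share the same $p$, this forces $p\in\{2,3,4\}$ and tightly bounds the number of archimedean places of $F$.

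I would then run the three cases. For $p=2$, $H^{\mathrm{nc}}=\SL_2(\R)^{r_2}\times\SL_2(\C)^{s}$ with $r_2+s\le 2$; these recover the list of \cite[Proposition 3.2]{FreebyFree} and already have $\Di_n$ with $n\le 2$, so they are not new. For $p=3$, $H^{\mathrm{nc}}$ is a single $\SL_3(\R)$ or $\SL_3(\C)$, which embeds in $\SL_3(\C)$ but in no $\SL_n(\C)$ with $n<3$, so $\SL_1(\O)$ has property $\Di_3$; since $3\nmid 4$ these are excluded. For $p=4$, the single remaining factor must be all of $\SL_4(\C)$, which forces $F$ to have exactly one archimedean place -- so $F=\Q$ or $F$ imaginary quadratic -- and $H^{\mathrm{nc}}$ is one of $\SL_4(\R)$, $\SU^*(4)$, $\SL_4(\C)$, none of which embeds into $\SL_n(\C)$ for $n<4$; hence these $\SL_1(\O)$ have $\Di_4$. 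Reading this case off via the local invariants yields the asserted algebras: $\Ma_4(\Q)$ (factor $\SL_4(\R)$); $\Ma_2$ of a (necessarily definite, hence division) rational quaternion algebra $\qa{-a}{-b}{\Q}$ (factor $\Ma_2(\mathbb{H})$, hence $\SU^*(4)$); $\Ma_2$ of a quaternion algebra over an imaginary quadratic field $\Q(\sqrt{-d})$ with $d>1$ squarefree, which splits over $\C$ and so contributes $\Ma_4(\C)$, hence $\SL_4(\C)$; and a degree four division algebra all of whose archimedean completions are the non-split ones -- phrased as being ``non-ramified at at most one infinite place'', once one records that together with discreteness this pins $F$ down to a single archimedean place.

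The step I expect to be the main obstacle is the ``only if'' direction of the reduction in the genuinely new rank one case $H^{\mathrm{nc}}=\SU^*(4)\cong\mathrm{Spin}(5,1)$, where Margulis superrigidity is unavailable. Explicitly, for $D$ a definite rational quaternion algebra with order $\O_D$, one must rule out a discrete embedding of the lattice $\SL_2(\O_D)$ into $\SL_3(\C)$ (a fortiori into $\SL_2(\C)$), equivalently show that every complex linear representation of $\SL_2(\O_D)$ of dimension at most $3$ has finite image. I would deduce this either from bounded generation of $\SL_2(\O_D)$ by unipotent subgroups -- a faithful low-dimensional representation would then send these to unipotent subgroups whose $\mathfrak{sl}_2$-configuration cannot be realised in $\mathfrak{sl}_3(\C)$ -- or from a Prasad/Mostow-type rigidity statement for this particular arithmetic lattice; the complementary check that the algebras in the list are genuinely $\Di_4$ and not $\Di_2$ or $\Di_3$ is then immediate from the same minimal-representation-dimension bounds.
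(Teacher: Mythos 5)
Your approach — identifying the archimedean factors, classifying which products of real simple groups $\SL_p(\R)$, $\SU^*(p)$, $\SL_p(\C)$ embed closedly into $\SL_4(\C)$, and reading off the algebras from local invariants — is genuinely different from the paper's. The paper's argument is shorter and more elementary: it uses strong approximation to show the diagonal image of $\SL_1(\O)$ is dense in any proper subproduct of the noncompact archimedean factors, which directly forces $|V^{nc}| \le 1$ without invoking any rigidity theorem, and then applies a blunt dimension count $\dim_{\C} A\otimes_K\C \le 16$. Your route produces more structural information (the actual list of closed subgroups of $\SL_4(\C)$), but at the cost of relying on superrigidity, which creates exactly the obstruction you flag. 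One advantage of your derivation is that it exposes that item (3) of the proposition should naturally read $\Ma_2(\cdot)$ of a quaternion algebra over $\Q(\sqrt{-d})$ rather than the quaternion algebra alone, since only the former has archimedean factor $\SL_4(\C)$.

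The genuine gap is in the rank-one case, and the fixes you sketch are wrong. You propose to rule out a discrete embedding of $\SL_2(\O_D)$ (with $D$ a totally definite rational quaternion algebra) into $\SL_3(\C)$ by showing that every complex representation of $\SL_2(\O_D)$ of dimension at most $3$ has finite image, deduced either from bounded generation by unipotents or from a Mostow/Prasad-type rigidity statement. Both are incompatible with results established in this very paper: \Cref{infinite abelianization SL} shows that $\SL_2(\O_D)$ is large, i.e.\ has a finite-index subgroup mapping onto a non-abelian free group. This gives many $2$- and $3$-dimensional complex representations with infinite image, so the asserted finiteness is false as stated; and since bounded generation passes to quotients and non-abelian free groups are not boundedly generated, $\SL_2(\O_D)$ is not boundedly generated by unipotents (or by anything). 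Mostow--Prasad rigidity concerns isomorphisms of lattices, not arbitrary representations, so it also does not help here. What is actually needed is that there is no \emph{discrete faithful} embedding of this lattice in $\SU^*(4)\cong\mathrm{Spin}(5,1)$ into $\SL_3(\C)$, a strictly weaker statement requiring a different argument (the paper, for its part, avoids this by working with strong approximation and does not address this point explicitly either); your proposal would need this replaced before the Lie-theoretic route goes through.
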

\begin{proof}
Write $A = \Ma_n(D)$ with $D$ a finite-dimensional division algebra and denote $K = \ZZ (D)$. As mentioned earlier, property $\Di_n$ does not depend on the chosen order. For convenience, we choose one of the form $\Ma_n(\O)$ with $\O$ an order in $D$. We will consider the set of infinite non-compact places: 
\[V^{nc} := V_{\infty}(K) \setminus \{ v \in V_{\infty}(K)\mid \SL_1(\Ma_n(D \otimes_K K_v)) \text{ is compact }\}.\]
The places in $V^{nc}$ are exactly those at which we can use strong approximation. Note that if $n \geq 2$, then $V^{nc} = V_{\infty}(K)$ is the set of all infinite places.

Now suppose that $\SL_1(\O)$ is discrete in $\SL_4(\C)$ and take $v_0 \in V_{\infty}(K)$ such that $\SL_{n}(\O) \leqslant \SL_n(D \otimes_K K_{v_0})$ embeds discretely in $\SL_4(\C)$. Recall that $\SL_1 (\Ma_n(D \otimes_K K_v))$ is compact if and only if $n = 1$ and $A$ is ramified at $v$. Thus $v_0$ can be chosen in $V^{nc}$.

Next consider another place $v_0 \neq v_1 \in V^{nc}$ and consider the diagonal embedding 
\[\Delta \colon \SL_1(\Ma_n(D)) \hookrightarrow \prod\limits_{v \in V^{nc} \setminus \{ v_1\} } \SL_1( \Ma_n(D \otimes_K K_v)).\]
By strong approximation, see \cite[Theorem 7.12, pg 418]{platRap}, the image $\Ima (\Delta)$ is dense. Therefore, $v_0 \in V^{nc}\setminus\{ v_1\}$ yields a factor in which the image of $\SL_1(\Ma_n(D))$ is both dense and discrete, a contradiction since $\SL_n(\O)$ is not finite (being discrete in $\SL_4(\C)$). Consequently, 
\begin{equation}\label{bound number nc places}
|V^{nc}| \leq 1.
\end{equation}

Now, notice that being discrete in $\SL_4(\C)$ implies that $\dim_{\C} \Ma_n(D \otimes_K \C) \leq 16$. Furthermore, if it is not discrete in some $\SL_m(\C)$ for $m \leq 4$, then $\dim_{\C} \Ma_n(D \otimes_K \C)= 16$. The latter implies that $A = \Ma_4(F)$, $\Ma_2(D)$ or  $D^{\prime}$ with $D$ a quaternion algebra and $D^{\prime}$ a division algebra of degree $4$. This combined with \eqref{bound number nc places} yields the stated possibilities. Indeed, simply recall that $\SL_1 (\Ma_n(D \otimes_K K_v))$ is compact if and only if $n = 1$ an $A$ is ramified at $v$. 

Conversely, it is easily verified that the $\SL_1$ of all algebras mentioned indeed have a discrete embedding in $\SL_4(\C)$.
\end{proof}

\begin{remark}\label{remark about other discrete degrees}
    The proof of \Cref{discrete in SL4 description} also yields that $\SL_1(\mc{O})$ has property $\Di_3$ if and only if $A$ is isomorphic to $\Ma_3(\Q)$ or a division algebra of degree $3$ which is non-ramified at at most one infinite place. 

    Concerning $\Di_2$, \cite[Remark 3.5]{FreebyFree} says that $\SL_1(\mc{O})$ has $\Di_2$ (i.e. is Kleinian) if and only if $\vcd (A) \leq 2$ or $A$ is quaternion division algebra which is ramified at all its infinite places and which has exactly one pair of complex embeddings. 
\end{remark}

In \cite{FreebyFree} it was proven that $\SL_1(\Z Ge)$ has $\Di_n$ for $n\leq 2$ for each $e \in \PCI(\Q G)$ if and only if $\Q G$ has no exceptional division components and the non-division components are of the form $\Ma_2(\Q(\sqrt{-d}))$ for some $d \in \N$. In our case, where we allow exceptional division algebras, the situation is more intricate, as the next example illustrates.

\begin{example}
Consider the group
\[C_3 \rtimes C_{2^m} = \langle a,b \mid a^3=b^{2^m}=1, a^b = a^{-1}\rangle,\]
with $m \geq 4$. In \eqref{decomp grp with exp comp} we obtained that 
\[
\begin{array}{rcr}
\mc{C}(\Q[C_3 \rtimes C_{2^m}]) = \Big\{ \Q(\zeta_{2^{\ell}}),  \qa{-1}{-3}{\Q}, \qa{\zeta_{2^t}}{-3}{\Q(\zeta_{2^t})}, & \hspace{-1.7cm}\Ma_2(\Q), \: \Ma_2(\Q(i)) \\[0,2cm] & \hspace{-.5cm} \mid 1 \leq \ell \leq n, \, 3 \leq t \leq m-1 \Big\}.
\end{array}
\]
Thus $C_3 \rtimes C_{2^m}$ has all non-division algebra components as in \cite{FreebyFree}, but also has exceptional division components which do not have $\Di_n$ for $n \leq 2$.
\end{example}

From \Cref{remark about other discrete degrees}, \Cref{discrete in SL4 description} and \Cref{broad form finite grps} we obtain the following corollary.

\begin{corollary}
Let $G$ be a finite group and $F$ a number field with ring of integers $R$. Suppose that $FG$ has $\Mexc$. Then 
\[\SL_1(R Ge) \text{ has } \Di_n \text{ with } n \mid 4,\]  
for each $e \in \PCI(F G)$ such that $F Ge$ is not a division algebra. If $FG$ has no exceptional division components, then this holds for all $e \in \PCI(FG)$.
\end{corollary}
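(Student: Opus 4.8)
The plan is to derive the corollary purely by matching the already-established classification lists, with no new geometry: since property $\Di_n$ is invariant under commensurability (\Cref{SL order commensurable}), for each $e$ it depends only on the isomorphism type of $FGe$, so it suffices to run through the finitely many algebras that can occur. The three inputs are \Cref{broad form finite grps} (supplemented by \Cref{arbitrary coefficients} and \Cref{remark on larger field} when $F\neq\Q$), \Cref{discrete in SL4 description}, and the Kleinian criterion recorded in \Cref{remark about other discrete degrees} (i.e. \cite[Remark 3.5]{FreebyFree}), together with $\vcd(\SL_1(\O))=0$ for fields and totally definite quaternion algebras from \Cref{The other vcd}.

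First I would dispose of the non-division components, which under $\Mexc$ are exactly the exceptional matrix components $FGe\cong\Ma_2(D)$. If $F=\Q$, then by \Cref{broad form finite grps}(3) the only possibilities are $\Ma_2(\Q(\sqrt{-d}))$ with $d\in\{0,1,2,3\}$, $\Ma_2(\qa{-1}{-1}{\Q})$ and $\Ma_2(\qa{-1}{-3}{\Q})$; if $F\neq\Q$, then \Cref{arbitrary coefficients} leaves only $\Ma_2(F)$ with $F=\Q(\sqrt{-d})$ imaginary quadratic. In every case the algebra is either of the form $\Ma_2(\qa{-a}{-b}{\Q})$, which has $\Di_4$ by \Cref{discrete in SL4 description}(1), or of the form $\Ma_2(K)$ with $K=\Q$ or $K$ imaginary quadratic, for which $\vcd(\SL_1(\O))\le 2$ (\Cref{The other vcd}) and hence $\SL_1(\O)$ is Kleinian, i.e. has $\Di_n$ with $n\le 2$, by \Cref{remark about other discrete degrees}. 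Since $2\mid 4$, this establishes the first assertion.

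For the second assertion I would additionally treat the division components, under the hypothesis that $FG$ has no exceptional division components. By \Cref{def_exc_comps} such a component $FGe$ is then a field or a totally definite quaternion algebra, so $\SL_1(RGe)$ is finite by Kleinert's theorem \cite{Kleinert} (equivalently $\vcd(\SL_1(RGe))=0$); \Cref{remark about other discrete degrees} again yields $\Di_n$ with $n\le 2$. Concretely $\SL_1(RGe)$ is a finite cyclic group, respectively a finite subgroup of $\SL_1(D)$ of a definite quaternion algebra, each of which embeds discretely in $\SL_2(\C)$ (and the trivial group in $\SL_1(\C)$). Combined with the previous paragraph this covers every $e\in\PCI(FG)$.

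The only point requiring care — the ``obstacle'', such as it is — is the bookkeeping around the definition of $\Di_n$: one must notice that the admissible exceptional matrix algebras split into those with $\Di_4$ (the quaternionic $\Ma_2(\qa{-a}{-b}{\Q})$) and those with $\Di_2$ (the ones $\Ma_2(K)$ with commutative $K$), that \Cref{remark about other discrete degrees} is phrased for ``$\Di_2$'' while finite $\SL_1$ really only gives ``$\Di_n$ with $n\le 2$'' (a weaker but sufficient reading I would simply record), and that all of these values — and the degenerate $\Di_1$ — divide $4$; the statement would be false if ``$n\mid 4$'' were replaced by ``$n=4$''. Everything else is a direct lookup in the lists already proven.
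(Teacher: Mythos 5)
Your proof is correct and takes essentially the same route the paper intends: the corollary is stated with no explicit proof, only the remark that it follows from \Cref{remark about other discrete degrees}, \Cref{discrete in SL4 description} and \Cref{broad form finite grps}, and your argument is precisely a careful unwinding of that lookup (supplemented, quite rightly, by \Cref{arbitrary coefficients}/\Cref{remark on larger field} for $F\neq\Q$, which the paper's one-line attribution implicitly requires). The one substantive care-point you flag — that finite or trivial $\SL_1(\O)$ gives $\Di_1$ or $\Di_2$ rather than exactly $\Di_2$ as \Cref{remark about other discrete degrees} is phrased, but that either value divides $4$ — is a genuine (if minor) gap in the paper's wording that your reading patches correctly.
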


Consider the group 
\[G := C_5 \rtimes C_8 = \langle a,b \mid a^5 =b^8=1, a^b=a^3 \rangle.\]
It can be verified that
\[\mc{C}(\Q G) = \{\Q, \Q(i), \Q(\zeta_8), \Ma_2(\qa{-2}{-5}{\Q}), \Ma_4(\Q)\}.\] From \Cref{discrete in SL4 description} and \Cref{remark about other discrete degrees} we see that $\SL_1(\Q Ge)$ has $\Di_n$ with $n \mid 4$ for each primitive central idempotent $e$. However, $\Q G$ does not have $\Mexc$. Following \Cref{broad form finite grps}, the group theoretical origin for the failure of $\Mexc$ is that $[G:\Fit(G)]=4$.  

Through a precise classification of finite groups $G$ such that $\Q G$ has $\Mexc$ we will obtain in the companion paper \cite{CJT} the following.

\begin{theorem}\label{final theorem dis}
Let $G$ be a finite group and $F$ a number field. Then the following are equivalent:
\begin{enumerate}
\item $F G$ has $\Mexc$ (resp. and has no exceptional division algebra components)
\item $[G:\Fit(G)] \leq 2$ and $\Fit(G)$ has class at most $3$ and
\[\SL_1(R Ge) \text{ has } \Di_n \text{ with } n \mid 4,\]
for each $e \in \PCI(F G)$ such that $F Ge$ a non-division algebra (resp. for each $e \in \PCI(F G)$).
\end{enumerate}
\end{theorem}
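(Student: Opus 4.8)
The plan is to prove the equivalence by two implications, using as external inputs the component restrictions of \Cref{broad form finite grps} and \Cref{arbitrary coefficients}, the discreteness classifications \Cref{discrete in SL4 description} and \Cref{remark about other discrete degrees}, and the classification of the finite groups $G$ with $FG$ having $\Mexc$ carried out in \cite{CJT} (quoted here as \Cref{charc Mexc grps}).

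For the implication (1)$\Rightarrow$(2), assume $FG$ has $\Mexc$. By \Cref{charc Mexc grps} the quotient $G/\Fit(G)$ has order at most $2$ and $G\cong C_3^k\rtimes Q$ with $Q$ a $2$-group of nilpotency class at most $3$; since $\Fit(G)=O_3(G)\times O_2(G)$ with $O_3(G)$ elementary abelian and $O_2(G)\leq Q$, the Fitting subgroup has class at most $3$. For the $\Di_n$-assertion, \Cref{broad form finite grps}(3) over $\Q$ and \Cref{arbitrary coefficients} over a larger field list every non-division component of $FG$: each is isomorphic to $\Ma_2(\Q(\sqrt{-d}))$ with $d\in\{0,1,2,3\}$, to $\qa{-1}{-1}{\Q}$-matrices $\Ma_2(\qa{-1}{-1}{\Q})$, to $\Ma_2(\qa{-1}{-3}{\Q})$, or to $\Ma_2(F)$ with $F$ imaginary quadratic. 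For each of these, \Cref{discrete in SL4 description} and \Cref{remark about other discrete degrees} (together with \cite[Remark 3.5]{FreebyFree} for the Kleinian cases) show that $\SL_1$ of an order has $\Di_n$ with $n\in\{2,4\}$, so $n\mid 4$. In the ``resp.'' version, the extra hypothesis forces every division component $FGe$ to be a field or a totally definite quaternion algebra, whence $\SL_1(RGe)$ is finite by \cite{Kleinert} and has $\Di_n$ with $n\leq 2$.

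For the implication (2)$\Rightarrow$(1), assume the right-hand side. If $FGe$ is a non-division component, then $\SL_1(RGe)$ has $\Di_n$ with $n\mid 4$, so by \Cref{discrete in SL4 description} and \Cref{remark about other discrete degrees} (the value $n=3$, e.g. $\Ma_3(\Q)$, being impossible since $3\nmid 4$) the algebra $FGe$ is isomorphic to one of $\Ma_2(\Q)$, $\Ma_2(\Q(\sqrt{-d}))$, $\Ma_2(\qa{-a}{-b}{\Q})$ or $\Ma_4(\Q)$; in particular $\cd(G)\subseteq\{1,2,4\}$ and $\deg(FGe)\mid 4$. All of these algebras are exceptional matrix algebras except $\Ma_4(\Q)$, so $FG$ has $\Mexc$ once components isomorphic to $\Ma_4(\Q)$ are excluded. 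This exclusion is the delicate point, and I expect it to be the main obstacle: it does not follow from $[G:\Fit(G)]\leq 2$ alone, since Itô's theorem only bounds the character degrees by $2$ when $\Fit(G)$ is abelian, whereas the hypothesis allows $\Fit(G)$ of class up to $3$ and hence a priori irreducibles of degree $4$. My approach would be to feed the information already extracted (the Fitting constraints, $\cd(G)\subseteq\{1,2,4\}$, and the restricted list of admissible degree-$4$ components) into the group-theoretic classification of \cite{CJT}, via strong Shoda pairs (\Cref{form of SSP idempotent result}, \Cref{SSP in metabelian}) and a case analysis of the action of $Q$ on $C_3^k$: the finite groups satisfying all three conditions of (2) are exactly those in the explicit list there, and each is checked directly to have $\Mexc$. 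The ``resp.'' variant follows along the same list by additionally recording which of these groups contribute an exceptional division component.

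In summary, the homological and discreteness invariants of \Cref{section hom and topo char} do the bulk of the work on the side of the simple components, but they are blind to the difference between $\Ma_4(\Q)$ and an exceptional $2\times 2$ algebra; consequently the reverse implication must be routed through the group-theoretic classification of \cite{CJT} rather than through these invariants alone, and that is where the real content lies.
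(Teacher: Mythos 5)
Your proposal is correct and follows essentially the same route as the paper, which itself does not prove this theorem in the present text but defers the core argument to the companion paper \cite{CJT} (the theorem is stated here only as a forward reference, with the subsequent remark offering partial evidence via the $\wMexc$ case of \Cref{broad form finite grps}). You correctly pinpoint the crux: the discreteness classifications of \Cref{discrete in SL4 description} and \Cref{remark about other discrete degrees} constrain each non-division component to one of $\Ma_2(\Q(\sqrt{-d}))$, $\Ma_2(\qa{-a}{-b}{\Q})$, or $\Ma_4(\Q)$ but are blind to the last possibility, so the reverse implication must be carried by the group-theoretic classification of \cite{CJT}, exactly as the paper intends.
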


\begin{remark*}
As a preliminary piece of evidence, note that if we add the assumption that $FG$ has $\wMexc$, then part \eqref{item: Mexc versus wMexc} of \Cref{broad form finite grps} yields the statement of \Cref{final theorem dis}, but with $[G:\Fit(G)] \leq 2$ replaced by $[Ge:\Fit(Ge)] \leq 2$.
\end{remark*}

\subsection{The good property}\label{subsectie good}

In this section, we fix for each $e \in \PCI(G)$ a maximal order $\Ma_{n_e}(\O_e)$ in $\Q Ge$. The arguments will however be independent of this choice. Furthermore, we denote by $\widehat{\Gamma}$ the profinite completion of a group $\Gamma$. Recall, following terminology introduced by Serre, that $\Gamma$ is called \emph{good} if the map
\[H^j(\widehat{\Gamma},M) \rightarrow H^j(\Gamma,M),\]
induced by the inclusion of $\Gamma$ in its profinite completion, is an isomorphism for any $j$ and any finite $\Gamma$-module $M$. Note that any finite group has the good property, since finite groups are isomorphic to their profinite completion. Moreover, free groups have the good property (see \cite[Proposition 3.7]{GJZGood}).

By \cite[Lemma 3.2, Proposition 3.4]{GJZGood}, the good property is preserved under commensurability and is closed under direct products. Writing $FG = \prod_{i=1}^m \Ma_{n_i}(D_i)$, $R$ an order in $F$ and $\O_i$ an order in $D_i$, \Cref{SL order commensurable} implies that $\SL_1(R G)$ is good exactly when $\SL_{n_i}(\O_i)$ is good for all $1 \leq i \leq m$.  One has moreover the following statement which follows directly by assembling results in the literature and is in fact implicit in the proof of \cite[Theorem 1.5]{CdRZ}.  For convenience of the reader, we record it here explicitly and provide a proof.

\begin{proposition}\label{good for exceptional}
Let $G$ be a finite group and $F$ a field of characteristic $0$. Further let $\O$ be an order in $FGe$ with $e \in \PCI(FG)$ such that $FGe$ is not a division algebra. Then $\SL_1(\O)$ is good if and only if $FGe$ is an exceptional matrix algebra.
\end{proposition}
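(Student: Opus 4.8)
The plan is to prove both implications by reducing, via commensurability and known structure results, to a handful of explicit arithmetic groups whose goodness (or non-goodness) is recorded in the literature. Throughout I will use that goodness is a commensurability invariant and closed under direct products (cited from \cite{GJZGood}), and that for any two orders $\O_1,\O_2$ in a simple algebra $\SL_1(\O_1)$ and $\SL_1(\O_2)$ are commensurable (\Cref{SL order commensurable}); hence ``$\SL_1(\O)$ is good'' depends only on the isomorphism type of $FGe$.

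\emph{Sufficiency.} Suppose $FGe$ is an exceptional matrix algebra, so by \Cref{def_exc_comps} it is $\Ma_2(\Q)$, $\Ma_2(\Q(\sqrt{-d}))$ or $\Ma_2\!\left(\qa{-a}{-b}{\Q}\right)$. In the first two cases $\SL_1(\O)$ is commensurable with $\SL_2(\Z)$ or with the Bianchi group $\SL_2(\mathcal{I}_d)$; both are virtually free respectively known to be good (Bianchi groups are good by \cite{GJZGood}), so we are done. In the quaternionic case $\Ma_2\!\left(\qa{-a}{-b}{\Q}\right)$, the group $\SL_1(\O)$ is commensurable with a higher modular group $\SL_+(\Gamma_4^{u,v}(\Z))$ acting discretely on $\mathbb{H}^5$ (this is exactly the identification built in \Cref{congr_subgroups_SL2} / used for \Cref{cong higher modular}); such a group is a lattice in $\operatorname{Iso}^+(\mathbb{H}^5)$, and by the general goodness results for lattices in rank-one Lie groups of the relevant type (again \cite{GJZGood}, in the spirit of the proof of \cite[Theorem 1.5]{CdRZ}) it is good. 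So in all three cases $\SL_1(\O)$ is good.

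\emph{Necessity.} Conversely assume $FGe$ is not a division algebra and not an exceptional matrix algebra; I must show $\SL_1(\O)$ is \emph{not} good. Write $FGe \cong \Ma_n(D)$. If $n\cdot\deg(D)\ge 3$ then strong approximation forces $\SL_1(\O)$ to be a higher-rank arithmetic lattice (or at least to contain such a factor after passing to the relevant place), and higher-rank lattices are \emph{not} good: they have property (T), hence finite abelianisation on all finite-index subgroups, while their profinite completions have non-vanishing $H^2$ coming from the congruence kernel / central extensions — this is the standard obstruction, and one cites \cite{GJZGood} for the precise statement that such lattices fail goodness. The remaining case is $n=2$, $D$ a field, so $FGe\cong \Ma_2(K)$ with $K$ a number field that is neither $\Q$ nor imaginary quadratic; then $K$ has at least two archimedean places that are ``non-compact'' for $\SL_2$, $\SL_1(\O)$ is an irreducible higher-rank lattice in a product $\SL_2(\mathbb{R})^{r_1}\times\SL_2(\mathbb{C})^{r_2}$ with $r_1+r_2\ge 2$, and again such lattices are not good by the same (T)-type obstruction. (One uses here that $K$ being a \emph{field} rules out the totally-definite-quaternion case, where $\SL_1(\O)$ would be finite and trivially good.) This exhausts all non-division, non-exceptional possibilities.

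\emph{Main obstacle.} The delicate point is the necessity direction: one needs a clean, citable statement that irreducible higher-rank arithmetic lattices (in the generality of $\SL_n$ over a number field, $n\deg(D)\ge 3$, or $\SL_2$ over a field with $\ge 2$ non-compact places) are never good. The underlying reason — property (T) forcing $H^1$ to vanish on finite-index subgroups while the congruence subgroup property produces a non-trivial, even infinite, contribution to $H^2$ of the profinite completion — is well known, but assembling it uniformly across the exceptional-versus-non-exceptional dichotomy (and correctly handling the borderline totally definite quaternion centre, where the group is finite) is where the care is needed. I expect the actual write-up to quote \cite{GJZGood} and \cite{CdRZ} for these facts rather than reprove them, so the real work is bookkeeping: matching each simple algebra to the right lattice and invoking the correct item from that literature.
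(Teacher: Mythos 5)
Your proposal has a genuine gap in both directions, though you correctly identify the high-level shape (reduce by commensurability, quote goodness results from the literature).

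\textbf{Sufficiency, quaternionic case.} You assert that a lattice in $\operatorname{Iso}^+(\mathbb{H}^5)$ is good by ``general goodness results for lattices in rank-one Lie groups of the relevant type''. No such general result exists: goodness of arithmetic lattices in $\SO(1,n)$ for $n\ge 4$ is not automatic and is in fact a deep recent fact. The paper's actual proof is that $\SL_2(\O)$ (for $\O$ an order in a totally definite quaternion algebra over $\Q$) is commensurable with a \emph{standard} arithmetic lattice of $\SO(1,5)$, that such a lattice is virtually special by \cite[Theorem 1.10]{BHD}, hence admits a separable hierarchy, and that groups admitting such a hierarchy are good by \cite[Theorem 3.9]{GJZGood}. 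Without the virtual specialness step your argument does not close; this is the key missing idea.

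\textbf{Necessity.} Your argument is built on property (T): ``higher-rank lattices ... have property (T), hence finite abelianisation ... not good''. But irreducible lattices in $\SL_2(\R)^{r_1}\times\SL_2(\C)^{r_2}$ with $r_1+r_2\ge 2$ --- exactly the case $FGe\cong\Ma_2(K)$ with $K$ a number field that is neither $\Q$ nor imaginary quadratic --- do \emph{not} have property (T), since each factor has rank one. So the property (T) route fails precisely in the case you single out at the end of your necessity paragraph. The correct and uniform argument, which is what the paper uses, is via the congruence subgroup property: by \cite[Proposition 5.1]{GJZGood}, $\SL_n(\O')$ is not good whenever it satisfies CSP; CSP holds for $n\ge 3$ by Bass--Milnor--Serre and for $n=2$ with $\U(\O')$ infinite by Serre; and $\U(\O')$ is finite (for $n=2$) precisely when $D\in\{\Q,\Q(\sqrt{-d}),\text{totally definite quaternion over }\Q\}$, i.e.\ precisely when $FGe$ is an exceptional matrix algebra. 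Recasting your necessity direction around CSP rather than property (T) (and dropping the $H^2$/congruence-kernel heuristic, which is not the argument of \cite[Proposition 5.1]{GJZGood}) would fix it.
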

\begin{proof}
Let $FGe = \Ma_n(D)$. Then, thanks to \Cref{SL order commensurable} and the fact that the good property is preserved under commensurability, we may assume that $\O$ is of the form $\Ma_n(\O')$ with $\O'$ a maximal order in $D$. 

Suppose $FGe$ is an exceptional matrix algebra, i.e. $n = 2$, and $\O'$ is $\mc{I}_d$, the ring of integers in $\Q(\sqrt{-d})$ for $d \geq 0$, or $\O'$ is an order in a totally definite quaternion algebra. In \cite[Theorem 1.1]{GJZGood}, it is shown that $\PSL_2(\mc{I}_d)$ is good, for all $d \geq 1$. Since the centre of $\SL_2(\mc{I}_d)$ is finite, it follows moreover from \cite[Lemma 3.3]{GJZGood} that $\SL_2(\mc{I}_d)$ is good. Additionally, $\SL_2(\Z)$ is good since it is commensurable to a free group. 

As explained in the proof of \cite[Remark 3.5]{CdRZ}, $\SL_2(\O')$ with $\O'$ an order in a totally definite quaternion algebra is commensurable to a standard arithmetic lattice of $\SO(1,5)$. Such a subgroup of $\SO(1,5)$ admits a separable hierarchy as it is virtually special \cite[Theorem 1.10]{BHD}. Consequently, one can apply \cite[Theorem 3.9]{GJZGood}, which gives the good property whenever such a hierarchy exists. We conclude that $\SL_1(\O)$ is good when $FGe$ is an exceptional matrix algebra.

Conversely, in \cite[Proposition 5.1]{GJZGood} it was proven that $\SL_n(\O')$ is not good if it enjoys the subgroup congruence property. In particular  if $n\geq 3$ or $n=2$ and $\U(\O')$ is infinite, then it is not good, see \cite{BassMilnorSerre} for the case $n \geq 3$ and \cite{SerreSL2} for the case $n = 2$. Suppose $n = 2$. Now $\U(\O')$ is finite if and only if $D$ is either isomorphic to $\Q$, to an imaginary quadratic field or to a totally definite quaternion algebra, see \cite[Theorem 2.10]{BJJKT}, i.e. in the case at hand if and only if $FGe$ is an exceptional matrix algebra. It follows that when $FGe$ is a non-exceptional matrix component, $\SL_1(\O)$ does not have the good property, as desired.
\end{proof}

We now readily obtain the following characterisation.

\begin{corollary}\label{good for group rings}
Let $G$ be a finite group with no exceptional division components, $F$ a number field with ring of integers $R$. Then the following are equivalent:
\begin{enumerate}
    \item\label{item SL good} $\SL_1(R G)$ is good,
    \item \label{item unit good} $\U ( R G)$ is good,
    \item \label{mexc versus good} $F G$ has $\Mexc$.
\end{enumerate}
\end{corollary}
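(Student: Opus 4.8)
The statement to prove is \Cref{good for group rings}, which characterises when $\SL_1(RG)$ is good, $\U(RG)$ is good, and $FG$ has $\Mexc$, for $G$ with no exceptional division components.

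\textbf{Plan of proof.} The proof is a direct assembly of the preceding results, exploiting that ``goodness'' is a commensurability invariant closed under direct products (\cite[Lemma 3.2, Proposition 3.4]{GJZGood}). First I would reduce everything to the simple components. Write $FG \cong \prod_{i=1}^m \Ma_{n_i}(D_i)$ with $D_i$ a division algebra and $\O_i$ an order in $D_i$. By \Cref{SL order commensurable} together with the cited results of \cite{EricAngel1} already recalled in this section, $\SL_1(RG)$ is commensurable with $\prod_{i=1}^m \SL_{n_i}(\O_i)$, and similarly $\U(RG)$ is commensurable with $\prod_{i=1}^m \U(\Ma_{n_i}(\O_i)) \times \U(R)$, which in turn is commensurable with $\prod_i \bigl(\SL_{n_i}(\O_i) \times \U(\ZZ(\O_i))\bigr) \times \U(R)$ by \cite[Corollary 5.5.3]{EricAngel1} (note $\U(\ZZ(\O_i))$ and $\U(R)$ are finitely generated abelian, hence good since they are commensurable with $\Z^{k}$, which is good). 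Consequently, using that goodness is preserved under commensurability and finite direct products, both $\SL_1(RG)$ being good and $\U(RG)$ being good are each equivalent to: $\SL_1(\Ma_{n_i}(\O_i)) = \SL_{n_i}(\O_i)$ is good for every $i$. This already gives the equivalence \eqref{item SL good} $\Leftrightarrow$ \eqref{item unit good}.

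It remains to show these are equivalent to \eqref{mexc versus good}, i.e. $FG$ has $\Mexc$. I would split the components $\Ma_{n_i}(D_i)$ into the division components ($n_i = 1$) and the non-division components ($n_i \geq 2$). For the division components: by hypothesis $G$ has no exceptional division components, so each division component $D_i$ is either a field or a totally definite quaternion algebra; in either case $\SL_1(\O_i)$ is finite by Kleinert's theorem (\cite{Kleinert}, recalled after \Cref{virutally G-am imply special components}), and finite groups are good. So the division components never obstruct goodness, and never obstruct $\Mexc$ either (the $\Mexc$ condition only constrains the matrix components). For the non-division components, \Cref{good for exceptional} applies verbatim: $\SL_1(\O_i)$ is good if and only if $\Ma_{n_i}(D_i)$ is an exceptional matrix algebra. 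Therefore $\SL_1(RG)$ is good $\iff$ every non-division component of $FG$ is an exceptional matrix algebra $\iff$ $FG$ has $\Mexc$ (here using that $\Mexc$ is precisely the statement that all components $\Ma_n(D)$ with $n \geq 2$ are exceptional, and that the division components are automatically fine by the standing hypothesis). This closes the cycle of implications.

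\textbf{Main obstacle.} There is essentially no obstacle once \Cref{good for exceptional} is in hand; the only point requiring a little care is the bookkeeping in passing between $\U(RG)$, the product $\prod_i \U(\Ma_{n_i}(\O_i))$, and the ``$\SL_1 \times$ centre'' decomposition — one must make sure each commensurability step is legitimate and that the finitely generated abelian factors one introduces ($\U(R)$ and the $\U(\ZZ(\O_i))$) really are good, which holds since they are commensurable with free abelian groups and free (in particular abelian) groups are good by \cite[Proposition 3.7]{GJZGood}. The role of the hypothesis ``no exceptional division components'' should also be emphasised: without it, a division component $D_i$ that is a non-totally-definite quaternion division algebra (or worse) would have infinite $\SL_1(\O_i)$ satisfying the congruence subgroup property, hence not good by \cite[Proposition 5.1]{GJZGood}, breaking the implication \eqref{mexc versus good} $\Rightarrow$ \eqref{item SL good} — this is exactly why the full \Cref{main characterisation theorem} needs the additional hypotheses $[F:\Q]\le 2$ and constraints on $\Fit(G)$ rather than just $\Mexc$.
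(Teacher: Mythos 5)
Your proof is correct and follows essentially the same route as the paper: reduce to the simple components of $FG$ via commensurability, apply \Cref{good for exceptional} to each non-division factor, and observe that the division factors give finite (hence good) $\SL_1$ thanks to the hypothesis that there are no exceptional division components. Two small points worth tightening. First, $\U(RG)$ is already commensurable with $\prod_{i} \U(\Ma_{n_i}(\O_i))$, so the extra $\U(R)$ factor you insert is spurious — harmless, since it is finitely generated abelian and therefore good, but not needed. Second, and more substantively, in passing from ``$\SL_1(RG)$ (resp.\ $\U(RG)$) is good'' to ``each $\SL_{n_i}(\O_i)$ is good'' you tacitly invoke the implication that a finite direct product being good forces each factor to be good; this does \emph{not} follow merely from goodness being closed under direct products. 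The paper addresses precisely this in a footnote, noting that the K\"unneth argument of \cite[Proposition~3.4]{GJZGood} must be adapted to show: if $G_1 \times G_2$ is good and $G_1$ is good, then $G_2$ is good. You should cite or supply this cancellation step explicitly rather than absorbing it silently into ``preserved under finite direct products.''
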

\begin{remark}\label{good and exc div}
The proof of \Cref{good for group rings} will show that for any $G$ the group $\SL_1(R G)$ is good if and only if $\U (R G)$ is good. In fact, one expects that 
\[\SL_1(R G)\text{ is good } \iff  F G \text{ has } \Mexc \text{ and has no exceptional division components.}\]
Indeed, from  \Cref{broad form finite grps} and \Cref{remark on larger field} one obtains explicitly the possible exceptional division components $D$. Now if $S$ denotes the set of infinite places of $D$, then for each of them a direct computation shows that the $S$-rank of $D$ is at least two. Hence Serre's conjecture predicts that $\SL_1(\O)$ satisfies the subgroup congruence property, for $\O$ some order in $D$, and hence $\SL_1(\O)$ would not be good by \cite[Proposition 5.1]{GJZGood}.

In the companion paper \cite{CJT} it will be shown that when $\Q G$ has $\Mexc$, then $\Q G$ has an exceptional matrix component if and only if the Sylow $2$-subgroups of $G$ are abelian and $16 \mid \exp(G)$. 
\end{remark}

\begin{proof}[Proof of \Cref{good for group rings}]
For the equivalence between \eqref{item SL good} and \eqref{item unit good}, it follows from \cite[Corollary 5.5.3]{EricAngel1} that $\U(R G)$ is commensurable with $\U(\ZZ(R G))) \times \SL_1(R G)$.
Moreover, $\ZZ(R G)$ is an order in $\ZZ( F G)$ and hence $\U(\ZZ(R G))$ is finitely generated by \cite[Theorem 5.3.1]{EricAngel1}.
Because $\U(\ZZ(R G))$ is a finitely generated abelian group, it is good (since $\Z$ is good and finite groups are good). Hence $\U(\ZZ(R G)) \times \SL_1(R G)$ is good if and only if $\SL_1(R G)$ is good\footnote{The argument in the proof of \cite[Proposition 3.4]{GJZGood} using the Künneth theorem may be adapted to show that if $G_1 \times G_2$ is good, and $G_1$ is good, then necessarily $G_2$ is good. This follows by considering the $0$\textsuperscript{th} cohomology groups for $N_1$ in the proof, which coincide with $\F_p$.}.  This finishes the equivalence between \eqref{item SL good} and \eqref{item unit good}.

For the equivalence between \eqref{item SL good} and \eqref{mexc versus good} we will use that $F G$ is assumed to have no exceptional division components. Namely, recall that $\SL_1$ over a non-exceptional division component is finite, e.g. see \cite[Theorem 2.10]{BJJKT}. Hence they are good. We obtain the equivalence between \eqref{item SL good} and \eqref{mexc versus good} by \Cref{good for exceptional} and the paragraph above it.
\end{proof}

\section{\texorpdfstring{Congruence subgroups of rank $1$ have virtually free quotient}{Congruence subgroups of rank 1 have virtually free quotient}}\label{sectie vFQ}

The aim of this section is to show \emph{largeness} for principal congruence subgroups of $\SL_2(\O)$ with $\O$ an order in a division algebra $D$ such that $\Ma_2(D)$ is an exceptional matrix algebra. A group is called\footnote{Such groups are also referred to as having property $\vFQ$, see \cite{Lubot96}. However, we will avoid this notation to avoid confusion with property vQL.} \emph{large} if it has a quotient which is virtually a non-abelian free group. A group \emph{has property $\vQL$} if it has a finite index subgroup which maps onto a non-abelian free group, i.e. it virtually has a non-abelian free quotient. Note that largeness implies $\vQL$.

\begin{theorem}\label{infinite abelianization SL}
Let $\qa{u}{v}{\Q}$ be a totally definite quaternion algebra with centre $\Q$ and let $\LL_{u,v}$ be the $\Z$-order with basis $\{1,i,j,k\}$. Then every torsion-free principal congruence subgroup of $\SL_2(\LL_{u,v})$ is large. 
\end{theorem}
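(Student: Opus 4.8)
The plan is to transport the statement, through the Clifford-algebra (Vahlen) formalism, to the setting of higher modular groups, where \Cref{cong higher modular} already supplies largeness; so the whole argument amounts to building the right dictionary and then invoking that theorem. First I would recall from \Cref{clifford_background} the Vahlen realisation: since $\qa{u}{v}{\Q}$ is totally definite (equivalently $u,v\in\Z_{<0}$), tensoring with $\R$ gives the Hamilton quaternions, and $2\times2$ matrices over $\qa{u}{v}{\Q}$ are realised as $2\times2$ matrices with entries in a Clifford algebra acting as isometries of the hyperbolic $5$-space, in the sense of \cite{Vahlen}. In this picture the $\Z$-order $\LL_{u,v}$ with basis $\{1,i,j,k\}$ gives rise to the integral higher modular group $\SL_+\!\left(\Gamma_4^{u,v}(\Z)\right)$, and --- this is the crucial structural point, made precise in \Cref{congr_subgroups_SL2} --- the resulting identification of $\SL_2(\LL_{u,v})$ with $\SL_+\!\left(\Gamma_4^{u,v}(\Z)\right)$ is an isomorphism of groups compatible with the relevant reduction maps, so it carries principal congruence subgroups of $\SL_2(\LL_{u,v})$ to principal congruence subgroups $\con_n$ of $\SL_+\!\left(\Gamma_4^{u,v}(\Z)\right)$ and preserves torsion-freeness.

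Granting this dictionary, the proof is short: given a torsion-free principal congruence subgroup $\Gamma\leq\SL_2(\LL_{u,v})$, the previous paragraph identifies it with a torsion-free principal congruence subgroup $\con_n$ of $\SL_+\!\left(\Gamma_4^{u,v}(\Z)\right)$, so \Cref{cong higher modular} applies and yields that $\con_n$, hence $\Gamma$, is large. Conversely, feeding through the same dictionary the fact that sufficiently deep principal congruence subgroups of $\SL_2(\LL_{u,v})$ are torsion-free (established later in the section) shows that $\con_n$ is torsion-free for some $n$, which is the statement recorded in the remark following \Cref{cong higher modular}.

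The main obstacle is the first step: making the Vahlen dictionary precise enough to obtain a genuine identification of $\SL_2(\LL_{u,v})$ with $\SL_+\!\left(\Gamma_4^{u,v}(\Z)\right)$ as arithmetic groups under which the congruence structure --- and torsion-freeness of congruence subgroups --- is preserved, despite $\LL_{u,v}$ being a possibly non-maximal order and the passage running between the quaternionic and the Clifford model. If the matching of congruence structures holds only up to a controlled finite index, one moreover needs that largeness behaves well under commensurability, as it does for torsion-free lattices of this hyperbolic type, cf.\ \cite{Lubot96}. The deeper geometric input itself --- largeness of congruence subgroups of these $5$-dimensional higher modular groups, i.e.\ \Cref{cong higher modular} --- is established via the techniques of \cite{Lubot96} and \cite{Kiefer}, but it is a result we are entitled to assume here.
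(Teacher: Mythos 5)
Your strategy follows the paper's: transport the problem to the higher modular groups $\SL_+\!\left(\Gamma_4^{u,v}(\Z)\right)$ via the Clifford--Vahlen formalism, and then invoke \Cref{vFq for Con}.

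The one point to correct is your framing of the dictionary as ``an isomorphism of groups compatible with the relevant reduction maps'' carrying principal congruence subgroups to principal congruence subgroups. That overstates what is available. What the paper establishes, via the algebra isomorphism $\Lambda$ of \eqref{Lambda iso} and the homomorphism $\chi$ of \Cref{Iso_SL+Gamma4R}, is a finite-index relationship rather than an isomorphism: setting $\mathcal{G}_n := \Lambda^{-1}(G_n)$ for $G_n$ the level-$n$ congruence subgroup, one gets an \emph{inclusion} $\chi(\con_n) \leq \mathcal{G}_n$ (from which $\con_n$ inherits torsion-freeness of $\mathcal{G}_n$) together with $\chi\!\left(\SL_+\!\left(\Gamma_4^{u,v}(\Z)\right)\right)$ of finite index in $\SL_2(\Lambda^{-1}(\LL_{u,v}))$, whence $[\mathcal{G}_n : \chi(\con_n)] < \infty$. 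So your fallback scenario --- matching only up to a controlled finite index --- is exactly what occurs, and the remedy you propose (push largeness through the finite-index relationship, cf.\ \cite{Lubot96}) is what the paper then does. One caveat worth making explicit: the paper's notion of ``large'' (has a quotient which is virtually free non-abelian) is a priori stronger than $\vQL$, and it is $\vQL$, not largeness, that is cheaply a commensurability invariant. Passing from ``$\chi(\con_n)$ is large and $[\mathcal{G}_n : \chi(\con_n)]<\infty$'' to ``$\mathcal{G}_n$ is large'' needs a short further argument, and this is the one spot where the finite-index discrepancy between the two models actually matters.
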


In \Cref{torsionfree,} it is shown that sufficiently deep (depending on $u$) principal congruence subgroups are torsion-free. As explained in the introduction, the analogue of \Cref{infinite abelianization SL} for $\SL_2(\mc{I}_d)$ was obtained in \cite{GS, Lubot96}. Furthermore, $\SL_2(\Z)$ is itself virtually free. Combining these results with  \Cref{infinite abelianization SL}, understanding the remaining exceptional matrix algebra  $\SL_2(\O)$ with $\O$ an order in $\qa{u}{v}{\Q}$,  we obtain the following.

\begin{corollary}\label{vQL for exceptional}
Let $A$ be an exceptional matrix algebra, $\O$ an order in $A$ and $\Gamma$ a group commensurable with $\SL_1(\O)$. Then $\Gamma$ has a finite index subgroup mapping onto a non-abelian free group.
\end{corollary}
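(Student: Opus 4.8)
The plan is to reduce to the three families of division algebras $D$ appearing in \Cref{def_exc_comps}(2) and, in each case, to exhibit a finite index subgroup of $\SL_1(\O)$ which is \emph{large}. Since largeness implies property $\vQL$ and, as recalled above, $\vQL$ is stable under commensurability (in particular it passes both to finite index subgroups and from a finite index subgroup to the ambient group), this will yield $\vQL$ for $\SL_1(\O)$ and hence for any $\Gamma$ commensurable with it. Moreover, by \Cref{SL order commensurable} we may replace $\O$ by any order $\O'$ in $D$ convenient for the argument.

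First I would dispose of $D=\Q$: here $\SL_2(\Z)$ surjects onto $\PSL_2(\Z)\cong C_2 * C_3$, which is virtually a non-abelian free group, so pulling back a finite index free subgroup of rank at least $2$ shows that $\SL_2(\Z)$ has $\vQL$. Next, for $D=\Q(\sqrt{-d})$, I would invoke the theorem of Grunewald--Schwermer \cite{GS}, in the constructive form of Lubotzky \cite[Theorem 3.7]{Lubot96}: every torsion-free finite index subgroup of $\SL_2(\mc{I}_d)$ is large, and such subgroups exist because $\SL_2(\mc{I}_d)$ is a finitely generated linear group, so $\SL_2(\mc{I}_d)$ has a large finite index subgroup. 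Finally, for $D=\qa{-a}{-b}{\Q}$ a totally definite quaternion algebra, I would take $\O'=\LL_{-a,-b}$ and combine \Cref{infinite abelianization SL} with the accompanying observation that sufficiently deep principal congruence subgroups of $\SL_2(\LL_{u,v})$ are torsion-free; this produces a torsion-free principal congruence subgroup which is both large and of finite index in $\SL_2(\LL_{-a,-b})$.

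Putting the three cases together, $\SL_1(\O)$ has a finite index subgroup with $\vQL$, so by commensurability $\Gamma$ itself has a finite index subgroup mapping onto a non-abelian free group. The only substantive ingredient is \Cref{infinite abelianization SL} (together with the torsion-freeness of deep congruence subgroups) in the quaternion case; all remaining steps are routine bookkeeping with commensurability and the two cases already available in the literature, so I expect the sole real obstacle to lie there — which is precisely why that case is established on its own.
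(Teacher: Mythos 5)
Your proposal is correct and follows essentially the same route as the paper: split into the three isomorphism types of $D$, dispose of $\Q$ via virtual freeness of $\SL_2(\Z)$, invoke Grunewald--Schwermer/Lubotzky for $\Q(\sqrt{-d})$, handle the totally definite quaternion case via \Cref{infinite abelianization SL} together with \Cref{torsionfree}, and close with commensurability invariance of $\vQL$ (using \Cref{SL order commensurable} to choose a convenient order). The only cosmetic difference is your passage through $\PSL_2(\Z)\cong C_2*C_3$ rather than directly quoting that $\SL_2(\Z)$ is virtually free, which amounts to the same thing.
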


In \Cref{subsection vFQ vsp} we will use \Cref{vQL for exceptional} to obtain another characterisation of property $\Mexc$.

The proof of \Cref{infinite abelianization SL} is geometric and uses tools from \cite{Lubot96} and \cite{Kiefer}. More precisely, in \Cref{congr_subgroups_SL2} we relate the principal congruence subgroups of $\SL_2(\LL_{u,v})$ to congruence subgroups of certain groups of isometries of the hyperbolic $5$-space, denoted $\SL_+\left(\Gamma_4^{u,v}(\Z)\right)$, called \emph{higher modular groups}. Once this connection is settled, \Cref{infinite abelianization SL} will follow from the following statement on higher modular groups 

\begin{theorem}\label{vFq for Con}
    Let $u,v \in \Z_{< 0}$. Let $\con_n$ (defined in \eqref{def con}) be a torsion-free principal congruence subgroup of the higher modular group $\SL_+\left(\Gamma_4^{u,v}(\Z)\right)$. Then $\con_n$ is large.
\end{theorem}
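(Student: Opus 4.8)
The plan is to realize $\con_n$ geometrically and then run Lubotzky's cusp-counting machinery from \cite{Lubot96}. First I would recall, via the Clifford-algebra picture developed in \Cref{clifford_background} and the work of \cite{Kiefer}, that for $u,v<0$ the algebra $\qa{u}{v}{\R}$ is the Hamilton quaternions and $\SL_+\left(\Gamma_4^{u,v}(\Z)\right)$ is a non-uniform arithmetic lattice in $\operatorname{Iso}^+(\mathbb{H}^5)$: non-uniformity holds because the underlying $\Q$-algebra $\Ma_2\left(\qa{u}{v}{\Q}\right)$ has $\Q$-rank one, so the group contains parabolic elements. Hence the torsion-free subgroup $\con_n$ is the fundamental group of a complete, finite-volume, non-compact hyperbolic $5$-manifold $M_n:=\mathbb{H}^5/\con_n$, and since $\dim M_n=5$ is odd, Gauss--Bonnet gives $\chi(M_n)=0$. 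It therefore suffices to prove that $\pi_1(M_n)$ is large; since a group admitting a finite-index subgroup that surjects onto a non-abelian free group is large, it even suffices to exhibit such a subgroup inside a sufficiently deep principal congruence subgroup $\con_m\le\con_n$ (which is normal in $\con_n$, of finite index, and again torsion-free).

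Next I would analyse the cusps. Compactify $M_n$ to a compact aspherical $5$-manifold $\overline{M}_n$ with boundary $\partial\overline{M}_n=\bigsqcup_{i=1}^{c}N_i$, where $c\ge1$ is the number of cusps and each $N_i$ is a compact flat $4$-manifold, namely a quotient of $\mathbb{R}^4$ by a torsion-free crystallographic group coming from a parabolic subgroup of $\SL_+\left(\Gamma_4^{u,v}(\Z)\right)$. Using Kiefer's explicit description of these parabolic subgroups \cite{Kiefer}, together with the fact that $\con_n$ is a \emph{principal congruence} subgroup, I would show that for $n$ deep enough each cusp subgroup is in fact free abelian of rank $4$ (the finite holonomy dies upon reduction modulo the level), so that each $N_i\cong T^4$ and $b_1(N_i)=4$; moreover the number of cusps of the congruence covers grows without bound. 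Poincaré--Lefschetz duality on $\overline{M}_n$ (``half lives, half dies'') then gives $b_1(\con_n)=b_1(\overline{M}_n)\ge\tfrac{1}{2}\,b_1(\partial\overline{M}_n)=2c$, so the first Betti numbers of the principal congruence subgroups are unbounded.

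Finally, to pass from ``many cusps and unbounded $b_1$'' to largeness I would follow Lubotzky's argument in \cite{Lubot96}: the vanishing of $\chi(\overline{M}_n)$ combined with Poincaré--Lefschetz duality on the truncated manifold lets one control the number of $2$-handles of $\overline{M}_n$ against the number of $1$-handles, hence bound the number of relators in a finite presentation of $\con_n$ linearly in the cusp data, while the cusps force $b_1$ (and, for a suitable prime $p$, $\dim_{\F_p}H_1(\,\cdot\,;\F_p)$) to grow strictly faster; feeding this into the Baumslag--Pride / Golod--Shafarevich type criterion used by Lubotzky produces a finite-index subgroup of $\con_m$, hence of $\con_n$, that surjects onto a non-abelian free group. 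I expect the main obstacle to be exactly this last bookkeeping: Lubotzky's estimates are tailored to dimension $3$ (and to Bianchi groups), and carrying them out for a cusped hyperbolic $5$-manifold requires the precise geometric input of \cite{Kiefer} — the shape of the fundamental domain and of the parabolic subgroups — and genuinely uses that we work with principal congruence subgroups, whose cusps are honest $4$-tori contributing a controlled number of relations, rather than with arbitrary finite-index subgroups.
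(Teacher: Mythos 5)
The paper's proof of this theorem is entirely different and much shorter than what you propose: it shows that the reflection $\sigma(z)=-z'$ of $\mathbb{H}^5$ along the hyperplane $\{z_0=0\}$ normalises $\operatorname{PCon}_n$ (which is isomorphic to $\con_n$ since $\con_n$ is torsion-free), and then cites \cite[Corollary~3.6]{Lubot96}, which states that a discrete subgroup of $\operatorname{Iso}^+(\mathbb{H}^n)$ normalised by a reflection maps onto a virtually non-abelian free group. No cusp count, no Betti numbers, no presentation estimates. You replace this with a first-Betti-number-growth / Baumslag--Pride argument modelled on the other part of Lubotzky's toolkit, and the step you yourself flag as ``the main obstacle'' --- bounding the relator count of a presentation of a deep congruence subgroup against the cusp data --- is precisely where the argument is missing. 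In dimension $3$ the deficiency estimate is essentially forced by $\chi=0$ together with the handle structure of the truncated manifold, but in dimension $5$ the relation $\chi(\overline{M}_n)=0$ reads $b_1-b_2+b_3-b_4=0$ and gives no a priori control on the number of $2$-handles; half-lives-half-dies gives a lower bound on $b_1$ in terms of the number of cusps, but not the upper bound on relators that a Baumslag--Pride / Golod--Shafarevich deficiency argument requires. That analysis would have to be built from scratch using the explicit fundamental domains of \cite{Kiefer}, and your sketch does not do it.

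There is also a logical gap in the reduction at the start. The theorem asserts that $\con_n$ is \emph{large}, which in \Cref{sectie vFQ} means $\con_n$ has a quotient that is virtually non-abelian free; the paper explicitly distinguishes this from the a priori weaker property $\vQL$ (some finite index subgroup surjects onto $F_2$). Your opening claim --- that it ``even suffices to exhibit such a subgroup inside a sufficiently deep principal congruence subgroup $\con_m\leq\con_n$'' --- would only give $\vQL$ for $\con_n$: pushing the kernel of the free quotient of $\con_m$ to its normal core in $\con_n$ produces a quotient of $\con_n$ containing, with finite index, a subgroup of a finite \emph{product} of free groups, and such subgroups need not be virtually free. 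Promoting $\vQL$ to largeness therefore requires an argument you do not supply. The paper's reflection argument sidesteps this entirely, since the Bass--Serre splitting cut out by the fixed hyperplane of $\sigma$ yields a genuine virtually free quotient of $\con_n$ itself.
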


The fact that there exists some $n$ such that $\con_n$ is torsion-free, follows from \Cref{torsionfree} and the connection built in the proof of \Cref{infinite abelianization SL}.

Finally, in \Cref{congruence kernel qa} we use \Cref{infinite abelianization SL} to show that the congruence kernel for $\SL_2(\O)$ and $\SL_+\left(\Gamma_4^{u,v}(\Z)\right)$ contains a copy of the free group of countable rank. In case of $\SL_2(\mc{I}_d)$ this was obtained in \cite[Theorem B]{Lubot82} and our proof uses the methods in \emph{loc. cit.} The result obtained here will be used in \Cref{subsection vFQ vsp} to study the congruence kernel of the unit group of an integral group ring. 

\subsection{Background on \texorpdfstring{$\SL_2$}{SL2} over quaternionic orders}\label{clifford_background}

To prove the main result, we introduce some notation. We define, for $u$ and $v$ strictly negative integers, the following subset of $\qa{-1}{-1}{\R}$: \begin{eqnarray*}\label{Huv}
\mathcal{H}_{u,v}(\Q):= \left\lbrace a_0+a_1\sqrt{\vert u \vert}i+a_2\sqrt{\vert v \vert}j+a_3\sqrt{\vert uv\vert}k \mid a_0, a_1, a_2, a_3 \in \Q \right.\\ \left. \text{ and } i^2=j^2=-1, ij=-ji=k \right\rbrace.
\end{eqnarray*}

It is not hard to see that the following statement holds.
\begin{lemma}[{\cite[Lemma~6.1]{Kiefer}}]
$\mathcal{H}_{u,v}(\Q)$ is a subalgebra of $\qa{-1}{-1}{\R}$ isomorphic to $\qa{u}{v}{\Q}$. The isomorphism is explicitly given by \begin{align}
\Lambda\colon \mathcal{H}_{u,v}(\Q) & \rightarrow \left(\frac{u,v}{\Q}\right) \label{Lambda iso}, \\
a_0 + a_1\sqrt{\vert u \vert}i+a_2\sqrt{\vert v \vert}j+a_3\sqrt{\vert uv\vert }k & \mapsto  a_0 + a_1i+a_2j+a_3k. \notag
\end{align}
\end{lemma}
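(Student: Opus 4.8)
The plan is to check that the rescaled generators $I := \sqrt{\lvert u\rvert}\,i$, $J := \sqrt{\lvert v\rvert}\,j$ and $K := \sqrt{\lvert uv\rvert}\,k$ of $\qa{-1}{-1}{\R}$ satisfy exactly the defining relations of $\qa{u}{v}{\Q}$; both assertions then follow at once. By definition $\mathcal{H}_{u,v}(\Q)$ is the $\Q$-span of $\{1,I,J,K\}$, and since $\{1,i,j,k\}$ is an $\R$-basis of $\qa{-1}{-1}{\R}$ and the scalars $\sqrt{\lvert u\rvert}$, $\sqrt{\lvert v\rvert}$, $\sqrt{\lvert uv\rvert}$ are nonzero, the set $\{1,I,J,K\}$ is $\Q$-linearly independent; hence $\mathcal{H}_{u,v}(\Q)$ is a $4$-dimensional $\Q$-vector space.

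Next I would compute the products of these generators. Using $u,v<0$, so that $\lvert u\rvert=-u$, $\lvert v\rvert=-v$ and $\lvert uv\rvert=uv$, together with $i^2=j^2=-1$, $ij=-ji=k$ and $k^2=-1$ in $\qa{-1}{-1}{\R}$, one obtains
\[
I^2 = u, \qquad J^2 = v, \qquad IJ = K = -JI, \qquad K^2 = -uv;
\]
for instance $I^2 = \lvert u\rvert\, i^2 = -\lvert u\rvert = u$, while $IJ = \sqrt{\lvert uv\rvert}\,ij = \sqrt{\lvert uv\rvert}\,k = K$ and $JI = -\sqrt{\lvert uv\rvert}\,k = -K$. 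These are precisely the relations $i^2 = u$, $j^2 = v$, $ij = -ji = k$ defining $\qa{u}{v}{\Q}$ (with the consequent $k^2 = -uv$). In particular every product of two elements of $\{1,I,J,K\}$ again lies in its $\Q$-span, and $1 \in \mathcal{H}_{u,v}(\Q)$; since multiplication in $\qa{-1}{-1}{\R}$ is $\Q$-bilinear, $\mathcal{H}_{u,v}(\Q)$ is closed under it, i.e.\ it is a $\Q$-subalgebra of $\qa{-1}{-1}{\R}$.

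Finally, I would note that the $\Q$-linear map $\Lambda$ sending $1\mapsto 1$, $I\mapsto i$, $J\mapsto j$, $K\mapsto k$ takes a $\Q$-basis of $\mathcal{H}_{u,v}(\Q)$ to a $\Q$-basis of $\qa{u}{v}{\Q}$, hence is bijective, and it respects multiplication because the two multiplication tables computed above correspond under it; thus $\Lambda$ is an isomorphism of $\Q$-algebras, and writing it out on a general element recovers the displayed formula \eqref{Lambda iso}. There is no serious obstacle in this argument; the only point requiring a little care is the sign bookkeeping coming from $u$ and $v$ being negative, which is exactly what makes $I^2 = u$ and $J^2 = v$ rather than $-\lvert u\rvert$ and $-\lvert v\rvert$, and hence what makes the target $\qa{u}{v}{\Q}$ come out with the correct parameters.
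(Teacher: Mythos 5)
Your proof is correct. The paper itself gives no proof of this lemma, only a citation to \cite[Lemma~6.1]{Kiefer}, so there is nothing to compare against; your direct verification — rescaling the generators by $\sqrt{\lvert u\rvert}$, $\sqrt{\lvert v\rvert}$, $\sqrt{\lvert uv\rvert}$, checking that the rescaled elements satisfy $I^2=u$, $J^2=v$, $IJ=-JI=K$ (hence $K^2=-uv$), noting $\Q$-linear independence from $\R$-linear independence of $\{1,i,j,k\}$, and observing that $\Lambda$ sends one $\Q$-basis to the other while matching multiplication tables — is the natural and complete argument, and the sign bookkeeping you highlight is indeed the only point where care is needed.
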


We will be using the well-established Clifford algebras $\Cliff_n(\R)$, i.e. the $\R$-algebras generated abstractly (as an algebra) by $n-1$ elements $i_1, \ldots, i_{n-1}$ satisfying the relations \begin{equation*}
i_hi_k = -i_ki_h, \textrm{ for } h \neq k\quad \textrm{ and }\quad i_h^2=-1 \textrm{ for } 1 \leq h \leq n-1.
\end{equation*}

For example, \[ \Cliff_1(\R) \cong \R, \quad \Cliff_2(\R) \cong \C \quad \text{ and } \quad \Cliff_3(\R) \cong \qa{-1}{-1}{\R}.\]

The following definitions are also well-established, see e.g. \cite{1985ahlfors}. Recall that the main conjugation $.'$ on $\Cliff_n(\R)$ is defined as the $\R$-linear automorphism determined by $i_h \mapsto -i_h$ for every $1\leq h \leq n-1$, and that there is an anti-involution $.^*$ defined as the $\R$-linear automorphism defined on an arbitrary basis element $i_{h_1}i_{h_2}\ldots i_{h_m}$ by reversing its order, i.e. by sending it to $i_{h_m}i_{h_{m-1}}\ldots i_{h_1}$.
The composition of these two commuting automorphisms yields a new anti-involution denoted by $\overline{a} := a'^*$. 

The vector space $\Cliff_n(\R)$ is endowed with a Euclidean norm \[ |a| := \sqrt{\sum a_I^2},\] where $a = \sum a_I I \in \Cliff_n(\R)$, $I=i_{j_1} \cdots i_{j_r},$ $1 \leq j_1 < ... < j_r \leq n-1$ (notice that such elements $I$ form a basis for $\Cliff_n(\R)$).

Moreover, for $a$ an element of the $n$-dimensional linear subspace $\V^n(\R) \leq \Cliff_n(\R)$ generated by the basis $\{i_0 := 1, i_1, \ldots, i_{n-1}\}$, one readily computes that $a\overline{a} = |a|^2$, showing that every non-zero vector of $\V^n(\R)$ is invertible. The group generated by all such vectors is denoted by $\Gamma_n(\R)$ and is called the \emph{Clifford group}.

This Clifford group also gives rise to general and special linear groups: \begin{eqnarray*}
\GL(\Gamma_n(\R)) & := & \left \lbrace \begin{pmatrix} a & b \\ c & d \end{pmatrix} \mid a,b,c,d \in \Gamma_n(\R) \cup \lbrace 0 \rbrace,\ ad^*-bc^* \in \R^{\ast}, \right.\\
& & \phantom{ \left \lbrace \begin{pmatrix} a & b \\ c & d \end{pmatrix} \mid \right. } \left. ab^*,\ cd^*,\ c^*a,\ d^*b \in \V^n(\R) \right \rbrace,\\
\SL_+(\Gamma_n(\R)) & := & \left \lbrace \begin{pmatrix} a & b \\ c & d \end{pmatrix} \in \GL(\Gamma_n(\R))  \mid ad^*-bc^* =  1 \right \rbrace,
\end{eqnarray*}
for which it can be shown that the map $\begin{psmallmatrix} a & b \\ c & d \end{psmallmatrix} \mapsto ad^* - bc^*$ is multiplicative. Lastly, $\Cliff_n(\Z)$ denotes the $\Z$-subalgebra of $\Cliff_n(\R)$ generated by $i_0, \ldots, i_{n-1}$, and we let $\Gamma_n(\Z) := \Cliff_n(\Z) \cap \Gamma_n(\R)$, the submonoid of products of vectors from $\Cliff_n(\Z)$ which are themselves invertible in $\Cliff_n(\R)$.

\begin{remark}\label{Iso_SL+Gamma4R} The groups $\SL_+(\Gamma_4(\R))$ and $\SL_2\left(\qa{-1}{-1}{\R}\right)$ are isomorphic. This non-obvious fact is proven in \cite[Section~$6$]{ElsGrunMen}, though the explicit isomorphism, as we will use later, can be found in \cite[Section~$5$]{Kiefer}.
For the sake of completeness, we mention it here. Note that the elements \begin{equation*}
    \varepsilon_1  =  \frac{1 + i_1i_2i_3}{2}, \quad 
    \varepsilon_2  =  \frac{1 - i_1i_2i_3}{2}
\end{equation*}
form a pair of central, orthogonal idempotents of $\Cliff_4(\R)$. Indeed, one readily checks that they commute with all generators of $\Cliff_4(\R)$:
\[ \varepsilon_i^2 = \varepsilon_i, \quad \varepsilon_1\varepsilon_2 = 0, \quad \text{ and } \quad \varepsilon_1 + \varepsilon_2 = 1.\]
Using this, we obtain that every element $\alpha \in \Cliff_4(\R)$ can be uniquely expressed as \[ \alpha = a \varepsilon_1 + b \varepsilon_2,\] with $a,b \in \Cliff_3(\R)$. Indeed, writing $\alpha = \sum \alpha_I I$ for $I \in \{1,i_1,i_2,i_3,i_1i_2,i_1i_3,i_2i_3,i_1i_2i_3\}$, and $a = \sum a_I I$, $b = \sum b_I I$ for $I \in \{1, i_1, i_2, i_1i_2\}$ one straightforwardly calculates that 
\[\begin{array}{ccccccc}
   a_1 & =  & \alpha_1 + \alpha_{i_1i_2i_3}, & \quad & b_1 & = & \alpha_1 - \alpha_{i_1i_2i_3}, \\
a_{i_1}  & = & \alpha_{i_1} - \alpha_{i_2i_3}, & \quad & b_{i_1}  & = & \alpha_{i_1} + \alpha_{i_2i_3}, \\
a_{i_2}  & = & \alpha_{i_2} + \alpha_{i_1i_3},& \quad & b_{i_2}  & = & \alpha_{i_2} - \alpha_{i_1i_3}, \\
a_{i_1i_2}  & = & \alpha_{i_1i_2} - \alpha_{i_3},& \quad & b_{i_1i_2}  & = & \alpha_{i_1i_2} + \alpha_{i_3}.
\end{array}\]
Because of the fact that $\varepsilon_1$ and $\varepsilon_2$ are central orthogonal idempotents, one can make an algebra homomorphism $\chi' \colon \Cliff_4(\R) \rightarrow \Cliff_3(\R), \: \alpha \mapsto a $. Using the isomorphism $\Cliff_3(\R) \cong \qa{-1}{-1}{\R}$ allows us to extend it to an algebra homomorphism $\chi \colon \Cliff_4(\R) \rightarrow \qa{-1}{-1}{\R}$, which can be explicitly expressed as \[\chi(\alpha) = \alpha_1 + \alpha_{i_1i_2i_3} + (  \alpha_{i_1} - \alpha_{i_2i_3} ) i + ( \alpha_{i_2} + \alpha_{i_1i_3} ) j + (\alpha_{i_1i_2} - \alpha_{i_3}) k.\] It is this homomorphism (which is obviously not an isomorphism) that extends, by entrywise application, to a group isomorphism between $\SL_+(\Gamma_4(\R))$ and $\SL_2\left(\qa{-1}{-1}{\R}\right)$.
\end{remark}
We now define a transformation $\sigma$ of $\Cliff_n(\R)$ as \[\sigma(z) = -z'.\] Clearly $\sigma^2 = \mathrm{Id}_{\Cliff_n(\R)}$.

We also define the following subset of $\Cliff_4(\R)$:
\begin{eqnarray*}\label{Cliffalgebran}
\Cliff_4^{u,v}(\Q):= \left\lbrace \alpha_0+\alpha_1\sqrt{\vert u \vert}i_1+\alpha_2\sqrt{\vert v \vert}i_2+\alpha_3\sqrt{\vert uv\vert}i_3+\alpha_4\sqrt{\vert uv\vert}i_1i_2
\right.\\ \left. +\alpha_5\sqrt{\vert v\vert}i_1i_3+\alpha_6\sqrt{\vert u\vert}i_2i_3+\alpha_7i_1i_2i_3  \mid \alpha_i \in \Q \right\rbrace,
\end{eqnarray*}
and one straightforwardly checks that it is a subalgebra of $\Cliff_4(\R)$.
Analogously, we define $\Cliff_4^{u,v}(\Z)$, a $\Z$-order of $\Cliff_4^{u,v}(\Q)$, $\Gamma_4^{u,v}(\Z)$ and $\SL_+\left(\Gamma_4^{u,v}(\Z)\right)$\label{SLforcliff2} (which is a subgroup of $\SL_+\left(\Gamma_4(\R)\right)$).
We consider the principal congruence subgroups of level $n \geq 1$ of the latter group, i.e. \begin{equation}\label{def con}
\con_n := \left\lbrace \begin{pmatrix} 1 + n a & nb\\ nc & 1+nd \end{pmatrix} \in \SL_+\left(\Gamma_4^{u,v}(\Z)\right) \mid a,b,c,d \in \Cliff_4^{u,v}(\Z) \right\rbrace.
\end{equation}

\subsection{Virtually free quotients of congruence subgroups}\label{congr_subgroups_SL2}

We start with proving that the congruence subgroups of higher modular groups are large.
\begin{proof}[Proof of \Cref{vFq for Con}]
Consider the hyperbolic space of dimension $5$, 
\[\mathbb{H}^5= \{ z_0 + z_1i_1 + z_2i_2 + z_3i_3 + z_4i_4 \mid z_i \in \R, z_4 \geq 0\} \subseteq \V^5(\R).\]
It is not hard to see that $\sigma$ restricted to $\mathbb{H}^5$ is simply a reflection along the plane with equation $z_0 = 0$.
It is well-known that $\PSL_+\left(\Gamma_4(\R)\right)$ acts faithfully via a M\"{o}bius transformation on $\mathbb{H}^5$: \[Mz = \begin{pmatrix} \alpha & \beta \\ \gamma & \delta\end{pmatrix}z = (\alpha z + \beta)(\gamma z + \delta)^{-1}, \quad M \in \PSL_+\left(\Gamma_4(\R)\right), \: z \in \mathbb{H}^5,\]
where we abuse the notation by representing an element of $\PSL_+\left(\Gamma_4(\R)\right)$ by a matrix. However, it is clear this choice does not impact the way it acts via M\"{o}bius transformation.

Let now $M=\begin{psmallmatrix} \alpha & \beta \\ \gamma & \delta\end{psmallmatrix}$ be in the subgroup $\con_n$, i.e. $\alpha = 1+na$, $\beta = nb$, $\gamma = nc$ and $\delta = 1+nd$ for some $a,b,c,d \in \Cliff_4^{u,v}(\Z)$. Denote $\operatorname{PCon}_n$ by its image in $\PSL_+\left(\Gamma_4(\R)\right)$ and again abusively say that $M \in \operatorname{PCon}_n$ (indeed, since $\con_n$ is torsion-free, we may assume $M \in \operatorname{PCon}_n$). We claim that $\sigma M \sigma$ is again in $\operatorname{PCon}_n$. Indeed, for every $z \in \mathbb{H}^5$ we have $$\left(\sigma M \sigma\right) z = - \left((-\alpha z' + \beta)(-\gamma z' + \delta)^{-1}\right) ' = (\alpha ' z - \beta ')(-\gamma ' z + \delta ')^{-1}. $$
Hence, $\sigma M \sigma$ corresponds to the M\"{o}bius transformation given by the matrix $\begin{psmallmatrix} \alpha ' & -\beta ' \\ -\gamma ' & \delta ' \end{psmallmatrix}$, which is an element of $\operatorname{PCon}_n$.

Hence, the subgroup $\operatorname{PCon}_n$ is normalised by a reflection. This shows, by \cite[Corollary~3.6]{Lubot96}, that $\operatorname{PCon}_n$ maps to a virtually free group.
\end{proof}

As indicated by \Cref{vFq for Con}, in order to prove \Cref{infinite abelianization SL}, we need to consider torsion-free principal congruence subgroups of $\SL_2(\LL_{u,v})$. We now show that these exist.

\begin{lemma}\label{torsionfree}
Let $\LL_{u,v}$ be as above. There exists a prime $p$ such that for every larger prime $q$, with $u$ not a square modulo $q$, the principal congruence subgroup of level $q$ of $\GL_2(\LL_{u,v}) = \SL_2(\LL_{u,v})$ is torsion-free.
\end{lemma}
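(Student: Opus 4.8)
The plan is to run a Minkowski-type torsion argument. Set $A := \Ma_2\!\left(\qa{u}{v}{\Q}\right)$, a central simple $\Q$-algebra of degree $4$, and $\Lambda := \Ma_2(\LL_{u,v})$, which is a $\Z$-order in $A$ (and which, since $\qa{u}{v}{\Q}$ is totally definite, satisfies $\GL_2(\LL_{u,v}) = \SL_2(\LL_{u,v})$, as already noted). For a prime $q$ write $\Gamma(q) := \{\, M \in \SL_2(\LL_{u,v}) \mid M \in I + q\Lambda \,\}$ for the principal congruence subgroup of level $q$. I would prove the sharper statement that $\Gamma(q)$ is torsion-free for \emph{every} prime $q \geq 7$; the prime $p$ in the lemma can then be taken to be $5$. (For later use one also wants such $q$ with $u$ a non-square modulo $q$, and these have density $1/2$ by Chebotarev's theorem applied to $\Q(\sqrt{u})/\Q$, but that plays no role in the torsion-freeness itself.) The first reduction is routine: a nontrivial torsion element of $\Gamma(q)$ has a power of prime order $\ell$ lying again in $\Gamma(q)$, so it suffices to rule out $M \in \Gamma(q)$ of prime order $\ell$.

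The second step is to bound $\ell$. Since $\Lambda$ is a $\Z$-order, $M$ is integral over $\Z$, so its reduced characteristic polynomial $p_M(X)$ is monic, lies in $\Z[X]$, and has degree $\deg A = 4$; by the reduced Cayley--Hamilton theorem $p_M(M)=0$, hence the minimal polynomial $m_M(X)$ of $M$ over $\Q$ divides $p_M$ and so $\deg m_M \leq 4$. From $M^\ell = I$ and $M \neq I$ one gets that $m_M$ divides $X^\ell - 1 = (X-1)\Phi_\ell(X)$ but is not $X-1$, so $\Phi_\ell \mid m_M$ and therefore $\ell - 1 = \deg\Phi_\ell \leq 4$. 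As $\ell$ is prime this leaves $\ell \in \{2,3,5\}$; in particular $\gcd(\ell,q) = 1$ whenever $q \geq 7$.

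The third step is the $q$-adic iteration. Writing $M = I + qN$ with $N \in \Lambda$, I would expand
\[ I \;=\; M^\ell \;=\; I + \ell\,q\,N + \sum_{k=2}^{\ell}\binom{\ell}{k} q^k N^k, \]
cancel $I$, and divide by $q$ inside the free $\Z$-module $\Lambda$ to get $\ell N = -\,q\sum_{k=2}^{\ell}\binom{\ell}{k} q^{k-2} N^k \in q\Lambda$. Since $\Lambda$ is $\Z$-free and $\gcd(\ell,q)=1$, this forces $N \in q\Lambda$, i.e. $M \in I + q^2\Lambda$; repeating the computation with $q^2$, then $q^3$, \dots\ in place of $q$ gives $M \in I + q^n\Lambda$ for all $n \geq 1$. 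As $\bigcap_{n\geq 1} q^n\Lambda = 0$ (a finitely generated free $\Z$-module is $q$-adically separated), we conclude $M = I$, contradicting that $M$ has order $\ell$; hence $\Gamma(q)$ is torsion-free.

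I do not expect a genuine obstacle here: the only input that is not completely elementary is the integrality of the reduced characteristic polynomial of an element of a $\Z$-order, which is standard, and the rest is a Minkowski-style computation. The hypothesis ``$u$ not a square modulo $q$'' is stronger than what torsion-freeness requires — it is included because the resulting primes $q$ are later fed, through the Clifford-algebra identification of \Cref{congr_subgroups_SL2}, into the higher modular groups $\SL_+\!\left(\Gamma_4^{u,v}(\Z)\right)$, where one wants the reduction of $\LL_{u,v}$ at $q$ to take a prescribed shape; the real difficulty of the section lies there and in \Cref{vFq for Con}, not in this lemma.
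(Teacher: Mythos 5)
Your proof is correct, and it takes a genuinely different and more elementary route than the paper's. The paper embeds $\GL_2(\LL_{u,v})$ into $\GL_4(\I_u)$ via the regular representation of $\qa{u}{v}{\Q}$ over $\Q(\sqrt{u})$, then (i) invokes a lemma of Guralnick--Lorimer to conclude that torsion in the level-$q$ congruence kernel of $\GL_4(\I_u)$ has $q$-power order --- this is where the hypothesis ``$u$ not a square mod $q$'' is actually used, to make $q\I_u$ a prime ideal so the cited lemma applies --- and (ii) quotes Schur's theorem giving a uniform bound $s$ on torsion orders in $\GL_4(\I_u)$, finally taking $p > s$. You instead bound the prime torsion orders of $\GL_2(\LL_{u,v})$ by $5$ directly, using that the minimal polynomial of any element of $\Ma_2\!\left(\qa{u}{v}{\Q}\right)$ has degree at most $\deg A = 4$, and then run the Minkowski $q$-adic iteration directly in the free $\Z$-module $\Ma_2(\LL_{u,v})$, needing only $\gcd(\ell,q)=1$; the iteration step is exactly the content of the Guralnick--Lorimer lemma the paper cites, but done by hand in the original group. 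This gives the sharper and fully explicit conclusion that $\Gamma(q)$ is torsion-free for every prime $q \geq 7$ (so $p=5$ works), bypasses Schur's bound entirely, and you are right that the hypothesis on $u$ plays no role in torsion-freeness --- it is used later, in the Clifford-algebra translation of \Cref{congr_subgroups_SL2}, and in the paper's proof only to route the argument through $\GL_4(\I_u)$. Your version buys explicitness and self-containedness at essentially no cost.
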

\begin{proof}
Consider the map \[ \varphi \colon \GL_2(\LL_{u,v}) \rightarrow \GL_2(\LL_{u,v} / q\LL_{u,v}).\]
We want to find a prime $p$ and show that for $q$ described in the statement, the kernel of $\varphi$ is torsion-free.

It is immediately verified that \begin{align*}\iota \colon \qa{u}{v}{\Q} &\rightarrow \Ma_2(\Q(\sqrt{u})) \\
a+bi+cj+dk &\mapsto \begin{pmatrix} a+b\sqrt{u} & c+d\sqrt{u} \\ cv - dv\sqrt{u} & a-b\sqrt{u} \end{pmatrix},
\end{align*}
is an injective homomorphism, such that $\iota (\LL_{u,v}) \subseteq \Ma_2(\mathcal{I}_u)$. We may thus extend $\iota$ to an injection $\iota \colon \GL_2(\LL_{u,v}) \rightarrow \GL_4(\I_u)$.

For this larger group, we may also consider the principal congruence subgroup of level $q$, namely the kernel of the canonical morphism \[ \psi \colon \GL_4(\mathcal{I}_u) \rightarrow \GL_4(\mathcal{I}_u / q\mathcal{I}_u).\]

From the expression of $\iota$, it is not hard to see that $\iota(\ker \varphi) \leq \ker \psi$.
From the assumptions on $q$, i.e. that $u$ is not a square modulo $q$, it follows that $q\mathcal{I}_u$ is a prime ideal. This is well-known, for example see \cite[\textsection~5.4~Proposition~1]{Samuel2}. As such, from \cite[Lemma~9]{GurLor} it follows that every torsion element of the kernel has order some power of $q$.
On the other hand, by a result of Schur which is also stated in \cite[Theorem 14]{GurLor}, there exists a natural number $s$ such that the orders of the torsion elements of $\GL_4(\mathcal{I}_u)$ divide $s$.

So, if we let $p$ be a prime larger than $s$ and $q$ a prime larger than $p$ such that $u$ is not a square modulo $p$, then all torsion elements of $\ker \varphi$ have to have an order which is a power of $q$ and which divides $s$. This is only possible when the order is $1$. This proves that $\ker \varphi$ is torsion-free.
\end{proof}

\begin{proof}[Proof of \Cref{infinite abelianization SL}]
Let $G_n$ be the principal congruence subgroup of level $n$ of $\SL_2(\LL_{u,v})$.
To prove the result, we will assume $G_n$ to be a torsion-free principal congruence subgroup. The fact that such an index $n$ exists follows from \Cref{torsionfree}.
This will indeed suffice since congruence subgroups which are contained in one another are of finite index and $G_n \cap G_m = G_{\mathrm{lcm}(n,m)}$.

We expand the isomorphism $\Lambda$ of \eqref{Lambda iso} to an isomorphism $\Lambda$ between $\SL_2(\mathcal{H}_{u,v}(\Q))$ and $\SL_2\left(\qa{u}{v}{\Q}\right)$.
Setting $\mathcal{G}_n = \Lambda^{-1}(G_n)$, the $n^{\text{th}}$ principal congruence subgroup of $\SL_2\left(\Lambda^{-1}(\LL_{u,v})\right)$, it suffices to prove the desired statement for $\mathcal{G}_n$. Remark that $\mathcal{G}_n$ is also torsion-free.

By \Cref{Iso_SL+Gamma4R}, one deduces that for the explicit isomorphism $\chi \colon \SL_+(\Gamma_4(\R)) \rightarrow \SL_2\left( \qa{-1}{-1}{\R}\right)$, one has that $\chi(\con_n) \leq \mathcal{G}_n$ and thus that $\mathrm{Con}_n$ is torsion-free.

Even more so, $\chi\left(\SL_+\left(\Gamma_4^{u,v}(\Z)\right)\right) \leq \SL_2\left(\Lambda^{-1}(\LL_{u,v})\right)$, and, using the classical argument that the units of orders are commensurable, this index is finite.
However, since $\con_n$ is a finite index subgroup of $\SL_+\left(\Gamma_4^{u,v}(\Z)\right)$ and the same holds for $\mathcal{G}_n$ in $\SL_2\left(\Lambda^{-1}(\LL_{u,v})\right)$, we obtain that $[\mathcal{G}_n : \chi(\con_n)] < \infty$.
\Cref{vFq for Con} now finishes the proof.
\end{proof}

\subsection{Congruence kernel of higher modular groups}\label{congruence kernel qa}

Let $F$ be a number field and $S$ a non-empty finite set of places of $F$ containing the Archimedean places. Associated is the ring of $S$-integers $\mc{O}_S = \{ x\in F \mid |x|_v \leq 1 \text{ for all } v \notin S \}$. Now consider a linear algebraic $F$-group $\mathbf{G}$ and fix an $F$-embedding $\mathbf{G} \hookrightarrow \GL_n(F)$. Using this, the group of $S$-integral points is defined as $\mathbf{G}(\mc{O}_S) := \mathbf{G}(F) \cap \GL_n(\mc{O}_S)$. Any subgroup $\Gamma$ of $\mathbf{G}(F)$ commensurable with $\mathbf{G}(\mc{O}_S)$ is called an \emph{$S$-arithmetic subgroup}. In this generality, a principal congruence subgroup of $\Gamma$ is a subgroup of the form $\Gamma(J):=\Gamma \cap \mathbf{G}(J)$ for an ideal $J$ of $\mc{O}_S$, and 
\[\mathbf{G}(J) := \mathbf{G}(\O_S) \cap \SL_n(J) = \{ g \in \mathbf{G}(\O_S) \mid g \equiv 1 \text{ mod } J\}.\]
As $\O_S/J$ is finite, all the groups $\Gamma(J)$ have finite index in $\Gamma$. 

The congruence topology on $\Gamma$ has the subgroups $\Gamma(J)$ as basis of neighbourhoods of the identity, whereas the profinite topology considers all finite index subgroups. Denote by $\widetilde{\Gamma}$ and $\widehat{\Gamma}$ the respective completions. The \emph{subgroup congruence problem} (\textbf{SCP}) asks whether all finite index subgroups contain a principal congruence subgroup. In other words, whether the natural epimorphism 
$$\pi_{\Gamma} : \widehat{\Gamma} \rightarrow \widetilde{\Gamma}$$
is injective. The kernel $C(\Gamma) := \ker(\pi_{\Gamma})$ is called the \emph{congruence kernel}. Serre formulated a quantitative version of (\textbf{SCP}) by asking to compute $C(\Gamma)$ and conjecturing that it is finite if and only if the $S$-rank of $\Gamma$ is at least two.\medskip

In our setting, we consider a finite-dimensional division $\Q$-algebra $D$, a free $D$-module $V$ of rank two and $\mathbf{G} = \GL_{V}$ the algebraic $\Q$-group of automorphisms of the $D$-module $V$. Hence for any $\Q$-algebra $A$, the group $\GL_V(A)$ is the group of automorphisms of the $(D \ot_{\Q} A)$-module $V\ot_{\Q} A$. More precisely, $D= \qa{u}{v}{\Q}$ for some strictly negative integers $u,v \in \Z_{<0}$ and $\Gamma$ is any arithmetic subgroup of $\mathbf{G}(\Q)$. For convenience, we consider an order $\O$ in $D= \qa{u}{v}{\Q}$ (which were always assumed to be $\Z$-orders) and assume that $\Gamma$ is a finite index subgroup of $\SL_2(\O)$. 

In case that $\Gamma = \SL_2(\O)$, then by \cite[Theorem (22.4)]{Reiner} $\widetilde{\SL_2(\O)} = \SL_2(\widehat{\O})$ where $\widehat{\O}$ is the profinite completion of $\O$, i.e. $\widehat{\O}$ is the inverse limit of all $\O/J$ with $J$ a non-zero ideal of $\O$. The ring $\widehat{\O}$ is a profinite ring and so $\GL_2(\widehat{\O})$ is a profinite group.

We denote by $F_{\omega}$ the free group of countable rank and by $\widehat{F_{\omega}}$ its profinite completion. 

\begin{theorem}\label{congruence kernel over quaternion}
Let $u,v \in \Z_{<0}$ be strictly negative integers, $\O$ an order in $\qa{u}{v}{\Q}$ and $\Gamma$ a finite index subgroup of $\SL_2(\O)$. The congruence kernel $C(\Gamma)$ contains a closed subgroup isomorphic to $\widehat{F_{\omega}}$.
\end{theorem}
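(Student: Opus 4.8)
\emph{Strategy.} The plan is to deduce this from \Cref{vQL for exceptional} by transporting Lubotzky's proof of \cite[Theorem B]{Lubot82} for $\SL_2(\mathcal I_d)$ to the quaternionic case.

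\emph{Reduction to a free quotient.} Since $\Ma_2(\O)$ is an order in the exceptional matrix algebra $A=\Ma_2\!\left(\qa{u}{v}{\Q}\right)$ and $\Gamma$ has finite index in $\SL_2(\O)=\SL_1(\Ma_2(\O))$, the group $\Gamma$ is commensurable with $\SL_1$ of $A$, so \Cref{vQL for exceptional} yields a finite-index subgroup $\Delta\le\Gamma$ together with a surjection $\phi\colon\Delta\twoheadrightarrow F_2$ onto a non-abelian free group. The congruence kernel is well behaved under finite-index passage: the restriction of $\pi_\Gamma$ to the open subgroup $\widehat\Delta\le\widehat\Gamma$ is the congruence map $\pi_\Delta\colon\widehat\Delta\twoheadrightarrow\widetilde\Delta$, so $C(\Delta)=C(\Gamma)\cap\widehat\Delta$ is open in $C(\Gamma)$; as every open subgroup of $\widehat{F_\omega}$ is again free profinite of rank $\aleph_0$ (an index-$n$ open subgroup of $\widehat{F_\kappa}$ has rank $n(\kappa-1)+1=\kappa$ for infinite $\kappa$), it suffices to exhibit a closed copy of $\widehat{F_\omega}$ inside $C(\Delta)$.

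\emph{Coarseness of the congruence topology.} Let $N=\ker\phi$; since $F_2$ is residually finite, the kernel of the induced map $\widehat\phi\colon\widehat\Delta\twoheadrightarrow\widehat{F_2}$ is the closure $\overline N$ of $N$ in $\widehat\Delta$. Put $K:=\widehat\phi\big(C(\Delta)\big)$. Because $C(\Delta)\trianglelefteq\widehat\Delta$, $K$ is a closed normal subgroup of $\widehat{F_2}$, and $\widehat{F_2}/K\cong\widehat\Delta/\big(C(\Delta)\overline N\big)$ is a continuous quotient of $\widetilde\Delta$. The key point to establish is $K\ne 1$, equivalently $C(\Delta)\not\subseteq\overline N$: were it contained, every finite group — being a quotient of $F_2$ — would, pulled back along $\phi$, be a congruence quotient of $\Delta$; but $\widetilde\Delta$ is, prime by prime, built from compact $p$-adic analytic groups of bounded dimension (the reduced-norm-one group of $\Ma_2(\O\otimes\Z_p)$, of dimension $15$ over $\Q_p$ — this is where totally-definiteness and the splitting of $\qa{u}{v}{\Q}$ away from finitely many places enter), so the abelian quotients of its finite quotients have uniformly bounded $p$-rank, contradicting that $\phi$ produces quotients $(\Z/p)^k$ of arbitrarily large rank. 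This comparison of the congruence topology with the profinite topology on the free quotient is exactly Lubotzky's computation in \cite{Lubot82}, and I expect it to be the main obstacle.

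\emph{Extracting and lifting $\widehat{F_\omega}$.} Finally, $K$ is a nontrivial closed normal subgroup of the free profinite group $\widehat{F_2}$ of rank $2$. If $K$ is open it is free profinite of finite rank $\ge 2$, hence contains $\overline{[K,K]}\cong\widehat{F_\omega}$; if $K$ has infinite index it is itself free profinite of rank $\aleph_0$ by Mel'nikov's theorem on closed normal subgroups of free profinite groups. Either way $K$ contains a closed subgroup $P\cong\widehat{F_\omega}$. Since $P$ is a projective profinite group, the continuous surjection $C(\Delta)\cap\widehat\phi^{-1}(P)\twoheadrightarrow P$ induced by $\widehat\phi$ admits a continuous section $s\colon P\to C(\Delta)$; being a continuous injection of a compact group, $s$ is a topological embedding, so $s(P)$ is a closed subgroup of $C(\Delta)$, and hence of $C(\Gamma)$, isomorphic to $\widehat{F_\omega}$.
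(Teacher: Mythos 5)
Your architecture parallels the paper's: use the largeness result (\Cref{vQL for exceptional} resp.\ \Cref{infinite abelianization SL}) to produce a free quotient of a finite-index subgroup, show that the resulting copy of $\widehat{F_m}$ in $\widehat{\Gamma}$ meets $C(\Gamma)$ nontrivially, then invoke \cite[Theorem 2.1(a)]{Lubot82} (Mel'nikov-type) to extract $\widehat{F_\omega}$. Your Step 1 and Step 3 are sound (the paper instead embeds $\widehat{F_m}$ directly into $\widehat{\Gamma}$ via a section and intersects with $C(\Gamma)$, which avoids the projectivity/lifting step you need after working inside $\widehat{F_2}$). The problem is Step 2.

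The ``bounded $p$-rank'' argument you use to show $K\neq 1$ does not work. On one reading, you claim the finite abelian quotients of $\widehat{F_2}$ have unbounded $p$-rank; but $\widehat{F_2}^{\mathrm{ab}}\cong\widehat{\Z}^2$, so every finite abelian quotient of $\widehat{F_2}$ has $p$-rank at most $2$ — there is no contradiction with the bounded rank of finite abelian quotients of $\widetilde{\Delta}$. On the charitable reading, where you compare abelianizations of open \emph{subgroups} (index-$n$ open subgroups of $\widehat{F_2}$ do have $(\Z/p)^{n+1}$ as a quotient), the corresponding boundedness claim for $\widetilde{\Delta}$ is \emph{false}: $\widetilde{\Delta}$ is open in $\prod_\ell \SL_2(\O\otimes\Z_\ell)$, and by choosing $n$ distinct primes $\ell_1,\dots,\ell_n\equiv 1\pmod p$ and replacing each factor $\SL_2(\O\otimes\Z_{\ell_i})$ by the preimage of a Borel of $\SL_4(\F_{\ell_i})$, one gets an open subgroup whose abelianization surjects onto $\prod_{i}(\F_{\ell_i}^*)^3$, of $p$-rank $\ge 3n$. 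So neither reading yields the required nontriviality of $K$. The $p$-adic dimension bound controls the rank of the pro-$p$ part at each fixed place, but does not control the number of places contributing a nontrivial prime-to-$\ell$ abelian quotient.

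The correct invariant — and the one used by the paper, following Lubotzky — is the collection of finite non-abelian simple subquotients, not abelian rank. Concretely: $\widehat{F_2}$ has every finite group as a continuous quotient of a closed subgroup, whereas in $\widetilde{\SL_2(\O)}=\SL_2(\widehat{\O})$ (one may reduce to $\O$ maximal), any finite quotient of a subgroup factors through some $\SL_2(\O/J)$; decomposing $J$ into prime powers and using nilpotency of the congruence kernel in $\SL_2(\O/P^{m})\to\SL_2(\O/P)$, a non-abelian simple subquotient must already appear in some $\SL_{2d}(\F_{p^t})$ where $\O/P\cong\Ma_d(\F_{p^t})$. By \cite[Lemma 2.4(b)]{Lubot82} there is a finite non-abelian simple group that is a subquotient of no $\SL_{2d}(\F_{p^t})$, giving the contradiction. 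Your Step 2 should be replaced by this simple-group argument; the rest of your proof can then be kept.
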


Via the comparison map $\chi: \SL_+(\Gamma_4(\R)) \rightarrow \SL_2\left( \qa{-1}{-1}{\R}\right)$, \Cref{infinite abelianization SL} was obtained as a consequence of the similar statement for the congruence subgroups $\con_n$ of the higher modular groups $\SL_+\left(\Gamma_4^{u,v}(\Z)\right)$, i.e. \Cref{vFq for Con}. This time, through that same comparison, the analogue of \Cref{congruence kernel over quaternion} for the higher modular groups follows as a corollary. More precisely, the principal congruence subgroups of $\SL_+\left(\Gamma_4^{u,v}(\Z)\right)$ are by definition the finite index subgroups $\con_n$. Hence we can consider the completions $\widehat{\SL_+\left(\Gamma_4^{u,v}(\Z)\right)}$ and $\widetilde{\SL_+\left(\Gamma_4^{u,v}(\Z)\right)}$ as above and the associated natural epimorphism and its kernel, again called the congruence kernel. 

\begin{corollary}\label{congruence kernel higher modular}
    Let $u,v \in \Z_{<0}$ be strictly negative integers, then the congruence kernel of $\SL_+\left(\Gamma_4^{u,v}(\Z)\right)$ contains a closed subgroup isomorphic to $\widehat{F_{\omega}}$.
\end{corollary}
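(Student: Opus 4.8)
The plan is to transport Theorem \ref{congruence kernel over quaternion} across the comparison map $\chi \colon \SL_+(\Gamma_4(\R)) \to \SL_2\left(\qa{-1}{-1}{\R}\right)$, exactly in the way Theorem \ref{vFq for Con} was transported to Theorem \ref{infinite abelianization SL}. Concretely, recall from the proof of \Cref{infinite abelianization SL} that $\chi$ restricts to an injection $\SL_+\left(\Gamma_4^{u,v}(\Z)\right) \hookrightarrow \SL_2\left(\Lambda^{-1}(\LL_{u,v})\right)$ with finite-index image, and that $\chi(\con_n) \leq \mc{G}_n$ is again of finite index. Thus $\Gamma := \chi\left(\SL_+\left(\Gamma_4^{u,v}(\Z)\right)\right)$ is a finite-index subgroup of $\SL_2(\O)$ for the order $\O = \Lambda^{-1}(\LL_{u,v})$ in $\qa{u}{v}{\Q}$, so \Cref{congruence kernel over quaternion} applies and gives a closed copy of $\widehat{F_\omega}$ inside $C(\Gamma)$.

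The one genuine point to verify is that the congruence kernel is insensitive to this identification: since $\chi$ is an isomorphism onto $\Gamma$ as abstract groups, the profinite completions agree, $\widehat{\SL_+\left(\Gamma_4^{u,v}(\Z)\right)} \cong \widehat{\Gamma}$; I must check that it also matches the \emph{congruence} completions, i.e. that the congruence topology defined via the subgroups $\con_n$ on the left corresponds to the congruence topology on $\Gamma \leq \SL_2(\O)$. This is where a small amount of care is needed. On the higher-modular side, by definition the basic neighbourhoods of the identity are the $\con_n$ for $n \geq 1$. On the $\SL_2(\O)$ side the basic neighbourhoods are $\Gamma(J) = \Gamma \cap \SL_2(J)$ for ideals $J$ of $\O$; restricting to the finite-index subgroup $\Gamma$ and pulling back along $\chi$, one sees from the explicit formula for $\chi$ (which has entries that are $\Z$-linear combinations of the Clifford coordinates) that $\chi^{-1}\!\left(\Gamma(n\O)\right)$ is sandwiched between $\con_{n}$ and $\con_{cn}$ for a fixed constant $c$ depending only on $\chi$, hence the two filtrations are cofinal in one another and define the same topology. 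Therefore $\widetilde{\SL_+\left(\Gamma_4^{u,v}(\Z)\right)} \cong \widetilde{\Gamma}$ compatibly with the maps from the profinite completions, and consequently $C\!\left(\SL_+\left(\Gamma_4^{u,v}(\Z)\right)\right) \cong C(\Gamma)$ as topological groups.

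Given that identification, the corollary follows at once: $C(\Gamma) \supseteq \widehat{F_\omega}$ as a closed subgroup by \Cref{congruence kernel over quaternion}, and transporting back along $\chi$ yields a closed copy of $\widehat{F_\omega}$ inside $C\!\left(\SL_+\left(\Gamma_4^{u,v}(\Z)\right)\right)$. I expect the main (though still minor) obstacle to be the bookkeeping in the previous paragraph — checking that passing to the finite-index image $\Gamma$ and comparing the two systems of congruence subgroups does not distort the congruence kernel. This is handled by the standard fact that the congruence kernel is a commensurability invariant together with the cofinality observation above; no new ideas beyond those already used for \Cref{infinite abelianization SL} are required.
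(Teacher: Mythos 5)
Your proposal is correct and takes the same route the paper intends: transport \Cref{congruence kernel over quaternion} across $\chi$ using the finite-index embedding $\SL_+\!\left(\Gamma_4^{u,v}(\Z)\right)\hookrightarrow\SL_2(\O)$ established in the proof of \Cref{infinite abelianization SL}, together with the fact that the congruence kernel is a commensurability invariant. The only soft spot is the claimed explicit sandwiching of $\chi^{-1}(\Gamma(n\O))$ between $\con_n$ and $\con_{cn}$: the easy inclusion $\chi(\con_n)\le\Gamma(n\O)$ follows immediately from the $\Z$-linear formula for $\chi$, but the reverse cofinality (some $\Gamma(m\O)$ lying inside $\chi(\con_n)$) is less elementary because $\chi$ is only injective after restricting to $\SL_+$; it is cleaner to invoke the standard fact that the congruence topology on an arithmetic subgroup of a $\Q$-algebraic group is independent of the chosen $\Q$-embedding and lattice (up to commensurability), so that $\chi$ automatically identifies the two congruence completions.
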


\begin{proof}[Proof of \Cref{congruence kernel over quaternion}]
The proof proceeds as for \cite[Theorem B]{Lubot82}. Namely, reading between the lines in \textit{loc. cit.}, one obtains a reduction to the following claim.\smallskip

\noindent \underline{Claim:} Let $X$ be a set with $|X|\geq 2$ and $n\geq 1$. The profinite group $\widehat{F(X)}$ is not embedded as a profinite group in $\widetilde{\SL_n(\O)}= \SL_n(\widehat{\O})$.

We now explain how the claim indeed implies \Cref{congruence kernel over quaternion}. Let $\Gamma_f$ be a finite index subgroup of $\Gamma$ mapping onto a free group $F_m$ of rank $m \geq 2$, as yielded by \Cref{infinite abelianization SL}. It is well-known that this implies that $\widehat{\Gamma_f} \leq \widehat{\SL_2(\O)}$ has a subgroup $Z$ isomorphic to $\widehat{F_m}$. Indeed, $\widehat{F_m}$ being a free profinite group, the epimorphism from $\Gamma_f$ onto $F_m$ splits. The claim now implies that $Z \cap C(\Gamma)$ is non-trivial as otherwise $Z\cong \widehat{F_m}$ would embed into $\widehat{\Gamma}/C(\widehat{\Gamma}) = \widetilde{\Gamma} \leq \widetilde{\SL_2(\O)}$, which would contradict the above claim. 

It follows from \cite[Theorem 2.1.(a)]{Lubot82} that the non-trivial normal subgroup $Z \cap C(\Gamma)$ of $Z\cong \widehat{F_m}$ contains $\widehat{F_{\omega}}$. In particular also $C(\Gamma)$ contains $\widehat{F_{\omega}}$, as desired. Thus it indeed remains to prove the claim. \smallskip

First note that it is enough to obtain the claim in case that $\O$ is a maximal order, which we assume from now on. Suppose that $\widehat{F(X)}$ embeds continuously into $\SL_n(\widehat{\O})$. Since every finite group is a quotient of a subgroup of $\widehat{F(X)}$, the same holds for $\SL_n(\widehat{\O})$. We will show that the latter however does not hold. If $H$ is a quotient of a subgroup of $\SL_n(\widehat{\O}) = \varprojlim \SL_n(\O/J),$ then it must be of some $\SL_n(\O/J)$. Since $J$ is a two-sided ideal in a maximal order (over a Dedekind domain) one can decompose $J$ into a product of prime ideals \cite[Theorem (22.10)]{Reiner}, say $J = \prod_{i=1}^{\ell} P_i^{k_i}$. Hence $H$ is the quotient of a subgroup of some $\SL_n(\O/P_i^{m_i})$. Now, if $H$ is a non-abelian simple group, then it does not intersect the kernel of the map from $\SL_n(\O/P_i^{m_i})$ to $\SL_n(\O/P_i)$, as the kernel is nilpotent. Hence such $H$ is the quotient of a subgroup of some $\SL_n(\O/P_i)$. By \cite[Theorem (22.3)]{Reiner} $\O/P_i$ is a finite-dimensional simple algebra over the residue class field $\Z/(P_i \cap \Z)$. Moreover, $P_i$ lies over some prime ideal $(p_i)$ of $\Z$. Then, $p_i\O = P_i^{m_i}$ with $m_i$ the index of the division algebra $\qa{u}{v}{\Q_{p_i}}$.  

If $\qa{u}{v}{\Q_{p_i}} \cong \Ma_2(\Q_{p_i})$ is split, then $\O/P_i \cong \Ma_2(\F_p)$ by \cite[Theorem (22.4)]{Reiner}. Furthermore, if $\qa{u}{v}{\Q_{p_i}}$ is still a division algebra, then $\O/P_i \cong \F_{p^2}$. In conclusion, $\SL_n(\O/P_i)$ is of the form $\SL_{dn}(\F_{p^{t}})$ for some $d,t \in \N$. It follows from \cite[Lemma 2.4.(b)]{Lubot82} that there is a finite non-abelian simple group which is not the quotient of subgroup of $\SL_{dn}(\F_{p^{t}})$ for any $p$, yielding the desired contradiction and as such finishing the proof.
\end{proof}
\section{\texorpdfstring{Applications: $\Mexc$ through $\vQL$ and congruence kernel unit group}{Applications: Mexc through vQL and congruence kernel unit group}}\label{subsection vFQ vsp}

Let $\O$ be an order in an exceptional matrix algebra $A$. In \Cref{sectie vFQ} we obtained that $\SL_1(\O)$ has a finite index subgroup mapping onto a non-abelian free group. Subsequently, the latter property was used to obtain information on the congruence kernel of $\SL_1(\O)$. In this section we apply the aforementioned results to the unit group of $RG$ with $R$ the ring of integers in a number field.

\subsection{\texorpdfstring{Characterisation of $\Mexc$ via largeness of (higher) modular groups}{Characterisation of Mexc via largeness of (higher) modular groups}}

Recall that we say that a group $\Gamma$ \emph{has property $\vQL$} if it has a finite index subgroup which maps onto a non-abelian free group. This property is preserved under commensurability. Now consider the following class of groups:
\[\mc{G}_{\vQL} :=  \left\{ \Gamma \mid \Gamma \text{ is virtually indecomposable and has } \vQL  \right\} \sqcup \{ \text{ finite groups } \},\] 
and the associated class $\prod \mc{G}_{\vQL}$ defined in \eqref{prod G}. Recall that a group $\Gamma$ is called \emph{virtually indecomposable} if $\Gamma$ can only be commensurable to a direct product $\Gamma_1 \times \Gamma_2$ if $\Gamma_1$ or $\Gamma_2$ is finite.

\begin{theorem}\label{Mexc versus vFQ}
  Let $G$ be a finite group, $F$ a number field and $R$ its ring of integers. Then the following are equivalent:
\begin{enumerate}
    \item \label{item: vFQ} $\U(RG)$ is virtually-$\prod \mc{G}_{\vQL}$.
    \item \label{item: mexc vFQ} $FG$ has $\Mexc$ and no exceptional division components.
\end{enumerate}
Moreover, $FG$ has $\Mexc$ if and only if $\SL_1(RGe)$ has $\vQL$ for all $e  \in  \PCI(FG)$ such that $FGe$ is not a division algebra.
\end{theorem}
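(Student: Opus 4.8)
The plan is to deduce \Cref{Mexc versus vFQ} from \Cref{virutally G-am imply special components} applied to the class $\mc{G} = \mc{G}_{\vQL}$, combined with \Cref{vQL for exceptional}, \Cref{broad form finite grps} and the characterisation of which simple algebras are exceptional. First I would verify that $\mc{G}_{\vQL}$ satisfies the hypothesis of \Cref{virutally G-am imply special components}, namely that being virtually-$\prod\mc{G}_{\vQL}$ is closed under commensurability and under direct factors. Closure under commensurability is immediate since $\vQL$ is a commensurability invariant and virtual indecomposability is as well; for the direct factor property one uses \Cref{remark on vsp lemma}, which says it suffices to treat direct products of the form $\prod_i \Gamma_i \times \Gamma$ with $\Gamma_i$ non-abelian virtually indecomposable and $\Gamma$ finitely generated abelian, and there each virtually indecomposable factor either is finite or, being commensurable to a factor that has $\vQL$ (this requires checking $\vQL$ passes to virtually indecomposable direct factors, which follows as in \cite[Theorem 3]{KleRio} type arguments), lands in $\mc{G}_{\vQL}$.

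Granting this, \Cref{virutally G-am imply special components}\eqref{prod G to SL} gives that $\U(RG)$ is virtually-$\prod\mc{G}_{\vQL}$ if and only if for every $e \in \PCI(FG)$ the group $\SL_1(RGe)$ is either virtually cyclic (equivalently finite, by part \eqref{virtually Z}) or lies in $\mc{G}_{\vQL}$, i.e.\ is virtually indecomposable with $\vQL$. By \cite{Kleinert}, $\SL_1(RGe)$ is finite exactly when $FGe$ is a field or a totally definite quaternion algebra; and by \cite[Theorem 1]{KleRio} every other $\SL_1(RGe)$ is automatically virtually indecomposable. So the condition becomes: every non-field, non-totally-definite-quaternion simple component $FGe$ has $\SL_1(RGe)$ with $\vQL$. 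Now for the equivalence with \eqref{item: mexc vFQ}: if $FG$ has $\Mexc$ and no exceptional division components, then every non-division component is an exceptional matrix algebra, so $\SL_1(RGe)$ has $\vQL$ by \Cref{vQL for exceptional}, while every division component is a field or totally definite quaternion (this is exactly the ``no exceptional division components'' hypothesis), so $\SL_1(RGe)$ is finite; hence \eqref{item: vFQ} holds. Conversely, if \eqref{item: vFQ} holds, then no component can be an exceptional division algebra (its $\SL_1$ would be infinite by \cite[Theorem 2.10]{BJJKT} yet, being virtually indecomposable and non-elementary, one must check it fails $\vQL$ — here I would invoke that such $\SL_1$ has property (FAb) or the relevant rank-$\geq 2$ lattice property, so cannot virtually surject a free group); and no non-division component can be a non-exceptional matrix algebra, since those have $S$-rank $\geq 2$ and hence property (FAb), again precluding $\vQL$. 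Thus all matrix components are exceptional, i.e.\ $FG$ has $\Mexc$.

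For the final ``moreover'' clause, one direction is just the computation above: if $FG$ has $\Mexc$ then every non-division $FGe$ is an exceptional matrix algebra, so $\SL_1(RGe)$ has $\vQL$ by \Cref{vQL for exceptional}. For the converse, suppose $\SL_1(RGe)$ has $\vQL$ for all $e$ with $FGe$ not a division algebra; I would argue that a non-exceptional matrix component $\Ma_n(D)$ has $\SL_1$ of $S$-rank at least two (by \cite[Remark 6.7]{BJJKT} or the explicit rank computation, using \eqref{vcd formule}-type bookkeeping), whence it satisfies property (FAb) and thus cannot have a finite-index subgroup surjecting a non-abelian free group — contradicting $\vQL$. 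Hence every matrix component is exceptional, i.e.\ $FG$ has $\Mexc$.

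The main obstacle I anticipate is the technical verification that having $\vQL$ (as opposed to largeness, which is what \Cref{vQL for exceptional} and the literature on higher modular groups more naturally provide) is incompatible with property (FAb), and that (FAb) does hold for $\SL_1(\O)$ whenever $A$ is a non-exceptional simple algebra — this is the crux of both the ``converse'' in the main equivalence and the ``moreover''. Concretely, one needs: (i) a group with $\vQL$ has a finite-index subgroup with infinite abelianisation, so it is not (FAb); and (ii) for $A$ non-exceptional, $\SL_1(\O)$ is an $S$-arithmetic group of $S$-rank $\geq 2$, hence has (FAb) by Margulis/Raghunathan-type results (or, for the matrix case, this is exactly the content of the subgroup congruence property plus finiteness of congruence kernel consequences, and for exceptional division algebras Serre's conjecture is known in the relevant cases or one argues directly). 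Assembling these inputs cleanly — and making sure the quantifier ``for all $e$ with $FGe$ not a division algebra'' in the moreover clause genuinely does not need any hypothesis on the division components — is where the care is needed; everything else is bookkeeping on top of \Cref{virutally G-am imply special components}.
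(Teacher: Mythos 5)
Your overall strategy — reducing to \Cref{virutally G-am imply special components} applied to $\mc{G} = \mc{G}_{\vQL}$, then invoking \Cref{vQL for exceptional}, largeness results for the Bianchi case, and Margulis's theorem via (FAb) — matches the structure of the paper's proof, and the ``moreover'' part and the direction $(2)\Rightarrow(1)$ are handled correctly. However there are two genuine gaps.

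First, the verification that being virtually-$\prod\mc{G}_{\vQL}$ satisfies the direct factor hypothesis of \Cref{virutally G-am imply special components} is treated as a formality, with an appeal to ``\cite[Theorem 3]{KleRio}-type arguments.'' This does not work: the cited result concerns the $\SL_1$'s being non-trivial amalgams / failing (FA), not the question of whether a virtually-$\prod\mc{G}_{\vQL}$ decomposition of $\prod_i \Gamma_i\times\Gamma$ can be realigned with the factors $\Gamma_i$. If $H\leq \prod_i\Gamma_i\times\Gamma$ has finite index and $H\cong\Z^\ell\times\prod_x H_x$ with each $H_x$ mapping onto a non-abelian free group, there is no \emph{a priori} reason for the $H_x$ to match up with the $\Gamma_i$. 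This is precisely the content of \Cref{class nicely closed}, whose proof requires a careful analysis of the projections $\pi_x(S_i)$: one must show that the image of each $S_i=\Gamma_i\cap H$ is supported on a single coordinate $x$, using the projectivity of free groups and the virtual indecomposability of both $S_i$ and $H_x$. That argument does not follow from \cite{KleRio} and needs to be supplied.

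Second, your argument that $(1)$ rules out exceptional division components is incorrect as stated: you claim an arbitrary exceptional division algebra $D$ must have $\SL_1(\O)$ failing $\vQL$ because of ``property (FAb) or the relevant rank-$\geq 2$ lattice property.'' But exceptional division algebras (\Cref{def_exc_comps}) include quaternion algebras of $S$-rank $1$, for which $\SL_1(\O)$ is a Fuchsian or Kleinian group; those certainly \emph{do} have $\vQL$ (surface groups map onto free groups, and cocompact Kleinian groups virtually fiber by Agol), so (FAb) fails and the argument collapses. What saves the proof — and what the paper does — is to first deduce from the non-division components alone that $FG$ has $\Mexc$ (this part of your sketch is fine), and only \emph{then} invoke \Cref{broad form finite grps} to constrain the possible non-commutative division components to totally definite quaternion algebras (where $\SL_1$ is finite) and $\qa{\zeta_{2^t}}{-3}{\Q(\zeta_{2^t})}$ with $t\geq 3$, the latter having $S$-rank $\geq 2$ so that Margulis's normal subgroup theorem applies. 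Without routing through $\Mexc$ and the explicit classification in \Cref{broad form finite grps}, the (FAb) step is simply not available for a general exceptional division algebra.
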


A main ingredient for the equivalence between \eqref{item: vFQ} and \eqref{item: mexc vFQ} is \Cref{virutally G-am imply special components}. In particular, we need to show that the class of virtually-$\prod \mc{G}_{\vQL}$ groups satisfies the necessary properties. The proof of \cite[Theorem 2.1]{FreebyFree} is particularly instructive for the direct factor property.

\begin{lemma}\label{class nicely closed}
  With notations as above, the class of groups which are virtually-$\prod \mc{G}_{\vQL}$ satisfies the following:
  \begin{itemize}
      \item If $\Gamma_1$ and $\Gamma_2$ are commensurable and $\Gamma_1$ is virtually-$\prod \mc{G}_{\vQL}$, then so is $\Gamma_2$.
      \item If $\prod_{i=1}^m \Gamma_i \times \Gamma$ is virtually-$\prod \mc{G}_{\vQL}$, where $\Gamma_i$ is virtually indecomposable and $\Gamma$ finitely generated abelian, then each $\Gamma_i$ is either finite or virtually-$\prod \mc{G}_{\vQL}$.
  \end{itemize}
\end{lemma}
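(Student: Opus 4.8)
The plan is to derive both parts from a single elementary closure property of the class $\prod\mc{G}_{\vQL}$, and then to import the combinatorial heart of the second part from \cite[Theorem 2.1]{FreebyFree}.

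First I would record the basic reduction: if $H=\prod_{j=1}^{k}H_j$ belongs to $\prod\mc{G}_{\vQL}$ and $H'\le H$ has finite index, then $H'$ is virtually-$\prod\mc{G}_{\vQL}$. Indeed, each $H'\cap H_j$ has finite index in $H_j$, the subgroups $H'\cap H_j$ pairwise commute and generate their internal direct product $\prod_j(H'\cap H_j)$, and this product lies in $H'$ and has finite index in $H$, hence in $H'$. Since $H'\cap H_j$ is finitely generated abelian whenever $H_j$ is, and otherwise is commensurable with $H_j$, and since membership in $\mc{G}_{\vQL}$ is preserved under commensurability (because virtual indecomposability and $\vQL$ both are), we get $\prod_j(H'\cap H_j)\in\prod\mc{G}_{\vQL}$. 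The first bullet is then immediate: if $\Gamma_1$ and $\Gamma_2$ are commensurable and $\Gamma_1$ has a finite index subgroup $N\in\prod\mc{G}_{\vQL}$, fix a common finite index subgroup $\Delta$ of $\Gamma_1$ and $\Gamma_2$; then $N\cap\Delta$ has finite index in $N$, so it contains a finite index subgroup $M\in\prod\mc{G}_{\vQL}$, and $M$ has finite index in $\Delta$, hence in $\Gamma_2$, whence $\Gamma_2$ is virtually-$\prod\mc{G}_{\vQL}$.

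For the second bullet, write $P:=\prod_{i=1}^{m}\Gamma_i\times\Gamma$. If a given $\Gamma_i$ is finite or virtually cyclic there is nothing to show, so assume every $\Gamma_i$ is infinite and non-abelian. Pick a finite index $N=\prod_\ell N_\ell\in\prod\mc{G}_{\vQL}$; passing to its normal core and using the basic reduction, assume $N\trianglelefteq P$, and after removing the finite direct factors $N_\ell$ and replacing the abelian ones by finite index free abelian subgroups, assume $N=\bigl(\prod_{\ell=1}^{r}L_\ell\bigr)\times\Z^{s}$ with each $L_\ell$ infinite, non-abelian, virtually indecomposable and with $\vQL$. Since each $A_i:=N\cap\Gamma_i$ has finite index in $\Gamma_i$, it suffices to show each $A_i$ is virtually-$\prod\mc{G}_{\vQL}$, and $\prod_{i}A_i\times(N\cap\Gamma)$ is a finite index subgroup of $N=\bigl(\prod_{\ell}L_\ell\bigr)\times\Z^{s}$. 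What remains is a direct-decomposition rigidity statement: comparing these two direct product structures on a common finite index subgroup, and using that $A_1,\dots,A_m$ and $L_1,\dots,L_r$ are all virtually indecomposable, one forces each $A_i$ to be commensurable either to one of the $L_\ell$ or to a subgroup of $\Z^{s}$. I would prove this following the argument of \cite[Theorem 2.1]{FreebyFree}, by analysing the projections of $N$ onto the direct factors of $P$ together with the images of the complementary products $\prod_{\ell'\ne\ell}N_{\ell'}$. Granting it, $\vQL$ transfers from the matched $L_\ell$ to $A_i$, hence to $\Gamma_i$, by commensurability, so $\Gamma_i\in\mc{G}_{\vQL}$; in the other case $\Gamma_i$ is virtually $\Z$, again in $\prod\mc{G}_{\vQL}$. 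Either way $\Gamma_i$ is virtually-$\prod\mc{G}_{\vQL}$.

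The main obstacle is exactly this factor matching. The naive route, projecting $N$ onto each $\Gamma_i$ and hoping a single factor $N_\ell$ accounts for it, fails, since distinct $N_\ell$ may have overlapping infinite images in the same $\Gamma_i$ --- as already happens when $\prod\mc{G}_{\vQL}$ is taken to be the class of abelian groups and $P=\Z\times\Z$. One therefore has to work with complements, normal cores, and a bound on the number of infinite direct factors of $N$, which is precisely where the hypotheses that all $\Gamma_i$ are virtually indecomposable and $\Gamma$ is finitely generated abelian are genuinely used; it is the step that must be carried out following the template of \cite{FreebyFree}.
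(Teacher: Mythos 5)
Your proof of the first bullet is correct and matches the paper's argument: both reduce to a finite-index subgroup, intersect the direct factors with it, and invoke commensurability-invariance of membership in $\mc{G}_{\vQL}$ (and of being finitely generated abelian) for the pieces.

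The second bullet is where the real work lies, and here your proposal stops short of a proof. You set up the same reduction as the paper (pass to $S=\prod_i A_i\times(N\cap\Gamma)$ inside $N=\prod_\ell L_\ell\times\Z^s$, both finite index, with the $A_i$ virtually indecomposable and the $L_\ell\in\mc{G}_{\vQL}$) and then assert a ``direct-decomposition rigidity statement'': each $A_i$ must be commensurable to some $L_\ell$ or to a subgroup of $\Z^s$. You explicitly defer proving this, offering only that it should follow the template of \cite[Theorem 2.1]{FreebyFree}. That is precisely the step the lemma is about, and it is not a formal consequence of virtual indecomposability alone — you list virtual indecomposability and finite generation of $\Gamma$ as the hypotheses that are ``genuinely used,'' but you never say where $\vQL$ of the factors $L_\ell$ enters, and it is in fact the engine of the argument. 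The paper's proof establishes, for each index $x$ with $\pi_x(S_i)$ infinite but not mapping onto a non-abelian free group, that the complementary piece $L=\prod_{j\ne i}S_j\times B$ has finite image under $\pi_x$. This uses two facts specific to $\vQL$: (i) since $\pi_x(S)$ has finite index in $H_x$ and $H_x$ surjects onto a non-abelian free group, the image $\psi_x(\pi_x(S))$ is a free group of rank $\ge 2$, whose center is trivial, which forces $\psi_x(\pi_x(S_i))\cap\psi_x(\pi_x(L))=1$ and hence an internal direct product; and (ii) the projectivity of free groups, used to lift a free quotient to a retract and thereby contradict the virtual indecomposability of $H_x$. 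Without such a non-commutativity/trivial-center input, a commensurable pair of direct products into virtually indecomposable pieces need not have matching factors, so your rigidity claim as stated is, at best, unjustified. Separately, your reduction to $N\trianglelefteq P$ while keeping $N$ a direct product is not clearly achievable (the normal core of a direct product need not be a direct product, and passing back to a direct-product subgroup may destroy normality), though you do not actually use normality afterwards, so this is a cosmetic rather than a fatal issue.
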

\begin{proof}

Firstly note that the class $\mc{G}_{\vQL}$ is closed under commensurability. This implies that the class of virtually-$\prod \mc{G}_{\vQL}$ groups is so as well. Indeed, suppose that $\Gamma_1$ and $\Gamma_2$ are two groups such that $\Gamma_1 \cap \Gamma_2$ has finite index in both $\Gamma_i$. Clearly if $\Gamma_1 \cap \Gamma_2$ is virtually-$\prod \mc{G}_{\vQL}$, then so is each $\Gamma_i$. Hence we may assume that $\Gamma_2$ is a finite index subgroup of $\Gamma_1$ and it remains to consider the case that $\Gamma_1$ is virtually-$\prod \mc{G}_{\vQL}$. Let $\prod_{i=1}^q H_i$ be a finite index subgroup of $\Gamma_1$ with $H_i$ either finitely generated abelian or in $\mc{G}_{\vQL}$. Since $H_i \cap \Gamma_2$ has finite index in $H_i$, and $\mc{G}_{\vQL}$ is closed under commensurability, we obtain that $H_i \cap \Gamma_2$ is either finitely generated abelian or in $\mc{G}_{\vQL}$. Therefore $\prod_{i=1}^q (H_i \cap \Gamma_2)$ is a finite index subgroup of $\Gamma_2$ and a member of $\prod \mc{G}_{\vQL}$, proving the claim.\smallskip

Now we consider the closedness under direct factors as in \Cref{remark on vsp lemma}. Let $\prod_{i=1}^m \Gamma_i \times \Gamma$ be virtually-$\prod \mc{G}_{\vQL}$, with $\Gamma$ a finitely generated abelian group and $\Gamma_i$ non-abelian virtually indecomposable groups.

If $\prod_{i=1}^m \Gamma_i$ is finite, then there is nothing to prove. Hence assume that some factors $\Gamma_i$ are infinite. Now take a finite index subgroup $H$ which is in $\prod \mc{G}_{\vQL}$. By possibly going over to a finite index subgroup of $H$, we may assume that $H$ decomposes as \[H = \Z^{\ell} \times \prod_{x \in X} H_x,\] with $H_x$ mapping onto a non-abelian free group and $X \neq \emptyset$. Note that each $H_x$ is infinite.

Denote by $\pi_x$ the projection of $H$ onto $H_x$. Further, consider $S_i := \Gamma_i \cap H$ for each $1 \leq i \leq m$ and $B := \Gamma \cap H$. Since $H$ has finite index in $\prod_{i} \Gamma_i \times \Gamma$, the group $S_i$ has finite index in $\Gamma_i$ and $S := \prod_{i=1}^m S_i \times B$ has finite index in $H$. In particular, $S_i$ is virtually indecomposable. 

For the remainder of the proof fix some $1 \leq i \leq m$ such that $\Gamma_i$ is infinite. We need to show that $\Gamma_i$ is virtually-$\prod \mc{G}_{\vQL}$. As $S_i$ has finite index in $\Gamma_i$, it is enough to show that $S_i$ is virtually-$\prod \mc{G}_{\vQL}$. Since $S_i$ is infinite, there is some $x \in X$ such that $\pi_x(S_i)$ is infinite. For such an $x$, the group $H_x$ maps onto a non-abelian free group, say $F_{n_x}$, via an epimorphism $\psi_x \colon H_x \rightarrow F_{n_x}$. If $\psi_x(\pi_x(S_i))$ is a non-abelian free group, then $S_i$ would be in $\mc{G}_{\vQL}$, as desired. Thus we are reduced to the case that  whenever $\pi_x(S_i)$ is infinite, then $\psi_x(\pi_x(S_i))$ is not a non-abelian free group. Hence $\psi_x(\pi_x(S_i))$ is either trivial or infinite cyclic. \smallskip

\noindent \underline{Claim:} If $\pi_x(S_i)$ is infinite, then $\pi_x\left( \prod_{j \neq i} S_j \times B\right )$ is finite.  \smallskip

Denote $L := \prod_{j \neq i} S_j \times B$. As $S$ has finite index in $H$, also $\pi_x(S)$ has finite index in $\pi_x(H) = H_x$. Since $S_i$ commutes with $L$, it follows that $\pi_x(S_i) \cap \pi_x(L) \subseteq \ZZ(\pi_x(S))$. Now note that the finite index subgroup $\pi_x(S)$ of $H_x$ also maps onto a non-abelian free group under $\psi_x$, which we denote $F$. Therefore, $\ZZ(\pi_x(S))$ must be contained in $\ker(\psi_x)$. This implies that we obtain 
\[\psi_x(\pi_x(S))=\psi_x(\langle \pi_x(S_i), \pi_x(L)\rangle)  \cong \psi_x(\pi_x(S_i)) \times \psi_x(\pi_x(L)),\]
as a finite index subgroup of $F_{n_x}$. As $n_x \geq 2$, this means that either $\pi_x(S_i)$ or $\pi_x(L)$ is fully contained in $\ker(\psi_x)$. However, if $\pi_x(L) \subseteq \ker(\psi_x)$, as $\psi_x(\pi_x(S_i))$ is assumed at most infinite cyclic, then $\psi_x(\pi_x(S))$ would be at most infinite cyclic. This is a contradiction since $\pi_x(S)$ has finite index in $H_x$. In conclusion, $\pi_x(S_i) \subseteq \ker(\psi_x)$. Now using that free groups are the projective objects in the category of groups, we obtain a splitting $\pi_x(S) \cong K \rtimes F$ with $K$ containing $\pi_x(S_i)$ and $F$ isomorphic to a subgroup of $\pi_x(L)$. However, from the definition of $S$, such a splitting in fact yields a direct product $\pi_x(S) = \pi_x(S_i) \times \pi_x(L)$, which contradicts the virtual indecomposability of $H_x$, finishing the proof of the claim.\smallskip

Finally, consider the set $Y = \{x \in X \mid \pi_x(S_i) \text{ is infinite } \}$. The above claim implies that if $x \in Y$, then $\pi_x(S_i)$ has finite index in $H_x$, since $\pi_x(S) = \langle \pi_x(S_i), \pi_x(\prod_{j \neq i} S_j \times B) \rangle$. Note that $S_i$ contains a finite index subgroup $R_i$ with the property that $\pi_x(R_i)=1$ for $x \notin Y$. Since $S_i$ is virtually indecomposable, so is $R_i$. But by the above, $R_i$ is a finite index subgroup of $\prod_{x\in Y} H_x$, which would yield a virtual decomposition of $R_i$. Thus $Y= \{ x_0\}$ is a singleton and therefore $R_i$ inherits from $H_{x_0}$ that it is in $\mc{G}_{\vQL}$. In conclusion, $S_i$ and hence $\Gamma_i$ is virtually-$\mc{G}_{\vQL}$, finishing the proof. 
\end{proof}

\begin{proof}[Proof of \Cref{Mexc versus vFQ}]
Let $D$ be a finite-dimensional division algebra, $\O$ an order in $D$ and $\Gamma$ a finite index subgroup in $\SL_n(\O)$ with $n \geq 2$. Then the moreover part of \Cref{Mexc versus vFQ} follows from the following:
\begin{equation}\label{vFQ for simple}
\Gamma \text{ has } \vQL \iff \Ma_n(D) \text{ is exceptional.}
\end{equation}
Indeed, if $\Ma_n(D)$ is not exceptional, then $\Gamma$ satisfies property (FAb) by Margulis's normal subgroup theorem \cite{MargulisBook} and hence cannot satisfy largeness. If $\Ma_n(D) = \Ma_2(\Q(\sqrt{-d}))$ for $d \in \N$, then it has $\vQL$ by \cite{GS} (or \cite[Theorem 3.7]{Lubot96}). If $\Ma_n(D) = \Ma_2(\qa{-a}{-b}{\Q})$, then $\vQL$ is given by \Cref{infinite abelianization SL} and the fact that $\vQL$ is preserved by commensurability. This proves \eqref{vFQ for simple} and hence the moreover part.\smallskip

Now we focus on the equivalence between \eqref{item: vFQ} and \eqref{item: mexc vFQ}. By \Cref{class nicely closed} the class virtually-$\prod \mc{G}_{\vQL}$ satisfies the necessary properties to apply \Cref{virutally G-am imply special components}. Consequently, $\U(RG)$ is virtually-$\prod \mc{G}_{\vQL}$ if and only if $\SL_1 (R Ge)$ is either finite or virtually-$\mc{G}_{\vQL}$ for each $e \in \PCI(FG)$. The former only occurs if $FGe$ is a field or totally definite quaternion algebra \cite[Proposition 5.5.6]{EricAngel1}. To understand when $\SL_1 (R Ge)$ is virtually-$\mc{G}_{\vQL}$ one needs to distinguish whether $FGe$ is a division algebra or not. 

If $FGe$ is not a division algebra, then $\SL_1(RGe)$ has $\vQL$ if and only if $FGe$ is an exceptional matrix algebra by \eqref{vFQ for simple}. Moreover $\SL_1(RGe)$ is virtually indecomposable by \cite[Theorem 1]{KleRio}. This finishes the proof that \eqref{item: vFQ} implies that $FG$ has $\Mexc$. Therefore, now suppose that $FGe$ has is one of the possible non-commutative division algebra component of a group algebra with $\Mexc$. Then $D$ is either exceptional of the form $\qa{\zeta_{2^t}}{-3}{\Q(\zeta_{2^t})}$ with $t \geq 4$ or it is a totally definite quaternion algebra. In the latter case $\U(\O)$ is finite by \cite{Kleinert}. In the former case the $S$-rank, with $S$ the set of infinite places, is at least $2$ and hence by Margulis's normal subgroup theorem $\SL_1(\O)$ has property (FAb) and thus not $\vQL$. This both finishes the proof that \eqref{item: vFQ} implies \eqref{item: mexc vFQ} and the converse.
\end{proof}

\subsection{Congruence kernel of group rings}

Let $A$ be a finite-dimensional semisimple $\Q$-algebra. Then  
\[
A = \End(V_1) \times \cdots \times \End(V_q),
\]
with $V_i$ an $n_i$-dimensional module over some finite-dimensional division $\Q$-algebra $D_i$, $i = 1, \ldots, q$. 
In consequence, the $\Q$-group of units of $A$ identifies with the reductive group
\begin{align*} \label{eq:decompositionunitsalgebra}
\mathbf{G} &= \GL_A = \GL_{D_1^{n_1}} \times \cdots \times \GL_{D_m^{n_m}}. 
\end{align*}
If $\O$ is an order in $A$, then $\SL_1(\O)$ is an arithmetic subgroup of $\SL_1(A)$. In \Cref{congruence kernel qa} we have recalled the definition of the congruence kernel of $\SL_1(\O)$. For ease of the reader, we describe in the setting of the unit group of an order in a semisimple algebra the definitions with down-to-earth terminology.  

A \emph{principal congruence subgroup} of $\SL_1(\O)$ is a subgroup of the form
\[\SL_1(\O,m) := \{ u \in \SL_1(\O) \mid u-1 \in m\O \},\]
with $m \in \N_0$. 
Denote by $\mc{N}$ the set of all finite index normal subgroups of $\SL_1(\O)$ and by $\mc{C}o$ the set of all principal congruence subgroups. Both sets define a basis of of neighbourhoods of $1$ for group topologies in $\SL_1(\O)$ and hence we have associated completions:
\[\widehat{\SL_1(\O)} := \varprojlim\limits_{N \in \mc{N}} \SL_1(\O)/N \,\, \text{ and }\,\, \widetilde{\SL_1(\O)} := \varprojlim\limits_{m \in \N} \SL_1(\O)/\SL_1(\O,m).\]
Since $\mc{N}$ contains the set $\mc{C}o$, the identity map on $\SL_1(\O)$ induces a surjective group homomorphism $\pi_{\O}$ from $\widehat{\SL_1(\O)}$ to $\widetilde{\SL_1(\O)}$. The kernel $\ker(\pi_{\O})$ is called the \emph{congruence kernel}. Note that if $A = \Q G$ and $\O = \Z G$, then the congruence kernel is simply the kernel of the natural morphism $\widehat{\SL_1(\Z G)} \rightarrow \SL_1(\widehat{\Z} G)$, which we denote by $C_{G}$.\smallskip

The congruence kernel of an integral group ring has been investigated in \cite{CdR1, CdR2, CdRZ}. In \emph{loc. cit.} several families of finite groups $G$ are described with the property that if $C_G$ is infinite, then $G$ is a member of one these families.  Under the assumption that $\Q G$ has no exceptional division components, an upper-bound on the virtual cohomological dimension of $C_G$ is obtained in \cite[Theorem 5.1]{CdRZ}. We contribute towards bounding from below, by showing that whenever $\Q G$ has an exceptional matrix component, then the congruence kernel contains a profinite free group of countable rank. This is a direct consequence of \cite[Theorem B]{Lubot82}, \Cref{congruence kernel over quaternion} and the well-behavedness of the congruence kernel with respect to direct products.

    \begin{corollary}\label{congruence kernel with excep component}
        Let $G$ be a finite group. Then the following hold:
        \begin{enumerate}
            \item If $\Q G$ has an exceptional matrix algebra, then $C_{G}$ contains $\hat{F_{\omega}}$,
            \item If $\Q G$ has $\Mexc$, then $C_{G}$ contains $\prod_{e \in \PCI_{\neq 1}} \hat{F_{\omega}}$.
        \end{enumerate}
    where $\PCI_{\neq 1} := \{ e \in \PCI(\Q G) \mid \Q Ge \text{ is not a division algebra}\}$.
    \end{corollary}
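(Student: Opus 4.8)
The plan is to reduce the statement, via the standard commensurability between $\SL_1(\Z G)$ and the product of the $\SL_1$'s of its simple components, to the cases already handled — the Bianchi and $\SL_2(\Z)$ components by \cite[Theorem B]{Lubot82}, and the totally definite quaternion components by \Cref{congruence kernel over quaternion} — and then to push the resulting copies of $\widehat{F_\omega}$ through the comparison maps using the behaviour of $\widehat{F_\omega}$ under open inclusions.

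First I would record the two functorial facts about congruence kernels that are needed. For arithmetic groups one has $\widetilde{\Gamma_1\times\Gamma_2}=\widetilde{\Gamma_1}\times\widetilde{\Gamma_2}$, while the projections $\Gamma_1\times\Gamma_2\to\Gamma_i$ split the inclusions and hence realise each $\widehat{\Gamma_i}$ as a closed retract of $\widehat{\Gamma_1\times\Gamma_2}$; a short diagram chase (both inclusions and projections are compatible with the congruence completions) then produces a closed embedding $\prod_i C(\Gamma_i)\hookrightarrow C(\Gamma_1\times\Gamma_2)$, and iterating this gives $\prod_{e\in S}C(\SL_1(\Z Ge))\hookrightarrow C\!\left(\prod_{e\in \PCI(\Q G)}\SL_1(\Z Ge)\right)$ for any subset $S\subseteq\PCI(\Q G)$. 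Secondly, if $\Delta\leqslant\Gamma$ is of finite index among arithmetic subgroups of the same algebraic group, then $\widehat{\Delta}$ and $\widetilde{\Delta}$ are open in $\widehat{\Gamma}$ and $\widetilde{\Gamma}$, and one checks $C(\Delta)=\widehat{\Delta}\cap C(\Gamma)$, so $C(\Delta)$ is an \emph{open} subgroup of $C(\Gamma)$; in particular commensurable arithmetic groups have congruence kernels sharing a common open subgroup. Combining this with \cite[Theorem 2.1.(a)]{Lubot82} — every non-trivial closed normal subgroup, hence every open subgroup, of $\widehat{F_\omega}$ contains a copy of $\widehat{F_\omega}$ — I get the transfer lemma I really use: if $C(\Gamma)$ contains a closed copy of $\prod_{i=1}^{k}\widehat{F_\omega}$ (with the factors pairwise commuting and intersecting trivially) and $C(\Delta)$ is open in $C(\Gamma)$, then $C(\Delta)$ again contains $\prod_{i=1}^{k}\widehat{F_\omega}$, because intersecting $C(\Delta)$ with each factor yields an open, hence $\widehat{F_\omega}$-containing, subgroup of $\widehat{F_\omega}$, and the internal product of these $k$ subgroups lies in $C(\Delta)$.

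With these in hand the corollary is short. Inside $\SL_1(\Q G)=\prod_{e\in\PCI(\Q G)}\SL_1(\Q Ge)$ one has $\SL_1(\Z G)\leqslant\prod_e\SL_1(\Z Ge)$ as a finite-index subgroup (\cite[Corollary 5.5.3]{EricAngel1}); since $\prod_e\Z Ge$ contains $\Z G$ with finite additive index, the congruence topology on $\SL_1(\Z G)$ induced from the product is cofinal with its intrinsic one, so $C_G=C(\SL_1(\Z G))$ is an open subgroup of $C\!\left(\prod_e\SL_1(\Z Ge)\right)$, which by the product formula contains $\prod_e C(\SL_1(\Z Ge))$ as a closed subgroup. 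Now fix $e$ with $\Q Ge\cong\Ma_2(D)$ an exceptional matrix algebra. Then $\SL_1(\Z Ge)$ is commensurable with $\SL_2(\O_D)$ for $\O_D$ a maximal order in $D$ (\Cref{SL order commensurable}), and a finite-index subgroup of $\SL_2(\O_D)$ has congruence kernel containing a closed copy of $\widehat{F_\omega}$: for $D=\Q$ or $D=\Q(\sqrt{-d})$ this is \cite[Theorem B]{Lubot82}, and for $D=\qa{-a}{-b}{\Q}$ — which is totally definite since $\Ma_2(D)$ is exceptional — this is \Cref{congruence kernel over quaternion}; by the second functorial fact $C(\SL_1(\Z Ge))$ then contains $\widehat{F_\omega}$ as well. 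For part (1) it suffices to take a single such $e_0$: $\widehat{F_\omega}\hookrightarrow C(\SL_1(\Z Ge_0))\hookrightarrow C\!\left(\prod_e\SL_1(\Z Ge)\right)$, and the transfer lemma with $k=1$ moves it into $C_G$. For part (2), property $\Mexc$ forces \emph{every} $e\in\PCI_{\neq1}$ to be of this exceptional-matrix type, so $\prod_{e\in\PCI_{\neq1}}\widehat{F_\omega}$ embeds as a closed subgroup (with commuting, trivially-intersecting factors) of $\prod_e C(\SL_1(\Z Ge))\leqslant C\!\left(\prod_e\SL_1(\Z Ge)\right)$, and the transfer lemma with $k=|\PCI_{\neq1}|$ places $\prod_{e\in\PCI_{\neq1}}\widehat{F_\omega}$ inside $C_G$.

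The genuinely substantial inputs — largeness of the relevant congruence subgroups of higher modular and Bianchi groups, together with Lubotzky's embedding theorem for $\widehat{F_\omega}$ — are already available, so what remains is bookkeeping. I expect the only point demanding real care to be the verification that the congruence topology on $\SL_1(\Z G)$ is precisely the restriction of the one on $\prod_e\SL_1(\Z Ge)$ — so that $C_G$ is genuinely an \emph{open} subgroup of the product's congruence kernel rather than merely abstractly related to it — and, secondarily, the elementary profinite observation that an open subgroup of $\prod_{i=1}^{k}\widehat{F_\omega}$ still contains $\prod_{i=1}^{k}\widehat{F_\omega}$; both are routine, and the rest is assembling cited results.
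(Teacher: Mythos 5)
Your proof is correct and follows essentially the same route the paper indicates: reduce via the finite-index inclusion $\SL_1(\Z G)\leqslant\prod_e\SL_1(\Z Ge)$, invoke \cite[Theorem B]{Lubot82} for the $\Ma_2(\Q(\sqrt{-d}))$ and $\Ma_2(\Q)$ factors and \Cref{congruence kernel over quaternion} for the $\Ma_2(\qa{-a}{-b}{\Q})$ factors, and transfer through open subgroups of $\widehat{F_\omega}$ — which is precisely what the authors mean by ``the well-behavedness of the congruence kernel with direct products.'' Your write-up is just more explicit about that bookkeeping than the paper, which leaves it implicit; the one phrase worth tightening is ``every non-trivial closed normal subgroup, hence every open subgroup'' — an open subgroup need not be normal, so you should pass to its open normal core (or invoke directly that an open subgroup of a free profinite group of countable rank is again free profinite of countable rank) before applying \cite[Theorem 2.1.(a)]{Lubot82}.
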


\begin{remark}
    If $\Q G$ has no exceptional division components, then $C_{G}$ is infinite if and only if $\Q G$ has an exceptional matrix component. As pointed out in \Cref{good and exc div}, for division algebra components it is still open whether $\SL_1(D)$ satisfies the subgroup congruence problem. However, they are expected to do so whenever their $S$-rank, with $S$ the set of infinite places of $\ZZ(D)$, is at least $2$. Under that condition one can already apply Margulis's normal subgroup theorem, yielding that every non-central normal subgroup has finite index (in particular yielding property (FAb)). 

  It is also not known if $S$-rank $1$ for $\SL_1(D)$ implies the existence of a large congruence subgroup, such as in \Cref{infinite abelianization SL}. In fact even infinite abelianisation is still open. However, in case that $D$ is a quaternion algebra over a number field, more is known. Indeed, whenever Margulis's theorem does not apply, then the corresponding symmetric space is either the hyperbolic plane $\mathbb{H}^2$ or the hyperbolic $3$-space $\mathbb{H}^3$. Then a torsion-free finite index subgroup of $\SL_1(\mc{O})$ is either the fundamental group of a compact surface, hence trivially has infinite abelianisation, or the fundamental group of a compact hyperbolic $3$-manifold. In the second case, Agol's positive answer of the virtual Hacken conjecture tells us that there is a finite index subgroup with infinite abelianisation. It would be interesting to know whether some congruence subgroup is large. 
\end{remark}

\section{The blockwise Zassenhaus and subgroup isomorphism property}\label{block zass and iso}

Three conjectures concerning finite subgroups $H$ of the normalised unit group $V(\Z G)$ of an integral group ring were attributed to Zassenhaus \cite{Zass,SehAntwerp}. 
The strongest (referred to as the \emph{third}) Zassenhaus conjecture predicts that $H$ is conjugated over $\Q G$ to a subgroup of $G$. The \emph{second} asserts the same, but only under the hypothesis that $|H|=|G|$, and the \emph{first} (and weakest) concerns the case that $H$ is cyclic. All of these conjectures have been disproven \cite{Scott, Rogg, EisMar}, transforming them into the question for which $H$ and $G$ the above conjugation property holds. In \Cref{Section what is block Zassenhaus} we introduce a \emph{blockwise variant} of the conjectures, i.e. investigating whether the image of $H$ under any irreducible $\Q$-representation $\rho$ of $G$ is conjugated in $\rho(\Q G)$ to a subgroup of $\rho(G)$. Furthermore, we define the \emph{Zassenhaus property} for semisimple $F$-algebras with $F$ a field of characteristic $0$. The rest of the section is devoted to the investigation of the Zassenhaus property for exceptional matrix algebras. Among other things, in \Cref{sectioin sbgrps excetpional} we describe the conjugacy classes of finite subgroups of exceptional matrix algebras.

\noindent {\it Convention:} With a \emph{group basis} of a group ring $FG$, we will mean a finite subgroup $\Gamma$ of the normalised unit group of $FG$ which forms a $F$-basis of $FG$.

\subsection{The Zassenhaus property for semisimple algebras}\label{Section what is block Zassenhaus}

Let $A$ be a finite-dimensional semisimple algebra over a field $F$, with $\Char(F) = 0$. Let $R$ be a subring of $A$. We introduce the following definition.
\begin{definition}
     The \emph{set of spanning (sub)groups in $R$} is the set of isomorphism classes of finite subgroups of $\U(R)$ such that their $F$-linear span equals $A$:
    \begin{equation}
        \mc{S}_F(R) := \left\{ \Gamma \leqslant \U(R) \mid \abs{\Gamma} < \infty, \: \Span_{F}\{g \in \Gamma\} = A\right\}/\cong.
    \end{equation}
If $F = \Q$ we will omit the index and write $\mc{S}(R)$.
\end{definition}
Note that for most subrings $R\leqslant A$, the set $\mc{S}_F(R)$ is empty. However, if $R$ is an order in $A$, or $A = R$ itself, then by definition $\mc{S}_F(R)\neq \emptyset$.

\begin{definition}\label{def Zassenhaus prop}
    Let $R$ be a subring of $A$ such that $\mc{S}_F(R)$ is non-empty. Furthermore, let $\mc{G}$ be a set of finite subgroups of $\U(R)$, and $[\Gamma] \in \mc{S}_F(R)$ an isomorphism class of spanning subgroups. Then $R$ is said to have \emph{the subgroup isomorphism property relative to $\Gamma$ and $\mc{G}$} if all $H \in  \mc{G}$ are isomorphic to a subgroup of $\Gamma$. If there moreover exists an element $\alpha_H \in \U(A)$ such that 
    \[H^{\alpha_H} \leqslant \Gamma,\]
    then $R$ is said to have \emph{the Zassenhaus property relative to $\Gamma$ and $\mc{G}$}.
\end{definition}

\begin{example}\label{Zc conj as instance}
    With the above terminology, the Zassenhaus conjectures may be reformulated as follows. Let $A = \Q G$ and $R = \Z G$. Then $R$ satisfies the \emph{$i$\textsuperscript{th} Zassenhaus conjecture} if it has the Zassenhaus property relative to $G$ and $\mc{G}_i$, where 
    \begin{align*}
        \mc{G}_1 & := \left\{ H \leqslant V(\Z G) \mid H \text{ cyclic and finite}\right\}, \\
        \mc{G}_2 & := \left\{ H \leqslant V(\Z G)  \mid \abs{H} = \abs{G}\right\}, \\
        \mc{G}_3 & := \left\{ H \leqslant V(\Z G) \mid H \text{ finite} \right\}.
    \end{align*}

The subgroup isomorphism property for the sets $\mc{G}_i$, with $i=1,2,3$, has also been investigated, see \cite{Margolis,PdRV} and references therein. The subgroup isomorphism property for the set $\mc{G}_2$ and twisted group rings has recently also received considerable attention, see \cite{MaSc, MaSc2, MaSc3, HKSa, HKSi}. 

Also, implicitly the blockwise Zassenhaus property has been considered in \cite[Lemma 3]{HerSin}.
\end{example}

\begin{remark}\label{independant of representative}
We defined the subgroup isomorphism property and Zassenhaus property relative to a representative $\Gamma$ of the isomorphism class $[\Gamma]$. The subgroup isomorphism property clearly does \emph{not} depend on this choice of representative. If $A$ is a simple algebra, \Cref{upgrade iso to conjugation} will imply that isomorphic spanning groups are conjugated in $\U(A)$. Hence, enjoying the Zassenhaus property is also independent of the choice of representative in case of a simple algebra. As shown by the next result, the latter also holds when $A$ is a group ring and $\mc{G}$ consists of cyclic subgroups. We are grateful to Leo Margolis for providing us with a proof, which he learned from Wolfgang Kimmerle. 
\end{remark}

\begin{proposition}
    \label{folklore remark}
    Suppose $G$ is a finite group and $R$ a $G$-adapted\footnote{Recall that a $G$-adapted ring is an integral domain $R$ of characteristic $0$ such that no prime divisor of $|G|$ is invertible in $R$.} ring with field of fractions $F$. Let $H \leqslant V(RG)$ be a finite cyclic group, and suppose there exists a group basis $\Gamma$ of $RG$ and an $y \in \U(FG)$ such that $H \leqslant \Gamma^y$. Then $H$ is conjugated by an element of $\U(FG)$ to a subgroup of $G$.
\end{proposition}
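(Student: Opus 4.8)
The plan is to reduce to the first Zassenhaus problem for a single torsion unit relative to the distinguished basis $G$, and then to exploit that this unit lies inside \emph{some} group basis. Conjugating by $y$, we may replace $H$ by $H^{y^{-1}}=yHy^{-1}$, which is conjugate to $H$ in $\U(FG)$; hence it suffices to treat the case $H\leqslant\Gamma$ and to produce $\alpha\in\U(FG)$ with $H^{\alpha}\leqslant G$, since composing this $\alpha$ with the conjugation by $y$ then settles the original statement. Write $H=\langle u\rangle$ with $o(u)=n$. As $\Gamma$ is a group basis we have $RG=R\Gamma$, so $u$ is at the same time a torsion unit of $V(RG)$ and an honest element of the finite group $\Gamma$; the goal becomes to show that $u$ is conjugate in $\U(FG)$ to an element of $G$, for then $H=\langle u\rangle$ is conjugate to $\langle g\rangle\leqslant G$.

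First I would bring in the partial-augmentation machinery around the first Zassenhaus conjecture. For every divisor $d\mid n$ the element $u^{d}$ lies in $\Gamma$, hence is a torsion unit of $V(RG)$ of order $n/d$. Two classical facts about torsion units of $V(RG)$ over a $G$-adapted ring are at hand: the Berman--Higman argument, which gives that a nontrivial torsion unit has vanishing partial augmentation at the identity of $G$; and the classical constraint that if $w\in V(RG)$ is a torsion unit and $g\in G$ satisfies $\varepsilon_{g}(w)\neq 0$, then $o(g)\mid o(w)$. Feeding every power $u^{d}$ into these two facts pins down the shape, relative to $G$, of the partial augmentations of $u$ and of all its powers along the cyclic chain $u,u^{2},\dots,u^{n-1}$: they are supported on classes of elements of the expected orders, and vanish at the identity on the proper powers. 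This is exactly the kind of input required by the conjugacy criterion of Marciniak--Ritter--Sehgal--Weiss (valid over a $G$-adapted ring), which then yields that $u$ is conjugate in $\U(FG)$ to an element $g\in G$ with $o(g)=n$. Consequently $H^{\alpha}=\langle u^{\alpha}\rangle=\langle g\rangle\leqslant G$ for a suitable $\alpha\in\U(FG)$, and undoing the conjugation by $y$ finishes the argument.

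The step I expect to be the main obstacle is verifying that the partial-augmentation data one can actually read off from ``$u^{d}\in\Gamma$ for all $d\mid n$'' genuinely meets the full hypotheses of the Marciniak--Ritter--Sehgal--Weiss criterion, i.e.\ that for each power it is concentrated on a single conjugacy class of $G$ and is coherent across the different powers; this is where the hypothesis that $H$ is cyclic is essential, as the entire subgroup is then controlled by the single element $u$, to each power of which Berman--Higman applies. For a general finite subgroup this bookkeeping no longer forces simultaneous control of all elements, which is exactly why \Cref{independant of representative} asserts independence of the chosen group basis only for cyclic subgroups. A parallel way to carry out this delicate step would be to first settle cyclic $p$-subgroups by Weiss's rigidity theorem for $p$-torsion in group bases (yielding $p$-local, hence by a local--global principle rational, conjugacy into $G$), and then to glue the prime-power components using the coprime decomposition $H=\prod_{p\mid n}H_{p}$.
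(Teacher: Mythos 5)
Your opening reduction is correct and matches the paper: conjugating by $y^{-1}$, one may assume $H=\langle u\rangle\leqslant\Gamma$, and it suffices to rationally conjugate the single generator $u$ into $G$. From there, however, your route via the Marciniak--Ritter--Sehgal--Weiss partial-augmentation criterion has a real gap that the facts you invoke cannot close. Berman--Higman (vanishing of the partial augmentation at $1$) and the constraint $\varepsilon_g(w)\neq 0\Rightarrow o(g)\mid o(w)$ are properties of \emph{every} torsion unit in $V(RG)$, not just those sitting inside a group basis. They tell you where the partial augmentations of the powers $u^d$ may be supported; they do not give the non-negativity that MRSW requires. If such general facts sufficed, they would prove rational conjugacy for every cyclic torsion unit, i.e.\ the first Zassenhaus conjecture itself, which is false. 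The hypothesis $u\in\Gamma$ has to enter through something stronger, and you never use it beyond restating it.

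The paper's proof supplies exactly this missing input: for two $R$-group bases $\Gamma$ and $G$ of $RG$, the Roggenkamp--Taylor theorem (cited as \cite[Theorem V.1]{RoggenkampTaylor}) gives a bijection preserving element orders, compatible with taking powers up to conjugacy, and preserving all irreducible character values. Evaluating irreducible characters on $u$ and its matched $g\in G$ at every power, and then applying the character criterion of \cite[Lemma 37.6]{SehgalBook93}, produces the desired $FG$-conjugation directly — no detour through partial augmentations is needed. (If one insisted on the MRSW route, it is the class-sum correspondence between $\Gamma$ and $G$, not Berman--Higman, that would force the partial augmentations of every $u^d$ to be concentrated with value $1$ on a single $G$-class, hence in particular non-negative.) Your fallback sketch — Weiss rigidity for cyclic $p$-parts followed by a local--global gluing of the coprime components — is also substantially underdeveloped: Weiss gives $p$-adic conjugacy, and passing from simultaneous $p$-local conjugacy over all $p$ to a single global conjugator is not automatic and is not addressed.
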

\begin{proof}
    Suppose first that $H \leqslant \Gamma$. Denote $H = \langle h \rangle$. Then from \cite[Theorem V.1]{RoggenkampTaylor} it follows that there is a bijection $\varphi \colon \Gamma \to G$ such that for all $x \in \Gamma$ the following holds: (i) $var\phi(x)$ and $x$ have the same order, (ii) $\varphi(x)^n$ and $\varphi(x^n)$ are conjugate for all $n \in \Z$, and (iii) $\chi(x) = \chi(\phi(x))$ for any irreducible $K$-character, with $K$ some field containing $F$. Then fixing $x \in \Gamma$ such that $\phi(x) = h$, the restriction $\varphi|_{\langle x \rangle} \colon \langle x \rangle \to H$ is a group isomorphism. Moreover, for any $\chi$ as above and any $j \in \Z$, $\chi(x^j) = \chi(h^j)$. It follows from\footnote{Note that the statement is only formulated for $F = \Q$, but the proof can also be generalised to this context, by replacing $\C$ in the proof of \cite[Lemma 37.6]{SehgalBook93} with a splitting field of $G$ containing $F$.} \cite[Lemma 37.6]{SehgalBook93} that $x$ and $h$ are conjugate over $FG$. 
    
    Now suppose that $H \leq \Gamma^y$ for $y \in \U(FG)$. Then $H^{y^{{-1}}} \leqslant \Gamma$, and hence by the above $H^{y^{{-1}}}$ is $FG$-conjugated to a subgroup of $G$. In particular, so is $H$, finishing the proof.
\end{proof} 

\begin{remark}
One could wonder whether there is a version of \Cref{folklore remark} with $\Gamma^y$ replaced by any group basis of $\Q G$. This however seems not to be possible. Concretely, Leo Margolis communicated to the authors that the cyclic subgroup $C_n$ serving as counterexample to the first Zassenhaus conjecture (see \cite{EisMar}) is expected not to be contained in any group basis of $\Q G$. It would be interesting to investigate whether there is some $e \in \PCI(\Q G)$ such that $C_ne$ is contained in no spanning subgroup of $\U(\Q Ge)$ of the form $\Gamma^y$ with $\Gamma \leq V(\Z G)$ and $y \in \Q Ge$. In particular, the following is natural to ask.
\end{remark}

\begin{question}\label{no first iso but block yes}
Does the counterexample to the first Zassenhaus conjecture in \cite{EisMar} satisfy the blockwise Zassenhaus property relative to cyclic subgroups?
\end{question}

In practice there are obvious conditions one needs to specify on the groups $H \in \mc{G}$ so that $\O$ can have the Zassenhaus or subgroup isomorphism property relative to $\mc{G}$. For instance, if $H$ is isomorphic to a subgroup of $\Gamma\in B$, then $|H|$ divides $|\Gamma|$ and $\exp(H) \mid \exp(\Gamma)$. However, in case that $A$ is a group algebra these conditions are always satisfied (see \Cref{blockwise Cohn-Livingstone}) and hence do not need to be specified in \Cref{Zc conj as instance}. 

Note that for any groups $H$ and $\Gamma$, one has that $\exp(H) \mid \exp(\Gamma)$ holds if the set of orders of prime-power order elements of $H$ is included in that of $\Gamma$. More generally, for a group $\Gamma$ we call the set
\begin{equation}\label{def: spectrum}
\spec(\Gamma) := \{ n \in \N \mid \exists g \in \Gamma : o(g) = n\}
\end{equation}
of orders of elements in $\Gamma$ \emph{the spectrum} of $\Gamma$.  A main problem in group rings\footnote{In the literature the problem is formulated for $R = \Z$. However, the question makes sense in this generality, since for such rings of integers there is a variant of the Cohn--Livingstone theorem.}, is the \emph{spectrum problem} (\textbf{SP}):
\begin{equation}
\label{spec prob}
    \tag{\textbf{SP}} \spec(G) = \spec(V(RG))?
\end{equation}
The spectrum problem is the weaker version of the first Zassenhaus conjecture where ‘conjugate’ is replaced by ‘isomorphic to’. In contrast to the Zassenhaus conjecture, the spectrum problem is still open. In \cite{HertOrder}, it was notably proven to be true for $G$ solvable. Moreover, if $R$ is the ring of integers in a number field, a theorem of Cohn--Livingstone \cite{CL} yields that the unit group $V(RG)$ contains an element of prime power order $p^n$ if and only if $G$ does.
In particular, 
\begin{equation} 
\label{exp cohn livingstone}\exp(V(RG)) = \exp(G).
\end{equation}

\begin{definition}\label{Def: admissible sets}
Let $\Gamma \in \mc{S}_F(R)$ for some subring $R$ of a finite-dimensional semisimple $F$-algebra $A$ and let $\mc{P}$ be some group-theoretic property. Then a set $\mc{G}$ of finite subgroups of $\U(R)$ is called \emph{$\mc{P}$-admissible} if all $H \in \mc{G}$ have property $\mc{P}$.
\end{definition}

\begin{example*}
If $\mc{P}$ is the property “the order $|H|$ divides $|\Gamma|$”, we will speak of \emph{“order-admissible”}. Similarly for $\exp(\Gamma)$ and $\spec(\Gamma)$ we say \emph{ $\exp$-}, respectively \emph{$\spec$-admissible}.
\end{example*}

In case that $A$ is semisimple but not simple, there is a natural weaker property to consider. Namely, for each $e \in \PCI(A)$ we can consider the simple factor $Ae$ and the associated projection $\pi_e\colon A \rightarrow Ae$. For a subgroup $G$ of $\U(A)$ we denote 
\[Ge := \pi_e(G).\]

\begin{definition}
    Let $\mc{G}$ be a set of finite subgroups of $\U(R)$, and $\Gamma \subseteq \mc{S}_F(R)$. We say that $R$ \emph{has the blockwise Zassenhaus (resp. subgroup isomorphism) property relative to $\Gamma$ and $\mc{G}$} if for all $e \in \PCI(A)$, $Re$ has the Zassenhaus (resp. subgroup isomorphism) property relative to $\Gamma e$ and $\mc{G}e$.
\end{definition}
Note that if $R$ satisfies the Zassenhaus property relative to $\Gamma$ and $\mc{G}$, then $R$ satisfies the blockwise Zassenhaus property relative to $\Gamma$ and $\mc{G}$.

\subsection{Somme illustrative examples}
In this section we consider the unit group of some exceptional algebras and point out several types of behaviour. As a consequence we will obtain the following application to the blockwise Zassenhaus property.

\begin{theorem}\label{Zass prop for some excep components}
Let $G$ be a finite and $e \in \PCI(\Q G)$ such that $\Q Ge$ is either a field, a quaternion algebra or isomorphic to $\Ma_2(\Q(\sqrt{-d}))$ with $d\neq 3$. If $H$ is a finite subgroup of $V(\Z G)$, then $(He)^{\alpha} \leq Ge$ for some $\alpha \in \Q Ge$.
\end{theorem}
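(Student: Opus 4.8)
The plan is to pass to the simple component and then split into the three Wedderburn types occurring in the statement. Fix $e\in\PCI(\Q G)$, put $A:=\Q Ge$ and let $\O:=\Z Ge$; this is an order in $A$ and $Ge$ is a spanning subgroup of $\O$, since $\Span_{\Q}Ge=A$. As $H\leqslant V(\Z G)$, the image $He:=\pi_e(H)$ is a finite subgroup of $\U(\O)\subseteq\U(A)$, and what we must prove is exactly that $\O$ has the Zassenhaus property relative to $Ge$ and $He$, i.e.\ that $He$ is $\U(A)$-conjugate into $Ge$. Two torsion inputs feed every case: the Berman--Higman property of $V(\Z G)$, and the Cohn--Livingstone theorem, which via \eqref{exp cohn livingstone} gives $\exp(He)\mid\exp(H)\mid\exp(G)$ and, applied to prime powers, shows that every element of prime-power order of $He$ is the image of a unit of $V(\Z G)$ of the same order.

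\emph{The field case.} If $A=F$ is a field, then $Ge$ and $He$ are finite subgroups of $F^{\times}$, hence cyclic and consisting of roots of unity; as $Ge$ spans $F$ we get $F=\Q(\zeta_n)$ with $n:=|Ge|$ and $Ge=\langle\zeta_n\rangle\subseteq\mu_F$. Since $F$ is commutative, conjugation by $\U(A)=F^{\times}$ is trivial, so it suffices to prove $He\leqslant Ge$, i.e.\ $|He|\mid n$. Writing $m:=|He|$, for each prime power $q\mid m$ with $q$ odd or $4\mid q$ the cyclic group $He$ contains a primitive $q$-th root of unity, so $\Q(\zeta_q)\subseteq F=\Q(\zeta_n)$ and hence $q\mid n$; thus the odd part and the $4$-part of $m$ divide $n$. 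The only possible obstruction is therefore $-1\in He$ together with $n$ odd, in which case a $2$-power of a preimage of $-1$ yields a torsion unit $t\in V(\Z G)$ of $2$-power order with $te=-1$. I would exclude this by noting that the linear characters lying over $e$ define ring homomorphisms $\Q G\to\C$ carrying $G$ into the group $\mu_n$ of odd order, combined with the Berman--Higman/Cohn--Livingstone constraints on torsion units, so that no element of $V(\Z G)$ can be sent to $-1$. Hence $He\leqslant Ge$ and one takes $\alpha=1$.

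\emph{The quaternion and exceptional matrix cases.} If $A$ is a quaternion division algebra, or $A\cong\Ma_2(\Q(\sqrt{-d}))$ with $d\neq3$, then finite subgroups of $\U(A)$ are severely restricted — they lie on the short list of finite groups admitting a faithful two-dimensional representation (cyclic groups, binary dihedral groups, and the binary tetrahedral, octahedral and icosahedral groups) — and which of these embed, span, or occur inside a spanning subgroup of an order of $A$ is pinned down by the analysis of \Cref{sectioin sbgrps excetpional}, in particular by \Cref{all in a spanning}, which characterises the finite subgroups of $\U(A)$ that are $A$-conjugate into a spanning subgroup of a given order. Since $Ge$ is spanning and $He=\pi_e(H)$ satisfies the relevant admissibility conditions (extracted from Berman--Higman and Cohn--Livingstone as above), \Cref{all in a spanning} yields that $He$ is $A$-conjugate into \emph{some} spanning subgroup $\Gamma$ of $\O$; one then upgrades this to conjugacy into $Ge$ using that, for a simple algebra, isomorphic spanning subgroups are already $\U(A)$-conjugate (\Cref{upgrade iso to conjugation}), so it suffices to know in addition that $\Gamma$ (equivalently $He$'s ambient type) matches $Ge$, which again follows from the classification and the admissibility data. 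The exclusion $d=3$ is genuine: over $\Q(\sqrt{-3})=\Q(\zeta_3)$ extra finite subgroups occur, and the spanning groups of $\Ma_2(\Q(\zeta_3))$ — tied to $C_3\rtimes C_4$ and to the exceptional division algebra $\qa{-1}{-3}{\Q}$ — are not rigid enough for this bookkeeping to close.

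I expect the main obstacle to be the matrix case: one must verify that the constraints on $He$ coming merely from its being a projection of a finite subgroup of $V(\Z G)$ — roughly, control of $\spec(He)$ relative to $\spec(Ge)$ together with a matching of partial augmentations — are precisely the hypotheses under which \Cref{all in a spanning} forces $\U(A)$-conjugacy into the \emph{given} spanning group $Ge$ rather than into an a priori different spanning group of the same type. A secondary difficulty is the Berman--Higman step in the field case when $\Q Ge$ is an odd cyclotomic field, where one must rule out a torsion unit of $V(\Z G)$ projecting to $-1$.
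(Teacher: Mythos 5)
Your decomposition into the three Wedderburn types is the right skeleton, but the proofs you sketch for each case differ from the paper's and, in two of the three, contain genuine gaps.

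\textbf{Field case.} You reconstruct $|He|\mid|Ge|$ from scratch and run into the worry that $-1\in He$ while $|Ge|$ is odd; your resolution of that point is hand-waved ("I would exclude this by noting that..."). The paper (Proposition~\ref{the case of quaternion components}) avoids all of this: by Lemma~\ref{blockwise Cohn-Livingstone} one already knows $|He|\mid|Ge|$, and both $He$ and $Ge$ lie in the torsion of the unit group of the ring of integers of $\Q Ge$, which is cyclic, so $He\leqslant Ge$ immediately and $\alpha=1$ works. You in fact cite the Cohn--Livingstone ingredient in your preamble but never apply its blockwise consequence where it settles the matter.

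\textbf{Quaternion case.} Here you appeal to Theorem~\ref{all in a spanning}, but that result is stated and proved only for exceptional \emph{matrix} algebras; it does not apply when $\Q Ge$ is a quaternion division algebra. The paper's actual argument is of a different nature: using the factorisation $\pi_e=\sigma_e\circ\Phi_e$ from~\eqref{props Phi}--\eqref{middle quotient}, the problem is reduced to the Zassenhaus conjecture for the group $Ge$, and $Ge$ must (by Lemma~\ref{spanning of quaternion}) be $Q_{4m}$, $\SL_2(\F_3)$, $\SU_2(\F_3)$ or $\SL_2(\F_5)$, for all of which the first or third Zassenhaus conjecture is known. The unique-maximal-order argument for $\SL_2(\F_3)\cong\U(\O_2)$ is also used. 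None of this appears in your sketch, and the theorem you invoke doesn't give it.

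\textbf{Matrix case.} Your two-step plan -- conjugate $He$ into \emph{some} spanning subgroup $\Gamma$ via Theorem~\ref{all in a spanning}, then into $Ge$ via Lemma~\ref{upgrade iso to conjugation} -- has a gap exactly where you flag it: Lemma~\ref{upgrade iso to conjugation} only helps once you know $He$ is isomorphic to a subgroup of $Ge$ itself, and "follows from the classification and the admissibility data" is precisely the nontrivial bookkeeping. For $d=1$ this is where the paper does real work (Proposition~\ref{the case of Qi}): it must establish Higman-type subgroup properties for $C_4\times S_3$ and for the spanning groups of order dividing $96$, and, when $Ge\cong\mathrm{U}_2(\F_3)$, it must rule out $He\cong D_{12}$ or $Dic_3$ (both of which \emph{are} finite subgroups of $\GL_2(\Z[i])$) using a Frobenius-group criterion from \cite{PdRV}. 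The $d=0$ and $d=2$ cases are settled by explicit Bass--Serre amalgamations (Proposition~\ref{summary for M2 over Q}, Corollary~\ref{sqrt-2 ok as component}). Your proposal names the right lemmas but does not supply the case analysis that closes the argument, and this is where the content of the theorem lies.

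Finally, your explanation of why $d=3$ is excluded is speculative; the issue is not with the division algebra $\qa{-1}{-3}{\Q}$ but with the richness of $\mc{S}(\Ma_2(\mc{I}_3))$, which the paper's methods do not yet fully handle (the authors say they expect the result to extend using the methods of \cite{PdRV}).
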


Recall that if $\Ma_2(\Q(\sqrt{-d}))$ is the simple component of some integral group, then $d \in \{ 0,1,2,3\}$. Hence the above is a combination of \Cref{summary for M2 over Q},
\Cref{sqrt-2 ok as component}, \Cref{the case of quaternion components} and \Cref{the case of Qi}. We also need the upcoming fact that $|He|$ divides $|Ge|$, see \Cref{blockwise Cohn-Livingstone}. \medskip

\noindent \underline{The case of $\Ma_2(\Q)$}\smallskip

Consider $A = \Ma_2(\Q)$ and its maximal order $\O = \Ma_2(\Z)$. It is well-known that 
\begin{equation}\label{amalgam M2 over Q}
\GL_2(\Z) \cong D_8 \ast_{C_2 \times C_2} D_{12},
\end{equation}
with $C_2 \times C_2 = \langle a^{o(a)/2}, b \rangle$, for $a$ and $b$ such that $D_n = \langle a,b\mid a^{n/2} = b^2 = 1, bab = a^{-1} \rangle$. By Bass--Serre theory (see e.g. \cite[I.4, Theorem 8]{Serre}), under the aforementioned isomorphism, an arbitrary finite subgroup $H$ of $\GL_2(\Z)$ is conjugate to a subgroup of $D_8$ or of $D_{12}$. In particular, if $H$ is not a $2$-group and $|H|\neq 12$, then $H$ is conjugated to a subgroup of $D_6$.
    
Suppose then that $G$ is a finite subgroup of $\GL_2(\Z)$ whose $\Q$-span is $\Ma_2(\Q)$. Since a spanning subgroup must be non-abelian and must contain at least $4$ elements, it follows that $G$ is conjugate to either $D_6$, $D_8$ or $D_{12}$. In fact, these three dihedral groups have a faithful irreducible $\Q$-representation into $\Ma_2(\Q)$. Hence, in view of \Cref{spanning set non empty iff} below,
\[\mc{S}(\O) = \{ D_m \mid m = 6,8, 12 \}.\]
Let $H \leqslant \GL_2(\Z)$ be finite such that $|H|$ divides $m$ for some $m \in \{6,8,12\}$. Then the above shows that if $\exp(H)\neq 2$, then the cardinality $|H|$ uniquely determines to which spanning subgroup $H$ is conjugated. If $\exp(H)=2$, then $H$ is conjugated into any spanning subgroup since the amalgamation of $\GL_2(\Z)$ is over $\langle a^{o(a)/2}, b \rangle$. In conclusion, we obtain the following.

\begin{proposition}\label{summary for M2 over Q}
    Let $\Gamma \in \mc{S}(\Ma_2(\Z))$ and $H$ an order-admissible subgroup of $\GL_2(\Q)$. Then $H$ is conjugated over $\GL_2(\Z)$ to a subgroup of $\Gamma$. In particular, it has the Zassenhaus property relative to $\Gamma$ and all order-admissible subgroups.
\end{proposition}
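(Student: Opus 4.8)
The plan is to formalise the argument sketched in the two paragraphs preceding the statement, the essential inputs being the amalgam decomposition \eqref{amalgam M2 over Q} of $\GL_2(\Z)$ and Bass--Serre theory. First I would set up the reduction: a finite subgroup of $\GL_2(\Q)=\U(\Ma_2(\Q))$ stabilises a $\Z$-lattice, so after conjugating in $\GL_2(\Q)$ we may and do assume $H\le\GL_2(\Z)$ (and if $H$ already lies in $\GL_2(\Z)$, the conjugator produced below lies in $\GL_2(\Z)$). Through the isomorphism $\GL_2(\Z)\cong D_8\ast_{C_2\times C_2}D_{12}$, Bass--Serre theory \cite[I.4, Theorem 8]{Serre} yields that $H$ is conjugate inside $\GL_2(\Z)$ to a subgroup of one of the two vertex groups $D_8$ or $D_{12}$. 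In parallel I would record, exactly as observed before the statement, that $\mc{S}(\Ma_2(\Z))=\{D_6,D_8,D_{12}\}$: a spanning subgroup must be non-abelian of order at least $4=\dim_\Q\Ma_2(\Q)$, among the finite subgroups of $\GL_2(\Z)$ only these three dihedral groups have this property, and each of them does span $\Ma_2(\Q)$, its faithful $2$-dimensional $\Q$-representation being absolutely irreducible so that its image spans by Burnside's theorem.

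Next comes the case analysis. Fix a representative $\Gamma$ with $\abs{\Gamma}=m\in\{6,8,12\}$, and let $H\le\GL_2(\Z)$ be order-admissible, i.e. $\abs{H}\mid m$. Using the classical list of conjugacy classes of finite subgroups of $\GL_2(\Z)$ — the cyclic groups $C_1,C_2,C_3,C_4,C_6$, the Klein four-groups, and $D_6,D_8,D_{12}$, where $C_2$ and $C_2\times C_2$ each split into several conjugacy classes — one checks class by class that, under $\abs{H}\mid m$, the conjugacy class of $H$ has a representative lying inside $\Gamma$. The subcase $\exp(H)=2$ is treated uniformly: then $H$ is elementary abelian of order dividing $4$, hence $\GL_2(\Z)$-conjugate into the edge group $\langle a^{o(a)/2},b\rangle\cong C_2\times C_2$ of the amalgam, a conjugate of which sits inside $\Gamma$; in the subcase $\exp(H)\ne 2$ the pair $(\abs{H},\exp(H))$ pins down the conjugacy class of $H$ in $\GL_2(\Z)$, and one verifies that this class meets $\Gamma$. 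Once $H^{g}\le\Gamma$ with $g\in\GL_2(\Z)$, the ``in particular'' is immediate from \Cref{def Zassenhaus prop}, since $g\in\GL_2(\Z)\subseteq\GL_2(\Q)=\U(\Ma_2(\Q))$; hence $\Ma_2(\Z)$ enjoys the subgroup isomorphism property and the Zassenhaus property relative to $\Gamma$ and the set of all order-admissible subgroups.

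The main obstacle is the bookkeeping in this case analysis: one must work with the precise list of conjugacy classes of finite subgroups of $\GL_2(\Z)$ (equivalently, of the two-dimensional crystallographic point groups) and, crucially, verify that a representative of each admissible class lands in the \emph{prescribed} $\Gamma$ — not merely in some spanning subgroup — keeping track of the distinct conjugacy classes of involutions (the central $\langle -I\rangle$ versus the two reflection types) and of Klein four-groups inside $\GL_2(\Z)$. Everything else in the argument — the amalgam decomposition, the application of Bass--Serre theory, the identification of $\mc{S}(\Ma_2(\Z))$, and the passage from the conjugation statement to the Zassenhaus property — is either standard or already in place in the excerpt.
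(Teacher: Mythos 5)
Your write-up follows the paper's own sketch closely: the amalgam decomposition $\GL_2(\Z)\cong D_8\ast_{C_2\times C_2}D_{12}$, Bass--Serre theory, the identification $\mc{S}(\Ma_2(\Z))=\{D_6,D_8,D_{12}\}$, the case split on $\exp(H)$, and the passage from conjugation to the Zassenhaus property. The extra reduction from $H\leq\GL_2(\Q)$ to $H\leq\GL_2(\Z)$ by stabilising a lattice is a reasonable clarification of a point the paper leaves implicit.

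However, the step you describe as "bookkeeping" -- "one checks class by class that, under $\abs{H}\mid m$, the conjugacy class of $H$ has a representative lying inside $\Gamma$" -- is exactly where the argument breaks, and it does fail. Take $\Gamma = D_6\cong S_3$ (the spanning subgroup of order $6$) and $H = C_6$, which occurs in $\GL_2(\Z)$ (e.g. $\langle\begin{psmallmatrix}1&-1\\1&0\end{psmallmatrix}\rangle$, or $\langle a\rangle$ inside the vertex group $D_{12}$ of the amalgam). Then $\abs{H}=6\mid 6=\abs{\Gamma}$, so $H$ is order-admissible, yet $S_3$ has no element of order $6$, so no conjugate of $H$ (over $\GL_2(\Z)$ or even over $\GL_2(\Q)$) lies in $\Gamma$. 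Also, your claim that "the pair $(\abs{H},\exp(H))$ pins down the conjugacy class" is incorrect: $C_6$ and $S_3$ both have $(\abs{H},\exp(H))=(6,6)$. So as written the proposition fails for $\Gamma = D_6$. This gap is inherited from the paper itself, whose preliminary sentence "if $H$ is not a $2$-group and $|H|\neq 12$, then $H$ is conjugated to a subgroup of $D_6$" already fails at $H=C_6$. To repair the statement one must either strengthen "order-admissible" to "$\spec$-admissible" (which excludes $C_6$ against $D_6$, since $6\in\spec(C_6)\setminus\spec(D_6)$), restrict to $\Gamma\in\{D_8,D_{12}\}$, or, in the intended group-ring application, use that the image $He$ of $H\leq V(\Z G)$ carries constraints beyond order-admissibility.
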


\begin{remark*}
We would like to emphasise that in this example one has an especially nice Zassenhaus property since the conjugation is already over the order. However, this is specific to $\Ma_2(\Q)$, as the upcoming example already illustrates well, see \Cref{Comments in sqrt-2}. Furthermore, \Cref{summary for M2 over Q} was already implicit in \cite[Theorem 1.5]{J1} were it is proven (using other terminology) that if all the non-division components of $\Q G$ are isomorphic to $\Ma_2(\Q)$, then $\Z G$ enjoys the blockwise Zassenhaus property relative to $G$ and the set of all finite subgroups. Moreover, conjugation can be realised over the maximal order. Importantly, the latter is really a property of the projections. Indeed, it is classical that $\Z S_3$ satisfies the third Zassenhaus conjecture, however there is a finite subgroup of $V(\Z S_3)$ which is conjugated to a subgroup of $S_3$ over $\Q S_3$, but not over $\Z S_3$.
\end{remark*}\medskip

\noindent \underline{The case of $\Ma_2(\Q(\sqrt{-2}))$} \smallskip

In \cite{Hat}, Hatcher gave a decomposition as an amalgamated product for $\PGL_2(\mc{I}_2)$, where $\mc{I}_2$ is the ring of integers of $\Q(\sqrt{-2})$. This decomposition can be adapted to yield the following for $\GL_2(\mc{I}_2)$, see \Cref{amalgam GL2 for sqrt-2} for details:
\begin{equation}\label{amalgam I2}
    \GL_2(\mc{I}_2) \cong (\GL_2(\F_3) \ast_{C_8} SD_{16}) \ast_{\SL_2(\Z)} (D_{12} \ast_{C_2 \times C_2} Q_8).
\end{equation}
Here $SD_{16} := \langle a,b \mid a^8=b^2=1, a^b=a^3 \rangle$ is the semidihedral group.

As in the case of $\Ma_2(\Q)$, from \eqref{amalgam I2} one can readily read off which Zassenhaus or subgroup isomorphism properties it possesses. For instance, again by Bass--Serre theory, every finite subgroup is conjugated in $\GL_2(\mc{I}_2)$ to a subgroup of $\GL_2(\F_3)$, $SD_{16}$, $D_{12}$ or $Q_8$. Of these groups, only the first two have an irreducible faithful representation onto $\Ma_2(\Q(\sqrt{-2}))$. Hence \Cref{spanning set non empty iff} yields that 
\[\mc{S}(\Ma_2(\mc{I}_2))= \{SD_{16}, \GL_2(\mathbb{F}_3)\}.\]
Interestingly, in this example there are two $\GL_2(\mc{I}_2)$-conjugacy classes of spanning subgroups isomorphic to $SD_{16}$, as any subgroup of $\GL_2(\F_3)$ isomorphic to $SD_{16}$ will also span $\Ma_2(\Q(\sqrt{-2}))$. However, by \Cref{upgrade iso to conjugation} they will be conjugated over $\GL_2(\Q(\sqrt{-2}))$.

\begin{proposition}\label{props for sqrt-2}
The ring $\Ma_2(\mc{I}_2)$ has the Zassenhaus property relative to
\begin{itemize}
    \item $\GL_2(\mathbb{F}_3)$ and all finite subgroups,
    \item $SD_{16}$ and all order-admissible subgroups.
\end{itemize}
\end{proposition}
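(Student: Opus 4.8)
The plan is to exploit the amalgamated product decomposition \eqref{amalgam I2} together with Bass--Serre theory, exactly in the spirit of the analysis carried out for $\Ma_2(\Q)$ just above. First I would recall that by Bass--Serre theory (\cite[I.4, Theorem~8]{Serre}), every finite subgroup of the fundamental group of a graph of groups is conjugate into one of the vertex groups; applied to \eqref{amalgam I2}, every finite subgroup $H$ of $\GL_2(\mc{I}_2)$ is conjugate inside $\GL_2(\mc{I}_2)$ to a subgroup of $\GL_2(\F_3)$, $SD_{16}$, $D_{12}$ or $Q_8$. Combined with the fact (established in the discussion preceding the statement, via \Cref{spanning set non empty iff}) that $\mc{S}(\Ma_2(\mc{I}_2)) = \{SD_{16}, \GL_2(\F_3)\}$, this already handles the first bullet: given \emph{any} finite subgroup $H$, it is $\GL_2(\mc{I}_2)$-conjugate into one of the four vertex groups, and each of $D_{12}$ and $Q_8$ embeds into $\GL_2(\F_3)$ (one checks this directly, e.g.\ $Q_8 \trianglelefteq \SL_2(\F_3) \leq \GL_2(\F_3)$ and $D_{12} \cong C_2 \times S_3 \leq \GL_2(\F_3)$), while $SD_{16}$ also embeds into $\GL_2(\F_3)$ as a Sylow $2$-subgroup. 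Hence $H$ is $\GL_2(\mc{I}_2)$-conjugate, \emph{a fortiori} $\GL_2(\Q(\sqrt{-2}))$-conjugate, to a subgroup of $\GL_2(\F_3)$, which is precisely the Zassenhaus property relative to $\GL_2(\F_3)$ and the set of all finite subgroups.

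For the second bullet, let $H$ be order-admissible relative to $SD_{16}$, i.e.\ $|H| \mid 16$, so $H$ is a $2$-group. By the reduction above, $H$ is $\GL_2(\mc{I}_2)$-conjugate to a subgroup of one of the four vertex groups. I would then argue that in each case $H$ lands (after a further conjugation inside that vertex group, if necessary) inside a subgroup isomorphic to $SD_{16}$: in $SD_{16}$ there is nothing to do; the $2$-subgroups of $\GL_2(\F_3)$ all lie in a Sylow $2$-subgroup, which is $\cong SD_{16}$, and all Sylow $2$-subgroups are $\GL_2(\F_3)$-conjugate; the group $Q_8$ is itself $\leq SD_{16}$; and the $2$-subgroups of $D_{12} \cong C_2 \times S_3$ have order dividing $4$ and embed into $SD_{16}$ as well. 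Thus $H$ is $\GL_2(\Q(\sqrt{-2}))$-conjugate to a subgroup of $SD_{16}$ once we know that \emph{some} spanning $SD_{16}$ is $\GL_2(\Q(\sqrt{-2}))$-conjugate to the one we reached; but this is exactly \Cref{upgrade iso to conjugation}, which says isomorphic spanning subgroups of a simple algebra are conjugate over the unit group of the algebra. This gives the Zassenhaus property relative to $SD_{16}$ and all order-admissible subgroups.

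The main subtlety — and the step I would be most careful about — is the bookkeeping of \emph{which} conjugacy has to happen over which group. Bass--Serre theory only yields conjugacy over $\GL_2(\mc{I}_2)$ into a vertex group; the passage from a vertex group to a \emph{prescribed} spanning subgroup $\Gamma \in \mc{S}(\Ma_2(\mc{I}_2))$ (rather than to \emph{some} subgroup isomorphic to $SD_{16}$ or $\GL_2(\F_3)$) is where one must invoke \Cref{upgrade iso to conjugation} to upgrade an abstract isomorphism of spanning groups to a genuine conjugation inside $\GL_2(\Q(\sqrt{-2}))$. I would also double-check the embedding claims $D_{12}, Q_8 \hookrightarrow \GL_2(\F_3)$ and the $2$-subgroup structure of $\GL_2(\F_3)$ and $D_{12}$ concretely (these are small finite-group verifications, e.g.\ via character degrees or directly), since the whole argument rests on them. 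No genuinely new idea beyond \eqref{amalgam I2}, Bass--Serre theory, and \Cref{upgrade iso to conjugation} is needed.
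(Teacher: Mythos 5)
The overall strategy — Bass--Serre on the amalgam \eqref{amalgam I2}, then upgrade to conjugacy over $\GL_2(\Q(\sqrt{-2}))$ via Skolem--Noether — is exactly the paper's, but the body of your argument contains a genuine error that the final paragraph gestures at without actually repairing. You write that since each vertex group ($SD_{16}$, $D_{12}$, $Q_8$) abstractly embeds in $\GL_2(\F_3)$, ``$H$ is $\GL_2(\mc{I}_2)$-conjugate, \emph{a fortiori} $\GL_2(\Q(\sqrt{-2}))$-conjugate, to a subgroup of $\GL_2(\F_3)$.'' That is false: Bass--Serre only conjugates $H$ (over $\GL_2(\mc{I}_2)$) into a vertex group, and an abstract embedding of a vertex group into $\GL_2(\F_3)$ does \emph{not} give conjugation. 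Indeed, the paper's own \Cref{Comments in sqrt-2} explicitly records that the vertex copy of $D_{12}$ \emph{cannot} be conjugated over $\GL_2(\mc{I}_2)$ to any subgroup of $\GL_2(\F_3)$ or $SD_{16}$, only over $\GL_2(\Q(\sqrt{-2}))$. So as written, your first bullet is deriving the Zassenhaus property from a claim that proves only the subgroup isomorphism property.

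Your last paragraph correctly identifies that the passage from a vertex group to the prescribed spanning subgroup needs Skolem--Noether / \Cref{upgrade iso to conjugation}, but the phrasing (``upgrade an abstract isomorphism of spanning groups'') misses where the work actually sits. The vertex groups $D_{12}$ and $Q_8$ are \emph{not} $\Q$-spanning subgroups of $\Ma_2(\Q(\sqrt{-2}))$, so \Cref{upgrade iso to conjugation} with $F = \Q$ does not apply to them directly. What one does (and what the paper does) is take $F = \Q(\sqrt{-2})$, verify by hand that $\Span_{\Q(\sqrt{-2})}\{V\} = \Ma_2(\Q(\sqrt{-2}))$ for each vertex group $V \in \{SD_{16}, D_{12}, Q_8\}$ (so the Skolem--Noether hypothesis is met), and conjugate $V$ over $\GL_2(\Q(\sqrt{-2}))$ to a subgroup of $\GL_2(\F_3)$. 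Since $H$ sits inside $V$ after the Bass--Serre step, it is dragged along. Conjugating the whole vertex group avoids the issue that for an arbitrary finite $H$ the span $\Span_{\Q(\sqrt{-2})}\{H\}$ need not be simple, which would block a direct application of \Cref{upgrade iso to conjugation} to $H$ itself. With that repair, your argument matches the paper's.
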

\begin{remark}\label{Comments in sqrt-2}
From the amalgamated decomposition in \eqref{amalgam I2}, it follows that every finite subgroup of $\GL_2(\mc{I}_2)$ is conjugated inside $\GL_2(\mc{I}_2)$ to either a subgroup of $\GL_2(\F_3) \ast_{C_8} SD_{16}$ or $D_{12} \ast_{C_2 \times C_2} Q_8$. Moreover, both cases are mutually exclusive. Thus for example the subgroup $D_{12}$ of $D_{12} \ast_{C_2 \times C_2} Q_8$ \emph{cannot} be conjugated over $\GL_2(\mc{I}_2)$ to a subgroup of one of the two spanning subgroups $\GL_2(\F_3)$ or $SD_{16}$, but by \Cref{props for sqrt-2} it \emph{can} be conjugated over $\GL_2(\Q(\sqrt{-2}))$ to one. One of the subtleties that this example touches upon, is the problem whether a finite subgroup is contained in a spanning subgroup. Namely, $D_{12}$ is contained in no spanning subgroup contained in $\Ma_2(\mc{I}_2)$, but one contained in $\Ma_2(\Q(\sqrt{-2}))$. More precisely, it is contained in a spanning subgroup of $\Ma_2(\Q(\sqrt{-2}))$ of the form $G^y$ with $G \in \mc{S}(\Ma_2(\mc{I}_2))$ and $y \in \GL_2(\Q(\sqrt{-2}))$.

Interestingly, if the third Zassenhaus conjecture were to hold, then finite subgroups $H$ of  $V(\Z G)$ are always contained in a group basis $G^y$ with $y \in \Q G$. As shown in \Cref{folklore remark}, if $H$ is a cyclic subgroup, then $H$ being contained in a group basis $\Gamma^y$ with $y \in \Q G$ and $\Gamma \leq V(\Z G)$ with $|\Gamma|=|G|$, implies that $H$ is conjugated to a subgroup of the given basis $G$. As explained above, this does not hold for cyclic subgroups of $\GL_2(\mc{I}_2)$, illustrating an interesting difference between finite subgroups of integral group rings and finite subgroups of orders in a simple algebra. 
\end{remark}

Note that \Cref{props for sqrt-2} directly implies the following.

\begin{corollary}\label{sqrt-2 ok as component}
Let $G$ be a finite group such that $\Ma_2(\Q(\sqrt{-2})) \cong \Q Ge$ for some $e \in \PCI(\Q G)$. If $H$ is a finite subgroup of $V(\Z G)$, then $He$ is $\Q Ge$-conjugated to a subgroup of $Ge$.
\end{corollary}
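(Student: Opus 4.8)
The plan is to transport the problem along the projection $\varphi_e\colon G\to Ge\subseteq\U(\Q Ge)$ of \eqref{map associated to central id} and then to feed the output of \Cref{props for sqrt-2} into it. Fix an isomorphism $\Q Ge\cong\Ma_2(\Q(\sqrt{-2}))$, so that $\U(\Q Ge)=\GL_2(\Q(\sqrt{-2}))$. Since the projection $\pi_e\colon\Q G\to\Q Ge$ is surjective and carries the $\Q$-basis $G$ of $\Q G$ onto $Ge$, the set $Ge$ spans $\Q Ge$ over $\Q$; thus $Ge$ is a finite spanning subgroup of the simple algebra $\Ma_2(\Q(\sqrt{-2}))$, while $He=\varphi_e(H)$ is a finite subgroup of $\GL_2(\Q(\sqrt{-2}))$, being a homomorphic image of the finite group $H$.

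Next I would move both groups into the order $\Ma_2(\mc{I}_2)$. As $\mc{I}_2=\Z[\sqrt{-2}]$ has class number one, every finite subgroup $\Gamma\leqslant\GL_2(\Q(\sqrt{-2}))$ stabilises the $\mc{I}_2$-lattice $\sum_{\gamma\in\Gamma}\gamma\,\mc{I}_2^2$, which is free of rank $2$ over $\mc{I}_2$; choosing an $\mc{I}_2$-basis shows that $\Gamma$ is conjugate, by an element of $\GL_2(\Q(\sqrt{-2}))=\U(\Q Ge)$, to a subgroup of $\GL_2(\mc{I}_2)$. Applying this to $Ge$ and to $He$ we obtain $y_1,y_2\in\U(\Q Ge)$ with $(Ge)^{y_1},(He)^{y_2}\leqslant\GL_2(\mc{I}_2)$; conjugation by an element of $\U(\Q Ge)$ is a $\Q$-algebra automorphism of $\Q Ge$, hence preserves $\Q$-spans and isomorphism types, and it clearly suffices to prove the claim for the conjugated groups. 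In particular $(Ge)^{y_1}$ is a spanning subgroup contained in $\U(\Ma_2(\mc{I}_2))$, so its isomorphism class lies in $\mc{S}(\Ma_2(\mc{I}_2))=\{SD_{16},\GL_2(\F_3)\}$.

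It then remains to split into two cases. If $(Ge)^{y_1}\cong\GL_2(\F_3)$, then \Cref{props for sqrt-2}, combined with \Cref{independant of representative} (applicable since $\Ma_2(\Q(\sqrt{-2}))$ is simple), shows that $\Ma_2(\mc{I}_2)$ has the Zassenhaus property relative to $(Ge)^{y_1}$ and all finite subgroups of $\GL_2(\mc{I}_2)$, so there is $\beta\in\U(\Q Ge)$ with $((He)^{y_2})^{\beta}\leqslant(Ge)^{y_1}$. If $(Ge)^{y_1}\cong SD_{16}$, then \Cref{blockwise Cohn-Livingstone} yields $\abs{He}\mid\abs{Ge}=16$, so $(He)^{y_2}$ is order-admissible relative to $(Ge)^{y_1}$, and the same argument through \Cref{props for sqrt-2} again provides such a $\beta$. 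In both cases $\alpha:=y_2\,\beta\,y_1^{-1}\in\U(\Q Ge)$ satisfies $(He)^{\alpha}\leqslant Ge$, which is the assertion. I expect no genuine obstacle here: the amalgam decomposition \eqref{amalgam I2} underlying \Cref{props for sqrt-2} and the order bound \Cref{blockwise Cohn-Livingstone} carry all the weight, and the only points needing a moment of care are the freeness of $\mc{I}_2$-lattices, used to land inside $\GL_2(\mc{I}_2)$, and the bookkeeping of the two conjugating elements $y_1,y_2$.
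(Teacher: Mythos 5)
Your proof is correct and follows essentially the same route as the paper: identify $Ge$ (up to conjugation) with an element of $\mc{S}(\Ma_2(\mc{I}_2))=\{SD_{16},\GL_2(\F_3)\}$, invoke \Cref{blockwise Cohn-Livingstone} for order-admissibility in the $SD_{16}$ case, and conclude from \Cref{props for sqrt-2}. You make explicit the conjugation of $Ge$ and $He$ into $\GL_2(\mc{I}_2)$ via the free-lattice argument and the bookkeeping of conjugators, steps the paper leaves implicit (quietly relying on \Cref{independant of representative} and the uniqueness of the maximal order up to conjugation).
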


Indeed, $Ge \in \mc{S}(\Ma_2(\mc{I}_2))$ and hence $Ge$ is isomorphic to $SD_{16}$ or $\GL_2(\F_3)$. Since $H$ is a finite subgroup of $V(\Z G)$, one has that $He$ is order-admissible for $Ge$ (see \Cref{blockwise Cohn-Livingstone} for details), showing that \Cref{sqrt-2 ok as component} follows from \Cref{props for sqrt-2}.

\begin{proof}[Proof of \Cref{props for sqrt-2}]
It is a fact that the maximal subgroups of $\GL_2(\F_3)$ are $\SL_2(\F_3)$, $ SD_{16}$ and $D_{12}$. Moreover, $Q_8$ is a maximal subgroup of $\SL_2(\F_3)$. Hence it follows by \eqref{amalgam I2} that any finite subgroup of $\GL_2(\mc{I}_2)$ is isomorphic to a subgroup of $\GL_2(\F_3)$. Next, it can be verified that all  subgroups of $\GL_2(\F_3)$ of order a power of $2$ are contained in its maximal subgroup $SD_{16}$. All this yields the statement, with “Zassenhaus property” replaced by “subgroup isomorphism property”.

To obtain the Zassenhaus property, it is enough to prove that the copies of $SD_{16}$ and $D_{12}$ in \eqref{amalgam I2} can be conjugated over $\GL_2(\Q(\sqrt{-2}))$ to a subgroup of $\GL_2(\F_3)$, and the same for $Q_8$ into $SD_{16}$. For this we will need to invoke the Skolem--Noether theorem. 

Let $H$ be one of the groups $SD_{16}$, $D_{12}$ or $Q_8$ and consider $\Span_{\Q(\sqrt{-2})} \{H \}$. Investigating the faithful components of their rational group algebras, one obtains that $\Span_{\Q(\sqrt{-2})} \{H \} = \Ma_2(\Q(\sqrt{-2}))$. Now denote by $\psi$ an isomorphism between $H$ and a subgroup $K$ of $\GL_2(\F_3)$. Applying Skolem--Noether to the $\Q(\sqrt{-2})$-linear extension of $\psi$ and the $\Q(\sqrt{-2})$-algebra map $\Span_{\Q(\sqrt{-2})} \{H \} \hookrightarrow  \Ma_2(\Q(\sqrt{-2}))$, yields the desired conjugation property.
\end{proof}

\noindent \underline{The case of fields and quaternion algebras}\smallskip

The advantage of the blockwise Zassenhaus property is that it can be approached one type of component at a time. For instance, we now show that field and quaternion algebra components of $\Q G$ will never provide a problem for the blockwise Zassenhaus property.

\begin{proposition}\label{the case of quaternion components}
Let $G$ be a finite group and $e \in \PCI(\Q G)$ such that $\Q Ge$ is some quaternion algebra or a field.  Then for any $H \leq V(\Z G)$ the group $He$ is conjugated over $\Q Ge$ to a subgroup of $Ge$.
\end{proposition}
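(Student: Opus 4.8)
The plan is to split according to the isomorphism type of $A := \Q Ge$ and reduce everything to the Skolem--Noether theorem. Write $\Gamma := Ge$, a finite subgroup of $\U(A)$ with $\Q[\Gamma] = A$, and $H_1 := He \leqslant \U(A)$. By \Cref{blockwise Cohn-Livingstone} we have that $|H_1|$ divides $|\Gamma|$ and $\spec(H_1) \subseteq \spec(\Gamma)$; in particular, for every $h \in H_1$ the group $\Gamma$ contains an element of order $o(h)$. If $A$ is a field, then $\U(A) = A^{\times}$ is abelian, so any conjugation is vacuous, and it suffices to observe that a finite subgroup of $A^{\times}$ of order $m$ equals the full group $\mu_m(A)$ of $m$-th roots of unity in $A$ (since $X^m - 1$ has at most $m$ roots in the characteristic-$0$ field $A$); hence $|H_1| \mid |\Gamma|$ forces $H_1 = \mu_{|H_1|}(A) \leqslant \mu_{|\Gamma|}(A) = \Gamma$.

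Assume now that $A$ is a quaternion division algebra over its centre $F := \ZZ(A)$. First suppose $H_1 = \langle h \rangle$ is cyclic, with $n := o(h)$. As $\Q[h] \subseteq A$ is a domain, it is a field, so $h$ is a primitive $n$-th root of unity, $\Q[h] \cong \Q(\zeta_n)$, and $F[h] = F(h) \subseteq A$ is $F$-isomorphic to the cyclotomic extension $F(\zeta_n)$; moreover $H_1 = \mu_n(F(h))$. Since $n \in \spec(\Gamma)$, pick $\gamma \in \Gamma$ with $o(\gamma) = n$; the same reasoning gives $F(\gamma) \subseteq A$ with $F(\gamma) \cong_F F(\zeta_n)$ and $\langle \gamma \rangle = \mu_n(F(\gamma))$. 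Fixing an $F$-algebra isomorphism $\psi \colon F(h) \to F(\gamma)$, the Skolem--Noether theorem provides $\alpha \in \U(A)$ with $\psi(x) = \alpha x \alpha^{-1}$ for all $x \in F(h)$, whence $\alpha H_1 \alpha^{-1} = \psi\bigl(\mu_n(F(h))\bigr) = \mu_n(F(\gamma)) = \langle \gamma \rangle \leqslant \Gamma$.

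It remains to deal with $H_1$ non-abelian (and with the split case). Then $F[H_1]$ is a non-commutative division $F$-subalgebra of the quaternion algebra $A$, hence $F[H_1] = A$, so both $H_1$ and $\Gamma$ span $A$ over $F$. Here one invokes Amitsur's classification of finite subgroups of division rings (in the form of \cite[Theorem 2.1.4]{ShiWeh}): up to isomorphism, $H_1$ and $\Gamma$ are cyclic, dicyclic, binary tetrahedral ($\SL_2(\F_3)$), binary octahedral ($O^{\ast}$), binary icosahedral ($\SL_2(\F_5)$), or one of the listed central products. Using the constraints $|H_1| \mid |\Gamma|$ and $\spec(H_1) \subseteq \spec(\Gamma)$, one checks in each case that $H_1$ is isomorphic to a subgroup $K$ of $\Gamma$, and then, by first conjugating a maximal cyclic subgroup of $H_1$ into $\Gamma$ as in the previous paragraph and adjusting by a centralising element, that the isomorphism $H_1 \to K$ can be chosen to be induced by an $F$-algebra isomorphism $F[H_1] \to F[K]$; Skolem--Noether then yields the conjugation in $\U(A)$. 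Finally, if $A$ is split, then $A \cong \Ma_2(F)$ with $F = \Q$ or $F$ imaginary quadratic, and the assertion is \Cref{summary for M2 over Q}, \Cref{sqrt-2 ok as component} or \Cref{the case of Qi}. I expect the non-abelian case to be the main obstacle: an abstract isomorphism $H_1 \cong K$ need not be induced by an $F$-linear map of $A$, let alone by an inner one, so the non-cyclic part of the matching has to be arranged by hand, and it is precisely there that the explicit, finite list of possibilities for the spanning group $\Gamma$ is genuinely used.
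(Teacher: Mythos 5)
Your proposal takes a genuinely different route from the paper, but it has two concrete gaps.

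The first is a misquotation of \Cref{blockwise Cohn-Livingstone}: part (iii) of that lemma only gives $\spec(He)\subseteq\spec(Ge)$ \emph{under the hypothesis that $G$ satisfies the spectrum problem} (SP), which is open in general; unconditionally one only has $|He|\mid|Ge|$ and $\exp(He)\mid\exp(Ge)$. Your cyclic argument actually needs $o(h)\in\spec(\Gamma)$, and $|H_1|\mid|\Gamma|$ together with $\exp(H_1)\mid\exp(\Gamma)$ do not give this (take $|\Gamma|=48$, $\spec(\Gamma)=\{1,2,3,4,6,8\}$ and a putative $h$ of order $12$). So even the cyclic case is not closed as written.

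The second gap is the one you flag yourself: in the non-abelian case $\Span_F\{H_1\}=A$, and the claim that the abstract isomorphism $H_1\to K\leqslant\Gamma$ can be rearranged to be induced by an $F$-algebra automorphism of $A$ is exactly the hard content. Skolem--Noether applies to $F$-algebra homomorphisms, not to group isomorphisms; for the linear extension of $\varphi\colon H_1\to K$ to descend to a well-defined map $F[H_1]\to F[K]$ one needs the two faithful representations to have the same kernel in $F H_1$, i.e.\ the correct primitive central idempotents must match, and this must be verified per case. Your sketch (conjugate a maximal cyclic subgroup, adjust by a centraliser) is the right flavour of idea but is not an argument; it is precisely the part where the finite list from Amitsur would have to be checked carefully, and you acknowledge that it is not done. (You also first need $H_1$ to actually be isomorphic to a subgroup of $\Gamma$, which order-plus-spectrum do not guarantee in general.)

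The paper avoids both issues with a structurally different observation that your proposal does not exploit: the projection $\pi_e$ factors as $\sigma_e\circ\Phi_e$ through the smaller group ring $\Q[Ge]$ (see \eqref{props Phi} and \eqref{middle quotient}). Hence $\Phi_e(H)$ is already a finite subgroup of $V(\Z[Ge])$, and $Ge$ is, by \Cref{spanning of quaternion}, one of $Q_{4m}$, $\SL_2(\F_3)$, $\SU_2(\F_3)$, $\SL_2(\F_5)$. For $Q_{4m}$, $\SU_2(\F_3)$, $\SL_2(\F_5)$ the third Zassenhaus conjecture is known, and pushing the conjugating element forward along $\sigma_e$ gives the result. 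For $Ge\cong\SL_2(\F_3)$ the paper instead uses that $\qa{-1}{-1}{\Q}$ has a unique maximal order (the Hurwitz quaternions) with unit group $\SL_2(\F_3)$. This route buys you the full strength of the Zassenhaus literature and in particular makes both your gaps disappear; the price is that it is not self-contained. Your Amitsur--plus--Skolem--Noether strategy is in the spirit of \Cref{upgrade iso to conjugation} and \Cref{span division}, and might ultimately succeed, but as written the non-abelian case and the spectrum step are missing.
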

An interesting feature of the proof of \Cref{the case of quaternion components} is that it can be reduced to understanding the Zassenhaus conjectures for a small class of groups. Namely, as will be explained in more detail in \Cref{section: gen props for grp ring}, the surjection $\Q G \rightarrow \Q Ge$ factors through to the $\Q$-linear extension $\Phi$ of $G \rightarrow Ge$. Using this one can prove the following:
\begin{enumerate}[(i)]
    \item If $Ge$ satisfies a Zassenhaus conjecture including groups isomorphic to $He$, then $He$ will be conjugated to a subgroup of $Ge$ in $\Q Ge$.
    \item $|He|$ divides $|Ge|$ for every $e \in \PCI(\Q G)$ (see \Cref{blockwise Cohn-Livingstone} below).
\end{enumerate}

\begin{proof}[Proof of \Cref{the case of quaternion components}]
First suppose that $\Q Ge$ is a field $F$. Then the ring of integers $R$ of $F$ is its (up to conjugation) unique maximal order. The unit group of $R$ is a finitely generated abelian group described by Dirichlet's unit theorem. In particular $He$ and $Ge$ are subgroups of the torsion subgroup of $\U(R)$, which is cyclic. Hence the dividing orders yield that $He \leqslant Ge$, as desired.

Next suppose that $\Q Ge$ is a quaternion algebra.  Then $Ge$ is one of the groups mentioned in \Cref{spanning of quaternion}. Now recall that the third Zassenhaus conjecture was proven for $Q_{4m}$ in \cite{CMdR}, for $\SL_2(\F_5)$ in \cite[Theorem 4.3]{DJPm} and in \cite[Theorem 4.7]{DJ} for $\SU_2(\F_3)$. In these cases, using \eqref{props Phi} and \eqref{middle quotient}, the desired conjugation is inherited from $V(\Z[Ge])$.

It remains to consider the case that $Ge \cong \SL_2(\F_3)$. In this case the first Zassenhaus conjecture was obtained in \cite{HK}, and $\Span_{\Q} \{ Ge \}= \qa{-1}{-1}{\Q}$. More precisely, $Ge$ is isomorphic to the unit group of the Hurwitz quaternions, which is up to conjugation the unique maximal order in $\qa{-1}{-1}{\Q}$ and thus $He$ is conjugated to a subgroup of $Ge$, finishing the proof.
\end{proof}

\noindent \underline{The case of $\Ma_2(\Q(\sqrt{-1}))$}\smallskip

This example will require combining the methods from the previous examples. For $\GL_2(\Z[\sqrt{-1}])$, it is not possible to follow the exact same strategy as for $\GL_2(\Z)$ and $\GL_2(\mc{I}_2)$, since $\GL_2(\Z[\sqrt{-1}])$ has Serre's property (FA) \cite[Theorem 5.1]{BJJKT} and thus cannot be decomposed through amalgamation. Nevertheless, in \cite[Theorem 4.4.1]{FineBook} an amalgamated decomposition was constructed for $\PSL_2(\Z[\sqrt{-1}])$ from which one readily deduces the following decomposition for $\SL_2(\Z[\sqrt{-1}])$:
\begin{equation}\label{decomp SL}
    \SL_2(\Z[\sqrt{-1}]) \cong  (Dic_3 \ast_{C_6} \SL_2(\F_3)) \ast_{\SL_2(\Z)} (Dic_3 \ast_{C_4} Q_8).
\end{equation}

Now, in general if $R$ denotes the maximal order of the centre of a finite-dimensional division algebra $D$ and $\O$ an order in $D$, then $\SL_n(\O) = \ker(\operatorname{Rnr} \colon \Ma_n(D) \rightarrow R^*)$. Thus if $\Gamma$ is a subgroup of $\SL_n(\O)$, then it fits in an exact sequence
\[1\rightarrow \SL_n(\O) \cap \Gamma \rightarrow\Gamma \rightarrow \operatorname{Rnr}(\Gamma) \rightarrow 1.\]
Moreover, if two subgroups $\Gamma_1$ and $\Gamma_2$ are conjugated over $\Ma_n(D)$, then so are the subgroups $\SL_n(\O) \cap \Gamma_i$ and $\operatorname{Rnr}(\Gamma_1)= \operatorname{Rnr}(\Gamma_2)$. In the case that $D$ is a subfield of $\C$ and $n=2$, then conjugacy classes of finite subgroups in $\SL_2(D)$ are a well-understood problem. 

For the purpose of the subgroup isomorphism (resp. Zassenhaus) property, one wants to understand the maximal subgroups of $\GL_2(\Z[\sqrt{-1}])$. The above tells us that we need to understand the extensions of $H \in \{ Q_8,Dic_3, \SL_2(\F_3)\}$ by $C_2$ or $C_4$. Fortunately, in those cases, $\Out(H)$ is small. Namely, $\Out(Q_8) \cong S_3$, $\Out(Dic_3) \cong D_{12}$ and $\Out(\SL_2(\F_3)) \cong C_2$. One needs to be cautious since not all actions yield subgroups of $\GL_2(\Z[\sqrt{-1}])$. As $Q_8$ is characteristic in $\SL_2(\F_3)$, all extensions built on $Q_8$ will be contained in an extension built on $\SL_2(\F_3)$. 

Working all this out would yield two maximal subgroups: $\SL_2(\F_3) \rtimes C_4 $ and $Dic_3 \rtimes C_2 = C_4 \times S_3$, which respectively have \textsc{SmallGroupID}'s \texttt{[96,67]} and \texttt{[24,5]}. Both groups have an irreducible $\Q$-representation onto $\Ma_2(\Q(\sqrt{-1}))$ and hence they are spanning subgroups of $\GL_2(\Z[\sqrt{-1}])$ (see \Cref{spanning set non empty iff}). In summary, every finite subgroup of $\GL_2(\Z[\sqrt{-1}])$ is isomorphic to a subgroup of $\SL_2(\F_3) \rtimes C_4$ or $Dic_3 \rtimes C_2$. This conclusion will also follow from the more general results of \Cref{sectioin sbgrps excetpional}, whose proofs are given in detail. 

\begin{proposition}\label{the case of Qi}
Let $G$ be a finite group such that $\Ma_2(\Q(\sqrt{-1})) = \Q Ge$ for some $e \in \PCI(\Q G)$. The following hold:
\begin{enumerate}
    \item\label{Zassenhaus prop Qi} Every finite subgroup of $\GL_2(\Z[\sqrt{-1}])$ is conjugated in $\Ma_2(\Q(\sqrt{-1}))$ to a subgroup of some $\Gamma \in \mc{S}(\Ma_2(\Z[\sqrt{-1}]))$.
    \item\label{Ok for Qi component} If $H$ is a finite subgroup of $V(\Z G)$, then $He$ is $\Q Ge$-conjugated to a subgroup of $Ge$.
\end{enumerate}
\end{proposition}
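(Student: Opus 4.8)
The plan is to follow the template established in the cases of $\Ma_2(\Q)$ and $\Ma_2(\Q(\sqrt{-2}))$, with the amalgamated decomposition \eqref{decomp SL} of $\SL_2(\Z[\sqrt{-1}])$ playing the role that \eqref{amalgam M2 over Q} and \eqref{amalgam I2} played there, supplemented by the analysis of how the reduced norm short exact sequence $1 \to \SL_2(\O)\cap \Gamma \to \Gamma \to \operatorname{Rnr}(\Gamma) \to 1$ lifts subgroup information from $\SL_2$ to $\GL_2$. First I would prove part \eqref{Zassenhaus prop Qi}. By Bass--Serre theory applied to \eqref{decomp SL}, every finite subgroup of $\SL_2(\Z[\sqrt{-1}])$ is conjugate (inside $\SL_2(\Z[\sqrt{-1}])$) to a subgroup of $Dic_3$, $\SL_2(\F_3)$, or $Q_8$. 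Combining this with the extension analysis sketched before the statement --- using that $\Out(Q_8)\cong S_3$, $\Out(Dic_3)\cong D_{12}$, $\Out(\SL_2(\F_3))\cong C_2$, that $Q_8$ is characteristic in $\SL_2(\F_3)$ so extensions on $Q_8$ sit inside extensions on $\SL_2(\F_3)$, and that only the actions producing genuine subgroups of $\GL_2(\Z[\sqrt{-1}])$ are allowed --- one concludes that the maximal finite subgroups of $\GL_2(\Z[\sqrt{-1}])$ are (up to conjugacy) $\SL_2(\F_3)\rtimes C_4$ and $Dic_3\rtimes C_2\cong C_4\times S_3$. I would then check (e.g. via Wedderga, as in the other examples) that each of these two groups admits a faithful irreducible $\Q$-representation onto $\Ma_2(\Q(\sqrt{-1}))$, so that by \Cref{spanning set non empty iff} both lie in $\mc{S}(\Ma_2(\Z[\sqrt{-1}]))$ and exhaust it. Hence every finite subgroup $H \leq \GL_2(\Z[\sqrt{-1}])$ is \emph{isomorphic} to a subgroup of one of these two spanning groups $\Gamma$; to upgrade the isomorphism to a conjugacy in $\GL_2(\Q(\sqrt{-1}))=\U(\Ma_2(\Q(\sqrt{-1})))$ one applies Skolem--Noether to the $\Q(\sqrt{-1})$-linear extension of the isomorphism together with the inclusion $\Span_{\Q(\sqrt{-1})}\{H\}\hookrightarrow \Ma_2(\Q(\sqrt{-1}))$, exactly as in the proof of \Cref{props for sqrt-2} (invoking \Cref{upgrade iso to conjugation}). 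This proves \eqref{Zassenhaus prop Qi}.

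For part \eqref{Ok for Qi component}, let $H \leq V(\Z G)$ and set $\Gamma := Ge$, a spanning subgroup of $\U(\Ma_2(\Z[\sqrt{-1}]))$, so by part \eqref{Zassenhaus prop Qi} and the classification above $\Gamma$ is (conjugate to) $\SL_2(\F_3)\rtimes C_4$ or $C_4\times S_3$. The image $He$ is a finite subgroup of $\GL_2(\Z[\sqrt{-1}])$ with $|He|$ dividing $|Ge|$ by \Cref{blockwise Cohn-Livingstone}. By part \eqref{Zassenhaus prop Qi}, $He$ is $\Q Ge$-conjugate to a subgroup of \emph{some} $\Gamma' \in \mc{S}(\Ma_2(\Z[\sqrt{-1}]))$, and since $\Gamma'$ is conjugate over $\GL_2(\Q(\sqrt{-1}))$ to $\SL_2(\F_3)\rtimes C_4$ or $C_4\times S_3$, after conjugating we may assume $He \leq \Delta$ with $\Delta$ one of these two groups. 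It now remains to show that every subgroup of $\SL_2(\F_3)\rtimes C_4$ and every subgroup of $C_4 \times S_3$ is, after a further $\GL_2(\Q(\sqrt{-1}))$-conjugation, contained in $Ge$ --- i.e. that the subgroup isomorphism statement for $Ge$ suffices. The clean way to do this is to reduce, as announced before \Cref{the case of quaternion components}, to Zassenhaus-type results: $\Ma_2(\Q(\sqrt{-1})) \cong \Q Ge$ means $Ge$ has a faithful irreducible $\Q$-representation into $\Ma_2(\Q(\sqrt{-1}))$, so $Ge \in \{\SL_2(\F_3)\rtimes C_4,\ C_4\times S_3\}$, and for each of these two groups one verifies using the fact that $\Q[Ge]$-conjugacy of subgroups can be checked component-by-component (via the factorization of $\Q G \to \Q Ge$ through $\Q[Ge]$, as in \Cref{section: gen props for grp ring}) together with the known third Zassenhaus conjecture statements for the relevant small groups ($Q_{4m}$ by \cite{CMdR}, $\SL_2(\F_3)$-type groups, and $S_3$ by classical results) that $He$ is $\Q Ge$-conjugate into $Ge$. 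Since $|He| \mid |Ge|$, this determines $He$ up to the required conjugacy.

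The main obstacle I anticipate is precisely the last reduction in \eqref{Ok for Qi component}: going from "$He$ is contained in a spanning group $\Gamma$ isomorphic to $Ge$" to "$He$ is $\Q Ge$-conjugate into $Ge$ itself". Unlike the $\Ma_2(\Q)$ and $\Ma_2(\Q(\sqrt{-2}))$ cases, here $\GL_2(\Z[\sqrt{-1}])$ has property (FA) \cite[Theorem 5.1]{BJJKT}, so there is no ambient amalgam over the \emph{order} to exploit; one genuinely has to produce the conjugating element in $\GL_2(\Q(\sqrt{-1}))$, which again is where Skolem--Noether enters, but one must be careful that the isomorphism $He \xrightarrow{\sim} (\text{subgroup of } Ge)$ one feeds to Skolem--Noether actually exists --- this requires knowing that the subgroup structures of the two possible spanning groups are "the same up to abstract isomorphism of subgroups", which is a finite (if tedious) check on $\SL_2(\F_3)\rtimes C_4$ and $C_4 \times S_3$. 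A secondary technical point is to make sure the classification of maximal finite subgroups of $\GL_2(\Z[\sqrt{-1}])$ claimed before the proposition is complete; I would either carry out the $\Out$-and-cohomology bookkeeping carefully or, preferably, simply cite the more general results of \Cref{sectioin sbgrps excetpional}, which the text promises give this classification with full proofs.
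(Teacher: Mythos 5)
Your outline of part \eqref{Zassenhaus prop Qi} matches the paper's: classify finite subgroups via the amalgamated decomposition of $\SL_2(\Z[\sqrt{-1}])$ and the $\Out$-bookkeeping, identify the two maximal spanning groups $\SL_2(\F_3)\rtimes C_4$ and $C_4\times S_3$, and upgrade isomorphism to $\GL_2(\Q(\sqrt{-1}))$-conjugacy via Skolem--Noether (the paper cites \Cref{iso implies conj for exc}). That part is fine.

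For part \eqref{Ok for Qi component} you have the right framework but, as you yourself flag, the ``finite (if tedious) check'' you defer is in fact the substantive content, and the route you sketch for it would not go through. The obstruction is that part \eqref{Zassenhaus prop Qi} only places $He$ inside \emph{some} spanning group; you then need $He$ to be isomorphic to a subgroup of $Ge$ itself. Appealing to ``known third Zassenhaus conjecture statements for the relevant small groups'' does not work: the Zassenhaus conjectures are not known (and not cited in the paper) for $\SL_2(\F_3)\rtimes C_4$ or $C_4\times S_3$, and the $S_3$ and $Q_{4m}$ results you mention concern $\Z[S_3]$ and $\Z[Q_{4m}]$, not these extensions. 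What the paper actually exploits --- and what your outline never uses, though you mention the factorization $\Q G\to\Q[Ge]\to\Q Ge$ in passing --- is that $\Phi_e(H)$ lands in $V(\Z[Ge])$, which is a far stronger constraint than $He$ being an arbitrary finite subgroup of $\GL_2(\Z[\sqrt{-1}])$. The paper then proves, by hand, that the spanning groups of order $\neq 48,96$ satisfy the Higman subgroup property (Claim~1 for $C_4\times S_3$, ruling out $Q_8$, $D_8$, $A_4$, $C_2^3$ via small-group SIP results and the quotient to $V(\Z[C_4\times C_2])$; Claim~2 for the nilpotent and metacyclic spanning groups via Weiss and Whitcomb), and treats $|Ge|\in\{48,96\}$ separately by invoking the Frobenius-complement result of \cite[Theorem~D]{PdRV} to exclude $D_{12}$ and $Dic_3$. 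None of that appears in your proposal, so there is a genuine gap: you name the right obstacle but propose a tool (Zassenhaus for the spanning groups) that isn't available, instead of the integral-group-ring subgroup-isomorphism machinery applied to $V(\Z[Ge])$ that the paper uses.
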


\begin{remark*}
One could also have obtained a classification of the (maximal) subgroups of $\GL_2(\Z[\sqrt{-1}]))$ via a reduction to $\PGL_2(\Z[\sqrt{-1}])$. More precisely, every finite subgroup $\Gamma$ is a central extension
\[1 \rightarrow \ZZ(\Gamma) \rightarrow \Gamma \rightarrow Q \rightarrow 1,\]
with $Q$ a subgroup of $\PGL_2(\Z[\sqrt{-1}])$. Subgroups of the latter are well-known (e.g. see \cite{Beau}): $C_n$ with $n \mid 4,6$, $D_6$, $D_8$, $A_4$ and $S_4$. A computation of their second group cohomology group with coefficients in $C_2$ or $C_4$ would yield the same conclusion.
\end{remark*}

\begin{proof}[Proof of \Cref{the case of Qi}]
If one replaces conjugation by isomorphism in \eqref{Zassenhaus prop Qi} a proof has been sketched before the statement of \Cref{the case of Qi}. The fact that conjugation holds, follows from an argument as in the proof of \Cref{props for sqrt-2}. Alternatively, see the upcoming \Cref{iso implies conj for exc} for a general statement.

Now consider a finite subgroup $H \leqslant V(\Z G)$. By \eqref{Zassenhaus prop Qi} we know that $He$ is $\Ma_2(\Q(\sqrt{-1}))$-conjugated to some $\Gamma \in \mc{S}(\Ma_2(\Z[\sqrt{-1}]))$. There are two spanning subgroups which are maximal (for the inclusion) among the members of $\mc{S}(\Ma_2(\Z[\sqrt{-1}]))$: the groups $\SL_2(\F_3) \rtimes C_4$ and $C_4 \times S_3$.\smallskip

\noindent \underline{Claim 1:} The group $C_4 \times S_3$ satisfies the Higman subgroup property, i.e. if $H$ is a finite subgroup of $V(\Z[C_4 \times S_3])$, then $C_4 \times S_3$ contains a copy of $H$.\smallskip

By the linear independence of finite subgroups of $V(\Z G)$, it follows that $|H|$ divides $|C_4 \times S_3| = 24$. If $|H|=24,$ then $H \cong C_4 \times S_3$ as $C_4 \times S_3$ is metabelian and hence satisfies the isomorphism problem by Whitcomb's theorem. Next let $|H| \mid 8$ or $12$. Moreover, since $C_4 \times S_3$ is solvable, it follows from \cite{HertOrder} that $\spec(H) \subseteq \spec(C_4 \times S_3) = \{n \, : \, n \mid 12 \}$. This in particular solves the case that $H$ is cyclic. Next recall that the subgroup isomorphism problem is known for subgroups of the form $C_2 \times C_2$ (\cite{HertOrder2}), $C_2 \times C_4$ (\cite{MarSIP}) and $C_p \times C_q^{\ell}$ for $p \neq q$ primes (\cite{HertOrder}). Hence for $H$ abelian, the only remaining case to investigate is $H \cong C_2^3$.  However, this is not possible since $H$ embeds into $\Z Ge$, where the maximal rank of an elementary abelian $2$-subgroup is $2$.

It remains to consider the case $H$ non-abelian. Since $C_4 \times S_3$ contains $D_{12}$ and $Dic_3$, we need to show that $H$ is not isomorphic to $D_8$, $Q_8$ or $A_4$.  First suppose that $H \cong Q_8$ or $D_8$. Denote $S_3 = \langle a,b \mid a^3 = b^2 = 1, a^b = a^{-1}\rangle$ and consider the epimorphism \[\Z[C_4 \times S_3] \rightarrow \Z[(C_4 \times S_3)/\langle a \rangle] \cong \Z[C_4 \times C_2],\] whose kernel is the relative augmentation\footnote{I.e. the kernel of the canonical map $R [\Gamma] \to R[\Gamma/\Lambda]$ for any group $\Gamma$ with normal subgroup $\Lambda$.} ideal $\omega(C_4 \times S_3, \langle a \rangle)$. This induces an epimorphism $\pi$ from $V(\Z[C_4 \times S_3])$ to $V(\Z[C_4 \times C_2])$ under which finite $2$-subgroups are mapped isomorphically. In particular $\pi(H)$ would be a non-abelian subgroup of the abelian group $V(\Z[C_4 \times C_2])$, a contradiction. Finally suppose that $H \ncong A_4$. As finite subgroups of the normalised unit group are $\Q$-linearly independent, one has that $\Q[H]$ is a subring of
\[\Q[C_4 \times S_3] \cong  \Q^4 \oplus \Q(i)^2 \oplus  \Ma_2(\Q) ^2 \oplus \Ma_2(\Q(i)).\] 
Now $\Q[H] = \oplus_{e \in \PCI(C_4 \times S_3)} \Q[H]e$. If $H \cong A_4$, then $\Q[H]\cong \Q \oplus \Q(\sqrt{-3}) \oplus \Ma_3(\Q)$, which one readily proves is not compatible with the above decomposition. This finishes the proof of Claim 1.\smallskip

Now we consider the other $\Gamma \in \mc{S}(\Ma_2(\Z[\sqrt{-1}]))$, which are all subgroups of $\operatorname{U}_2(\F_3):= \SL_2(\F_3) \rtimes C_4 =$ \texttt{[96,67]}.\smallskip

\noindent \underline{Claim 2:} If $\Gamma \in \mc{S}(\Ma_2(\Z[\sqrt{-1}]))$ and $|\Gamma| \neq 48$ or $96$, then $\Gamma$ satisfies the Higman subgroup property.\smallskip

If $\Gamma$ is nilpotent, then it even satisfies the $3$\textsuperscript{rd} Zassenhaus conjecture \cite{WeissPGroup, WeissNilpotent}. The only non-nilpotent $\Gamma \in \mc{S}(\Ma_2(\Z[\sqrt{-1}]))$ with $|\Gamma| \neq 48$ or $96$, is the metacyclic group $C_3 \rtimes C_8$ (as can be seen from  \Cref{tableB} in \Cref{appendix section}). Now note that all strict subgroups of $C_3 \rtimes C_8$ are cyclic. As $C_3 \rtimes C_8$ is metacyclic it satisfies the $1$\textsuperscript{st} Zassenhaus conjecture \cite{HertCC, CMdR}. Moreover by Whitcomb's theorem it also satisfies the isomorphism problem, thus altogether indeed has the subgroup isomorphism property for all its subgroups, as claimed.\smallskip

At this stage we have proven that if $|Ge|\neq 48$ or $96$, then statement  \eqref{Ok for Qi component} holds. If $|Ge| =48$, then $Ge \cong \SL_2(\F_3) \rtimes C_2 = $ \texttt{[48,33]} and if $|Ge| =96$, then $Ge \cong \mathrm{U}_2(\F_3)$.  Note that $ \SL_2(\F_3) \rtimes C_2$ is a maximal subgroup of $\mathrm{U}_2(\F_3) $ which contains copies of all subgroups which are not a spanning subgroup. Moreover, $He$ cannot be another spanning subgroup, as for those groups $\spec(He)$ is not contained in the spectrum of the solvable group $\SL_2(\F_3) \rtimes C_2$. Hence if $H$ is $\Q G e$-conjugated to a subgroup of $\mathrm{U}_2(\F_3)$ and $H$ is order-admissible with $\SL_2(\F_3) \rtimes C_2$, then it is actually conjugated to a subgroup of the latter. This reduces the problem to the largest spanning subgroup $\mathrm{U}_2(\F_3)$. 

Now the only finite subgroups of $\GL_2(\Z[\sqrt{-1}])$ which are not isomorphic to a subgroup of $\mathrm{U}_2(\F_3)$ are $D_{12}$ and $Dic_3$. Thus by part \eqref{Zassenhaus prop Qi}, it remains to show that $He \ncong D_{12}$ or $Dic_3$. Now if $He \cong D_{12}$, then this would imply that $V(\Z[Ge])$, with $Ge \cong \mathrm{U}_2(\F_3)$, contains a subgroup isomorphic to $D_6$. However, the latter is a Frobenius group and thus by \cite[Theorem D]{PdRV}, $Ge$ would also contain a copy of $D_6$, a contradiction. The dicyclic case is treated similarly.
\end{proof}

\subsection{Some general properties for simple algebras}\label{section: gen props for grp ring}
\subsubsection{The setting of components of group rings}
In the setting of the blockwise Zassenhaus property, one is interested in the images of finite subgroups under 
\[\pi_e \colon A \rightarrow Ae, \quad  x \mapsto xe,\]
for $e \in \PCI(A)$. In case that $A=FG$ is a group algebra, the map $\pi_e$ factorises through a meaningful (non-simple) quotient, as mentioned after \Cref{the case of quaternion components}. Namely, extend $R$-linearly the group epimorphism $\varphi_e \colon G \rightarrow Ge$ to an $R$-algebra homomorphism $\Phi_e\colon R [G] \rightarrow R [Ge]$.  Note then that 
\begin{equation}\label{props Phi}
\ker(\Phi_e) = \omega(G,\ker(\varphi_e)) \text{ and } \Phi_e(V(R[G])) \subseteq V(R[Ge]),
\end{equation}
where in general $\omega(\Gamma,\Lambda)$ denotes the relative augmentation ideal. Since $\pi_e(n-1) =0$ for all $n \in \ker(\varphi_e) = \{ g \in G \mid ge = e \}$, we have that $\omega(G,\ker(\varphi_e)) \subseteq \ker(\pi_e)$. Therefore there exists a unique morphism 
\begin{equation}\label{middle quotient}
\sigma_e \colon R[Ge] \rightarrow R[G]e\quad  \text{ such that } \quad \pi_e = \sigma_e \circ \Phi_e.
\end{equation}

\begin{lemma}\label{blockwise Cohn-Livingstone}
Let $G$ be a finite group and $R$ a $G$-adapted integral domain with field of fractions $F$. For any finite subgroup $H$ in $V(R G)$ and $e \in \PCI(F G)$, the following hold:
\begin{enumerate}[(i)]
    \item\label{order in projection} $\abs{He}$ divides $\abs{Ge}$,
    \item \label{exp projection} $\exp({H}{e})$ divides $\exp(Ge)$,
\item \label{item: spectrum in quotient} If $G$ satisfies \eqref{spec prob} for $R$, then $\spec(He) \subseteq \spec(Ge)$.
\end{enumerate}
\end{lemma}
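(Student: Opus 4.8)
The plan is to reduce all three items to the corresponding statements for finite subgroups of the \emph{group ring} $R[Ge]$ — here $R[Ge]$ denotes the group ring of the finite group $Ge$, which via $\Phi_e$ is the quotient $R[G]/\omega(G,\ker\varphi_e)$ appearing in \eqref{props Phi} — rather than working directly inside the (non-group-ring) component $R[G]e$. Put $\bar H:=\Phi_e(H)$. Since $\Phi_e$ is a ring homomorphism extending the group epimorphism $\varphi_e\colon G\twoheadrightarrow Ge$, it sends $V(RG)$ into $V(R[Ge])$, so $\bar H$ is a finite subgroup of $V(R[Ge])$; and by \eqref{middle quotient} the group $He=\pi_e(H)=\sigma_e(\bar H)$ is an epimorphic image of $\bar H$. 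Moreover $|Ge|$ divides $|G|$, so $R$ is also $Ge$-adapted and the classical torsion theory of integral group rings is available for $R[Ge]$. Finally, passing to a quotient of a finite group can only shrink the three invariants involved: by Lagrange $|He|$ divides $|\bar H|$, clearly $\exp(He)\mid\exp(\bar H)$, and if $x\in\bar H$ lies above an element of order $n$ of $He$ then $x^{o(x)/n}$ has order exactly $n$, whence $\spec(He)\subseteq\spec(\bar H)$. It therefore suffices to prove: (i$'$) $|\bar H|$ divides $|Ge|$; (ii$'$) $\exp(\bar H)$ divides $\exp(Ge)$; (iii$'$) $\spec(\bar H)\subseteq\spec(Ge)$, granting that $Ge$ satisfies \eqref{spec prob}.

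Items (ii$'$) and (iii$'$) are then immediate: $\bar H\leqslant V(R[Ge])$ gives $\exp(\bar H)\mid\exp(V(R[Ge]))=\exp(Ge)$ by \eqref{exp cohn livingstone}, and $\spec(\bar H)\subseteq\spec(V(R[Ge]))=\spec(Ge)$ once \eqref{spec prob} holds for $Ge$. The hypothesis that $G$ satisfies \eqref{spec prob} is present exactly to supply \eqref{spec prob} for the quotient $Ge$; in all our applications $G$, and hence $Ge$, is solvable, so this is Hertweck's theorem \cite{HertOrder} and holds unconditionally, while for prime-power orders one may in any case invoke the Cohn--Livingstone theorem \cite{CL} on $R[Ge]$ directly.

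The real content is (i$'$): a finite subgroup $P$ of $V(R[Ge])$ has order dividing $|Ge|$ — note that $R$-linear independence of the elements of such a $P$ only yields $|P|\le|Ge|$. The argument I would give views $F[Ge]$, with $F$ the field of fractions of $R$, as a left $F[P]$-module and computes its character $\chi$ on $P$. For the left-regular action of $F[Ge]$ on itself one has $\operatorname{tr}(L_x)=|Ge|\cdot c_1(x)$, where $c_1(x)$ is the coefficient of $1\in Ge$ in $x$; and by the Berman--Higman theorem — valid over the $Ge$-adapted ring $R$ — $c_1(x)=0$ for every nontrivial torsion unit of $V(R[Ge])$. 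Hence $\chi(1)=|Ge|$ and $\chi(x)=0$ for $1\neq x\in P$, so $\chi=\tfrac{|Ge|}{|P|}\,\rho_P$ with $\rho_P$ the regular character of $P$. Since $\chi$ is the character of an actual $F[P]$-module, the multiplicity $\langle\chi,\mathbf 1_P\rangle=\tfrac{|Ge|}{|P|}$ of the trivial character is a non-negative integer, forcing $|P|$ to divide $|Ge|$. Taking $P=\bar H$ finishes the reduction, hence all three parts. I expect this Berman--Higman step — together with checking that the classical integral-group-ring results persist over an arbitrary $G$-adapted coefficient ring — to be the only genuine obstacle; the remainder is bookkeeping with $\Phi_e$, $\sigma_e$ and the elementary quotient-monotonicity recorded above.
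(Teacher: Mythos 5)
Your proposal follows the same overall route as the paper's proof for (i) and (ii): factor $\pi_e = \sigma_e \circ \Phi_e$ as in \eqref{middle quotient}, work inside the genuine group ring $R[Ge]$ with the finite subgroup $\bar H := \Phi_e(H) \leqslant V(R[Ge])$, and then descend to $He = \sigma_e(\bar H)$ via Lagrange / exponent / quotient-monotonicity of the spectrum. For (i), where the paper simply cites \cite[Corollary 2.7]{AngelSurvey} for $|\bar H| \mid |Ge|$, you spell out the underlying Berman--Higman character argument (trace of the regular representation vanishes on nontrivial torsion units, so it is $\tfrac{|Ge|}{|\bar H|}$ times the regular character of $\bar H$, and that multiplicity must be an integer); that is correct and self-contained over any $Ge$-adapted ring, so this part is a more detailed version of the paper's (i). Part (ii) matches the paper exactly.

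Part (iii) has a genuine gap. You reduce to $\spec(\bar H) \subseteq \spec(V(R[Ge])) = \spec(Ge)$ and then claim the hypothesis ``$G$ satisfies (SP)'' is ``present exactly to supply (SP) for the quotient $Ge$''. But the spectrum problem is not known to pass to quotients: a torsion unit of $V(R[Ge])$ need not lift to a torsion unit of $V(RG)$, so (SP) for $G$ gives you no a priori control over $V(R[Ge])$. The patch --- ``in all our applications $G$, hence $Ge$, is solvable, so Hertweck's theorem applies'' --- is a remark about the intended applications, not a proof of the lemma as stated. The paper's own proof of (iii) does not pass through $\spec(V(R[Ge]))$ at all; it runs the chain
\[
\spec(He) \subseteq \spec(\Phi_e(H)) \subseteq \spec(\Phi_e(V(RG))) = \spec(\Phi_e(G)) = \spec(Ge),
\]
placing the weight of the hypothesis on the equality $\spec(\Phi_e(V(RG))) = \spec(\Phi_e(G))$ (only the inclusion $\subseteq$ is non-obvious). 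You should either prove that inclusion directly from $\spec(V(RG))=\spec(G)$, or be explicit that your version proves the lemma under the strictly stronger (and not deducible) hypothesis ``(SP) for $Ge$'' rather than the stated ``(SP) for $G$''.
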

\begin{proof}
By \eqref{props Phi} we have that $\Phi_e(H)$ is a finite subgroup of $ V(R[Ge])$, and therefore $|\Phi_e(H)| \mid |Ge|$, see for example \cite[Corollary 2.7]{AngelSurvey}. 
To prove \eqref{order in projection}, we show that $|He| \mid |\Phi_e(H)|$. For this, note that \eqref{middle quotient} implies that $He = \pi_e(H) = \sigma_e( \Phi_e(H))$ is an epimorphic image of $\Phi_e(H)$ and hence $|He|$ divides $|\Phi_e(H)|$, as desired. Since $V(R [Ge])$ has exponent $\exp(Ge)$ by \eqref{exp cohn livingstone}, it follows moreover that $\exp(\Phi_e(H)) \mid \exp(Ge)$ (since $\Phi_e(H) \leqslant V(R[Ge])$ and the fact that the exponent of a subgroup divides the exponent of the overlying group). But since $He$ is an epimorphic image of $\Phi_e(H)$, it follows that $\exp(He) \mid \exp(\Phi_e(H))$, which finishes \eqref{exp projection}.

Finally, suppose then that $G$ satisfies \eqref{spec prob}, i.e. $\spec(G) = \spec(V(RG))$. Then from $H \leqslant V(RG)$ it follows that $\spec(H) \subseteq \spec(V(RG))$, which implies that 
\begin{align*}
    \spec(He)  =  \spec(\sigma_e(\Phi_e(H))) \subseteq \spec(\Phi_e(H)) & \subseteq \spec(\Phi_e(V(RG))) \\
    & = \spec(\Phi_e(G)) = \spec(Ge).
\end{align*}
Here the first inclusion follows since for any group homomorphism $\psi \colon \Gamma \to \Lambda$ (of finite groups), one has that $\spec(\psi(\Gamma)) \subseteq \spec(\Gamma)$.
\end{proof}

Part \eqref{item: spectrum in quotient} of \Cref{blockwise Cohn-Livingstone} shows that the spectrum problem implies the following weaker version of \eqref{spec prob}: 

\begin{conjecture}[Blockwise Spectrum Problem]\label{block spectrum problem}
Let $G$ be a finite group. Then $\spec(Ge) = \spec(V(\Z G)e)$ for every  primitive central idempotent $e \in \PCI(\Q G)$.
\end{conjecture}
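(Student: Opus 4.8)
One inclusion is immediate. Since $G \subseteq V(\Z G)$ we have $Ge = \pi_e(G) \subseteq \pi_e(V(\Z G)) = V(\Z G)e$, and the multiplicative order of an element of $\U(\Q Ge)$ is intrinsic, so $\spec(Ge) \subseteq \spec(V(\Z G)e)$. The content of the conjecture is the reverse inclusion: for every $u \in V(\Z G)$ with $ue$ of finite order $m$, one must produce a $g \in G$ with $o(ge) = m$.

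The plan is to reduce this to the ordinary spectrum problem \eqref{spec prob} via \Cref{blockwise Cohn-Livingstone}. Part \eqref{item: spectrum in quotient} of that lemma shows that, \emph{for a finite subgroup} $H \leq V(\Z G)$ and provided $G$ satisfies \eqref{spec prob}, one has $\spec(He) \subseteq \spec(Ge)$. Hence it would suffice to realise the torsion element $ue$ of $V(\Z G)e$ as $\pi_e(h)$ for some $h$ lying in a finite subgroup of $V(\Z G)$: taking $H = \langle h \rangle$ then gives $m \in \spec(He) \subseteq \spec(Ge)$. A natural attempt at such an $h$ is to pass through $x := ue + (1-e) \in \U(\Q G)$; since $e$ is a central idempotent one has $x^{k} = (ue)^{k} + (1-e)$ for every $k$, so $x$ is a torsion unit of $\Q G$ of order exactly $m$ with $xe = ue$, and it would be enough to conjugate $x$, by a unit of $\Q G$, into $V(\Z G)$.

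The obstacle is precisely this last step: a torsion unit of $\Q G$ supported in a single simple component need not be rationally conjugate to a unit of $\Z G$ — already $\Q S_{3}$ has torsion units of order $4$, whereas $\exp(V(\Z S_{3})) = 6$ — and, more fundamentally, there is a priori no reason why a torsion element of the projection $V(\Z G)e$ should be the image of a torsion element of $V(\Z G)$ at all. Controlling this lifting problem is the heart of the matter; morally it is a blockwise analogue of the lifting phenomena around the Zassenhaus conjectures, and an unconditional treatment would presumably require partial-augmentation (HeLP-type) techniques.

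For the groups that this paper is concerned with both difficulties disappear. If $FG$ has $\Mexc$, then $G$ is solvable — indeed metabelian — by \Cref{charc Mexc grps}, so \eqref{spec prob} holds for $G$ by \cite{HertOrder}, and the non-division simple components of $\Q G$ lie on the short list of \Cref{broad form finite grps}. One may then argue component by component, using the explicit description of the conjugacy classes of finite subgroups of the relevant exceptional algebras obtained in \Cref{sectioin sbgrps excetpional}, exactly in the spirit of the proof of \Cref{Zass prop for some excep components}, where the far stronger blockwise Zassenhaus property is already established for fields, quaternion algebras and $\Ma_2(\Q(\sqrt{-d}))$ with $d\neq 3$.
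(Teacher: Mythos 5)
The statement you are addressing is labeled a \emph{Conjecture} in the paper and carries no proof; the only surrounding remark is the one-line claim that \Cref{blockwise Cohn-Livingstone}(iii) shows the spectrum problem implies it. So there is no paper proof for your proposal to match, and you correctly refrain from claiming to prove the conjecture.

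Your diagnosis of the obstruction is, if anything, sharper than the paper's own phrasing and looks accurate to me. \Cref{blockwise Cohn-Livingstone}(iii) applies to \emph{finite} subgroups $H\leq V(\Z G)$, whereas $\spec(V(\Z G)e)$ records the orders of \emph{all} torsion elements of $V(\Z G)e$, and such an element $ue$ can have finite order while $u$ itself has infinite order (as soon as some other component $\pi_f(u)$, $f\neq e$, is not torsion). Closing this gap --- lifting blockwise torsion to global torsion, or at least matching its order --- is exactly what makes the blockwise spectrum problem nontrivial even granting the spectrum problem, and the paper's phrase ``the spectrum problem implies the following weaker version'' silently passes over it. Your $\Q S_3$ illustration (order-$4$ torsion in $\U(\Q S_3)$ versus $\exp V(\Z S_3)=6$) is correct and shows why the naive lift $x = ue + (1-e)$ cannot in general be conjugated into $\Z G$. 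The concluding remarks about the $\Mexc$ case are also consistent with the paper's scope, but they concern a special class of groups rather than the conjecture as stated for all finite $G$, so they do not discharge it either.
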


We would also like to emphasise that Hertweck already formulated in his Habilitations-schrift the following.

\begin{conjecture}[Blockwise isomorphism problem]\label{block ISO}
Let $G$ be a finite group and $H \leq V(\Z G)$ such that $\Z G = \Z H$. Then $Ge \cong He$ for every $e \in \PCI( \Q G)$.
\end{conjecture}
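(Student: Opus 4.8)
Since this conjecture is still open, the plan is to reduce it to a single clean (but also open) question, and to record the cases in which this reduction already settles it. First I would fix a ring isomorphism $\Theta\colon\Z G\to\Z H$; after an augmentation-fixing adjustment one may assume $\Theta$ is normalised, and extending scalars yields a $\Q$-algebra isomorphism $\Theta_{\Q}\colon\Q G\to\Q H$, hence a bijection $e\mapsto e^{*}$ of $\PCI(\Q G)$ onto $\PCI(\Q H)$ with $\Q Ge\cong\Q He^{*}$. Recalling from \eqref{map associated to central id} that $Ge=G/\ker\varphi_{e}$, the goal becomes $G/\ker\varphi_{e}\cong H/\ker\varphi_{e^{*}}$.

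The heart of the plan is a \emph{normal subgroup correspondence}. For $N\unlhd G$ put $\omega(G,N)=\ker(\Z G\to\Z[G/N])$. I would show that $\Theta(\omega(G,N))=\omega(H,M)$ for a unique $M\unlhd H$, with $[H:M]=[G:N]$ and $\Z[G/N]\cong\Z[H/M]$: taking $M:=\{h\in H\mid h-1\in\Theta(\omega(G,N))\}$, the image of $H$ in $\Z H/\Theta(\omega(G,N))\cong\Z[G/N]$ is a finite subgroup of the normalised unit group of order $[H:M]$, so it is $\Z$-linearly independent and $[H:M]\le[G:N]$; running the same argument with $\Theta^{-1}$ and the inclusion $\omega(H,M)\subseteq\Theta(\omega(G,N))$ yields a surjection $\Z[H/M]\twoheadrightarrow\Z[G/N]$ of free $\Z$-modules, hence $[G:N]\le[H:M]$; a surjection of free modules of equal finite rank is an isomorphism, giving the claim. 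This correspondence is inclusion-preserving, bijective, and matches the idempotents $\widehat{N}=\tfrac1{|N|}\sum_{n\in N}n\mapsto\widehat{M}$ under $\Theta_{\Q}$. Next I would identify $M$ when $N=\ker\varphi_{e}$: since $\widehat{N}e'=e'$ precisely when $N\subseteq\ker\varphi_{e'}$, the subgroup $\ker\varphi_{e}$ is the largest $N\unlhd G$ with $\widehat{N}e=e$, and transporting this across $\Theta_{\Q}$ shows the corresponding largest normal subgroup of $H$ is $\ker\varphi_{e^{*}}$. Combining,
\[
\Z[Ge]=\Z[G/\ker\varphi_{e}]\;\cong\;\Z[H/\ker\varphi_{e^{*}}]=\Z[He^{*}].
\]

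The upshot is that \Cref{block ISO} is equivalent to the integral isomorphism problem for the class $\mathcal{F}$ of finite groups having a faithful irreducible rational representation: each $Ge$ lies in $\mathcal{F}$, since the simple $\Q Ge$-module is faithful and, as $Ge$ spans $\Q Ge$ over $\Q$, irreducible as a $\Q[Ge]$-module, and conversely every $\Gamma\in\mathcal{F}$ arises as $Ge$ (take $G=\Gamma$ and $e$ the faithful block). This also shows \Cref{block ISO} is not contradicted by Hertweck's example \cite{Hertweck}: there $\ker\varphi_{e}\neq 1$ for every $e$, so neither $G$ nor $H$ lies in $\mathcal{F}$ and the conjecture only predicts isomorphisms among proper quotients. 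The reduction therefore proves \Cref{block ISO} for every $(G,e)$ for which the isomorphism problem is known for $Ge$ — e.g. when the relevant $Ge$ are metabelian (Whitcomb), metacyclic \cite{GBdR1,GBdR2}, nilpotent (Roggenkamp--Scott), Frobenius, or covered by the methods of \cite{PdRV}; in particular it applies whenever $\Q G$ has $\Mexc$, where by \Cref{broad form finite grps} (and \Cref{charc Mexc grps}) every such $Ge$ is one of the explicitly listed low rank spanning groups, handled exactly as in \Cref{the case of Qi}. The main obstacle to a full proof is the leftover: the isomorphism problem for a \emph{general} finite group with a faithful irreducible rational representation, which is genuinely open and does not appear to follow from the cocycle-twist machinery of \cite{KR} by itself.
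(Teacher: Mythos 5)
What you are addressing is stated in the paper as an \emph{open conjecture} (attributed to Hertweck's Habilitationsschrift); the paper gives no proof of it, so the relevant question is simply whether your reduction is sound and what it buys, not whether it matches a proof in the text.

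Your reduction is correct. The normal-subgroup correspondence $\omega(G,N)\mapsto\omega(H,M)$ with $[G:N]=[H:M]$ is the classical device in the integral isomorphism problem, and the argument you give for it is valid: the $\Z$-linear independence of finite subgroups of the normalised unit group gives $[H:M]\le[G:N]$, and the surjection $\Z[H/M]\twoheadrightarrow\Z[G/N]$ of free $\Z$-modules forces the reverse inequality and hence an isomorphism. The characterisation of $\ker\varphi_e$ as the largest $N\unlhd G$ with $\widehat{N}e=e$ transports cleanly across $\Theta_{\Q}$ (which carries $\widehat{N}$ to $\widehat{M}$ because it maps $\omega(G,N)\Q G$ to $\omega(H,M)\Q H$), so $\Z[Ge]\cong\Z[He^{*}]$ as rings. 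The equivalence of \Cref{block ISO} with the integral isomorphism problem for the class $\mathcal{F}$ of finite groups possessing a faithful irreducible rational representation is a genuine reformulation, and the resulting positive cases are real content: whenever $Ge$ is metabelian, metacyclic, or nilpotent you can close the block, and in particular for every block of a group algebra with $\Mexc$ — since the groups with $\Mexc=\checkmark$ in \Cref{tableB} all have derived length at most two, Whitcomb's theorem suffices.

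Two caveats. First, the aside that Hertweck's counterexample groups do not lie in $\mathcal{F}$ is asserted but not verified, and by your own equivalence it is not a throwaway remark: if either of his groups admitted a faithful irreducible rational representation, \Cref{block ISO} would be false outright. That claim deserves a proof or a reference (for instance, by checking the centre is non-cyclic) before being stated as fact. Second, as you yourself note, the reduction does not close the conjecture; it leaves wide open the blocks $Ge$ with no special structure, so this should be framed as a reformulation plus a partial result, not a proof.
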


Finally, in the setting of the blockwise properties there is a class of finite groups that deserve special attention, namely those which have a faithful irreducible representation. In particular, the following question is interesting.

\begin{question}\label{iso for faith irr}
Let $G$ be a finite group having a faithful irreducible representation. Does it have the subgroup isomorphism property for any finite subgroup? 
\end{question}

\subsubsection{The setting of simple algebras}

Given an order $\O$ in a simple algebra $\Ma_n(D)$, the following result implies that $\mc{S}_F(\mc{O})$ is non-empty if and only if $\Ma_n(D)$ is the simple component of some group algebra.

\begin{lemma}\label{spanning set non empty iff}
Let $G$ be a finite subgroup of some $\GL_n(D)$ with $D$ a finite-dimensional division $F$-algebra with $F$ a field with $\Char(F)=0$. Then $\Span_{F}\{ g \in G\} = \Ma_n(D)$ if and only if $\Ma_n(D) \in \mc{C}(F G)$.
\end{lemma}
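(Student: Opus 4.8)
The statement is an ``if and only if'', and both directions are essentially a translation between the representation-theoretic data of $FG$ and the linear-algebra statement about the span of $G$ inside $\Ma_n(D)$. The plan is to unwind the two relevant pieces of structure: (1) a finite subgroup $G \leq \GL_n(D)$ comes with a tautological $F$-algebra homomorphism $FG \to \Ma_n(D)$ sending the group element to itself, whose image is exactly $\Span_F\{g \in G\}$; (2) the simple algebra $\Ma_n(D)$ being in $\mc{C}(FG)$ means precisely that there is an $F$-algebra surjection $FG \twoheadrightarrow \Ma_n(D)$, equivalently that $\Ma_n(D) = FGe$ for some $e \in \PCI(FG)$. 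So in both directions the game is to go between ``a surjection $FG \to \Ma_n(D)$ which restricts to the given embedding on $G$'' and the equality $\Span_F\{g\in G\} = \Ma_n(D)$.

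\textbf{The easy direction ($\Leftarrow$).} Suppose $\Ma_n(D) \in \mc{C}(FG)$. This need not a priori be realised by \emph{this} particular embedding $G \hookrightarrow \GL_n(D)$; but the hypothesis is that $G$ is already \emph{given} as a subgroup of $\GL_n(D)$, and we are told to prove $\Span_F\{g \in G\} = \Ma_n(D)$. Here I would argue as follows. The embedding $G \hookrightarrow \GL_n(D)$ extends $F$-linearly to an $F$-algebra homomorphism $\theta\colon FG \to \Ma_n(D)$; its image $B := \theta(FG) = \Span_F\{g \in G\}$ is a semisimple $F$-subalgebra of $\Ma_n(D)$ (being a quotient of the semisimple algebra $FG$, as $\Char F = 0$). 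So $B = \bigoplus_j \Ma_{m_j}(D_j)$ for division algebras $D_j$, and $\Ma_n(D)$ is a module over $B$. Now the content is: since $G$ spans $B$ and $\Ma_n(D) \in \mc{C}(FG)$, one of the Wedderburn factors of $B$ must already be $\Ma_n(D)$ itself, forcing $B = \Ma_n(D)$. Concretely, $\Ma_n(D)$ as an $FG$-module (via $\theta$) is faithful-over-$B$, and $\Ma_n(D) \in \mc{C}(FG)$ says the simple $FG$-module with endomorphism data $\Ma_n(D)$ occurs; comparing with the Wedderburn decomposition of $B$ via Wedderburn--Artin and a dimension/centralizer count (the bicommutant theorem: $B$ equals its double centralizer in $\Ma_n(D)$, and the centralizer of $\Ma_n(D)$ in itself is $\mc{Z}(D)$) pins down $B = \Ma_n(D)$.

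\textbf{The reverse direction ($\Rightarrow$).} Suppose $\Span_F\{g \in G\} = \Ma_n(D)$. Then the $F$-algebra homomorphism $\theta\colon FG \to \Ma_n(D)$ extending $G \hookrightarrow \GL_n(D)$ is surjective. Since $FG$ is semisimple, $\theta$ splits off a Wedderburn factor: there is $e \in \PCI(FG)$ with $\theta$ restricting to an isomorphism $FGe \xrightarrow{\ \sim\ } \Ma_n(D)$ (indeed, $\ker\theta$ is a two-sided ideal of the semisimple algebra $FG$, hence $FG = \ker\theta \oplus FGe'$ for a central idempotent $e'$, and $\Ma_n(D)$ simple forces $e'$ to be a single primitive central idempotent $e$). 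Hence $\Ma_n(D) \cong FGe \in \mc{C}(FG)$, as desired.

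\textbf{Main obstacle.} The only genuinely non-formal point is in the $(\Leftarrow)$ direction: translating ``$\Ma_n(D) \in \mc{C}(FG)$'' together with ``$G$ spans the subalgebra $B \subseteq \Ma_n(D)$'' into ``$B = \Ma_n(D)$''. One must rule out that $G$ spans a proper semisimple subalgebra $B \subsetneq \Ma_n(D)$ even though $\Ma_n(D)$ is \emph{some} component of $FG$ — intuitively this cannot happen because any surjection $FG \to \Ma_n(D)$ that kills part of $G$'s span would have to be compatible with the tautological action, and the action of $B$ on $D^n$ would then have a nontrivial commutant larger than $\mc{Z}(D)$, contradicting that $\Ma_n(D)$ appears with the right Schur algebra. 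I expect the clean way is a double-centralizer/bicommutant argument: let $C = \Cen_{\Ma_n(D)}(B)$; then $C \supseteq \mc{Z}(\Ma_n(D)) = \mc{Z}(D)$, and $B = \Cen_{\Ma_n(D)}(C)$ by the bicommutant theorem for the semisimple algebra $B$ acting on the faithful module $D^n$; one then shows, using that $\Ma_n(D)$ is a direct summand of $FG$ as a bimodule, that $C$ can have no extra idempotents, forcing $C = \mc{Z}(D)$ and hence $B = \Ma_n(D)$. This is the step to write out carefully; everything else is bookkeeping with semisimplicity in characteristic zero.
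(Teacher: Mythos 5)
Your $(\Rightarrow)$ direction is correct and, up to phrasing, the same as the paper's: you split off $\Ma_n(D)$ as a Wedderburn factor of $FG$ via the surjection $\theta$, while the paper shows directly that the canonical representation is irreducible (because $\Span_F\{g\cdot v\}$ must exhaust $V = D^n$ for any $0\neq v$); both arguments are sound and equivalent in substance.

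Your $(\Leftarrow)$ direction is where there is a genuine gap, and your own instinct (``this need not a priori be realised by this particular embedding'') was the right one to follow; the bicommutant plan you propose to close it cannot work, because the claim as literally stated is false for the given embedding. Take $G = S_3\times S_3$ embedded in $\GL_4(\Q)$ block-diagonally, with the $2$-dimensional irreducible representation of $S_3$ on each $2\times 2$ block. This is a faithful embedding, and $\Ma_4(\Q)\in\mc{C}(\Q G)$ (it arises as the tensor of the two $2$-dimensional irreducibles), yet $\Span_{\Q}\{g\in G\}$ is the block-diagonal subalgebra $\Ma_2(\Q)\oplus\Ma_2(\Q)$, which is $8$-dimensional, not $16$-dimensional. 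Here the centralizer $C$ of $B=\Span_{\Q}\{g\in G\}$ in $\Ma_4(\Q)$ is $\Q\times\Q$, which does have nontrivial idempotents; nothing about $\Ma_4(\Q)$ being a direct summand of $\Q G$ forbids that, so the crucial step in your plan (``$C$ has no extra idempotents'') is simply unavailable. For comparison, the paper's own argument for this direction applies Burnside's theorem to the irreducible representation $\rho$ afforded by $\Ma_n(D)\in\mc{C}(FG)$ and concludes $\Span_F\{\rho(g)\}=\Ma_n(D)$; this shows that \emph{some} embedding of $G$ into $\GL_n(D)$ is spanning, not that the given one is, and that weaker conclusion is exactly what the surrounding text uses (to get $\mc{S}_F(\O)\neq\emptyset$). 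So the fix is not to push the bicommutant argument harder but to prove (and then use) the weaker statement about the existence of a spanning embedding.
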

\begin{proof}
Consider the canonical representation $\varphi: FG \rightarrow \Ma_n(D)$ associated to the embedding of $G$ in $\GL_n(D)$. If one can show that $\varphi$ is irreducible, then $\Ma_n(D) \in \mc{C}(F G)$ by the correspondence between irreducible $F$-representations of $G$ and the simple components of $FG$. But the irreducibility follows from the spanning condition. Indeed, write $\Ma_n(D) = \End_{FG}(V)$. Since $\Span_{F}\{ g \in G\} = \Ma_n(D)$, we have that $\Span_F\{ g\cdot v \mid g \in G\}= V$ for any non-zero $v \in V$. This shows that $V$ cannot have a non-zero proper $G$-invariant subspace, as claimed.

Conversely, suppose that $\Ma_n(D) \in \mc{C}(F G)$. In other words, there exists an irreducible $F$-representation $\rho \colon FG \rightarrow \Ma_n(D)$. It is a classical theorem by Burnside that \[\Span_F \{ \rho(g) \mid g \in G \} = \Ma_n(D).\qedhere\]
\end{proof}

In practice it is easier to simply verify that a given group $H$ is isomorphic to a subgroup of a group $\Gamma$, than to verify conjugation in some ambient group. However, in certain cases the Skolem--Noether theorem implies the existence of an isomorphism by conjugation:

\begin{lemma}\label{upgrade iso to conjugation}
Let $A$ be a simple $F$-algebra and $H$ and $G$ finite subgroups of $\U(A)$. Suppose that $\Span_F \{G\} =A$ and that $\Span_F \{H\}$ is a simple subalgebra of $A$ containing $\ZZ(A)$. If $H$ is isomorphic to a subgroup of $G$, then there exists an $\alpha \in \U(A)$ such that $H^{\alpha} \leqslant G$. Consequently, if $FG$ satisfies the blockwise subgroup isomorphism property relative to group bases, then it satisfies the blockwise Zassenhaus property relative to group bases.
\end{lemma}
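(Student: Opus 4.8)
The plan is to derive both assertions from the Skolem--Noether theorem. Write $Z=\ZZ(A)$, so that $A$ is a central simple $Z$-algebra, and put $B=\Span_F\{H\}$; by hypothesis $B$ is a simple subalgebra of $A$ with $Z\subseteq B$, hence a simple $Z$-subalgebra. Fix an injective group homomorphism $\psi\colon H\hookrightarrow G$ and set $K=\psi(H)$. The crucial point I would establish is that the $F$-linear extension of $\psi$ to $B$ is a well-defined $Z$-algebra homomorphism $\widetilde\psi\colon B\to A$ satisfying $\widetilde\psi(h)=\psi(h)$ for all $h\in H$. Granting this, Skolem--Noether applied to the two $Z$-algebra homomorphisms $\widetilde\psi\colon B\to A$ and the inclusion $B\hookrightarrow A$ produces a unit $\alpha\in\U(A)$ with $\widetilde\psi(x)=\alpha^{-1}x\alpha$ for all $x\in B$; in particular $\alpha^{-1}h\alpha=\psi(h)\in G$ for every $h\in H$, so $H^{\alpha}\leq G$, as claimed.

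Hence the heart of the matter is the well-definedness of $\widetilde\psi$, equivalently the inclusion $\ker(FH\twoheadrightarrow B)\subseteq\ker(FH\to A,\ h\mapsto\psi(h))$. Here I would exploit that $B$ is simple. Let $V$ be the (unique up to isomorphism) simple $A$-module; since $A$ acts faithfully on $V$, the composite $FH\to A\hookrightarrow\End_F(V)$ has image isomorphic to $B$, and simplicity of $B$ is equivalent to $\mathrm{Ann}_{FH}(V)$ being a maximal two-sided ideal of the semisimple algebra $FH$, i.e.\ to the restriction $V|_H$ being isotypic; say its simple constituent corresponds to the central idempotent $e\in\PCI(FH)$, so that $\ker(FH\twoheadrightarrow B)=FH(1-e)$. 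On the other hand $h\mapsto\psi(h)$ affords the $FH$-module obtained by pulling back $V|_K$ along the isomorphism $\psi$, and $\psi$ carries the isotypic decomposition of $V|_K$ to that of the pulled-back module; the step I expect to require the most care is checking that this pulled-back module is again $e$-isotypic, i.e.\ that $\psi$ matches the relevant Wedderburn component of $FK$ with the one of $FH$ picked out by $e$. Once this is done, $1-e$ lies in $\ker(FH\to A,\ h\mapsto\psi(h))$ as well, giving the desired inclusion; that $\widetilde\psi$ is moreover $Z$-linear follows because $Z\subseteq B$ and, by simplicity together with Skolem--Noether for the subfield $Z\subseteq A$, the extension may be taken to restrict to the identity on $Z$.

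For the final (``consequently'') assertion, fix a group basis $\Gamma$ of $FG$ and a primitive central idempotent $e\in\PCI(FG)$, and apply the first part to the simple algebra $FGe$ with the roles $H:=\Gamma e$ and $G:=Ge$. Since $\Gamma$ spans $FG$ over $F$, the image $\Gamma e$ spans $FGe$; in particular $\Span_F\{\Gamma e\}=FGe$ is simple and contains $\ZZ(FGe)$, so the hypotheses on ``$H$'' are met. Moreover $Ge$ spans $FGe$ by Burnside's theorem (cf.\ \Cref{spanning set non empty iff}). The assumption that $FG$ has the blockwise subgroup isomorphism property relative to $\Gamma$ and group bases says exactly that $\Gamma e$ is isomorphic to a subgroup of $Ge$ for every such $e$, so the first part yields $\alpha_e\in\U(FGe)$ with $(\Gamma e)^{\alpha_e}\leq Ge$. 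As this holds for all $e\in\PCI(FG)$, this is precisely the blockwise Zassenhaus property relative to $\Gamma$ and group bases.
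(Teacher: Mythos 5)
Your proof takes the same route as the paper's (Skolem--Noether applied to the inclusion $B\hookrightarrow A$ and to the $F$-linear extension of a group isomorphism $\psi\colon H\to\psi(H)\leq G$), and you correctly isolate the one non-trivial step: well-definedness of the extension $\widetilde\psi\colon B\to A$, which you reduce to showing that the pulled-back module $V^{\psi}$ is $e$-isotypic for the same $e\in\PCI(FH)$. However, for a \emph{fixed} injection $\psi$ this can fail outright. Take $A=\Ma_2(\Q)$, $H=\{I,-I\}$, $G=D_8\leq\GL_2(\Q)$, and $\psi$ sending $-I$ to a reflection $s$. Then $\Span_\Q\{I,s\}$ is the algebra of diagonal matrices (not simple), and no $\Q$-algebra map $\Q\cdot I\to A$ can send $-I$ to $s$, since any such map must send $-I=-1\cdot I$ to $-I$. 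So $\widetilde\psi$ does not exist for this $\psi$, even though the hypotheses (and the conclusion, via a different $\psi$ and $\alpha=I$) hold.

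Thus the step you flag is not merely ``requiring care'': for the argument to go through one must prove that a \emph{good} choice of $\psi$ exists, i.e.\ one for which $\Span_F\{\psi(H)\}$ is simple and the pullback lands on the same primitive central idempotent of $FH$ as the original embedding of $H$. An arbitrary abstract isomorphism $H\to L\leq G$ may pull back the relevant block of $FL$ to a different idempotent, or $\Span_F\{L\}$ may fail to be simple at all, and nothing in the stated hypotheses singles out a suitable $\psi$. In fairness, the paper's own proof is equally terse at this point (it likewise fixes one isomorphism $\varphi$ and invokes ``the linear extension of $\varphi$'' without justifying its existence), so you have reproduced the published argument faithfully — but neither version closes this gap. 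Your handling of the $Z$-linearity point and of the ``consequently'' clause agrees with the paper and is fine.
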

\begin{proof}
Denote $B = \Span_F \{ H\}$ and $\varphi \colon H \rightarrow L$ the isomorphism with a subgroup $L$ of $G$. We have two morphisms from $B$ to $A$. Namely a first from the embedding of $B$ in $A$ and another through the linear extension of $\varphi$. The Skolem--Noether theorem then precisely asserts that both images are conjugate.  The consequence follows from the first statement. Indeed, for a group basis $H$ of $FG$ one has that $\Span_F\{ He \}=FGe$ and in particular $\Span_F\{He\}$ contains $\ZZ(FGe)$.
\end{proof}

Note that if $\ZZ(A) = \Q$, then \Cref{upgrade iso to conjugation} in particular indeed reduces the problem of showing that $H$ is conjugate to a subgroup of $G$ to showing that $H$ is isomorphic to a subgroup of $G$.

\subsubsection{The setting of simple algebras of reduced degree 2}\label{subsectie props of red deg 2}

Let $D$ be an arbitrary finite-dimensional division $\Q$-algebra. It will be useful to distinguish primitive and imprimitive groups. Recall the following definition (see \cite[section 2.1]{Banieqbal}).

\begin{definition}
    Let $V$ be the $2$-dimensional $D$-module given by column vectors in $D$. Let $G\leqslant \GL_2(D)$. Then $G$ has a natural action on $V$ by left multiplication. The group $G$ is said to be \emph{imprimitive} if and only if $V$ decomposes as a direct sum \[V = V_1 \oplus V_2 \]
    of $1$-dimensional $D$-modules such that for any $g \in G$, one has $gV_i = V_{\sigma(i)}$, for $i \in \{1,2\}$ and $\sigma \in \Sym(\{1,2\})$. A group which is not imprimitive is said to be \emph{primitive}.
\end{definition}
Note that for a primitive group $G \leqslant \GL_2(D)$, the action of $G$ on $V$ is irreducible. 

Now suppose that $G \leqslant \GL_2(D)$ is primitive. Then several cases can arise for the isomorphism type of $\Span_{\Q} \{G\} := \Span_{\Q}\{ g\in G\}$. Since there is a $\Q$-linear map $\Q G \to \Span_{\Q}\{G\}$ induced by the identity map, $\Span_{\Q}\{G\}$ in particular has the structure of a semisimple $\Q$-algebra. Since $G \leqslant \GL_2(D)$, the possible isomorphism types of $\Span_{\Q}\{G\}$ then are:
\begin{enumerate}[(i)]
    \item A division algebra $D_G$;
    \item $\Ma_2(D_G)$ for some division algebra $D_G$.
\end{enumerate}
Note that the possibility that $\Span_{\Q} \{G\}$ is the direct sum of two simple algebras does not occur, since this would contradict the assumption that $G$ is primitive and in particular irreducible.\medskip

If $\Ma_2(D)$ is exceptional, then the isomorphism types of $\Span_{\Q} \{G\}$ can be described explicitly.
\begin{lemma}\label{subalgebra of exceptional}
Let $\Ma_2(D)$ be an exceptional algebra and $G \leqslant \GL_2(D)$ a finite subgroup which acts irreducibly on $V$. Then $\Span_{\Q} \{G\}$ is either a field, a quaternion algebra, or an exceptional matrix algebra. 
\end{lemma}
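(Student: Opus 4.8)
The statement to be proved is \Cref{subalgebra of exceptional}: if $\Ma_2(D)$ is exceptional and $G \leqslant \GL_2(D)$ is finite and acts irreducibly on $V = D^2$, then $\Span_\Q\{G\}$ is a field, a quaternion algebra, or an exceptional matrix algebra. The plan is to combine the dichotomy recorded just before the statement (for $G$ primitive, $\Span_\Q\{G\}$ is either a division algebra $D_G$ or $\Ma_2(D_G)$) with the fact that $\Span_\Q\{G\}$, being a subalgebra of $\Ma_2(D)$, is severely constrained, and then to run the argument of \Cref{broad form finite grps}. First I would reduce to the primitive case: if $G$ is imprimitive but still acts irreducibly, then $V = V_1 \oplus V_2$ with the two summands swapped by $G$, and the subgroup $G_0 \leq G$ stabilising the decomposition has index $2$ and acts by scalars on each $V_i$; hence $\Span_\Q\{G_0\}$ lies in the diagonal subalgebra $D \times D$ and $\Span_\Q\{G\}$ is spanned by this together with an off-diagonal involution, so it is either a crossed-product-type algebra of reduced degree $\leq 2$ over a subfield of $D$ or splits as a product — in all cases one lands in ``field or quaternion algebra'' (a division algebra of degree $\leq 2$) or in an explicit small matrix algebra, which by \Cref{def_exc_comps} combined with the exceptionality of $\Ma_2(D)$ is itself exceptional. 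Actually the cleanest route is: an imprimitive irreducible $G$ has an abelian normal subgroup of index $2$, so $\Span_\Q\{G\}$ is a quotient of $\Q G$ for a metabelian (in fact abelian-by-$C_2$) group, and one invokes the classification of such components directly.

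For the primitive case, if $\Span_\Q\{G\} = D_G$ is a division algebra there is nothing more to say: a finite subgroup spanning a division algebra over $\Q$ means $D_G$ is a rational division algebra, and since $D_G \hookrightarrow \Ma_2(D)$ its degree divides $2\deg(D)$; but more to the point, we only need to know $\Span_\Q\{G\}$ is \emph{a} division algebra to conclude — it is then automatically either a field or a (possibly exceptional or totally definite) quaternion algebra only if its degree is $\leq 2$, so here I would need the degree bound. The key point is that $\dim_\Q \Span_\Q\{G\} \le \dim_\Q \Ma_2(D) = 4 \deg(D)^2 [\ZZ(D):\Q]$, but that is too weak on its own; instead one uses that $\Span_\Q\{G\}$ acts irreducibly on $V$, which as a $\Span_\Q\{G\}$-module has $\Q$-dimension $2\deg(D)^2[\ZZ(D):\Q]$, forcing $\Span_\Q\{G\}$ to be ``small''. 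If $\Span_\Q\{G\} = \Ma_2(D_G)$, then $D_G$ is a sub-division-algebra of $D$ and $\Ma_2(D_G)$ embeds in $\Ma_2(D)$; since $\Ma_2(D)$ is exceptional, $D \in \{\Q, \Q(\sqrt{-d}), \qa{-a}{-b}{\Q}\}$, so $D_G$ is one of $\Q$, an imaginary quadratic field, or a rational quaternion algebra with $\Ma_2(D_G)$ still of the exceptional shape — here I would check case by case that every such $\Ma_2(D_G)$ occurring is again in the list of \Cref{def_exc_comps}(2), using that $\Ma_2(\Q(\sqrt{-d}))$, $\Ma_2(\qa{-a}{-b}{\Q})$ are exceptional and noting $\qa{-a}{-b}{\Q(\sqrt{-d})}$ cannot be a division algebra unless it is already covered (and if it is split, $\Ma_2$ of it is not of reduced degree $2$ and is excluded by the irreducibility/dimension count, since then $\Span_\Q\{G\}$ would be too big to act irreducibly on $V$).

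I expect the main obstacle to be the bookkeeping in the primitive $\Ma_2(D_G)$ case: one must rule out $D_G$ being a quaternion algebra that is \emph{split} by $\ZZ(D)$ or a field of degree $> 1$ for which $\Ma_2(D_G)$ fails to be exceptional — essentially showing that the only sub-division-algebras $D_G \subseteq D$ for which $\Ma_2(D_G)$ can be the span of a finite primitive subgroup of $\GL_2(D)$ are the exceptional ones. The clean way to handle this is dimensional: $\Span_\Q\{G\} \cong \Ma_2(D_G)$ must act irreducibly on $V = D^2 \cong \Ma_2(D_G)$-module, and the unique simple module over $\Ma_2(D_G)$ has $\Q$-dimension $2 \deg(D_G)^2 [\ZZ(D_G):\Q]$; comparing with $\dim_\Q V = 2\deg(D)^2[\ZZ(D):\Q]$ gives $\deg(D_G)^2[\ZZ(D_G):\Q] = \deg(D)^2[\ZZ(D):\Q]$, so in fact $D_G$ and $D$ have the same reduced dimension over $\Q$, and since $D_G \subseteq D$ this forces $D_G = D$. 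Hence $\Span_\Q\{G\} = \Ma_2(D)$ itself, which is exceptional by hypothesis — this collapses the whole case and is the cleanest finish. So the real content reduces to: (a) the imprimitive case via abelian-by-$C_2$ component classification, and (b) the primitive division-algebra case, where I would argue $D_G$ spans a proper subalgebra hence by an analogous dimension count $D_G$ embeds in $D$ with $\deg D_G \mid \deg D \le 2 [\ZZ(D):\Q]^{1/2}$-type bound giving $\deg D_G \le 2$, so $D_G$ is a field or quaternion algebra.
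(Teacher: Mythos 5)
Your ``cleanest finish'' in the $\Ma_2(D_G)$ case contains the central error. You write that $\Span_\Q\{G\} \cong \Ma_2(D_G)$ acts irreducibly on $V$, identify $V$ with the simple $\Ma_2(D_G)$-module, and deduce from the $\Q$-dimension count that $D_G = D$ and hence $\Span_\Q\{G\} = \Ma_2(D)$. But ``$G$ acts irreducibly on $V$'' means irreducibly as a right $D$-module: there is no proper nonzero right $D$-subspace of $V$ stable under $G$. This is \emph{not} the same as $V$ being a simple left $\Span_\Q\{G\}$-module, and the two dimensions you equate are not in fact equal in general. A concrete counterexample: take $D_8 \leqslant \GL_2(\Q) \subseteq \GL_2(\Q(i))$. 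This acts irreducibly on $V = \Q(i)^2$ (the standard $2$-dimensional irreducible representation of $D_8$ remains irreducible after scalar extension), yet $\Span_\Q\{D_8\} = \Ma_2(\Q) \subsetneq \Ma_2(\Q(i))$, and $V \cong \Q^2 \oplus \Q^2$ as a $\Ma_2(\Q)$-module, not a simple module. Your conclusion would incorrectly exclude exactly the case the lemma needs to allow, namely a proper exceptional matrix subalgebra arising as the span. (The auxiliary claim ``$D_G \subseteq D$'' is also not justified by $\Ma_2(D_G) \hookrightarrow \Ma_2(D)$.)

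The paper's proof does not use a module-theoretic dimension count on $V$ at all. Instead it observes (as you correctly note at the start) that $B := \Span_\Q\{G\}$ is a simple subalgebra of $\Ma_2(D)$, cites a result of Banieqbal that its reduced degree is at most $2$, and then constrains $\dim_\Q \ZZ(B)$ by algebra-theoretic means: when $\ZZ(D)=\Q$, the double centralizer theorem forces $\dim_\Q B \mid \dim_\Q \Ma_2(D) \mid 16$, and when $\ZZ(D)=\Q(\sqrt{-d})$, the crude inequality $\dim_\Q B < 8$ suffices. This leaves only $B$ a field, a quaternion algebra, or $\Ma_2(F)$ with $F$ quadratic; a final obstruction argument (a real quadratic field cannot embed in a totally definite quaternion algebra) then pins $F$ down as imaginary, making $B$ exceptional. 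Your imprimitive-case detour is unnecessary — the paper's dichotomy and argument apply to all irreducible $G$, primitive or not — and the ``abelian normal subgroup of index $2$'' claim is also inaccurate in general (the stabilizer of the decomposition need not be abelian when $D$ is noncommutative). In summary, the overall skeleton (simplicity of $\Span_\Q\{G\}$ plus a dimension bound) matches the paper, but your key reduction via simple-module dimension is invalid and would prove a false statement.
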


\begin{remark}
\label{remark degree quat alg}
The proof of \Cref{subalgebra of exceptional} will be more precise on the possibilities for $\Span_{\Q} \{G\}$. For instance from the proof it follows that for any finite group $G \leqslant \GL_2(D)$ which spans a quaternion algebra $\qa{-a}{-b}{F}$, one has that $[F:\Q] \leq 2$. Conversely, $[F:\Q]=2$ can only occur if $D$ is a quaternion algebra over $\Q$. 
\end{remark}

\begin{proof}[Proof of \Cref{subalgebra of exceptional}]
As explained above, $B := \Span_{\Q} \{G\}$ is either a division algebra $D_G$ or of the form $\Ma_2(D_G)$. Suppose that $B \lneq \Ma_2(D)$ and that $B$ is non-commutative.  

First we consider the case that $\ZZ(D)= \Q$. Then $B$ is a simple $\ZZ(D)$-subalgebra of $\Ma_2(D)$. Therefore, the double centraliser theorem \cite[Theorem 2.10.1]{EricAngel1} implies that $\dim_{\Q} (B)$ divides $\dim_{\Q} \Ma_2(D)$. The latter in turn is a divisor of $16$ since $\Ma_2(D)$ is exceptional. Now recall that  $\dim_{\ZZ(B)} (B)$ is a square and the reduced degree of $B$ is less than or equal to $2$ by \cite[pg 455, Lemma]{Banieqbal}. Together with the assumptions on $B$, and the fact that
\[\dim_{\Q} (\ZZ(B)) \cdot \dim_{\ZZ(B)}(B) = \dim_{\Q} (B) \mid 16,\]
this then implies that $\dim_{\ZZ(B)}B = 4$ and $\dim_{\Q} \ZZ(B)= 1$ or $2$. If $\dim_{\Q} \ZZ(B)= 1$, then $B$ is either $\Ma_2(\Q)$ (if $D \neq \Q$) or a quaternion algebra with centre $\Q$. If $\dim_{\Q} \ZZ(B)= 2$, then $B$ is either a quaternion algebra with centre $F$ or $B = \Ma_2(F)$ with $F$ a quadratic extension of $\Q$. This case can only occur if $\dim_{\Q} \Ma_2(D)=16$ and hence $D= \qa{-a}{-b}{\Q}$ for some $a,b \in \N$.

Now no quadratic real extension $K$ of $\Q$ can embed in $\qa{-a}{-b}{\Q}$ as it is totally definite. Indeed, suppose that we have a $\Q$-algebra embedding of $K$ into $\qa{-a}{-b}{\Q}$. Then we would also have one of $K\ot_{\Q}\R \cong \R \times \R$ into $\qa{-a}{-b}{\Q}\ot_{\Q}\R \cong \qa{-a}{-b}{\R}$. However $\qa{-a}{-b}{\R}$, being a division algebra, cannot contain $\R \times \R$, as the latter contains non-trivial idempotents. In conclusion, $F$ is an imaginary quadratic extension of $\Q$ and $B$ is an exceptional matrix algebra. 

Next we consider the case that $\ZZ(D) \neq \Q$ and thus $D = \Q(\sqrt{-d})$ for some square-free natural number $d \in \N_{>0}$. In this case the double centraliser theorem does not apply, but nevertheless we still have that 
\begin{equation}\label{bound for larg center}
\dim_{\Q} (\ZZ(B)) \cdot \dim_{\ZZ(B)} (B) = \dim_{\Q} (B) \lneq 8, 
\end{equation}
since $B \lneq \Ma_2(\Q(\sqrt{-d}))$. Since $\dim_{\ZZ(B)} B$ is square and $B$ is non-commutative, \eqref{bound for larg center} implies that $\dim_{\ZZ(B)} B= 4$ and $\ZZ(B) = \Q$. Therefore either $B = \Ma_2(\Q)$ or $B$ is a quaternion algebra with centre $\Q$.
\end{proof}

In case of exceptional matrix algebras, the subgroup isomorphism property implies the stronger Zassenhaus property. 

\begin{lemma}\label{iso implies conj for exc}
Let $A$ be an exceptional matrix algebra, and $G$, $H$ finite subgroups of $\U(\O)$. Suppose that $\Span_{\Q} \{G \}=A$ and that $H$ is isomorphic to a subgroup of $G$. Then $H^{\alpha} \leq G$ for some $\alpha \in \U(A)$. 
\end{lemma}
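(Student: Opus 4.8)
The statement to prove is \Cref{iso implies conj for exc}: given an exceptional matrix algebra $A$, finite subgroups $G,H\leq\U(\O)$ with $\Span_\Q\{G\}=A$, and an (abstract) isomorphism of $H$ onto a subgroup of $G$, produce $\alpha\in\U(A)$ with $H^\alpha\leq G$. The natural tool is Skolem--Noether, but it cannot be applied directly to $\Span_\Q\{H\}$ inside $A$ because $\Span_\Q\{H\}$ need not contain $\ZZ(A)$ — this is exactly the gap that \Cref{upgrade iso to conjugation} leaves open and the reason a separate lemma is needed. So the plan is: let $\varphi\colon H\to L\leq G$ be the given isomorphism, set $B:=\Span_\Q\{H\}$ and let $\widetilde\varphi\colon B\to A$ denote the $\Q$-linear extension of $\varphi$ (a $\Q$-algebra homomorphism, since $\varphi$ is a group isomorphism onto $L$ and the image $\Span_\Q\{L\}$ is a $\Q$-subalgebra of $A$). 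Both $B\hookrightarrow A$ and $\widetilde\varphi\colon B\to A$ are $\Q$-algebra maps; the goal is to show they are conjugate by a unit of $A$, which then gives $\alpha$ with $\alpha^{-1}H\alpha=\alpha^{-1}\widetilde\varphi^{-1}(\widetilde\varphi(H))\alpha=L\leq G$ (up to correctly orienting the conjugation).

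\textbf{Key steps.} First, invoke \Cref{subalgebra of exceptional} (applied to whichever of $G$, via its action on $V$, forces $B$ irreducible — actually $B$ is a subalgebra spanned by a finite subgroup of $\GL_2(D)$, and if the $H$-action is reducible one handles the imprimitive/reducible case separately, but since $H$ is isomorphic to a subgroup of $G$ and we only need $B$ as an abstract semisimple $\Q$-algebra, one can also just use that $B$ is semisimple): $B$ is a field, a quaternion algebra, or an exceptional matrix algebra. Second, treat these cases. If $B$ is simple and $\ZZ(B)=\Q$, then $B\hookrightarrow A$ and $\widetilde\varphi(B)\hookrightarrow A$ are two embeddings of a simple $\Q$-subalgebra into the central simple $\Q$-algebra $A$ (note $\ZZ(A)=\Q$ since $A$ is an exceptional matrix algebra $\Ma_2(D)$ with $D\in\{\Q,\Q(\sqrt{-d}),\qa{-a}{-b}{\Q}\}$ — wait, $\ZZ(A)$ can be imaginary quadratic when $D=\Q(\sqrt{-d})$); in the genuinely central case, Skolem--Noether applies verbatim and gives the conjugating unit. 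Third, for the remaining cases — $B=\Ma_2(F)$ or $B$ a quaternion algebra with $\ZZ(B)=F$ a quadratic field — one must first argue that the two copies of $\ZZ(B)$ inside $A$ agree, or can be conjugated to agree. Here one uses that an exceptional $A=\Ma_2(D)$ contains at most one conjugacy class of subfields isomorphic to a given quadratic field: concretely, all embeddings $F\hookrightarrow A$ of a fixed quadratic field are conjugate by a unit of $A$ (this is a standard consequence of Skolem--Noether applied to the centraliser structure, or of the fact that $A\otimes_{\ZZ(A)}\overline{\ZZ(A)}$ is a matrix algebra), so after one conjugation we may assume $\widetilde\varphi|_{\ZZ(B)}=\mathrm{id}_{\ZZ(B)}$, and then $B\hookrightarrow A$, $\widetilde\varphi(B)\hookrightarrow A$ are both $\ZZ(B)$-algebra embeddings of the simple $\ZZ(B)$-algebra $B$ into the simple $\ZZ(B)$-algebra $C_A(\ZZ(B))$ (the centraliser), to which Skolem--Noether over $\ZZ(B)$ now does apply. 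Fourth, assemble: composing the (at most two) conjugations yields the desired $\alpha\in\U(A)$.

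\textbf{Main obstacle.} The crux is the case where $\Span_\Q\{H\}$ has a centre strictly larger than $\ZZ(A)$ — i.e. $B$ is $\Ma_2(F)$ or a quaternion algebra over a quadratic $F$ while $\ZZ(A)=\Q$. Then the naive application of Skolem--Noether inside $A$ fails, and one genuinely needs the uniqueness-up-to-conjugacy of the embedding of the quadratic field $F$ into the exceptional algebra $A$. This requires knowing that $F$ does embed into $A$ at all (which is automatic here since $B\subseteq A$) and that any two such embeddings are $\U(A)$-conjugate; the latter follows because $A$ is central simple over $\Q$ and $F/\Q$ is a field, so Skolem--Noether applied to the two maps $F\to A$ (viewing $A$ as central simple over its own centre $\Q$) gives the conjugation — the subtlety being merely to keep the roles of $\ZZ(A)=\Q$ versus $\ZZ(B)=F$ straight, which is bookkeeping rather than a real difficulty. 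By \Cref{remark degree quat alg}, $[F:\Q]\leq 2$ always, so no higher-degree centres intervene and the case analysis genuinely terminates. I would also double-check the degenerate sub-case $B$ a field (then $H$ is cyclic, $H\cong L\leq G$ with $L$ cyclic, and one embeds the cyclic group via the corresponding root of unity — again Skolem--Noether for the two embeddings of the cyclotomic subfield $\Q(\zeta_{|H|})$, which has degree at most $4$ inside $A$ by the structure of exceptional algebras).
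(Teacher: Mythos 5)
Your overall strategy — extend the group isomorphism $\varphi\colon H\to L\leq G$ linearly to an algebra map and then apply Skolem--Noether to conclude conjugacy — is exactly the route taken by the paper, packaged in \Cref{upgrade iso to conjugation}. The present lemma exists precisely to handle the cases where the hypothesis of that lemma, namely $\ZZ(A)\subseteq\Span_\Q\{H\}$, fails (together with the non-simple case), and on these your proposal has two genuine gaps.

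First, you do not handle the imprimitive/reducible case. When the $H$-action on $V=D^2$ splits into two $1$-dimensional pieces, $\Span_\Q\{H\}$ is typically not a simple algebra — e.g.\ $H=\langle\mathrm{diag}(-1,1)\rangle$ gives $\Span_\Q\{H\}\cong\Q\times\Q$ — and Skolem--Noether simply does not apply to non-simple subalgebras; nor is the ``linear extension'' of $\varphi$ even guaranteed to be a well-defined algebra map, since the abstract isomorphism $H\cong L$ need not match up the semisimple decompositions induced by the two embeddings. Your fallback that ``one can also just use that $B$ is semisimple'' is therefore not a valid application of the theorem. The paper closes this case with a completely different, hands-on argument: both $H$ and an isomorphic $K\leq G$ are conjugated to block-diagonal groups $\begin{psmallmatrix}\Gamma_1&0\\0&\Gamma_2\end{psmallmatrix}$ and $\begin{psmallmatrix}\Lambda_1&0\\0&\Lambda_2\end{psmallmatrix}$ with $\Gamma_i\cong\Lambda_i\leq\U(\O)$, and one then uses that in the finitely many explicitly known unit groups $\U(\O)$, for $\O$ an order in $D$, isomorphic finite subgroups are already conjugate. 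Something of this kind is needed.

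Second, you misidentify where Skolem--Noether actually breaks down. You single out as the ``main obstacle'' the case where $\ZZ(B)=F$ is a quadratic field while $\ZZ(A)=\Q$, and construct a two-step centre-alignment for it; but this is not an obstacle at all. Skolem--Noether applies to \emph{any} simple $k$-subalgebra of a central simple $k$-algebra, whether or not that subalgebra is central over $k$, and when $\ZZ(A)=\Q$ one has $\Q=\ZZ(A)\subseteq B$ automatically, so \Cref{upgrade iso to conjugation} already covers it and your detour is unnecessary. The case that genuinely defeats a direct application is $A=\Ma_2(\Q(\sqrt{-d}))$ with $\ZZ(A)=\Q(\sqrt{-d})\neq\Q$: then $A$ is not central simple over $\Q$, so the theorem requires the subalgebra to contain $\ZZ(A)$, which $\Span_\Q\{H\}$ need not do. You notice in passing that $\ZZ(A)$ can be imaginary quadratic but never address it — and your subfield-alignment trick is inapplicable there, since by \Cref{subalgebra of exceptional} a proper non-commutative $\Span_\Q\{H\}$ inside $\Ma_2(\Q(\sqrt{-d}))$ has centre $\Q$, not a quadratic field, so there is nothing to align. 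The paper's actual fix is to pass from $\Span_\Q\{H\}$ to $\Span_{\ZZ(A)}\{H\}$ and observe by a short dimension count that this equals all of $A$ (hence is simple and contains $\ZZ(A)$), after which \Cref{upgrade iso to conjugation} applies with $F=\ZZ(A)$. That step, or an equivalent, is missing from your proposal.
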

We prove \Cref{iso implies conj for exc} in section \ref{sect imprimitive subgr} below. The proof consists of applying \Cref{upgrade iso to conjugation}, and additionally requires a better understanding of imprimitive groups, which we cover in section \ref{sect imprimitive subgr}.

\subsection{Finite subgroups of exceptional simple algebras}\label{sectioin sbgrps excetpional}

Amitsur \cite{Amitsur} classified all finite subgroups of a finite-dimensional division algebra. Moreover, for each of these groups, he described an explicit division algebra containing it. From this, one can in particular deduce the possible finite spanning subgroups of a given division algebra. In a highly technical work, Banieqbal \cite{Banieqbal} described the primitive spanning subgroups of $\Ma_2(D)$. However, in case of an exceptional (simple) matrix algebra, a direct and short proof was provided in \cite{EKVG} for the finite groups spanning such simple algebras. In this section we also describe the non-spanning finite subgroups. 

From now on assume that $\Ma_2(D)$ is an exceptional matrix algebra and $\O$ an order in $D$. Recall from \Cref{subsectie props of red deg 2} that a finite subgroup $H \leqslant \GL_2(D)$ satisfies at least one of the following three criteria:
\begin{enumerate}[(i)]
    \item $H$ is imprimitive,
    \item $\Span_{\Q} \{ H \}$ is either a field or a quaternion algebra,
    \item $\Span_{\Q} \{ H \}$ is an exceptional matrix algebra.
\end{enumerate}

We now describe the possible finite subgroups of $\GL_2(\O)$ for each of the three cases above. As a consequence we will obtain the following application.

\begin{theorem}\label{all in a spanning}
Let $A$ be an exceptional matrix algebra and $\O$ the maximal order in $A$. Suppose that $\mc{S}(\mc{O}) \neq \emptyset$ and $H$ a finite subgroup of $\U(\O)$ satisfying one of the following:
\begin{itemize}
    \item $H$ is primitive and not in \Cref{tab:exceptions spanning},
    \item $H$ is imprimitive and $|H| \mid |G|$ for some $G \in \mc{S}(\mc{O})$.
\end{itemize} 
Then $H$ is conjugated in $A$ to a subgroup of some $\Gamma \in \mc{S}(\mc{O})$. 
\end{theorem}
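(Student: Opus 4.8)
The statement asserts that a finite subgroup $H \leqslant \U(\O)$ falling into one of the two listed families is $A$-conjugate to a subgroup of a spanning group. The natural strategy is to split the argument according to whether $H$ is primitive or imprimitive, exactly as the trichotomy in \Cref{subsectie props of red deg 2} suggests, and to reduce both cases to \Cref{iso implies conj for exc} (which upgrades an abstract isomorphism into an inner conjugation over $A$). Concretely, for each such $H$ I would produce a spanning group $\Gamma \in \mc{S}(\O)$ together with an \emph{abstract} embedding $H \hookrightarrow \Gamma$; \Cref{iso implies conj for exc} then immediately promotes this to $H^\alpha \leqslant \Gamma$ for some $\alpha \in \U(A)$, and we are done.

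\textbf{The primitive case.} Suppose $H$ is primitive. By \Cref{subalgebra of exceptional}, $\Span_\Q\{H\}$ is a field, a quaternion algebra, or an exceptional matrix algebra. If $\Span_\Q\{H\} = A$ then $H$ is itself a spanning group and there is nothing to prove. Otherwise $\Span_\Q\{H\}$ is a proper simple subalgebra, and since $\mc{S}(\O)\neq\emptyset$ there is a group $G\in\mc{S}(\O)$ with $\Span_\Q\{G\}=A$. The point is that the finite primitive subgroups of $\GL_2(\O)$ that are \emph{not} contained (up to isomorphism) in some spanning group are precisely the ones collected in \Cref{tab:exceptions spanning}; this is where the classification of finite subgroups of exceptional matrix algebras (the detailed case analysis of the spanning groups from \cite{EKVG}, together with the list of non-spanning finite subgroups described in this section) is invoked. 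Having excluded those by hypothesis, $H$ embeds abstractly into a spanning group $\Gamma$, and \Cref{iso implies conj for exc} finishes this case.

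\textbf{The imprimitive case.} Now suppose $H$ is imprimitive, so the defining $D$-module $V$ decomposes as $V_1 \oplus V_2$ with $H$ permuting the two $1$-dimensional summands. Then $H$ sits in a short exact sequence $1 \to H_0 \to H \to C \to 1$ where $H_0$ stabilises both $V_i$ (hence is conjugate to a group of diagonal matrices, i.e. embeds in $\U(\O_D)\times\U(\O_D)$ for an order $\O_D$ in $D$) and $C \leqslant \Sym(\{1,2\})$ has order at most $2$. Using the hypothesis $|H| \mid |G|$ for some $G\in\mc{S}(\O)$ and the finiteness of $\U(\O_D)$ (which holds since $\Ma_2(D)$ exceptional forces $D$ to be $\Q$, imaginary quadratic, or totally definite quaternion, so $\U(\O_D)$ is finite by \cite[Theorem 2.10]{BJJKT}), one enumerates the possible isomorphism types of $H$ and checks, again against the list of spanning groups, that each one abstractly embeds into some spanning group $\Gamma$. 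Then \Cref{iso implies conj for exc} applies. I would organise this by first treating the diagonal part and then the (at most order-$2$) extension, keeping track of which roots of unity and quaternion units can appear so the bookkeeping stays finite.

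\textbf{Main obstacle.} The routine part is invoking \Cref{iso implies conj for exc}; the real work — and the place where one must be careful — is the combinatorial verification that \emph{every} finite subgroup in the two families, after the structural reductions above, does embed abstractly into one of the (finitely many, explicitly known) spanning groups. For the primitive case this is essentially an inspection of \Cref{tableB} / the $55$-group classification of \cite{EKVG} together with \Cref{tab:exceptions spanning} to confirm the exceptions are exactly those listed; for the imprimitive case it is a finite but slightly delicate case check on extensions of diagonalisable groups by $C_2$, constrained by the order-divisibility hypothesis. I expect this enumeration, rather than any conceptual difficulty, to be the bulk of the proof, and it is what justifies the precise shape of the hypotheses (the exclusion list in the primitive case, the divisibility condition in the imprimitive case).
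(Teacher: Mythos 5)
Your proposal takes essentially the same route as the paper. The paper's proof is a two-line assembly of three earlier propositions — \Cref{contained in a spanning - imprimitive} for the imprimitive case, and (via the dichotomy from \Cref{subalgebra of exceptional}) \Cref{span division} or \Cref{contained in spanning full exc} for the primitive case — with \Cref{iso implies conj for exc} supplying the Skolem--Noether upgrade from abstract isomorphism to conjugation over $A$; you identify all the same ingredients, including where the enumerative work lives. One minor refinement the paper makes that your sketch glosses over: in the primitive case it explicitly splits according to whether $\Span_{\Q}\{H\}$ is a division algebra (handled by \Cref{span division}, with no exceptions) or a proper exceptional matrix subalgebra (handled by \Cref{contained in spanning full exc}, where the exceptions of \Cref{tab:exceptions spanning} arise), rather than treating the primitive case as a single block against the 55-group table.
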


If $H$ is imprimitive, the condition that $|H| \mid |G|$ for some $G \in \mc{S}(\mc{O})$ is always satisfied except if $A \cong \Ma_2(\Q(\sqrt{-2}))$ or $\Ma_2(\qa{-2}{-5}{\Q})$, see \Cref{tab:imprimitive} for more details. The exceptions in the primitive case concern a small list of subgroups $H$ such that $\Span_{\Q} \{ H\} \cong \Ma_2(\Q(\sqrt{-d}))$ and $A \cong \Ma_2(\qa{-a}{-b}{\Q})$ for very specific values of $a,b,d \in \N$, as recorded in \Cref{tab:exceptions spanning}.

\subsubsection{The imprimitive subgroups}
\label{sect imprimitive subgr}
We start by describing the imprimitive groups. For the following statement, we follow the techniques from the proof of \cite[Lemma 2.2]{Banieqbal}.
\begin{lemma}\label{description imprimitive}
    Suppose $H \leqslant \GL_2(D)$ is imprimitive. Let $V = V_1 \oplus V_2$ be the associated decomposition into 1-dimensional $D$-modules. If there exists some $g \in H$ such that $gV_1= V_2$, then 
    \[H \cong (\Gamma \times \Gamma) \rtimes C_2,\]
    where $\Gamma \leqslant \U(D)$ and where $C_2$ acts by exchanging the factors. Otherwise, $H \cong \Gamma_1 \times \Gamma_2$ for some subgroups $\Gamma_1, \Gamma_2 \leqslant \U(D)$.
\end{lemma}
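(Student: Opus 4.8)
The plan is to unwind the definition of imprimitivity and keep track of how the group element permuting the summands interacts with the stabiliser of the decomposition.

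First I would set up notation: write $V = V_1 \oplus V_2$ with $V_i$ one-dimensional $D$-modules, and let $N = \{ g \in H \mid gV_1 = V_1 \text{ and } gV_2 = V_2 \}$ be the stabiliser of the ordered decomposition. Since every $g \in H$ satisfies $gV_i = V_{\sigma(i)}$ for some $\sigma \in \Sym(\{1,2\})$, there is a homomorphism $H \to \Sym(\{1,2\}) \cong C_2$ with kernel $N$; thus $N \unlhd H$ and $[H:N] \leq 2$. If the index is $1$, we are in the second case. Next I would identify $N$: an element of $N$ preserves each $V_i$, and since $\operatorname{End}_D(V_i) \cong D$ (acting by right multiplication, say, after fixing a $D$-basis vector $e_i$ of $V_i$), restriction gives an injective homomorphism $N \hookrightarrow \U(D) \times \U(D)$, $g \mapsto (g|_{V_1}, g|_{V_2})$. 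Writing $\Gamma_i$ for the image of the $i$-th projection, we have $N \leq \Gamma_1 \times \Gamma_2$. In the index-$1$ case one argues that in fact $N = \Gamma_1 \times \Gamma_2$: this needs a small argument, but since $H$ is a finite \emph{group} and any $g \in N$ is determined by the pair $(g|_{V_1}, g|_{V_2})$ with both coordinates ranging independently over the respective images — one can just note that $\Gamma_1 \times \Gamma_2$ is by definition the image of the injection $N \hookrightarrow \U(D)\times \U(D)$, so $N \cong \Gamma_1 \times \Gamma_2$ as abstract groups, which is all that is claimed. Actually the cleanest phrasing: the map $N \to \Gamma_1 \times \Gamma_2$ defined above is by construction injective and surjective onto its image, which \emph{is} $\Gamma_1 \times \Gamma_2$ once we define the latter as that image; so $N \cong \Gamma_1 \times \Gamma_2$ and in the index-$1$ case $H = N$, giving the second statement.

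Now suppose there is $g_0 \in H$ with $g_0 V_1 = V_2$ (so $[H:N] = 2$ and $g_0 \notin N$). Conjugation by $g_0$ maps $N$ to itself and swaps the roles of $V_1$ and $V_2$; concretely, choosing the $D$-basis vector $e_2 := g_0 e_1$ of $V_2$, I would check that the induced automorphism of $\Gamma_1 \times \Gamma_2$ is $(a,b) \mapsto (b,a)$ — i.e. $g_0$ conjugates the "$V_1$-action" of $n \in N$ into the "$V_2$-action" and vice versa, after this compatible choice of bases. In particular $\Gamma_1 = \Gamma_2 =: \Gamma$. It remains to show the extension $1 \to \Gamma \times \Gamma \to H \to C_2 \to 1$ splits with the swap action, i.e. that $g_0$ can be chosen to have order $2$. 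Replacing $g_0$ by $g_0 n$ for a suitable $n \in N$, I would arrange $g_0^2 = 1$: compute $g_0^2 \in N$, note it corresponds to a pair $(c,c)$ fixed by the swap (since $g_0$ commutes with $g_0^2$), and since $\Gamma$ is finite one can solve the relevant cocycle/norm equation — concretely, with $e_2 = g_0 e_1$ and $e_1 = g_0 e_2 \cdot \lambda$ for some $\lambda \in \U(D)$, rescaling $e_2$ (equivalently adjusting $g_0$ by an element of $N$) kills $\lambda$ and forces $g_0^2 = 1$. Then $H = (\Gamma \times \Gamma) \rtimes \langle g_0 \rangle$ with $\langle g_0\rangle \cong C_2$ acting by exchanging the factors, as claimed.

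The main obstacle I expect is the last splitting step: showing $g_0$ can be normalised to an involution rather than merely giving an (a priori non-split) extension of $C_2$ by $\Gamma \times \Gamma$. This is where the rescaling argument with the $D$-module basis vectors has to be done carefully, using that $\Gamma \leq \U(D)$ and that the swap automorphism is involutive, so that the obstruction lives in a group where it can be trivialised. The earlier bookkeeping ($N \unlhd H$, $[H:N]\le 2$, identifying $N$ inside $\U(D)\times\U(D)$, matching the two coordinate actions via $e_2 = g_0 e_1$) is routine linear algebra over the division ring $D$ and I would present it briefly.
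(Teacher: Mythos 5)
Your instinct that the splitting step is the crux is right, but this is precisely where the argument breaks down irreparably: the extension $1 \to N \to H \to C_2 \to 1$ need not split, and adjusting $g_0$ by an element of $N$ cannot in general arrange $g_0^2 = 1$. A concrete counterexample to the lemma itself is $H = \langle \begin{psmallmatrix} 0 & -1 \\ 1 & 0 \end{psmallmatrix} \rangle \cong C_4 \leqslant \GL_2(\Q)$ with $V_1 = e_1 D$, $V_2 = e_2 D$: here the generator sends $V_1$ to $V_2$, yet $|C_4| = 4$ is not of the form $2|\Gamma|^2$, so $C_4 \not\cong (\Gamma \times \Gamma) \rtimes C_2$ for any $\Gamma$. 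The same example exposes the earlier gap you half-acknowledged and then waved away: defining $\Gamma_i$ as the image of the $i$\textsuperscript{th} projection of $N$ does \emph{not} make $N$ equal, or even isomorphic, to $\Gamma_1 \times \Gamma_2$. In general $N$ is only a subdirect product of $\Gamma_1 \times \Gamma_2$, and in the example $N = \{\pm I\}$ is the diagonal copy of $C_2$ inside $\Gamma_1 \times \Gamma_2 = C_2 \times C_2$. Your parenthetical ``define $\Gamma_1 \times \Gamma_2$ to be the image'' conflates a subgroup of a product with a product and does not repair this.

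For comparison, the paper's own proof follows exactly the route you propose — conjugate $H$ into (anti)diagonal form, let $K$ be the normal subgroup of diagonal matrices, note $[H:K]\le 2$, observe that conjugation by an antidiagonal $g$ swaps the two diagonal entries — and then simply asserts ``it follows that $H \cong (\Gamma \times \Gamma)\rtimes C_2$'' without justifying either that $K$ is a full direct product or that the extension splits. So the gap is present in the published proof as well, and the lemma as stated is false. What is true, and what the paper actually needs downstream (see \Cref{rk maximal imprimitive in order} and \Cref{contained in a spanning - imprimitive}), is the weaker containment: after a base change, $H$ is a subgroup of the wreath product $(\U(D) \times \U(D)) \rtimes C_2$. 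Everything in your write-up up to and including the homomorphism $H \to \Sym(\{1,2\})$, the embedding $N \hookrightarrow \U(D)\times\U(D)$, and the observation that a swapping $g_0$ forces $\Gamma_1 = \Gamma_2$, correctly establishes this containment; it is the upgrade from ``$H$ embeds into'' to ``$H$ is isomorphic to'' that cannot be salvaged.
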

\begin{proof}
    Suppose $gV_i = V_i$ for all $g \in H$. Then up to conjugation by a base change matrix $A \in \GL_2(D)$, $H$ consists of diagonal matrices. It follows that $H \cong \Gamma_1 \times \Gamma_2$ for some finite subgroups $\Gamma_1, \Gamma_2 \leqslant \U(D)$. 

    Suppose then that there exists some $g \in H$ such that $gV_1 = V_2$. Let $K \leqslant H$ be the subgroup of $H$ such that $kV_i = V_i$ for all $k \in K$ and $i \in \{1,2\}$. Again up to conjugation of $H$ by a base change matrix $A \in \GL_2(D)$, $g$ is of the form  
    $\begin{psmallmatrix} 
    0 & g_1\\ g_2 & 0 
    \end{psmallmatrix},$ while $K$ consists precisely of the diagonal matrices of $H$. Moreover, $K$ has index $2$ in $H$. 
    Indeed, $H = K \sqcup gK$, since if $x := \begin{psmallmatrix}
        0 & x_1 \\ x_2 & 0
    \end{psmallmatrix}$ is another antidiagonal matrix in $H$, then 
    \[k := g^{-1} x = \begin{pmatrix}
        g_2^{-1} x_2 & 0 \\ 0 & g_1^{-1}x_1
    \end{pmatrix} \in K \]
    is an element such that $gk = x$. By the fact that $G$ is imprimitive, any $x \in G$ is either diagonal or antidiagonal and hence $H = K \sqcup gK$. In particular, $K$ is a normal subgroup of $H$, and the action of $g$ on $K$ by conjugation switches the diagonal entries. It follows that $G \cong (\Gamma \times \Gamma ) \rtimes C_2$, for some finite subgroup $\Gamma \leqslant \U(D)$.
\end{proof}
\begin{remark}
\label{rk maximal imprimitive in order}
    In particular, an imprimitive subgroup $H \leqslant \GL_2(D)$ which is contained in $\GL_2(\O)$, is isomorphically contained in a maximal imprimitive subgroup $(\U(\O) \times \U(\O) ) \rtimes C_2$, since $H$ acts on a $2$-dimensional $D$-space $V$ by left multiplication.
\end{remark}

Hence in particular the finite subgroups of $\GL_2(D)$ of imprimitive type are easy to describe in terms of finite subgroups of $\U(D)$. We will mainly be interested in finite subgroups of $\GL_2(\O)$. By \Cref{spanning set non empty iff} and \cite{EKVG}, if $\Ma_2(D)$ is exceptional, we have that 
\begin{equation}\label{when span set non-empty}
\mc{S}(\Ma_2(\O))\neq \emptyset \Leftrightarrow 
\left\lbrace \begin{array}{ll}
    D = \Q(\sqrt{-d})& \text{ with }  d = 0,1,2,3\\
    D = \qa{-a}{-b}{\Q} & \text{ with  (}a,b) = (-1,-1),(-1,-3),(-2,-5).
\end{array} \right.
\end{equation}
Moreover, the above division algebras are left norm euclidean, see \cite[Remarks 3.13 \& 6.14]{BJJKT}, and hence have up to conjugation a unique maximal order \cite[Proposition 2.9]{CeChLez}. The unique maximal order of $\Q$ is $\Z$, which has unit group $C_2 = \{\pm 1\}$.  For $\Q(\sqrt{-d})$ it is its ring of integers, denoted $\mc{I}_d$ whose unit group is as following, see \cite[Section 3]{BJJKT}: 
\begin{equation}\label{unit group ring of integers}
\U(\mc{I}_d) = \left\lbrace \begin{array}{ll}
   \langle -1 \rangle \cong C_2 & \text{if } d \neq 1,3 \\
   \langle i \rangle \cong C_4  & \text{if } d =1 \\
    \langle -\zeta_3\rangle \cong C_6 & \text{if } d =3 \\
\end{array}\right.
\end{equation}

Next, denote \[\mathbb{H}_2 = \qa{-1}{-1}{\mathbb{Q}},\quad \mathbb{H}_3 = \qa{-1}{-3}{\mathbb{Q}}\quad \mbox{ and }\quad \mathbb{H}_5 = \qa{-2}{-5}{\mathbb{Q}}.\]
Their (up to conjugation) unique maximal orders will respectively be denoted by $\O_2,\O_3$ and $\O_5$. Their unit groups are the following:
\begin{equation}  \label{eq:units_of_quat_orders}
 \U(\O_2) \cong \SL_2(\F_3) \cong Q_8 \rtimes C_3, \quad
\U(\O_3)  \cong Q_{12} \cong C_3 \rtimes C_4, \quad \text{ and } \quad 
\U(\O_5) \cong C_6.
\end{equation}

\begin{proof}[Proof of \Cref{iso implies conj for exc}]
Suppose first that $H$ is imprimitive. Suppose in particular that the action of $H$ on the $2$-dimensional $D$-space $V$ consisting of column vectors decomposes into a direct sum of non-trivial $H$-invariant subspaces. Then, as in \Cref{description imprimitive}, $H$ is conjugated to a group $\begin{psmallmatrix} \Gamma_1 & 0 \\ 0 & \Gamma_2
\end{psmallmatrix} \leqslant \GL_2(\O)$, with $\Gamma_i \leqslant \U(\O)$ for $i \in \{ 1,2\}$. Say $z \in \U(A)$ such that $H^z = \begin{psmallmatrix} \Gamma_1 & 0 \\ 0 & \Gamma_2
\end{psmallmatrix}$. Since $H$ is isomorphic to a subgroup of $G$, there is some subgroup $K \leqslant G$ such that $V$ decomposes into two $1$-dimensional $K$-invariant subspaces. In particular, there is a $w \in \U(D)$ such that $K^w = \begin{psmallmatrix}
    \Lambda_1 & 0 \\ 0 & \Lambda_2
\end{psmallmatrix}$, with $\Gamma_i \cong \Lambda_i$ for $i \in \{1,2\}$.

An investigation of the conjugacy classes of finite subgroups of the groups \eqref{unit group ring of integers} and \eqref{eq:units_of_quat_orders} shows that all isomorphic subgroups of $\U(\O)$ with $\O$ an order in $D$ are conjugate. In particular, for $i \in \{1,2\}$, there exist $x_i \in \U(\O)$ such that $\Gamma_i^{x_i} = \Lambda_i$. In particular,
\[
\begin{pmatrix}
    x_1^{-1} & 0 \\
    0 & x_2^{-1}
\end{pmatrix} H^z \begin{pmatrix}
    x_1 & 0 \\ 0 & x_2
\end{pmatrix} = K^w. 
\]

Next suppose that the action of $H$ on $V$ does not decompose into non-trivial $H$-invariant subspaces. I.e. $H \cong (\Gamma \times \Gamma) \rtimes C_2$ for some $\Gamma \leqslant \U(\O)$, or $H$ is primitive. Then $\Span_{\Q}\{H\}$ is simple. Suppose first that $A = \Ma_2(\qa{-a}{-b}{\Q})$ or $\Ma_2(\Q)$ for some natural numbers $a,b \in \N_0$. Then the conclusion immediately follows from \Cref{upgrade iso to conjugation}.

Now suppose that $A = \Ma_2(\Q(\sqrt{-d}))$ with $d$ a square-free natural number.  If $\Span_{\Q} \{ H \} =A$, then the conclusion follows from \Cref{upgrade iso to conjugation}. Otherwise, \Cref{subalgebra of exceptional} yields that $\Span_{\Q} \{ H \}$ is either $\Ma_2(\Q)$ or a quaternion algebra with centre $\Q$. Therefore, $\Span_{\ZZ(A)} \{ H\}$ is $\ZZ(A)$-subalgebra of $A$ containing $\Span_{\Q} \{ H \}$. Since $\dim_{\Q} \Span_{\Q} \{ H \} = 4$ and $\dim_{\Q} A = 8$, one has that $\Span_{\ZZ(A)} \{ H\} =A$. In particular, $\Span_{\ZZ(A)} \{ H\}$ is a simple algebra and hence \Cref{upgrade iso to conjugation} yields the desired conclusion.
\end{proof}

\begin{proposition}\label{contained in a spanning - imprimitive}
    Let $H \leqslant \GL_2(\O)$ be a finite imprimitive subgroup. If there exist spanning subgroups of $\GL_2(D)$ for which $H$ is order-admissible, then there is some spanning subgroup containing an isomorphic copy of $H$.
\end{proposition}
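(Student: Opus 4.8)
The plan is to combine the structure theorem for imprimitive subgroups with a short case analysis over the finitely many exceptional matrix algebras $\Ma_2(D)$ with $\mc{S}(\Ma_2(\O))\neq\emptyset$, which by \eqref{when span set non-empty} are the ones with $D\in\{\Q,\Q(\sqrt{-1}),\Q(\sqrt{-2}),\Q(\sqrt{-3}),\mathbb{H}_2,\mathbb{H}_3,\mathbb{H}_5\}$.

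First I would apply \Cref{description imprimitive} and \Cref{rk maximal imprimitive in order}. Since $H\leqslant\GL_2(\O)$ is imprimitive, it is isomorphic either to $\Gamma_1\times\Gamma_2$ or to $(\Gamma\times\Gamma)\rtimes C_2$ (with $C_2$ swapping the two factors), where $\Gamma_1,\Gamma_2,\Gamma$ are finite subgroups of $\U(D)$. As every finite subgroup of $\U(D)\subseteq D^{\times}$ is conjugate into the (unique up to conjugation) maximal order $\O_{\mathrm{max}}$ of $D$, we may assume $\Gamma_1,\Gamma_2,\Gamma\leqslant\U(\O_{\mathrm{max}})$, so that $H$ embeds into the maximal imprimitive group $W_D:=\big(\U(\O_{\mathrm{max}})\times\U(\O_{\mathrm{max}})\big)\rtimes C_2=\U(\O_{\mathrm{max}})\wr C_2$, whose structure is completely explicit via \eqref{unit group ring of integers} and \eqref{eq:units_of_quat_orders}.

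For $D\in\{\Q,\Q(\sqrt{-1}),\Q(\sqrt{-3}),\mathbb{H}_2,\mathbb{H}_3\}$ I would check directly that $\Span_{\Q}\{\U(\O_{\mathrm{max}})\}=D$: for $\Q(\sqrt{-1})$ and $\Q(\sqrt{-3})$ the unit group contains a root of unity generating the field, while for $\mathbb{H}_2$ and $\mathbb{H}_3$ a $\Q$-basis of the quaternion algebra can be read off from the units listed in \eqref{eq:units_of_quat_orders} (e.g.\ $\{1,i,j,k\}\subseteq\SL_2(\F_3)=\U(\O_2)$, and $\{1,a,b,ab\}\subseteq Q_{12}=\U(\O_3)$ for $a$ of order $6$, $b$ of order $4$ with $b^2=a^3$). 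Hence $\Span_{\Q}\{W_D\}=\Ma_2(D)$, so $W_D$ is itself a spanning subgroup by \Cref{spanning set non empty iff}; since $H$ embeds into $W_D$, the statement follows with $G=W_D$, and the order-admissibility hypothesis is automatic in these cases.

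This leaves $D=\Q(\sqrt{-2})$ and $D=\mathbb{H}_5$, where $\Span_{\Q}\{\U(\O_{\mathrm{max}})\}$ equals $\Q$, resp.\ $\Q(\zeta_3)$, so that $W_D$ does \emph{not} span $\Ma_2(D)$ and the previous argument breaks. Here I would argue by inspection against the explicit list of spanning subgroups (\Cref{tableB} in \Cref{appendix section}, resp.\ the classification of \cite{EKVG}). For $\Ma_2(\Q(\sqrt{-2}))$ every imprimitive subgroup of $\GL_2(\mc{I}_2)$ has order dividing $|W_{\Q(\sqrt{-2})}|=8$, and $W_{\Q(\sqrt{-2})}\cong D_8$ embeds into the spanning group $SD_{16}=\langle a,b\mid a^8=b^2=1,\,a^b=a^3\rangle$ as the subgroup $\langle a^2,b\rangle$; since $8\mid|SD_{16}|$ this places every imprimitive $H$ inside $SD_{16}\in\mc{S}(\Ma_2(\mc{I}_2))$. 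For $\Ma_2(\mathbb{H}_5)$, \Cref{description imprimitive} together with $\U(\O_5)\cong C_6$ limits the imprimitive subgroups to $\Gamma_1\times\Gamma_2$ and $(\Gamma\times\Gamma)\rtimes C_2$ with $\Gamma_i,\Gamma\leqslant C_6$; comparing their orders with the orders of the groups in $\mc{S}(\Ma_2(\O_5))$ shows that the largest ones — in particular $C_6\wr C_2$ of order $72$ and $C_6\times C_6$ of order $36$ — are not order-admissible for any spanning group (this is the content of \Cref{tab:imprimitive}) and are thus excluded by hypothesis, while each remaining order-admissible $H$ is checked to sit inside a suitable spanning group. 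The only non-formal point is this last comparison for $\Ma_2(\mathbb{H}_5)$; everything else is bookkeeping, and the order-admissibility hypothesis is precisely what is needed to discard the handful of imprimitive subgroups that are genuinely too large to fit into any spanning group.
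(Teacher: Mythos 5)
Your proposal follows essentially the same skeleton as the paper's proof: reduce via \Cref{description imprimitive} and \Cref{rk maximal imprimitive in order} to the wreath product $W_D=\U(\O_{\max})\wr C_2$, then run a case analysis over the seven $D$ with $\mc{S}(\Ma_2(\O))\neq\emptyset$. Where you genuinely add something is the observation that whenever $\U(\O_{\max})$ spans $D$ (i.e.\ for $D\in\{\Q,\Q(i),\Q(\sqrt{-3}),\mathbb{H}_2,\mathbb{H}_3\}$), the group $W_D$ already spans $\Ma_2(D)$ and hence lies in $\mc{S}(\Ma_2(\O))$ itself; the paper instead handles all seven cases uniformly by lookup in \Cref{tab:imprimitive} and \Cref{tableB}. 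Your ``Hence $\Span_\Q\{W_D\}=\Ma_2(D)$'' deserves a one-line justification (the $\Q$-span of $\Gamma\times\Gamma$ is all of $D\times D$ because the image of the augmentation ideal of $\Q\Gamma$ in $D$ is a nonzero two-sided ideal, hence $D$, whenever $\Gamma$ is nontrivial and spans $D$), but it is correct, and it makes the order-admissibility hypothesis visibly vacuous in those five cases. For $\Q(\sqrt{-2})$ your embedding $D_8\cong\langle a^2,b\rangle\leq SD_{16}$ is exactly what the table records.

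The one place I would push back is $\Ma_2(\mathbb{H}_5)$: both you and the paper reduce the order-admissibility hypothesis to ``exclude $C_6\wr C_2$ and $C_6\times C_6$'' and then hand over the ``remaining'' cases to a table/bookkeeping, but the surviving imprimitive subgroups are not all contained in $D_8=(C_2\times C_2)\rtimes C_2$, which is the only entry recorded in \Cref{tab:imprimitive}. In particular $\diag(C_2,C_6)\cong C_2\times C_6\leq\GL_2(\O_5)$ is imprimitive of order $12$, and $12\mid 240$, so it \emph{is} order-admissible (with respect to \texttt{[240,89]} or \texttt{[240,90]}), yet $C_2\times C_6\not\leq D_8$. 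One must check explicitly that $C_2\times C_6$ sits inside a spanning group; indeed it does inside \texttt{[240,90]} (whose Sylow $2$-subgroup is semidihedral of order $16$, so of $2$-rank $2$, and a transposition-lift of order $2$ centralises an order-$3$ element), but it does not sit inside \texttt{[240,89]} (whose Sylow $2$ is $Q_{16}$, of $2$-rank $1$) nor inside \texttt{[40,3]}. So the ``each remaining order-admissible $H$ is checked'' step carries real content that neither your write-up nor the paper spells out; if you carry your plan through you should exhibit these remaining admissible imprimitive subgroups ($C_2\times C_6$, $C_6$, $D_8$, $C_2\times C_2$, \ldots) inside concrete spanning groups rather than defer to the recorded maximal one.
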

\begin{table}[h!]
    \centering
    \caption{\footnotesize Per isomorphism type of the exceptional matrix algebra $A$, the isomorphism types of the maximal finite imprimitive group $H \leqslant \GL_2(\O)$ which  occurs as a subgroup of some $\Gamma \in \mc{S}(\Ma_2(\O))$.}
    \begingroup
\setlength{\tabcolsep}{10pt} 
\renewcommand{\arraystretch}{1.5} 
 \label{tab:imprimitive}
    {\footnotesize
    \begin{tabular}{l|l|l}
        \multicolumn{1}{c|}{$A$} & \multicolumn{1}{c|}{$H$}  & \textsc{SmallGroupID} of a $\Gamma \in \mc{S}(\Ma_2(\O))$ containing $H$ \\ \hline 
         $\Ma_2(\Q)$ & $(C_2 \times C_2) \rtimes C_2$ & \texttt{[8,3]} \\ \hline 
         $\Ma_2(\Q(i))$ & $(C_4 \times C_4) \rtimes C_2$ &  \texttt{[32,11]}\\ \hline 
         $\Ma_2(\Q(\sqrt{-2}))$ & $(C_2 \times C_2) \rtimes C_2$ & \texttt{[16,8]}\\ \hline 
         $\Ma_2(\Q(\sqrt{-3}))$ & $(C_6 \times C_6) \rtimes C_2$ & \texttt{[72,30]}\\ \hline 
         $\Ma_2(\mathbb{H}_2)$ & $(\SL_2(\F_3) \times \SL_2(\F_3))\rtimes C_2$ & \texttt{[1152,155468]} \\ \hline 
         $\Ma_2(\mathbb{H}_3)$ & $(Q_{12} \times Q_{12})\rtimes C_2$ & \texttt{[288,389]}\\ \hline 
         $\Ma_2(\mathbb{H}_5)$ & $(C_2 \times C_2) \rtimes C_2$ & \texttt{[240,90]} \\ 
    \end{tabular}}
    \endgroup
\end{table}
\begin{proof}[Proof of \Cref{contained in a spanning - imprimitive}]
    This follows from a straightforward case-by-case analysis for each isomorphism type of exceptional components $A$ such that $\GL_2(D) = \U(A)$, combined with the list of spanning subgroups given by \cite[Corollary 12.12, Tab. 12.1 and Tab. 12.2]{EricAngel1} (see also \Cref{tableB} in \Cref{appendix section}). By \Cref{description imprimitive} (see also \cref{rk maximal imprimitive in order}), all finite imprimitive subgroups are isomorphically contained in a largest imprimitive subgroup, which is of the form $(\U(\O) \times \U(\O))\rtimes C_2$. We summarise the maximal imprimitive subgroups which appear as a subgroup of a spanning group in \Cref{tab:imprimitive} by giving the \textsc{SmallGroupID} of a spanning group containing it. \Cref{description imprimitive} then allows to deduce all other imprimitive subgroups which are contained in a spanning group. Note that from \Cref{tab:imprimitive}, it follows that only for $\Ma_2(\mathbb{H}_5)$, there are no order-admissible spanning groups for the largest imprimitive subgroup $(\U(\O) \times \U(\O))\rtimes C_2$. The largest imprimitive subgroup which is order-admissible, is $(C_2 \times C_2) \rtimes C_2$, which indeed appears as a subgroup of a spanning group. 
    
    We also give, whenever it exists (i.e. in every case except for $\Ma_2(\Q(\sqrt{-2}))$ and $\Ma_2(\mathbb{H}_5)$), the \textsc{SmallGroupID} of a spanning subgroup \emph{isomorphic} to the maximal imprimitive subgroup under consideration.
\end{proof}

\subsubsection{Subgroups spanning an exceptional subalgebra}
Let $\Ma_2(D)$ be an exceptional matrix algebra, $\O$ a maximal order in $D$ and $G$ a finite subgroup of $\GL_2(\O)$. In this section, we describe when $\Span_{\Q} \{G\}$ is an exceptional matrix algebra. This problem consists of two steps: \smallskip

\begin{enumerate}
    \item[{\it Step 1:}] Determine the sets $\mc{S}(\Ma_2(\O))$. In other words, determine the isomorphism type of finite groups $G$ which have a faithful irreducible representation into $\Ma_2(D)$.  
    \item[{\it Step 2:}] Determine which $\Q$-subalgebras isomorphic to an exceptional matrix algebra $\Ma_2(D)$ contains.
\end{enumerate}

Step $1$ was realised by Eisele--Kiefer--Van Gelder in \cite[Table 2]{EKVG}. The sets $\mc{S}(\Ma_2(\O))$ can also be read from \Cref{tableB} in \Cref{appendix section}. We now deal with Step $2$.

Firstly, any matrix algebra $\Ma_2(D)$ contains a copy of $\Ma_2(\Q)$. Therefore, by \eqref{amalgam M2 over Q}, $\GL_2(D)$ contains $D_8$ and $D_{12}$ as subgroups. In summary, for $G$ a finite subgroup of $\GL_2(\I_d)$, one has that 
\begin{equation}\label{which exc matrix in Qsqrt-d}
   \Span_{\Q} \{ G\} \text{ is an exceptional matrix algebra} \iff G \leqslant D_8,D_{12} \text{ or } G \in \mc{S}(\Ma_2(\I_d)).
\end{equation}

Next, suppose that $D = \qa{-a}{-b}{\Q}$. Then it could be that $\Span_{\Q} \{G\} \cong \Ma_2(\Q(\sqrt{-d}))$ for some square-free $d \in \N$. Recall \eqref{when span set non-empty} saying that  $(a,b) \in \{(1,1),(1,3),(2,5)\}$ if $\mc{S}(\O) \neq \emptyset$. Using the Albert--Brauer--Hasse--Noether embedding theorem, one can compute the following.

\begin{lemma}\label{which Qsqrt-d in qa}
The quaternion algebra $\qa{-a}{-b}{\Q}$ with $(a,b) \in \{(1,1),(1,3),(2,5)\}$ contains the following subfields of the form $\Q(\sqrt{-d})$ with $d \in \{1,2,3 \}$:
    \begin{equation}\label{imaginary in quaternion}
        \begin{array}{ll}
           (1,1):  & d=1,2,3   \\
        (1,3):   & d= 1,3  \\
          (2,5):   & d= 2,3  \\
        \end{array}
    \end{equation}
Moreover, in these cases $\mc{I}_d$ embeds in the maximal order $\O_{a,b}$ of $\qa{-a}{-b}{\Q}$.
Consequently, $\GL_2( \qa{-a}{-b}{\Q})$ contains all finite subgroups of $\GL_2(\Q(\sqrt{-d}))$ for those values of $d$. 
\end{lemma}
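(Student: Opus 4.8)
The statement consists of three assertions: (a) the list \eqref{imaginary in quaternion} of which $\Q(\sqrt{-d})$ with $d\in\{1,2,3\}$ embed into each of the three quaternion algebras $\qa{-1}{-1}{\Q}$, $\qa{-1}{-3}{\Q}$, $\qa{-2}{-5}{\Q}$; (b) that the embedding can be arranged so that the ring of integers $\mc{I}_d$ lands in the (unique up to conjugation) maximal order $\O_{a,b}$; and (c) the consequence that $\GL_2(\qa{-a}{-b}{\Q})$ contains all finite subgroups of $\GL_2(\Q(\sqrt{-d}))$ for those $d$. Assertion (c) is formal once (a) and (b) are in place: if $\Q(\sqrt{-d})\hookrightarrow \qa{-a}{-b}{\Q}$ sending $\mc{I}_d$ into $\O_{a,b}$, then applying this entrywise to $2\times 2$ matrices gives $\GL_2(\mc{I}_d)\hookrightarrow \GL_2(\O_{a,b})\le \GL_2(\qa{-a}{-b}{\Q})$, and every finite subgroup of $\GL_2(\Q(\sqrt{-d}))$ is conjugate into $\GL_2(\mc{I}_d)$ since $\mc{I}_d$ is (up to conjugation) the unique maximal order of $\Q(\sqrt{-d})$ and finite subgroups lie in maximal orders; one can alternatively cite commensurability and absorb into a conjugation. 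So the mathematical content is entirely in (a) and (b).

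For (a), the tool is the Albert--Brauer--Hasse--Noether embedding theorem: a quadratic field extension $K/\Q$ embeds into a quaternion $\Q$-algebra $B$ if and only if $K$ splits $B$, equivalently if and only if $K\otimes_\Q \Q_v$ is a field for every place $v$ at which $B$ is ramified (equivalently $v$ does not split in $K$). First I would record the ramification sets: $\qa{-1}{-1}{\Q}$ is ramified exactly at $\{2,\infty\}$; $\qa{-1}{-3}{\Q}$ is ramified exactly at $\{3,\infty\}$ — wait, one should double-check this via Hilbert symbols, it is ramified at $\{2,3,\infty\}$ or $\{3,\infty\}$; I will compute $(-1,-3)_p$ for $p=2,3,\infty$ carefully; and $\qa{-2}{-5}{\Q}$ is ramified at $\{2,5,\infty\}$ (again to be verified by Hilbert symbol computations). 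Then for each candidate $d\in\{1,2,3\}$ one checks whether every ramified place $v$ of $B$ is non-split (i.e. inert or ramified) in $\Q(\sqrt{-d})$. At $\infty$ this is automatic since $\Q(\sqrt{-d})$ is imaginary. At the finite ramified primes $p$ one uses the standard criterion: $p$ is non-split in $\Q(\sqrt{-d})$ iff $-d$ is not a square in $\Q_p$, which for $p$ odd means $p\mid d$ or $-d$ is a non-residue mod $p$, and for $p=2$ is the familiar condition on $-d\bmod 8$. Running through the nine (algebra, $d$) pairs produces exactly the list in \eqref{imaginary in quaternion}; in particular I expect to see that $d=1$ fails to embed into $\qa{-2}{-5}{\Q}$ because $\Q_5(\sqrt{-1})/\Q_5$ is split (since $-1$ is a square mod $5$), and $d=2$ fails for $\qa{-1}{-3}{\Q}$ at $p=3$ because $-2\equiv 1\pmod 3$ is a square in $\Q_3$, while $d=3$ embeds everywhere because $3$ ramifies or $\sqrt{-3}$ generates a non-split extension at $2$ and $5$ as needed.

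For (b), once $K=\Q(\sqrt{-d})$ embeds in $B=\qa{-a}{-b}{\Q}$, one must arrange that the image of the maximal order $\mc{I}_d$ is contained in a maximal order of $B$; since $\O_{a,b}$ is the unique maximal order of $B$ up to conjugation (the algebras in question are norm-Euclidean, as recalled in the excerpt via \cite[Remarks 3.13 \& 6.14]{BJJKT} and \cite[Proposition 2.9]{CeChLez}), it suffices to land $\mc{I}_d$ in \emph{some} maximal order and then conjugate. The point is that $\mc{I}_d$ is a $\Z$-order in $K\subseteq B$, hence an order (possibly non-maximal) in $B$, and any order in $B$ is contained in a maximal order; conjugating that maximal order to $\O_{a,b}$ moves $\mc{I}_d$ inside $\O_{a,b}$ while composing the embedding with an inner automorphism of $B$, which does not disturb (a). I anticipate the main obstacle is purely bookkeeping: the Hilbert-symbol and local-square computations at the primes $2,3,5$ are the kind of thing that is easy to get sign-wrong, so the bulk of a careful write-up will be a small table of local invariants verifying each of the nine cases; there is no conceptual difficulty beyond invoking Albert--Brauer--Hasse--Noether and the uniqueness of maximal orders. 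I would present (b) and (c) tersely and spend the written proof on the explicit local checks underlying (a), perhaps displaying them as a compact case analysis organized by the three quaternion algebras.
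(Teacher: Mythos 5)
Your approach is exactly the paper's: the paper gives no details beyond invoking Albert--Brauer--Hasse--Noether, which is precisely what you do with explicit Hilbert-symbol computations, so this is the intended proof. Two small corrections to tighten the write-up: (i) your tentative guesses $\{2,3,\infty\}$ and $\{2,5,\infty\}$ for the ramification sets of $\qa{-1}{-3}{\Q}$ and $\qa{-2}{-5}{\Q}$ cannot be right because a quaternion algebra over $\Q$ is ramified at an even number of places -- the correct sets are $\{3,\infty\}$ and $\{5,\infty\}$ (for the latter, $-2-5=-7\equiv 1\bmod 8$ is a square in $\Q_2$, so $2$ is split); this is immaterial to the final table since none of $-1,-2,-3$ is $\equiv 1\bmod 8$, but state it correctly. (ii) $\mc{I}_d$ is not an order \emph{of} $B$ (it has $\Z$-rank $2$, not $4$); say rather that $\mc{I}_d$ is a ring of integral elements, hence is contained in some order of $B$ and therefore in some maximal order, which is then conjugate to $\O_{a,b}$.
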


Although $\Ma_2(\mc{I}_d)$ embeds in $\Ma_2(\O_{a,b})$, this does not mean that every spanning subgroup of $\Ma_2(\mc{I}_d)$ is isomorphic to a subgroup of a spanning subgroup of $\Ma_2(\O_{a,b})$. However, they usually are, which can be verified by hand using the tables in \Cref{appendix section}.

\begin{proposition}\label{contained in spanning full exc}
    Let $A$ be an exceptional matrix algebra and $\O$ a maximal order such that $\mc{S}(\O)$ is non-empty. Let $H$ be a finite subgroup of $\U(\O)$ such that $\Span_{\Q} \{ H \}\cong \Ma_2(\Q(\sqrt{-d}))$ for $d \in \N$. Then $H$ is conjugated in $\U(A)$ to a subgroup of some $\Gamma \in \mc{S}(\O)$, with exception of the cases in \Cref{tab:exceptions spanning}.
\end{proposition}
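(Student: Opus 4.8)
\textbf{Proof plan for \Cref{contained in spanning full exc}.}

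The plan is to combine the three structural ingredients already assembled in this subsection: the classification of spanning subgroups $\mc S(\O)$ of exceptional matrix algebras from \cite{EKVG} (recorded in \Cref{tableB}), the description \eqref{which exc matrix in Qsqrt-d} of when a finite subgroup of $\GL_2(\I_d)$ spans an exceptional matrix algebra, and the embedding data of \Cref{which Qsqrt-d in qa}. The key observation is the dichotomy according to $\ZZ(D)$. If $A = \Ma_2(\Q(\sqrt{-d}))$, then $\Span_{\Q}\{H\} \cong \Ma_2(\Q(\sqrt{-d'}))$ forces $d' = d$ (one cannot have a strictly smaller imaginary quadratic field contributing a larger matrix algebra, and $\Ma_2(\Q)$ is not of the form $\Ma_2(\Q(\sqrt{-d'}))$ with $d'\neq 0$ unless one counts $d'=0$; in the genuinely non-trivial case $\ZZ(\Span_{\Q}\{H\})=\Q(\sqrt{-d})=\ZZ(A)$), so $\Span_{\Q}\{H\} = A$ and hence $H \in \mc S(\O)$ already, so there is nothing to prove beyond \Cref{iso implies conj for exc}. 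The substantive case is therefore $A = \Ma_2(\qa{-a}{-b}{\Q})$ with $(a,b) \in \{(1,1),(1,3),(2,5)\}$ by \eqref{when span set non-empty}, where $H$ spans a \emph{proper} subalgebra isomorphic to $\Ma_2(\Q(\sqrt{-d}))$, with $d$ among the values allowed by \eqref{imaginary in quaternion}.

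First I would reduce to conjugacy in $\U(A)$ via \Cref{iso implies conj for exc}: since $\Span_{\Q}\{H\}$ is an exceptional matrix algebra and $A$ is exceptional, it suffices to show $H$ is \emph{isomorphic} to a subgroup of some $\Gamma \in \mc S(\O)$. Next, by \Cref{which Qsqrt-d in qa}, $\mc I_d$ embeds in the maximal order $\O_{a,b}$, so $H$ — viewed as a finite subgroup of $\GL_2(\mc I_d)$ spanning $\Ma_2(\Q(\sqrt{-d}))$ — is, up to conjugacy in $\U(A)$, a finite subgroup of $\GL_2(\mc I_d)$. By \eqref{which exc matrix in Qsqrt-d}, such an $H$ is either contained in $D_8$ or $D_{12}$ (which embed in \emph{every} $\Ma_2(D)$ via the copy of $\Ma_2(\Q)$, hence into any $\Gamma\in\mc S(\O)$ containing $D_8$ or $D_{12}$ as a subgroup — one checks from \Cref{tableB} that each non-empty $\mc S(\O)$ has such a member), or $H \in \mc S(\Ma_2(\mc I_d))$, i.e. $H$ is on the short list of spanning subgroups of $\Ma_2(\Q(\sqrt{-d}))$. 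So the proof comes down to a finite check: for each $(a,b) \in \{(1,1),(1,3),(2,5)\}$ and each admissible $d$ from \eqref{imaginary in quaternion}, go through the (explicitly tabulated) groups in $\mc S(\Ma_2(\mc I_d))$ and verify which of them embed into some $\Gamma \in \mc S(\Ma_2(\O_{a,b}))$; the finitely many pairs $(A,H)$ for which this fails are collected in \Cref{tab:exceptions spanning}.

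The main obstacle will be this last bookkeeping step: it is not automatic that a spanning subgroup of the smaller algebra $\Ma_2(\mc I_d)$ sits inside a spanning subgroup of the larger $\Ma_2(\O_{a,b})$, precisely because $\mc S(\O_{a,b})$ need not contain every finite subgroup of $\GL_2(\O_{a,b})$ and the group orders may be incompatible. Concretely, I would cross-reference the two EKVG tables (the $\Ma_2(\Q(\sqrt{-d}))$ rows against the $\Ma_2(\qa{-a}{-b}{\Q})$ rows of \Cref{tableB}) using \textsc{SmallGroupID}s and subgroup relations, exactly as in the proof of \Cref{contained in a spanning - imprimitive}. The exceptional pairs — those $H$ with $\Span_{\Q}\{H\}\cong \Ma_2(\Q(\sqrt{-d}))$ that embed in no spanning subgroup of $A$ — are the content of \Cref{tab:exceptions spanning}, and identifying them is a matter of elimination once the embedding lattice of the finitely many candidate groups has been laid out. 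No genuinely new idea is needed beyond \Cref{iso implies conj for exc}, \Cref{which Qsqrt-d in qa}, \eqref{which exc matrix in Qsqrt-d}, and a careful reading of the classification tables.
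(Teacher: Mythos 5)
Your proposal is correct and follows the same route as the paper: the paper does not write out a formal proof for this proposition but instead sets it up with \eqref{which exc matrix in Qsqrt-d} and \Cref{which Qsqrt-d in qa} and then asserts that "they usually are, which can be verified by hand using the tables" in \Cref{appendix section}, which is precisely the reduction-to-isomorphism-via-\Cref{iso implies conj for exc}-plus-table-lookup you describe. Your explicit treatment of the easy case $A \cong \Ma_2(\Q(\sqrt{-d}))$ (where a proper exceptional span can only be $\Ma_2(\Q)$ by \Cref{subalgebra of exceptional}, handled by $D_8, D_{12}$) and the substantive quaternion case $(a,b) \in \{(1,1),(1,3),(2,5)\}$ matches the structure of the surrounding discussion, including the identification of the exceptional $(d,A,H)$ triples by elimination from the tables.
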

\begingroup
\setlength{\tabcolsep}{10pt} 
\renewcommand{\arraystretch}{1.5}
\begin{table}[h!]
    \caption{\footnotesize Cases of triples $(d,A,H)$ such that \Cref{contained in spanning full exc} is not satisfied.}
    \label{tab:exceptions spanning}
    \centering
    {\footnotesize
    \begin{tabular}{l|l|l}
         \multicolumn{1}{c|}{$d$} & \multicolumn{1}{c|}{$A$}&  \multicolumn{1}{c}{$H$}\\ \hline 
         $1$ &  $\Ma_2(\qa{-1}{-3}{\Q})$  & \texttt{[48,33]} or \texttt{[96,67]} \\ \hline 
         $3$ & $\Ma_2(\qa{-2}{-5}{\Q})$ & $S(\Ma_2(\mc{I}_3)) \setminus \{\SL_2(\F_3), \: C_3 \rtimes D_8 = $ \texttt{[24,8]}$\}$.
    \end{tabular}}
\end{table}
\endgroup

\subsubsection{Possible subgroups which are Frobenius complements}

Recall that a finite subgroup $H$ of a finite-dimensional division algebra $D$ is a Frobenius complement, which have been classified by Amitsur. Moreover, by \Cref{subalgebra of exceptional} if $\Span_{\Q}\{ H\} =D$ is contained in an exceptional matrix algebra, then $D$ is a field or a quaternion algebra. It turns out that this restricts $H$ strongly, which we make precise in the following proposition.

\begin{proposition}\label{span division}
Let $A$ be an exceptional matrix algebra and $\O$ an order in $A$. Suppose that $\mc{S}(\mc{O})$ is non-empty and let $H$ be a finite subgroup of $\U(\O)$ such that $\Span_{\Q} \{H\}$ is a division algebra. Then the following hold:
\begin{enumerate}
    \item\label{it: nc, the prim} If $H$ is not cyclic, then $H$ is primitive or conjugated to $\{ \begin{psmallmatrix}
        u & 0 \\ 0 & 1
    \end{psmallmatrix} \mid u \in N\}$ with $N$ a non-cyclic subgroup of $\U (\O)$.
    \item\label{it: field case} If $\Span_{\Q} \{H\}$ is a field, $H \cong C_n$, with $n$ a divisor of $8,10$ or $12$.
    \item\label{it: div alg case} If $\Span_{\Q} \{H\}$ non-commutative, $H$ is isomorphic to $\SL_2(\F_3), \SU_2(\F_3), \SL_2(\F_5)$ or $Q_{4m}$, with $m$ a divisor of $4,5$ or $6$, and all of these cases occur for some $A$.
    \item \label{it: in spanning} There exists a $\Gamma \in \mc{S}(\O)$ containing an isomorphic copy of $H$.
\end{enumerate}
Moreover, whether $\mc{O}$ actually contains such an $H$ is recorded in \Cref{table iso types H}.
\end{proposition}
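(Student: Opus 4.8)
Write $D_H := \Span_{\Q}\{H\}$, so that $D_H$ is a division algebra by hypothesis. The plan is to combine three inputs: the structural reduction of \Cref{subalgebra of exceptional}, Amitsur's classification of finite subgroups of division algebras \cite{Amitsur} (in the form recalled in the proof of \Cref{broad form finite grps}), and a finite verification against the list of spanning subgroups of \cite{EKVG}. The only general fact I would record first is that a maximal subfield of $\Ma_2(D)$ has degree at most $4$ over $\Q$ (as $\deg D\cdot[\ZZ(D):\Q]\leq 2$), so every commutative subfield of $\Ma_2(D)$ — in particular of $D_H$ — has $\Q$-degree at most $4$.

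For part \eqref{it: nc, the prim} I would invoke the primitive/imprimitive dichotomy. If $H$ is imprimitive, \Cref{description imprimitive} gives, after a base change over $D$, that $H$ is either a group of diagonal matrices $\Gamma_1\times\Gamma_2$ or is isomorphic to $(\Gamma\times\Gamma)\rtimes C_2$ with $C_2$ swapping the two summands. In the swap case, and in the diagonal case when both $\Gamma_i$ are non-trivial, the $\Q$-span contains an element $\diag(x,0)$ with $x\neq 0$ — either $\diag(\gamma_1,1)-I=\diag(\gamma_1-1,0)$ for a non-trivial $\gamma_1$, or the linear relation forced on $\Q[\Gamma_1]$ by the fact that it embeds into the division algebra $D$ (hence is a field, not the full group algebra). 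As $\diag(x,0)$ is a non-zero non-unit, this contradicts $D_H$ being a division algebra, so when $H$ is non-cyclic the only imprimitive possibility that survives is that $H$ is conjugate to $\{\diag(u,1)\mid u\in N\}$, with $N$ non-cyclic since $H$ is. Otherwise $H$ is primitive, which is the first alternative.

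For \eqref{it: field case}–\eqref{it: div alg case}, by \Cref{subalgebra of exceptional} the division algebra $D_H$ is a field or a quaternion algebra; in both cases $H$ is a finite subgroup of $D_H$, hence a Frobenius complement, so Amitsur's list applies. If $D_H$ is a field then $H$ is cyclic, $H\cong C_n$ and $D_H=\Q(\zeta_n)$; the subfield bound gives $\phi(n)\leq 4$, i.e.\ $n\mid 8,10$ or $12$. If $D_H$ is a non-commutative quaternion algebra with centre $F_H$, a maximal subfield of $D_H$ has degree $2[F_H:\Q]\leq 4$, so $[F_H:\Q]\leq 2$ (and $[F_H:\Q]=2$ forces $D$ to be a rational quaternion algebra, by \Cref{remark degree quat alg}). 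Running through Amitsur's list — the dicyclic groups $Q_{4m}$, the binary groups $\SL_2(\F_3)$, $\SU_2(\F_3)$, $\SL_2(\F_5)$, and the mixed types $M\times Q_8$, $M\times\SL_2(\F_3)$ — and computing the centre of the minimal division algebra in each case (its faithful cyclotomic character field, e.g.\ $\Q(\zeta_{2m}+\zeta_{2m}^{-1})$ for $Q_{4m}$ and $\Q(\zeta_m)$ for the mixed types), the constraint $[F_H:\Q]\leq 2$ together with the arithmetic conditions in Amitsur's list forces $M$ trivial and $m\in\{2,3,4,5,6\}$, which is precisely the list in \eqref{it: div alg case}. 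That each such group actually occurs inside $\U(\O)$ for some exceptional $A$ I would then see by exhibiting an embedding of its minimal division algebra into an exceptional $\Ma_2(D)$ — for instance $\qa{-1}{-1}{\Q}\hookrightarrow\Ma_2(\qa{-1}{-1}{\Q})$, and $\Q(\zeta_5)\hookrightarrow\Ma_2(\qa{-1}{-1}{\Q})$ because $\Q(\zeta_5)$ splits $\qa{-1}{-1}{\Q}$ — the existence of such embeddings being decided by the Albert–Brauer–Hasse–Noether theorem exactly as in \Cref{which Qsqrt-d in qa}.

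Finally, for part \eqref{it: in spanning} and the table: for each $H$ produced above and each of the (finitely many) exceptional matrix algebras $A$ with $\mc{S}(\O)\neq\emptyset$ listed in \eqref{when span set non-empty}, I would exhibit a $\Gamma\in\mc{S}(\Ma_2(\O))$ containing an isomorphic copy of $H$, reading the spanning groups off \cite[Table 2]{EKVG} (equivalently \Cref{tableB} in \Cref{appendix section}): every cyclic $C_n$ with $n\mid 12$ sits in a dihedral spanning group, $Q_8\leq SD_{16}$, $\SL_2(\F_3)\leq\GL_2(\F_3)$, and $\SU_2(\F_3)$, $\SL_2(\F_5)$ sit in the order-$48$, respectively order-$120$, spanning groups over the relevant quaternion algebra, and so on; recording for each $H$ the precise set of orders $\O$ for which this copy lies in $\U(\O)$ gives \Cref{table iso types H}. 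The conceptual content (\eqref{it: nc, the prim}–\eqref{it: div alg case} modulo embeddings) is short once \Cref{subalgebra of exceptional} and Amitsur's theorem are available; the genuine work — and the main obstacle — is the embedding questions, i.e.\ deciding exactly which minimal division algebra of a candidate group embeds into which of the seven exceptional $\Ma_2(D)$, which requires local Hasse-invariant computations, together with the finite but lengthy bookkeeping needed for \eqref{it: in spanning} and \Cref{table iso types H}.
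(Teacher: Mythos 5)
For parts \eqref{it: field case}--\eqref{it: in spanning} your route is sound and ends up at the same arithmetic as the paper's, but reaches it more directly: where the paper first invokes \Cref{recollection of props} to bound element orders and then computes $\Q Q_{4n}$'s faithful strong Shoda pair component $\qa{\alpha_{2n}^2-4}{-1}{\Q(\alpha_{2n})}$ to force $\phi(2m)/2\leq 2$, you derive the same inequality in one step from the bound on maximal subfield degree of an exceptional $\Ma_2(D)$. You also correctly identify the embedding questions (which $D_H$ embeds in which $\Ma_2(D)$, decided via local invariants) as the real work behind \eqref{it: in spanning} and \Cref{table iso types H}; the paper handles exactly this in the last part of its proof.

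Part \eqref{it: nc, the prim} is where your argument breaks, and it breaks in the same place as the paper's own proof. You both read \Cref{description imprimitive} as saying that after conjugation the diagonal part of an imprimitive $H$ is literally a full product $\Gamma_1\times\Gamma_2$ (resp.\ $\Gamma\times\Gamma$), but the diagonal part can be a proper ``graph'' subgroup of $\U(D)\times\U(D)$, and such graphs survive your $\diag(\gamma_1,1)-I$ test. Concretely, take $A=\Ma_2(\qa{-1}{-1}{\Q})$ and $H=\{\diag(\gamma,\gamma):\gamma\in Q_8\}$ inside $\GL_2(\O_2)$ with $\O_2$ the Hurwitz order. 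Then $H$ is imprimitive, non-cyclic, and $\Span_\Q\{H\}\cong\qa{-1}{-1}{\Q}$ is a division algebra, so the hypotheses hold; yet $H$ is neither primitive nor conjugate to any $\{\diag(u,1):u\in N'\}$, since the $\Q$-span of the latter contains the zero-divisor $\diag(u-1,0)$ once $N'\neq 1$ and so can never be a division algebra. Your ``or'' branch does not catch this case: for the diagonal embedding of $Q_8$ every relation $\sum a_\gamma\gamma=0$ in $\Q Q_8$ is killed in both coordinates simultaneously, so no $\diag(x,0)$ with $x\neq 0$ arises. A swap-type graph fails the same way: the faithful representation $Q_8\to\GL_2(\Z[i])$ sending $i\mapsto\diag(i,-i)$ and $j\mapsto\begin{psmallmatrix}0&1\\-1&0\end{psmallmatrix}$ is imprimitive with swap element $j$, has diagonal part $\langle\diag(i,-i)\rangle\cong C_4$ (not of the form $\Gamma\times\Gamma$ for any $\Gamma$), and has $\Q$-span $\cong\qa{-1}{-1}{\Q}$. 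So both the statement of \eqref{it: nc, the prim} and the reading of \Cref{description imprimitive} that you (and the paper) rely on need repair; your full-product zero-divisor argument is, for that case, cleaner than the paper's Frobenius-complement detour and would be the right core of a corrected proof.
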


\begin{table}[h!]
\caption{\footnotesize Per isomorphism type of the exceptional matrix algebra $A$, the isomorphism types of all finite groups $H$ which span a division algebra that occur as a subgroup of a $\Gamma \in \mc{S}(\O)$.}
\label{table iso types H}
\begingroup
\setlength{\tabcolsep}{10pt} 
\renewcommand{\arraystretch}{1.5} 
{\footnotesize \begin{tabular}{l|l}
\multicolumn{1}{c|}{$A$}                  & \multicolumn{1}{c}{$H$}     \\ \hline
$\Ma_2(\Q)$ &  \hspace{2pt} $C_n$, for $n \mid 6$ \\ \hline
$\Ma_2(\Q(i))$    & \hspace{-9pt} \begin{tabular}{l}
$C_{n}$, for $n \mid 8$ or $12$ \\ \hline
$Q_8$, $Q_{12}$, $\SL_2(\F_3)$  
\end{tabular}\\ \hline
$\Ma_2(\Q(\sqrt{-2}))$ & \hspace{-9pt} \begin{tabular}{l} $C_n$, for $n \mid 8$ \\ \hline $Q_8$,  $\SL_2(\F_3)$ \end{tabular} \\ \hline                                                              
$\Ma_2(\Q(\sqrt{-3}))$ & \hspace{-9pt} \begin{tabular}{l}
$C_n$, for $n \mid 12$ \\ \hline 
$Q_8$, $Q_{12}$, $\SL_2(\F_3)$
\end{tabular} \\  \hline
$\Ma_2(\mathbb{H}_2)$,  $\Ma_2(\mathbb{H}_3)$ and $\Ma_2(\mathbb{H}_5)$ & \hspace{-9pt} \begin{tabular}{l} $C_n$, for $n \mid 8$, $10$ or $12$ \\ \hline $Q_8$, $Q_{12}$, $Q_{16}$, $Q_{20}$, $Q_{24}$, $\SL_2(\F_3)$, $\SU_2(\F_3)$, $\SL_2(\F_5)$\end{tabular}
\end{tabular}}
\endgroup
\end{table}

\begin{example}
\label{example faithful irreps SLSU}
The groups $\SL_2(\F_3), \SU_2(\F_3)$ and $ \SL_2(\F_5)$ have a faithful irreducible $\Q$-representation into a unique quaternion algebra, but also into an exceptional matrix  algebra. For example using the Wedderga package\footnote{Recall that the groups have \textsc{SmallGroupID's} respectively \texttt{[24,3]},  \texttt{[48,28]} and \texttt{[120,5]}.}, one verifies that these groups have the following faithful irreducible representations.
\[
\begin{array}{l}
     \SL_2(\F_3) \rightsquigarrow \qa{-1}{-1}{\Q} \text{ and }  \Ma_2(\Q(\sqrt{-3})), \\[0.2 cm]
     \SU_2(\F_3) \rightsquigarrow \qa{-1}{-1}{\Q(\zeta_8+\zeta_8^{-1})} \text{ and } \Ma_2(\mathbb{H}_3),\\[0.2cm]
      \SL_2(\F_5) \rightsquigarrow \qa{-1}{-1}{\Q(\zeta_5 + \zeta_5^{-1})} \text{ and } \Ma_2(\mathbb{H}_3).
\end{array}\]  
\end{example}

The isomorphism types in \Cref{span division} arise thanks to the following description of finite subgroups of a quaternion algebra, which follows from \cite[Prop. 32.4.1, Lemma 32.6.1 \& Prop. 32.7.1]{Voight}.

\begin{lemma}[\cite{Voight}]\label{spanning of quaternion}
    Let $\mathcal{O}$ be an order in $D =\qa{a}{b}{F}$ with $F$ a number field. If $H \leqslant \U (\mathcal{O})$ is a finite subgroup such that $\Span_{\Q}\{ H\} \cong D$, then $H$ is isomorphic to one of the following:
\begin{itemize}
    \item $Q_{4m} = \langle x,y \mid x^{2m} = 1, y^2= x^{m}, yx = x^{-1}y \rangle$, a generalised quaternion group,
    \item $\SL_2(\F_3)$, $\SU_2(\F_3)$ or $\SL_2(\F_5)$.
\end{itemize} 
\end{lemma}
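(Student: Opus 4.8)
\textbf{Proof proposal for \Cref{spanning of quaternion}.}

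The plan is to reduce the statement directly to the cited results on orders in quaternion algebras, namely \cite[Prop. 32.4.1, Lemma 32.6.1 \& Prop. 32.7.1]{Voight}. First I would observe that since $H \leqslant \U(\mc{O})$ is finite and $\Span_{\Q}\{H\} \cong D$, the group $H$ is not contained in any proper subfield of $D$; in particular $H$ is a non-abelian finite subgroup of $\U(\mc{O})$ (if $H$ were abelian, $\Q[H]$ would be commutative, contradicting $\Span_{\Q}\{H\}\cong D$ with $D$ non-commutative). Moreover $H$ embeds into $\U(\mc{O})$ for $\mc{O}$ an order in a quaternion algebra $\qa{a}{b}{F}$, so $H$ is a finite subgroup of a quaternion division algebra over a number field; equivalently $H$ is a Frobenius complement whose rational span is $D$.

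Next I would invoke the classification of finite subgroups of quaternion orders. The results in \emph{loc.\ cit.} describe the finite subgroups of $\U(\mc{O})$ up to conjugacy: the cyclic ones, the binary dihedral (generalised quaternion) groups $Q_{4m}$, and the exceptional binary polyhedral groups, which for quaternion algebras are the binary tetrahedral group $\SL_2(\F_3)$, the binary octahedral group $\SU_2(\F_3) \cong 2.S_4$, and the binary icosahedral group $\SL_2(\F_5)\cong 2.A_5$. The cyclic subgroups are excluded by the spanning hypothesis (they generate a commutative, hence at most $2$-dimensional, $\Q$-subalgebra, which cannot equal the $4$-dimensional $D$ — more precisely a cyclic group generates a field, not a quaternion algebra). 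Similarly one must check that the generalised quaternion groups and the three exceptional groups listed genuinely span a quaternion algebra over their character field: this is classical, since each of $Q_{4m}$, $\SL_2(\F_3)$, $\SU_2(\F_3)$, $\SL_2(\F_5)$ has a faithful irreducible $\Q$-representation (or representation over the relevant real subfield) of Schur index $2$ landing in a quaternion division algebra, so any embedding $H \hookrightarrow \U(\mc{O})$ with $\Span_{\Q}\{H\} = D$ realises exactly that component.

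I expect the only mild subtlety — which is really just bookkeeping — to be the elimination of the dihedral-type subgroups that are \emph{not} binary: an ordinary dihedral group $D_{2m}$ can sit inside $\GL_2$ but not inside a division algebra, since it contains a Klein four-group when $4\mid |D_{2m}|$ (or a non-central involution), and a finite subgroup of a division ring has at most one involution (the unique central one $-1$). This forces any dihedral-like subgroup inside $\U(\mc{O})$ to be of binary (generalised quaternion) type $Q_{4m}$. After that, the statement is exactly the content of the quoted propositions, so the proof amounts to: (i) note $H$ is a non-abelian finite subgroup of a quaternion order; (ii) apply the cited classification; (iii) discard the cyclic case by the spanning hypothesis and rule out non-binary dihedral groups by the unique-involution property; (iv) conclude that $H \cong Q_{4m}$ for some $m$, or $H \in \{\SL_2(\F_3), \SU_2(\F_3), \SL_2(\F_5)\}$. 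The main obstacle, if any, is simply citing the quaternion-order classification in the precise form needed and checking the involution argument; no genuinely new computation is required.
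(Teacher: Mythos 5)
The paper gives no argument here: \Cref{spanning of quaternion} is simply stated with a citation to the three Voight propositions, so your reconstruction is exactly the kind of implicit deduction the paper leaves to the reader. Your overall plan — note $H$ is non-abelian, invoke Voight's classification, discard cyclic groups via the spanning hypothesis, and use the unique-involution property of division rings to rule out non-binary dihedral groups — is a sensible way to organise it.

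However, there is a genuine gap that both you and (apparently) the paper gloss over, and it is not mere bookkeeping. The list you attribute to Voight — cyclic, binary dihedral, binary tetrahedral/octahedral/icosahedral — is the ADE classification of finite subgroups of $\mathbb{H}^*$, and it is the correct classification of finite subgroups of $\U(\O)$ when $F$ is \emph{totally real} (so that $H \cap F^* = Z(H)$ is contained in $\{\pm 1\}$). It is \emph{not} the correct classification for arbitrary number fields $F$, which is what the lemma allows. Concretely, take $F = \Q(\zeta_7)$ and $D = \qa{-1}{-1}{F}$. Since $\Q(\zeta_7)$ is totally complex and $2$ has odd residue degree $o_7(2)=3$ at every prime above $2$, the algebra $D$ is a genuine division algebra over $F$. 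Let $\O = \Z[\zeta_7]\langle 1,i,j,ij\rangle$. Then $H := \langle \zeta_7\rangle \times Q_8 \cong C_7 \times Q_8$ sits inside $\U(\O)$ and $\Span_\Q\{H\} = \Q(\zeta_7)\otimes_\Q \qa{-1}{-1}{\Q} = D$. But $C_7\times Q_8$ has centre $C_{14}$, so it is not a generalised quaternion group and not one of $\SL_2(\F_3)$, $\SU_2(\F_3)$, $\SL_2(\F_5)$. It is precisely case (b)(iii) of the Amitsur list recalled earlier in the paper. So either the cited Voight results carry a hypothesis (totally definite $D$, i.e.\ totally real $F$) that the paper's statement of the lemma has dropped, or the lemma is simply too broadly stated; either way your proof should surface this: after invoking the classification you need to observe that $Z(H)\leqslant H\cap F^*\leqslant \mu(\O_F)$, and that the lemma's conclusion requires $\mu(\O_F)=\{\pm 1\}$, which is exactly the totally-real hypothesis.

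Two smaller points. First, the parenthetical ``they generate a commutative, hence at most $2$-dimensional, $\Q$-subalgebra'' is false as stated: a cyclic group $C_n$ generates $\Q(\zeta_n)$, which has $\Q$-dimension $\phi(n)$, not $\leq 2$; your ``more precisely'' clause (a cyclic group generates a field, not a quaternion algebra) is the argument you actually want. Second, if Voight's propositions already output \emph{binary} dihedral groups, then your final step about ordinary $D_{2m}$ is redundant; the unique-involution argument is more naturally used at the step where one lifts $H/Z(H)$ (cyclic, dihedral, $A_4$, $S_4$, or $A_5$) through the central extension by $Z(H)\leqslant\{\pm 1\}$ and shows the split extensions (and $\GL_2(\F_3)$) cannot embed in a division algebra, leaving exactly the four listed families.
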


Another important ingredient for \Cref{span division} are the following restrictions on the possible order of elements in an exceptional matrix algebra obtained in \cite{EKVG} (see also \cite[Proposition 12.1.1]{EricAngel1} for the version below).

\begin{theorem}[\cite{EKVG}]\label{recollection of props}
Let $A = \Ma_2(D)$ be an exceptional matrix algebra and $\O$ an order in $A$ such that $\mc{S}(\O)$ is non-empty. Consider $h \in \O$. If $D = \Q(\sqrt{-d})$, then the following hold:
\begin{itemize}
    \item For $h \in H$, the order $o(h)$ divides $8$ or $12$.
    \item If $o(h)=8$, then $D = \Q(\sqrt{-1})$ or $\Q(\sqrt{-2})$.
    \item If $o(h)=12$, then $D =\Q(\sqrt{-1})$ or $\Q(\sqrt{-3})$.
\end{itemize}
If $D= \qa{-a}{-b}{\Q}$, then $o(h)$ divides $8,10$ or $12$.
\end{theorem}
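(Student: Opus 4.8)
The plan is to reduce the statement to a finite computation with roots of unity. A torsion element $h$ of $\U(\O)$ is semisimple, so over a splitting field of $A$ it becomes a diagonalisable matrix whose eigenvalues are roots of unity; these eigenvalues are exactly the roots of the reduced characteristic polynomial $p_h(X)$, which is monic of degree equal to the reduced degree of $A$ and has coefficients in $\ZZ(A)$ (in the case $D = \Q(\sqrt{-d})$ this $p_h$ is just the ordinary characteristic polynomial of the $2\times 2$ matrix $h$), and $o(h)$ is the least common multiple of the orders of these roots of unity. I would then treat the two cases of \eqref{when span set non-empty} separately, according to whether $\ZZ(A)$ equals $\Q$ or an imaginary quadratic field.

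First I would take $D = \qa{-a}{-b}{\Q}$, so that $\ZZ(A) = \Q$ and $p_h \in \Q[X]$ is monic of degree $4$ with all roots roots of unity; hence $p_h$ is a product of cyclotomic polynomials $\prod_m \Phi_m^{b_m}$ with $\sum_m b_m\,\phi(m) = 4$, and $o(h) = \operatorname{lcm}\{m : b_m \ge 1\}$. Every $m$ occurring satisfies $\phi(m)\le 4$, hence $m\in\{1,2,3,4,5,6,8,10,12\}$, and a direct inspection shows that the only least common multiples that can be formed from such $m$'s without the total degree $\sum_m b_m\phi(m)$ exceeding $4$ again lie in $\{1,2,3,4,5,6,8,10,12\}$; in particular $o(h)$ divides $8$, $10$ or $12$.

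Next I would take $D = \Q(\sqrt{-d})$, so that $\ZZ(A) = \Q(\sqrt{-d})$ and $p_h = (X-\zeta_1)(X-\zeta_2) \in \Q(\sqrt{-d})[X]$ with $\zeta_1,\zeta_2$ roots of unity and $n := o(h) = \operatorname{lcm}(o(\zeta_1),o(\zeta_2))$. From $\zeta_2 = (\zeta_1+\zeta_2)-\zeta_1 \in \Q(\sqrt{-d})(\zeta_1)$ one gets $\Q(\sqrt{-d})(\zeta_n) = \Q(\zeta_1,\zeta_2) \cdot \Q(\sqrt{-d}) = \Q(\sqrt{-d})(\zeta_1)$, which has degree at most $2$ over $\Q(\sqrt{-d})$ (being generated by a root of the quadratic $p_h$), hence degree at most $4$ over $\Q$. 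Therefore $\phi(n) = [\Q(\zeta_n):\Q] \le 4$, so $n \in \{1,2,3,4,5,6,8,10,12\}$, and whenever $\phi(n) = 4$ this moreover forces $\Q(\sqrt{-d}) \subseteq \Q(\zeta_n)$. Since $\Q(\zeta_5) = \Q(\zeta_{10})$ has $\Q(\sqrt 5)$, a real field, as its unique quadratic subfield, the values $n = 5$ and $n = 10$ are impossible; since the imaginary quadratic subfields of $\Q(\zeta_8)$ are $\Q(\sqrt{-1})$ and $\Q(\sqrt{-2})$, and those of $\Q(\zeta_{12})$ are $\Q(\sqrt{-1})$ and $\Q(\sqrt{-3})$, the cases $n=8$ and $n=12$ yield exactly the stated restrictions on $d$. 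Hence $o(h)$ divides $8$ or $12$, with the claimed refinements.

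I do not expect any real obstacle here beyond bookkeeping; the one point that needs care is the reduction in the first paragraph, which uses that $A$ is a separable algebra so that torsion elements are semisimple and the reduced characteristic polynomial computed in a splitting field agrees with the characteristic polynomial of the image matrix. I would also note that neither the hypothesis $\mc{S}(\O)\neq\emptyset$ nor the precise values of $a,b$ (beyond $D$ being a quaternion algebra over $\Q$, respectively an imaginary quadratic field) enter these upper bounds — they only determine which of the listed orders are actually realised — and that, since the statement is recorded in \cite{EKVG} and \cite[Proposition 12.1.1]{EricAngel1}, one may alternatively simply cite these references.
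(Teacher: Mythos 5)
Your proof is correct, and there is no in-paper argument to compare it against: the statement is recorded as a theorem of \cite{EKVG} (see also \cite[Proposition~12.1.1]{EricAngel1}) and the paper cites rather than reproves it, so your self-contained derivation is a genuine addition. The reduction to the reduced characteristic polynomial is sound (in characteristic~$0$ a torsion element is semisimple, $p_h$ has coefficients in $\ZZ(A)$, and $o(h)$ is the least common multiple of the orders of the eigenvalues read off over a splitting field). The quaternion case then follows from enumerating the multisets of cyclotomic degrees summing to $4$, all of whose least common multiples lie in $\{1,2,3,4,5,6,8,10,12\}$. For $D=\Q(\sqrt{-d})$, your field-degree argument via $\zeta_2=(\zeta_1+\zeta_2)-\zeta_1\in\Q(\sqrt{-d})(\zeta_1)$ gives $\phi(n)\le 4$ and, when $\phi(n)=4$, forces $\Q(\sqrt{-d})\subseteq\Q(\zeta_n)$; together with the facts that $\Q(\zeta_5)=\Q(\zeta_{10})$ has only the real quadratic subfield $\Q(\sqrt{5})$, that the imaginary quadratic subfields of $\Q(\zeta_8)$ are $\Q(\sqrt{-1})$ and $\Q(\sqrt{-2})$, and those of $\Q(\zeta_{12})$ are $\Q(\sqrt{-1})$ and $\Q(\sqrt{-3})$, this exactly reproduces the stated refinements. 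Your closing observation is also right: the hypothesis $\mc{S}(\O)\neq\emptyset$ does not enter the upper bounds and is carried along only because the statement is excerpted from the broader classification in \cite{EKVG}, where it governs which of the listed orders are actually realised.
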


We can now proceed to the proof.

\begin{proof}[Proof of \Cref{span division}]
Suppose that $H$ is not cyclic. Suppose $H$ is imprimitive, and hence of the form given by \Cref{description imprimitive}, i.e. $H \cong (\Gamma \times \Gamma)\rtimes C_2$ or $H \cong \Gamma_1 \times \Gamma_2$, for finite $\Gamma,\Gamma_1,\Gamma_2 \leqslant \U(\O)$. Since $H$ spans a division algebra by assumption, it must be a Frobenius complement. Recall that a Frobenius complement does not have a subgroup of the form $C_p \times C_p$ for $p$ prime, see \cite[Theorem 18.1]{PassPerm}. Thus if $H$ is a Frobenius complement, it must be of the form $\Gamma_1 \times \Gamma_2$ with the orders $|\Gamma_i|$ for $i = 1,2$ relatively prime and $\Gamma_i \leqslant \U(\O)$. In that case $\Gamma_1 \times \Gamma_2$ is indeed a Frobenius complement by \cite[Exercise 11.4.3]{EricAngel1}. Now recall \eqref{when span set non-empty} which asserts in terms of $A$ when $\mc{S}(\O)$ is non-empty.  For these values the unit groups are recorded in \eqref{unit group ring of integers} and \eqref{eq:units_of_quat_orders}. From these it follows immediately that the only way for $H$ to be non-cyclic and of the form $\Gamma_1 \times \Gamma_2$, is that some $\Gamma_i$ is trivial, which finishes the proof of statement \eqref{it: nc, the prim}.

We now consider statements \eqref{it: field case} and \eqref{it: div alg case}. By \Cref{subalgebra of exceptional}, $\Span_{\Q} \{H\}$ must be a field or a quaternion algebra. In the former case, since $H$ is a subgroup of a field, it must be cyclic, say $H \cong C_n$. In the latter case, $H$ is isomorphic to one of the groups in \Cref{spanning of quaternion}, i.e. $H \cong Q_{4m}$ if it is a generalised quaternion group, or $H \cong \SL_2(\F_3)$, $\SU_2(\F_3)$ or $\SL_2(\F_5)$. In both cases, using \Cref{recollection of props} we obtain that both $m$ and $n$ must be a divisor of $8$, $10$ or $12$. In particular, we obtain part \eqref{it: field case}.  We now show that in the non-commutative case, $m$ is in fact at most a divisor of $4$, $5$ or $6$. 

In \cite[Example 3.5.7]{EricAngel1}, the strong Shoda pairs $(\Gamma, K)$ are computed for a generalised quaternion group $Q_{4n}$. Recall that in \Cref{form of SSP idempotent result}, we show for which $e = e(Q_{4n}, \Gamma,K)$, the associated representation 
$\psi \colon Q_{4n} \to \Q Q_{4n} e$ is faithful, namely when \[\bigcap_{g \in Q_{4n}} K^g = 1.\]
But from \cite[Example 3.5.7]{EricAngel1} it is then immediate that the only SSP for $Q_{4n} = \langle x,y \mid x^{2n} = 1, x^n = y^2, yx = x^{-1}y\rangle$ which yields a faithful representation, is $(\Gamma, K_{2n})$, with $\Gamma = \langle x\rangle$, and $K_{2n} = \langle x^{2n} \rangle = \{1\}$. Moreover in the authors in \emph{loc. cit.}\footnote{Note that there is a small typo, $A_d$ should read $\qa{\alpha_d^2 -4}{-1}{\Q(\alpha_d)}$ rather than $\qa{\alpha_d^2 -4}{-1}{\Q}$, as seen in eq.~(3.5.1) right below it.}, show that the associated component $\Q Q_{4n} e$ is given by $\qa{\alpha_{2n}^2-4}{-1}{\Q(\alpha_{2n})}$, where $\alpha_{2n} = \zeta_{2n}+\zeta_{2n}^{-1}$.

So suppose that for $m \in \{8,10,12\}$, $Q_{4m} \leqslant \GL_2(\O)$. Then in particular, there exists a faithful representation \[\psi \colon Q_{4m} \to \GL_2(\O) \subseteq \Ma_2(D).\]
By the previous paragraph, the image of $\psi$ is isomorphic to $\qa{-1}{-1}{\Q(\zeta_{2m}+\zeta_{2m}^{-1})}$. Moreover,
\[\ \dim_{\Q}(\Q(\zeta_{2m}+\zeta_{2m}^{-1})) = \frac{\phi(2m)}{2} = 4, \text{ for all } m \in \{8,10,12\}. \]
However, since we assume $\Span_{\Q}\{Q_{4m}\}\leqslant \GL_2(D)$, \cref{remark degree quat alg} implies that the degree over $\Q$ of the centre of $\Span_{\Q}\{Q_{4m}\}$ is at most $2$, a contradiction. We conclude that $m$ is a divisor of $4$, $5$ or $6$, which finishes the first part of \eqref{it: div alg case}.

For the second part of \eqref{it: div alg case}, as well as \eqref{it: in spanning}, we use the tables of finite spanning subgroups of exceptional matrix algebras provided by \cite[Corollary 12.12, Tab. 12.1 and Tab. 12.2]{EricAngel1}. In particular, in \Cref{table iso types H} we give, for each exceptional matrix algebra, which groups from the statement occur as a subgroup of some finite spanning subgroup. In \Cref{tableA} in \Cref{appendix section} we listed per spanning subgroup the groups from the statement which are isomorphically contained in it, and the content of \Cref{table iso types H} is then obtained by combining the information in \Cref{tableA} and \Cref{tableB}.

We now moreover prove for every possible isomorphism type of $H$ that whenever it does not occur as a subgroup of some finite spanning subgroup, then in fact $H$ cannot occur as a finite subgroup of $\U(\O)$ at all, which concludes the proof.

From \Cref{table iso types H}, we remark that the groups $Q_{4m}$ for $m \in \{4,5,6\}$ do not occur as a subgroup of a spanning subgroup of $\Ma_2(\Q(\sqrt{-d}))$ for $d \in \{0,1,2,3\}$. From the same reasoning as above, if these were subgroups of $\GL_2(\I_d)$, then this would mean that $Q_{4m}$ has a faithful representation into $\qa{-1}{-1}{\Q(\zeta_{2m}+\zeta_{2m}^{-1})}$. In particular the $\Q$-linear span of $H = Q_{4m}$ is isomorphic to the latter quaternion algebra. But 
\[\dim_{\Q}(\Q(\zeta_{2m} + \zeta_{2m}^{-1})) = \frac{\phi(2m)}{2} = 2, \text{ for all } m \in \{4,5,6\}.\]
It now follows from \Cref{remark degree quat alg} that this is only possible in case $H$ is a subgroup of $\GL_2(D)$ with $D$ a quaternion algebra over $\Q$, a contradiction since we assumed $H \leqslant \GL_2(\I_d)$.

We now show moreover that $Q_{12}$ cannot occur as a subgroup of $\GL_2(\Z(\sqrt{-2}))$. Recall from above that the unique faithful irreducible $\Q$-representation of $Q_{12}$ is $\qa{-3}{-1}{\Q}$. It is well-known that the latter division algebra splits over $\Q(\sqrt{-3})$ and $\Q(i)$. Now consider a square-free integer $d \in \N$. If $Q_{12}$ were a subgroup of $\GL_2(\Q(\sqrt{-d}))$ such that $\Span_{\Q}\{Q_{12}\}$ is a division $\Q$-subalgebra, then by the aforementioned faithful irreducible $\Q$-representation, \[\Span_{\Q}\{Q_{12}\} \cong \qa{-3}{-1}{\Q}.\] This would imply that 
\[\dim_{\Q} \Span_{\Q(\sqrt{-d})}\{Q_{12}\} = \dim_{\Q}\Q(\sqrt{-d}) \cdot \dim_{\Q}\qa{-3}{-1}{\Q} = 8.\]
Hence $\Span_{\Q(\sqrt{-d})}\{Q_{12}\} = \Ma_{2}(\Q(\sqrt{-d}))$. But then $\qa{-3}{-1}{\Q}$ would have to split over $\Q(\sqrt{-d})$. However, $\qa{-3}{-1}{\Q}$ does not split over $\Q(\sqrt{-2})$ (but does over $\Q(i)$ and $\Q(\sqrt{-3})$), a contradiction. In conclusion, $Q_{12}$ is not a subgroup of $\GL_2(\mc{I}_2)$, as stated.
If $\SL_2(\F_3)$ were a finite subgroup of $\GL_2(\Q)$ spanning a non-commutative division algebra, then this division algebra must be a quaternion algebra by \Cref{subalgebra of exceptional}. But such a quaternion algebra is of $\Q$-dimension at least $4$, a contradiction since in this case, $\Span_{\Q}(\SL_2(\F_3)) \subseteq \Ma_2(\Q)$, which is of dimension $4$ over $\Q$.

Finally, we prove that $\SU_2(\F_3)$ and $\SL_2(\F_5)$ cannot occur as a subgroup of $\GL_2(\Z[\sqrt{-d}])$, for $d \in \{0,1,2,3\}$, which finishes the proof. Note that when $\Span_{\Q}\{G\}$ is a division algebra, the latter must occur as a component of the Wedderburn--Artin decomposition of $\Q G$. Calculating the Wedderburn--Artin decompositions of $\Q[\SU_2(\F_3)]$ and $\Q[\SL_2(\F_5)]$ via the Wedderga package in GAP, one sees (see also \Cref{example faithful irreps SLSU}) that the only non-commutative division algebras occurring are respectively $\qa{-1}{-1}{\Q(\zeta_8+\zeta_8^{-1})}$ and $\qa{-1}{-1}{\Q(\zeta_5+\zeta_5^{-1})}$. But again as in the paragraph before last, the centres of these quaternion algebras have degree $2$ over $\Q$, meaning that from \Cref{remark degree quat alg} it follows that $\SL_2(\F_5)$ and $\SU_2(\F_3)$ cannot occur as finite subgroups of $\GL_2(\Z[\sqrt{-d}])$ for $d \in \{0,1,2,3\}$.
\end{proof}

\subsubsection{Some applications of the classification of finite subgroups}

To start we point out that all the results obtained can be packaged to obtain \Cref{all in a spanning}. 

\begin{proof}[Proof of \Cref{all in a spanning}]
Let $H$ be a finite subgroup of $\U(\O)$. In case that $H$ is imprimitive the desired statement was obtained in \Cref{contained in a spanning - imprimitive}.

Suppose that $H$ is primitive. Then by \Cref{subalgebra of exceptional}, either $\Span_{\Q} \{ H \}$ is a division algebra or $\Span_{\Q} \{ H \}$ is an exceptional matrix subalgebra of $A$. In the former case the statement was obtained in \Cref{span division}. In the latter case it holds by \Cref{contained in spanning full exc}.   
\end{proof}

Next, we explain how one can deduce an amalgamation for $\GL_2(\mc{I}_2)$.

\begin{remark}\label{amalgam GL2 for sqrt-2}
In \cite{Hat} it was proven that there is an isomorphism
\[\PGL_2(\Z[\sqrt{-2}]) \cong (S_4 \ast_{C_4} D_8) \ast_{(C_3 \ast C_2)} (D_6 \ast_{C_2} (C_2 \times C_2)),\]
where the amalgamating subgroup $C_3 \ast C_2 \cong \PSL_2(\Z)$. It is a classical fact that for the canonical epimorphism $\pi\colon \GL_2(\Z[\sqrt{-2}]) \rightarrow \PGL_2(\Z[\sqrt{-2}])$, one has that 
\[\GL_2(\Z[\sqrt{-2}]) \cong \pi^{-1}(S_4 \ast_{C_4} D_8) \ast_{\pi^{-1}(C_3 \ast C_2)} \pi^{-1}(D_6 \ast_{C_2} (C_2 \times C_2)).\]
We describe the inverse image $\pi^{-1}$ of each of the finite subgroups appearing in the decomposition. Firstly note that $|\pi^{-1}(H)|= 2 \cdot|H|$ for each of those subgroups. In particular for $H= S_4$, we have that $|\pi^{-1}(H)|=48$. Inspecting the obtained list of finite subgroups of $\GL_2(\Z[\sqrt{-2}])$, we see that $48$ is the largest order of a finite subgroup. Moreover, $\GL_2(\F_3)$ is the only one of that order. Hence $\pi^{-1}(S_4) = \GL_2(\F_3)$. Similarly, one deduces that $\pi^{-1}(D_8) = SD_{16}$, and the amalgamating $C_4$ is pulled back to a cyclic group $C_8$. 

For the other part of the amalgam we use that the centre of the subalgebra $\Ma_2(\Z)$ equals the centre of $\Ma_2(\Z[\sqrt{-2}])$. Hence, having in mind that the main amalgamation is over $\PSL_2(\Z)$, we see that the inverse image $\pi^{-1}$ of the amalgamating $C_2$ is isomorphic $C_4$. Therefore, looking again at the list of possible subgroups of $\GL_2(\Z[\sqrt{-2}])$, $ \pi^{-1}(D_6) = D_{12} \text{ and } \pi^{-1}(C_2 \times C_2) = Q_8$. In conclusion,
\[\GL_2(\mc{I}_2) \cong (\GL_2(\F_3) \ast_{C_8} SD_{16}) \ast_{\SL_2(\Z)} (D_{12} \ast_{C_2 \times C_2} Q_8).\]
\end{remark}

\appendix
\clearpage
\section{\texorpdfstring{Tables of groups with a faithful exceptional $2 \times 2$ embedding}{Tables of groups with a faithful exceptional 2 by 2 embedding}}\label{appendix section}

In this appendix we reproduce\footnote{Including the group with \textsc{SmallGroupID} \texttt{[24, 1]}, which was accidentally omitted in the original table.} \cite[Table~2]{EKVG}, listing those finite groups $G$ that have a faithful exceptional matrix component (see \Cref{def_exc_comps}) in the Wedderburn--Artin decomposition of the rational group algebra $\mathbb{Q}G$. In particular, these groups appear as a spanning subgroup of an exceptional $\Ma_2(D)$. For all these groups, we list properties relevant for \Cref{span division} in the first table. 
\setcounter{table}{0}
\begingroup
\renewcommand\thetable{\Alph{table}}

\begin{table}[h!]
\caption{\footnotesize Content of the first table in this appendix.}
\label{tableA}
{\footnotesize
\noindent\begin{tabular}{p{0.25\linewidth}p{0.7\linewidth}}
 \textsc{SmallGroupID}: & The identifier of the group $G$ in the \textsc{SmallGroup} library \\[.4cm] 
Spectrum & The spectrum of the group, i.e. the set of orders of elements in $G$. \vspace{6pt} \\ [.4cm]
 Quaternion subgroups: & The generalised quaternion subgroups $Q_{4n}$ isomorphically appearing as a subgroup of $G$.\vspace{6pt} \\[.4cm]
 Linear subgroups: & The groups $\SL_2(\F_3)$, $\SU_2(\F_3)$ and $\SL_2(\F_5)$ isomorphically appearing as a subgroup of $G$. (Recall that they have \textsc{SmallGroupID}'s respectively \texttt{[24,3]},  \texttt{[48,28]} and \texttt{[120,5]}). \vspace{6pt}\\[.4cm]
 Dihedral subgroups: & The dihedral groups $D_n$ isomorphically appearing as a subgroup of $G$. \vspace{6pt}\\[.4cm]
 Subgroups from list: & Isomorphism types of strict subgroups of $G$ which themselves have an exceptional matrix component, given in terms of their \textsc{SmallGroupID}.

\end{tabular}}
\end{table}
\vspace{-6pt} In the second table, we list properties relevant to property $\Mexc$. In particular we give the exceptional matrix algebras these groups span.

\begin{table}[h!]
\caption{\footnotesize Content of the second table in this appendix.}
\label{tableB}
{\footnotesize 
\noindent\begin{tabular}{p{0.25\linewidth}p{0.7\linewidth}}
\textsc{SmallGroupID}: & The identifier of the group $G$ in the \textsc{SmallGroup} library \\[.4cm] 
 $\Mexc$: & Indicates whether the group satisfies $\Mexc$.\\[.4cm]
 $\mathrm{cl}$: & The nilpotency class of the group; $\infty$ indicates that the group is not nilpotent (omitted for non-solvable groups) \vspace{6pt}\\[.4cm]
 d$\ell$: & Derived length of the group; $\infty$ for non-solvable groups \vspace{6pt}\\[.4cm]
 $[G : F(G)]$: & The index of the Fitting subgroup of $G$ in $G$ (omitted when $G$ itself is nilpotent).\\ [.4cm]
 $\mathrm{cl}(F(G))$: & The nilpotency class of the Fitting subgroup $F(G)$. \\[.4cm]
 Faithful exceptional components: & The exceptional matrix components (with multiplicity) of the Wedderburn--Artin decomposition of $G$ which are a faithful representation of $G$. In particular $G$ is a spanning subgroup of these.\vspace{2pt} \\[.4cm]
 $1 \times 1 $ components: & The division-algebra components appearing in the Wedderburn--Artin decomposition of $\Q G$ (with multiplicity); omitted for groups without $\Mexc$.  \\
 
\end{tabular}}
\end{table}
Recall that we use the following shorthands for the appearing quaternion algebras:
\[\mathbb{H}_2 = \qa{-1}{-1}{\mathbb{Q}}, \quad \mbox{ and } \quad \mathbb{H}_3 = \qa{-1}{-3}{\mathbb{Q}}, \quad \mbox{ and } \quad \mathbb{H}_5 = \qa{-2}{-5}{\Q},\] 
and that $\zeta_k$ represents a complex, primitive $k$-root of unity.
\endgroup

\newgeometry{right=1.75cm,left=1.75cm,top=0cm,bottom=0cm}
 \begin{landscape}
 \scriptsize

\begin{longtable}{@{}lcllll@{}} \\ \toprule[1.5pt]
\textsc{SmallGroupID} & Spectrum & Quaternion subgroups & Linear subgroups & Dihedral subgroups & Subgroups from list\
   \\ \midrule 
\endfirsthead \toprule[1.5pt] \textsc{SmallGroup} ID & Spectrum & Quaternion subgroups & Linear subgroups & Dihedral subgroups & Subgroups from list\
   \\ \midrule 
\endhead \hline \multicolumn{6}{c}{continued}\\ \midrule[1.5pt]\endfoot\bottomrule[1.5pt]\endlastfoot
\texttt{{[}6, 1{]}} &  $ \{1, 2, 3\} $ &  &  &  &  \\
\texttt{{[}8, 3{]}} &  $ \{1, 2, 4\} $ &  &  &  &  \\
\texttt{{[}12, 4{]}} &  $ \{1, 2, 3, 6\} $ &  &  & $D_6$ & $ \texttt{{[}6, 1{]}}$ \\
\texttt{{[}16, 6{]}} &  $ \{1, 2, 4, 8\} $ &  &  &  &  \\
\texttt{{[}16, 8{]}} &  $ \{1, 2, 4, 8\} $ & $Q_8$ &  & $D_8$ & $ \texttt{{[}8, 3{]}}$ \\
\texttt{{[}16, 13{]}} &  $ \{1, 2, 4\} $ & $Q_8$ &  & $D_8$ & $ \texttt{{[}8, 3{]}}$ \\
\texttt{{[}18, 3{]}} &  $ \{1, 2, 3, 6\} $ &  &  & $D_6$ & $ \texttt{{[}6, 1{]}}$ \\
\texttt{{[}24, 1{]}} &  $ \{1, 2, 3, 4, 6, 8, 12\} $ &  &  &  &  \\
\texttt{{[}24, 3{]}} &  $ \{1, 2, 3, 4, 6\} $ & $Q_8$ &  &  &  \\
\texttt{{[}24, 5{]}} &  $ \{1, 2, 3, 4, 6, 12\} $ & $Q_{12}$ &  & $D_6$, $D_{12}$ & $ \texttt{{[}12, 4{]}}$ \\
\texttt{{[}24, 8{]}} &  $ \{1, 2, 3, 4, 6\} $ & $Q_{12}$ &  & $D_6$, $D_8$, $D_{12}$ & $ \texttt{{[}8, 3{]}}$, $ \texttt{{[}12, 4{]}}$ \\
\texttt{{[}24, 10{]}} &  $ \{1, 2, 3, 4, 6, 12\} $ &  &  & $D_8$ & $ \texttt{{[}8, 3{]}}$ \\
\texttt{{[}24, 11{]}} &  $ \{1, 2, 3, 4, 6, 12\} $ & $Q_8$ &  &  &  \\
\texttt{{[}32, 8{]}} &  $ \{1, 2, 4, 8\} $ & $Q_8$ &  &  & $ \texttt{{[}16, 6{]}}$ \\
\texttt{{[}32, 11{]}} &  $ \{1, 2, 4, 8\} $ & $Q_8$ &  & $D_8$ & $ \texttt{{[}16, 6{]}}$, $ \texttt{{[}16, 13{]}}$ \\
\texttt{{[}32, 44{]}} &  $ \{1, 2, 4, 8\} $ & $Q_8$, $Q_{16}$ &  & $D_8$ & $ \texttt{{[}16, 6{]}}$, $ \texttt{{[}16, 8{]}}$, $ \texttt{{[}16, 13{]}}$ \\
\texttt{{[}32, 50{]}} &  $ \{1, 2, 4\} $ & $Q_8$ &  & $D_8$ & $ \texttt{{[}16, 13{]}}$ \\
\texttt{{[}36, 6{]}} &  $ \{1, 2, 3, 4, 6, 12\} $ & $Q_{12}$ &  &  &  \\
\texttt{{[}36, 12{]}} &  $ \{1, 2, 3, 6\} $ &  &  & $D_6$, $D_{12}$ & $ \texttt{{[}12, 4{]}}$, $ \texttt{{[}18, 3{]}}$ \\
\texttt{{[}40, 3{]}} &  $ \{1, 2, 4, 5, 8, 10\} $ & $Q_{20}$ &  &  &  \\
\texttt{{[}48, 16{]}} &  $ \{1, 2, 3, 4, 6, 8, 12\} $ & $Q_8$, $Q_{12}$, $Q_{24}$ &  & $D_8$ & $ \texttt{{[}16, 8{]}}$, $ \texttt{{[}24, 1{]}}$, $ \texttt{{[}24, 10{]}}$ \\
\texttt{{[}48, 18{]}} &  $ \{1, 2, 3, 4, 6, 8, 12\} $ & $Q_8$, $Q_{12}$, $Q_{16}$, $Q_{24}$ &  &  & $ \texttt{{[}24, 1{]}}$, $ \texttt{{[}24, 11{]}}$ \\
\texttt{{[}48, 28{]}} &  $ \{1, 2, 3, 4, 6, 8\} $ & $Q_8$, $Q_{12}$, $Q_{16}$ & $\SL_2(\F_3)$ &  & $ \texttt{{[}24, 3{]}}$ \\
\texttt{{[}48, 29{]}} &  $ \{1, 2, 3, 4, 6, 8\} $ & $Q_8$ & $\SL_2(\F_3)$ & $D_6$, $D_8$, $D_{12}$ & $ \texttt{{[}12, 4{]}}$, $ \texttt{{[}16, 8{]}}$, $ \texttt{{[}24, 3{]}}$ \\
\texttt{{[}48, 33{]}} &  $ \{1, 2, 3, 4, 6, 12\} $ & $Q_8$ & $\SL_2(\F_3)$ & $D_8$ & $ \texttt{{[}16, 13{]}}$, $ \texttt{{[}24, 3{]}}$ \\
\texttt{{[}48, 39{]}} &  $ \{1, 2, 3, 4, 6, 12\} $ & $Q_8$, $Q_{12}$, $Q_{24}$ &  & $D_6$, $D_8$, $D_{12}$ & $ \texttt{{[}16, 13{]}}$, $ \texttt{{[}24, 5{]}}$, $ \texttt{{[}24, 8{]}}$, $ \texttt{{[}24, 10{]}}$ \\
\texttt{{[}48, 40{]}} &  $ \{1, 2, 3, 4, 6, 12\} $ & $Q_8$, $Q_{12}$, $Q_{24}$ &  & $D_6$, $D_{12}$ & $ \texttt{{[}24, 5{]}}$, $ \texttt{{[}24, 11{]}}$ \\
\texttt{{[}64, 37{]}} &  $ \{1, 2, 4, 8\} $ & $Q_8$ &  &  & $ \texttt{{[}32, 8{]}}$ \\
\texttt{{[}64, 137{]}} &  $ \{1, 2, 4, 8\} $ & $Q_8$, $Q_{16}$ &  & $D_8$ & $ \texttt{{[}32, 8{]}}$, $ \texttt{{[}32, 11{]}}$, $ \texttt{{[}32, 44{]}}$, $ \texttt{{[}32, 50{]}}$ \\
\texttt{{[}72, 19{]}} &  $ \{1, 2, 3, 4, 6, 8\} $ & $Q_{12}$ &  &  &  \\
\texttt{{[}72, 20{]}} &  $ \{1, 2, 3, 4, 6, 12\} $ & $Q_{12}$ &  & $D_6$, $D_{12}$ & $ \texttt{{[}24, 5{]}}$, $ \texttt{{[}36, 6{]}}$, $ \texttt{{[}36, 12{]}}$ \\
\texttt{{[}72, 22{]}} &  $ \{1, 2, 3, 4, 6\} $ & $Q_{12}$ &  & $D_6$, $D_8$, $D_{12}$ & $ \texttt{{[}24, 8{]}}$, $ \texttt{{[}36, 12{]}}$ \\
\texttt{{[}72, 24{]}} &  $ \{1, 2, 3, 4, 6, 12\} $ & $Q_8$, $Q_{12}$, $Q_{24}$ &  &  & $ \texttt{{[}36, 6{]}}$ \\
\texttt{{[}72, 25{]}} &  $ \{1, 2, 3, 4, 6, 12\} $ & $Q_8$ & $\SL_2(\F_3)$ &  & $ \texttt{{[}24, 3{]}}$, $ \texttt{{[}24, 11{]}}$ \\
\texttt{{[}72, 30{]}} &  $ \{1, 2, 3, 4, 6, 12\} $ & $Q_{12}$ &  & $D_6$, $D_8$, $D_{12}$ & $ \texttt{{[}24, 8{]}}$, $ \texttt{{[}24, 10{]}}$, $ \texttt{{[}36, 6{]}}$, $ \texttt{{[}36, 12{]}}$ \\
\texttt{{[}96, 67{]}} &  $ \{1, 2, 3, 4, 6, 8, 12\} $ & $Q_8$ & $\SL_2(\F_3)$ & $D_8$ & $ \texttt{{[}24, 1{]}}$, $ \texttt{{[}32, 11{]}}$, $ \texttt{{[}48, 33{]}}$ \\
\texttt{{[}96, 190{]}} &  $ \{1, 2, 3, 4, 6, 8\} $ & $Q_8$, $Q_{12}$, $Q_{16}$ & $\SL_2(\F_3)$, $\SU_2(\F_3)$ & $D_6$, $D_8$, $D_{12}$ & $ \texttt{{[}24, 8{]}}$, $ \texttt{{[}32, 44{]}}$, $ \texttt{{[}48, 28{]}}$, $ \texttt{{[}48, 29{]}}$ \\
\texttt{{[}96, 191{]}} &  $ \{1, 2, 3, 4, 6, 8, 12\} $ & $Q_8$, $Q_{12}$, $Q_{16}$, $Q_{24}$ & $\SL_2(\F_3)$, $\SU_2(\F_3)$ & $D_8$ & $ \texttt{{[}32, 44{]}}$, $ \texttt{{[}48, 28{]}}$, $ \texttt{{[}48, 33{]}}$ \\
\texttt{{[}96, 202{]}} &  $ \{1, 2, 3, 4, 6, 12\} $ & $Q_8$ & $\SL_2(\F_3)$ & $D_8$ & $ \texttt{{[}24, 10{]}}$, $ \texttt{{[}32, 50{]}}$, $ \texttt{{[}48, 33{]}}$ \\
\texttt{{[}120, 5{]}} &  $ \{1, 2, 3, 4, 5, 6, 10\} $ & $Q_8$, $Q_{12}$, $Q_{20}$ & $\SL_2(\F_3)$ &  & $ \texttt{{[}24, 3{]}}$ \\
\texttt{{[}128, 937{]}} &  $ \{1, 2, 4, 8\} $ & $Q_8$, $Q_{16}$ &  & $D_8$ & $ \texttt{{[}64, 37{]}}$, $ \texttt{{[}64, 137{]}}$ \\
\texttt{{[}144, 124{]}} &  $ \{1, 2, 3, 4, 6, 8, 12\} $ & $Q_8$, $Q_{12}$, $Q_{16}$, $Q_{24}$ & $\SL_2(\F_3)$, $\SU_2(\F_3)$ &  & $ \texttt{{[}48, 18{]}}$, $ \texttt{{[}48, 28{]}}$, $ \texttt{{[}72, 25{]}}$ \\
\texttt{{[}144, 128{]}} &  $ \{1, 2, 3, 4, 6, 12\} $ & $Q_8$, $Q_{12}$, $Q_{24}$ & $\SL_2(\F_3)$ & $D_6$, $D_{12}$ & $ \texttt{{[}36, 12{]}}$, $ \texttt{{[}48, 40{]}}$, $ \texttt{{[}72, 25{]}}$ \\
\texttt{{[}144, 135{]}} &  $ \{1, 2, 3, 4, 6, 8\} $ & $Q_{12}$ &  &  & $ \texttt{{[}16, 6{]}}$, $ \texttt{{[}72, 19{]}}$ \\
\texttt{{[}144, 148{]}} &  $ \{1, 2, 3, 4, 6, 12\} $ & $Q_8$, $Q_{12}$, $Q_{24}$ &  & $D_6$, $D_8$, $D_{12}$ & $ \texttt{{[}48, 39{]}}$, $ \texttt{{[}72, 20{]}}$, $ \texttt{{[}72, 22{]}}$, $ \texttt{{[}72, 24{]}}$, $ \texttt{{[}72, 30{]}}$ \\
\texttt{{[}160, 199{]}} &  $ \{1, 2, 4, 5, 10\} $ & $Q_8$ &  & $D_8$ & $ \texttt{{[}32, 50{]}}$ \\
\texttt{{[}192, 989{]}} &  $ \{1, 2, 3, 4, 6, 8, 12\} $ & $Q_8$, $Q_{12}$, $Q_{16}$, $Q_{24}$ & $\SL_2(\F_3)$, $\SU_2(\F_3)$ & $D_8$ & $ \texttt{{[}48, 16{]}}$, $ \texttt{{[}64, 137{]}}$, $ \texttt{{[}96, 67{]}}$, $ \texttt{{[}96, 191{]}}$, $ \texttt{{[}96, 202{]}}$ \\
\texttt{{[}240, 89{]}} &  $ \{1, 2, 3, 4, 5, 6, 8, 10, 12\} $ & $Q_8$, $Q_{12}$, $Q_{16}$, $Q_{20}$, $Q_{24}$ & $\SL_2(\F_3)$, $\SU_2(\F_3)$, $\SL_2(\F_5)$ &  & $ \texttt{{[}40, 3{]}}$, $ \texttt{{[}48, 28{]}}$, $ \texttt{{[}120, 5{]}}$ \\
\texttt{{[}240, 90{]}} &  $ \{1, 2, 3, 4, 5, 6, 8, 10\} $ & $Q_8$, $Q_{12}$, $Q_{20}$ & $\SL_2(\F_3)$, $\SL_2(\F_5)$ & $D_6$, $D_8$, $D_{12}$ & $ \texttt{{[}24, 8{]}}$, $ \texttt{{[}40, 3{]}}$, $ \texttt{{[}48, 29{]}}$, $ \texttt{{[}120, 5{]}}$ \\
\texttt{{[}288, 389{]}} &  $ \{1, 2, 3, 4, 6, 8, 12\} $ & $Q_8$, $Q_{12}$, $Q_{24}$ &  & $D_6$, $D_8$, $D_{12}$ & $ \texttt{{[}32, 11{]}}$, $ \texttt{{[}144, 135{]}}$, $ \texttt{{[}144, 148{]}}$ \\
\texttt{{[}320, 1581{]}} &  $ \{1, 2, 4, 5, 8, 10\} $ & $Q_8$, $Q_{16}$, $Q_{20}$ &  & $D_8$ & $ \texttt{{[}64, 137{]}}$, $ \texttt{{[}160, 199{]}}$ \\
\texttt{{[}384, 618{]}} &  $ \{1, 2, 3, 4, 6, 8, 12\} $ & $Q_8$, $Q_{16}$ & $\SL_2(\F_3)$ & $D_8$ & $ \texttt{{[}96, 202{]}}$, $ \texttt{{[}128, 937{]}}$ \\
\texttt{{[}384, 18130{]}} &  $ \{1, 2, 3, 4, 6, 8\} $ & $Q_8$, $Q_{12}$, $Q_{16}$ & $\SL_2(\F_3)$, $\SU_2(\F_3)$ & $D_6$, $D_8$, $D_{12}$ & $ \texttt{{[}96, 190{]}}$, $ \texttt{{[}128, 937{]}}$ \\
\texttt{{[}720, 409{]}} &  $ \{1, 2, 3, 4, 5, 6, 8, 10\} $ & $Q_8$, $Q_{12}$, $Q_{16}$, $Q_{20}$ & $\SL_2(\F_3)$, $\SU_2(\F_3)$, $\SL_2(\F_5)$ &  & $ \texttt{{[}48, 28{]}}$, $ \texttt{{[}72, 19{]}}$, $ \texttt{{[}120, 5{]}}$ \\
\texttt{{[}1152, 155468{]}} &  $ \{1, 2, 3, 4, 6, 8, 12\} $ & $Q_8$, $Q_{12}$, $Q_{16}$ & $\SL_2(\F_3)$, $\SU_2(\F_3)$ & $D_6$, $D_8$, $D_{12}$ & $ \texttt{{[}72, 25{]}}$, $ \texttt{{[}72, 30{]}}$, $ \texttt{{[}384, 618{]}}$, $ \texttt{{[}384, 18130{]}}$ \\
\texttt{{[}1920, 241003{]}} &  $ \{1, 2, 3, 4, 5, 6, 8, 10, 12\} $ & $Q_8$, $Q_{12}$, $Q_{16}$, $Q_{20}$, $Q_{24}$ & $\SL_2(\F_3)$, $\SU_2(\F_3)$, $\SL_2(\F_5)$ & $D_8$ & $ \texttt{{[}120, 5{]}}$, $ \texttt{{[}192, 989{]}}$, $ \texttt{{[}320, 1581{]}}$, $ \texttt{{[}384, 618{]}}$ \\
\end{longtable}
	\end{landscape}
\restoregeometry
\newpage
 \newgeometry{right=1.75cm,left=1.75cm,top=0cm,bottom=0cm}
 \begin{landscape}
 \scriptsize

\begin{longtable}{@{}lcccccll@{}} \\ \toprule[1.5pt]
$\textsc{SmallGroupID} $ & $ \operatorname{(M_{exc})} $ & cl & d$\ell$ & ${[} G : F(G) {]}$  & cl$(F(G))$ & Faithful \
exceptional components & $1\times 1$ components   \\ \midrule 
\endfirsthead \toprule[1.5pt] $\textsc{SmallGroup} ID $ & $ \operatorname{(M_{exc})} $ & cl & d$\ell$ & ${[} G : F(G) {]}$  & cl$(F(G))$ & Faithful \
exceptional components & $1\times 1$ components   \\ \midrule 
\endhead \hline \multicolumn{6}{c}{continued}\\ \midrule[1.5pt]\endfoot\bottomrule[1.5pt]\endlastfoot
\texttt{{[}6, 1{]}} &$\checkmark$ & $\infty$ & $2$ & $2$  & $1$ & 1 $\times$  $M_2(\mathbb Q)$  & $2 {\times} \mathbb{Q}$,  \\
\texttt{{[}8, 3{]}} &$\checkmark$ & $2$ & $2$ &  &  & 1 $\times$  $M_2(\mathbb Q)$  & $4 {\times} \mathbb{Q}$,  \\
\texttt{{[}12, 4{]}} &$\checkmark$ & $\infty$ & $2$ & $2$  & $1$ & 1 $\times$  $M_2(\mathbb Q)$  & $4 {\times} \mathbb{Q}$,  \\
\texttt{{[}16, 6{]}} &$\checkmark$ & $2$ & $2$ &  &  & 1 $\times$  $M_2(\mathbb Q (i))$  & $4 {\times} \mathbb{Q}$, $2 {\times} \mathbb{Q}(i)$,  \\
\texttt{{[}16, 8{]}} &$\checkmark$ & $3$ & $2$ &  &  & 1 $\times$  $M_2(\mathbb Q (\sqrt{-2}))$  & $4 {\times} \mathbb{Q}$,  \\
\texttt{{[}16, 13{]}} &$\checkmark$ & $2$ & $2$ &  &  & 1 $\times$  $M_2(\mathbb Q (i))$  & $8 {\times} \mathbb{Q}$,  \\
\texttt{{[}18, 3{]}} &$\checkmark$ & $\infty$ & $2$ & $2$  & $1$ & 1 $\times$  $M_2(\mathbb Q (\sqrt{-3}))$  & $2 {\times} \mathbb{Q}$, $2 {\times} \mathbb{Q}(\sqrt{-3}) = \mathbb{Q}(\zeta_{3})$,  \\
\texttt{{[}24, 1{]}} &$\checkmark$ & $\infty$ & $2$ & $2$  & $1$ & 1 $\times$  $M_2(\mathbb Q (i))$  & $2 {\times} \mathbb{Q}$, $1 {\times} \mathbb{Q}(i)$, $1 {\times} \mathbb{Q}(\zeta_{8})$, $1 {\times} \mathbb{H}_3$,  \\
\texttt{{[}24, 3{]}} &$\times$ & $\infty$ & $3$ & $3$  & $2$ & 1 $\times$  $M_2(\mathbb Q (\sqrt{-3}))$  &  \\
\texttt{{[}24, 5{]}} &$\checkmark$ & $\infty$ & $2$ & $2$  & $1$ & 1 $\times$  $M_2(\mathbb Q (i))$  & $4 {\times} \mathbb{Q}$, $2 {\times} \mathbb{Q}(i)$,  \\
\texttt{{[}24, 8{]}} &$\checkmark$ & $\infty$ & $2$ & $2$  & $1$ & 1 $\times$  $M_2(\mathbb Q (\sqrt{-3}))$  & $4 {\times} \mathbb{Q}$,  \\
\texttt{{[}24, 10{]}} &$\checkmark$ & $2$ & $2$ &  &  & 1 $\times$  $M_2(\mathbb Q (\sqrt{-3}))$  & $4 {\times} \mathbb{Q}$, $4 {\times} \mathbb{Q}(\sqrt{-3}) = \mathbb{Q}(\zeta_{3})$,  \\
\texttt{{[}24, 11{]}} &$\checkmark$ & $2$ & $2$ &  &  & 1 $\times$  $M_2(\mathbb Q (\sqrt{-3}))$  & $4 {\times} \mathbb{Q}$, $4 {\times} \mathbb{Q}(\sqrt{-3}) = \mathbb{Q}(\zeta_{3})$, $1 {\times} \mathbb{H}_2$,  \\
\texttt{{[}32, 8{]}} &$\checkmark$ & $3$ & $2$ &  &  & 1 $\times$  $M_2(\mathbb{H}_{2})$  & $4 {\times} \mathbb{Q}$, $2 {\times} \mathbb{Q}(i)$,  \\
\texttt{{[}32, 11{]}} &$\checkmark$ & $3$ & $2$ &  &  & 2 $\times$  $M_2(\mathbb Q (i))$  & $4 {\times} \mathbb{Q}$, $2 {\times} \mathbb{Q}(i)$,  \\
\texttt{{[}32, 44{]}} &$\checkmark$ & $3$ & $2$ &  &  & 1 $\times$  $M_2(\mathbb{H}_{2})$  & $8 {\times} \mathbb{Q}$,  \\
\texttt{{[}32, 50{]}} &$\checkmark$ & $2$ & $2$ &  &  & 1 $\times$  $M_2(\mathbb{H}_{2})$  & $16 {\times} \mathbb{Q}$,  \\
\texttt{{[}36, 6{]}} &$\checkmark$ & $\infty$ & $2$ & $2$  & $1$ & 1 $\times$  $M_2(\mathbb Q (\sqrt{-3}))$  & $2 {\times} \mathbb{Q}$, $1 {\times} \mathbb{Q}(i)$, $2 {\times} \mathbb{Q}(\sqrt{-3}) = \mathbb{Q}(\zeta_{3})$, $1 {\times} \mathbb{Q}(\zeta_{12})$, $1 {\times} \mathbb{H}_3$,  \\
\texttt{{[}36, 12{]}} &$\checkmark$ & $\infty$ & $2$ & $2$  & $1$ & 1 $\times$  $M_2(\mathbb Q (\sqrt{-3}))$  & $4 {\times} \mathbb{Q}$, $4 {\times} \mathbb{Q}(\sqrt{-3}) = \mathbb{Q}(\zeta_{3})$,  \\
\texttt{{[}40, 3{]}} &$\times$ & $\infty$ & $2$ & $4$  & $1$ & 1 $\times$  $M_2(\mathbb{H}_{5})$  &  \\
\texttt{{[}48, 16{]}} &$\checkmark$ & $\infty$ & $2$ & $2$  & $2$ & 1 $\times$  $M_2(\mathbb{H}_{2})$  & $4 {\times} \mathbb{Q}$,  \\
\texttt{{[}48, 18{]}} &$\checkmark$ & $\infty$ & $2$ & $2$  & $2$ & 1 $\times$  $M_2(\mathbb{H}_{3})$  & $4 {\times} \mathbb{Q}$, $1 {\times} \left(\frac{-1, -1}{\mathbb{Q}(\sqrt{2})}\right)$,  \\
\texttt{{[}48, 28{]}} &$\times$ & $\infty$ & $4$ & $6$  & $2$ & 1 $\times$  $M_2(\mathbb{H}_{3})$  &  \\
\texttt{{[}48, 29{]}} &$\times$ & $\infty$ & $4$ & $6$  & $2$ & 1 $\times$  $M_2(\mathbb Q (\sqrt{-2}))$  &  \\
\texttt{{[}48, 33{]}} &$\times$ & $\infty$ & $3$ & $3$  & $2$ & 1 $\times$  $M_2(\mathbb Q (i))$  &  \\
\texttt{{[}48, 39{]}} &$\checkmark$ & $\infty$ & $2$ & $2$  & $2$ & 1 $\times$  $M_2(\mathbb{H}_{3})$  & $8 {\times} \mathbb{Q}$,  \\
\texttt{{[}48, 40{]}} &$\checkmark$ & $\infty$ & $2$ & $2$  & $2$ & 1 $\times$  $M_2(\mathbb{H}_{2})$  & $8 {\times} \mathbb{Q}$, $2 {\times} \mathbb{H}_2$,  \\
\texttt{{[}64, 37{]}} &$\times$ & $4$ & $2$ &  &  & 2 $\times$  $M_2(\mathbb{H}_{2})$  &  \\
\texttt{{[}64, 137{]}} &$\checkmark$ & $3$ & $2$ &  &  & 2 $\times$  $M_2(\mathbb{H}_{2})$  & $8 {\times} \mathbb{Q}$,  \\
\texttt{{[}72, 19{]}} &$\times$ & $\infty$ & $2$ & $4$  & $1$ & 2 $\times$  $M_2(\mathbb{H}_{3})$  &  \\
\texttt{{[}72, 20{]}} &$\times$ & $\infty$ & $2$ & $4$  & $1$ & 1 $\times$  $M_2(\mathbb{H}_{3})$  &  \\
\texttt{{[}72, 22{]}} &$\times$ & $\infty$ & $2$ & $4$  & $1$ & 1 $\times$  $M_2(\mathbb{H}_{3})$  &  \\
\texttt{{[}72, 24{]}} &$\times$ & $\infty$ & $2$ & $4$  & $1$ & 1 $\times$  $M_2(\mathbb{H}_{3})$  &  \\
\texttt{{[}72, 25{]}} &$\times$ & $\infty$ & $3$ & $3$  & $2$ & 3 $\times$  $M_2(\mathbb Q (\sqrt{-3}))$  &  \\
\texttt{{[}72, 30{]}} &$\checkmark$ & $\infty$ & $2$ & $2$  & $1$ & 2 $\times$  $M_2(\mathbb Q (\sqrt{-3}))$  & $4 {\times} \mathbb{Q}$, $4 {\times} \mathbb{Q}(\sqrt{-3}) = \mathbb{Q}(\zeta_{3})$,  \\
\texttt{{[}96, 67{]}} &$\times$ & $\infty$ & $4$ & $6$  & $2$ & 2 $\times$  $M_2(\mathbb Q (i))$  &  \\
\texttt{{[}96, 190{]}} &$\times$ & $\infty$ & $4$ & $6$  & $2$ & 1 $\times$  $M_2(\mathbb{H}_{2})$  &  \\
\texttt{{[}96, 191{]}} &$\times$ & $\infty$ & $4$ & $6$  & $2$ & 1 $\times$  $M_2(\mathbb{H}_{2})$  &  \\
\texttt{{[}96, 202{]}} &$\times$ & $\infty$ & $3$ & $3$  & $2$ & 1 $\times$  $M_2(\mathbb{H}_{2})$  &  \\
\texttt{{[}120, 5{]}} &$\times$ &  & $\infty$ & $60$  & $1$ & 1 $\times$  $M_2(\mathbb{H}_{3})$  &  \\
\texttt{{[}128, 937{]}} &$\times$ & $4$ & $3$ &  &  & 4 $\times$  $M_2(\mathbb{H}_{2})$  &  \\
\texttt{{[}144, 124{]}} &$\times$ & $\infty$ & $4$ & $6$  & $2$ & 3 $\times$  $M_2(\mathbb{H}_{3})$  &  \\
\texttt{{[}144, 128{]}} &$\times$ & $\infty$ & $3$ & $6$  & $2$ & 1 $\times$  $M_2(\mathbb{H}_{2})$  &  \\
\texttt{{[}144, 135{]}} &$\times$ & $\infty$ & $2$ & $4$  & $1$ & 4 $\times$  $M_2(\mathbb{H}_{3})$  &  \\
\texttt{{[}144, 148{]}} &$\times$ & $\infty$ & $2$ & $4$  & $1$ & 2 $\times$  $M_2(\mathbb{H}_{3})$  &  \\
\texttt{{[}160, 199{]}} &$\times$ & $\infty$ & $3$ & $5$  & $2$ & 1 $\times$  $M_2(\mathbb{H}_{2})$  &  \\
\texttt{{[}192, 989{]}} &$\times$ & $\infty$ & $4$ & $6$  & $2$ & 2 $\times$  $M_2(\mathbb{H}_{2})$  &  \\
\texttt{{[}240, 89{]}} &$\times$ &  & $\infty$ & $120$  & $1$ & 1 $\times$  $M_2(\mathbb{H}_{5})$  &  \\
\texttt{{[}240, 90{]}} &$\times$ &  & $\infty$ & $120$  & $1$ & 1 $\times$  $M_2(\mathbb{H}_{5})$  &  \\
\texttt{{[}288, 389{]}} &$\times$ & $\infty$ & $3$ & $8$  & $1$ & 2 $\times$  $M_2(\mathbb{H}_{3})$  &  \\
\texttt{{[}320, 1581{]}} &$\times$ & $\infty$ & $4$ & $10$  & $2$ & 2 $\times$  $M_2(\mathbb{H}_{2})$  &  \\
\texttt{{[}384, 618{]}} &$\times$ & $\infty$ & $3$ & $3$  & $4$ & 1 $\times$  $M_2(\mathbb{H}_{2})$  &  \\
\texttt{{[}384, 18130{]}} &$\times$ & $\infty$ & $4$ & $6$  & $2$ & 1 $\times$  $M_2(\mathbb{H}_{2})$  &  \\
\texttt{{[}720, 409{]}} &$\times$ &  & $\infty$ & $360$  & $1$ & 2 $\times$  $M_2(\mathbb{H}_{3})$  &  \\
\texttt{{[}1152, 155468{]}} &$\times$ & $\infty$ & $4$ & $18$  & $2$ & 1 $\times$  $M_2(\mathbb{H}_{2})$  &  \\
\texttt{{[}1920, 241003{]}} &$\times$ &  & $\infty$ & $60$  & $2$ & 1 $\times$  $M_2(\mathbb{H}_{2})$  &  \\
\end{longtable}
\end{landscape}
\restoregeometry

\bibliographystyle{plain}
\bibliography{LowDeg}

\end{document}